\newcommand\cyr{%
 \renewcommand\rmdefault{wncyr}%
 \renewcommand\sfdefault{wncyss}%
 \renewcommand\encodingdefault{OT2}%
\normalfont\selectfont} \DeclareTextFontCommand{\textcyr}{\cyr}
\newtheorem{theorem}{Theorem}
\newtheorem{lemma}[theorem]{Lemma}
\newtheorem{corollary}[theorem]{Corollary}
\newtheorem{proposition}[theorem]{Proposition}
\newtheorem{remark}[theorem]{Remark}
\def\mod{\operatorname{mod}}
\def\Q{\mathbb Q}
\long\def\symbolfootnote[#1]#2{\begingroup%
\def\thefootnote{\fnsymbol{footnote}}\footnote[#1]{#2}\endgroup}
\title{Finite field restriction estimates based on Kakeya maximal operator estimates}
\author{Mark Lewko\thanks{Supported by a NSF Postdoctoral Fellowship, DMS-1204206 and the IAS Fund for Math.}}
\date{}
\begin{document}

\maketitle

\begin{abstract}In the finite field setting, we show that the restriction conjecture associated to any one of a large family of $d=2n+1$ dimensional quadratic surfaces implies the $n+1$ dimensional Kakeya conjecture (Dvir's theorem). This includes the case of the paraboloid over finite fields in which $-1$ \emph{is} a square. We are able to partially reverse this implication using the sharp Kakeya maximal operator estimates of Ellenberg, Oberlin and Tao to establish the first finite field restriction estimates beyond the Stein-Tomas exponent in this setting.
\end{abstract}

\symbolfootnote[0]{2010 Mathematics Subject Classification 42B10; Secondary 11T24, 52C17}

\section{Introduction}\label{sec:intro}The Kakeya and restriction conjectures are two central open problems in Euclidean Fourier analysis. The two problems are inextricably linked. The restriction conjecture implies the Kakeya conjecture, and much of the progress on the restriction conjecture has exploited partial progress on the Kakeya conjecture. We refer the reader to \cite{FeffermanR}, \cite{TaoRecent} and \cite{Wolff} for a more detailed discussion of these topics and \cite{BG} for recent results. Both of these problems also have formulations over finite fields. The finite field Kakeya problem was introduced by Wolff \cite{Wolff} in 1999 and solved by Dvir in 2008 using the polynomial method \cite{Dvir}. The finite field restriction conjecture was introduced by Mockenhaupt and Tao \cite{MT} in 2002 and appears far from complete resolution. It seemed to have been widely believed that in finite fields the two problems do not enjoy an explicit connection. Here, we prove that such a formal connection does in fact exist. Most significantly, we will be able to exploit Kakeya maximal operator estimates to make progress on the finite field restriction conjecture.

Let $\mathbb{F}$ denote a finite field, and $\mathbb{F}^d$ the $d$-dimensional Cartesian product of $\mathbb{F}$. We will always assume the characteristic of the finite field $\mathbb{F}$ is strictly greater than $2$.  We let $e(\cdot)$ denote a non-principal character on $\mathbb{F}$ and for $x=(x_1,\ldots,x_d),\xi=(\xi_1,\ldots,\xi_d) \in \mathbb{F}^d$ we define the dot product by $x\cdot \xi= x_1 \xi_1 + x_2 \xi_2 + \ldots +x_d \xi_d$.  We endow the vector space $\mathbb{F}^{d}$ with the counting measure $dx$, and its (isomorphic) dual space $\mathbb{F}^d$ with the normalized counting measure $d\xi$ that assigns measure $|\mathbb{F}|^{-d}$ to each point. For a complex-valued function $f$ on $\mathbb{F}^d$ the $L^p$ norm is given by $ ||f||_{L^p(\mathbb{F}^d,dx)} = \left(\sum_{x \in \mathbb{F}^d} |f(x)|^p \right)^{1/p} $ and its Fourier transform (defined on the dual space $\mathbb{F}^d$) is given by
\[ \hat{f}(\xi) = \sum_{x \in \mathbb{F}^d} f(x) e(-x \cdot \xi).\]
Given a non-empty set $S \subset \mathbb{F}^d$ we define the surface measure of $S$, denoted $d\sigma$, which assigns the measure $|S|^{-1}$ to each point so that
\[ \int_{\xi \in S} g(\xi) d\sigma := \frac{1}{|S|}\sum_{\xi \in S} g(\xi).\]
Thus $||g||_{L^{q}(S,d\sigma)} = \left(|S|^{-1}\sum_{\xi \in S}  |g(\xi)|^q \right)^{1/q} $ and the inverse Fourier transform of $g$ on $S$ is given by
\[  (gd \sigma)^{\vee}(x) = \frac{1}{|S|} \sum_{\xi \in S} g(\xi)e(x \cdot \xi ).\]
For $1 \leq p,q \leq \infty$ we define $\mathcal{R}^{*}(q\rightarrow p)$ to be the best constant such that, for all $g$ supported on $S$, we have
\begin{equation}\label{eq:ExtDef}
|| (gd\sigma)^{\vee}||_{L^p(F^{d},dx) } \leq \mathcal{R}^{*}(q\rightarrow p) ||g||_{L^{q}(S,d\sigma)}.
\end{equation}
By duality, this is also equal to the best constant in the inequality
\[||\hat{f}||_{L^{q'}(S,d\sigma)} \leq  \mathcal{R}^{*}(q\rightarrow p)||f||_{L^{p'}(\mathbb{F}^d,dx) }.\]
When $S$ is an algebraic variety the restriction problem for $S$ seeks to classify the pairs of exponents $(q,p)$ for which $\mathcal{R}^{*}(q\rightarrow p)$ is bounded independent of the field size.

Here we will primarily be interested in the case of the $d$ dimensional paraboloid defined by $\mathcal{P} := \{(\xi, \xi \cdot \xi ) : \xi \in \mathbb{F}^{d-1} \}$. The finite field restriction problem associated to $\mathcal{P}$ has received a fair amount of study, see  \cite{IKParaboloid}, \cite{IKQuadratic}, \cite{KohRadial}, \cite{KangKoh}, \cite{LL}, \cite{LewkoNew}, \cite{MT}. To properly understand the restriction theory of the paraboloid, it is useful to consider it in the context of more general quadratic surfaces. We briefly assume familiarity with quadratic form theory (for definitions and references see section \ref{sec:HighBack}).  Given a non-degenerate quadratic form $Q:\mathbb{F}^{d-1} \rightarrow \mathbb{F}^{d-1}$, one might consider the quadratic surface $\mathcal{S}:=\{(\xi, Q(\xi) ) : \xi \in \mathbb{F}^{d-1}\}$. We will show that the restriction problem is formally equivalent for quadratic surfaces associated to equivalent quadratic forms. In odd characteristic, there are only two equivalence classes of quadratic forms. For even dimensional quadratic forms, these classes are distinguished by an invariant called the Witt index. This is defined to be the dimension of the maximal totally isotropic subspace of $\mathbb{F}^{d-1}$ (this is a subspace $V$ such that $Q(v)=0$ for every $v \in V$ with maximal dimension). If $d-1$ is odd then the Witt index is $\frac{d-2}{2}$. If $d-1$ is even then the Witt index is either $\frac{d-1}{2}$ or $\frac{d-3}{2}$. It turns out that the Witt index of the form associated to the surface is what determines the numerology of the associated restriction problem. If $d$ is odd and $-1$ is a square in $\mathbb{F}$ then the quadratic form $x\cdot x$ (which is associated to the paraboloid) has Witt index $\frac{d-1}{2}$. If $d$ is odd and $-1$ is not a square in $\mathbb{F}$ the Witt index of the quadratic form $x\cdot x$ is $\frac{d-3}{2}$.  Slightly generalizing Mockenhaupt and Tao's finite field restriction conjecture, if $d$ is odd and $\mathcal{S}$ is a surface associated to a quadratic form with Witt index $\frac{d-1}{2}$, then one conjectures
\begin{equation}\label{eq:conjOddSqare}
 || (fd \sigma)^{\vee}||_{L^{\frac{2d}{d-1}}(\mathbb{F}^d,dx)} \lesssim ||f||_{L^{\frac{2d}{d-1}}(\mathcal{S},d\sigma)}.
\end{equation}
If $d$ is odd and $\mathcal{S}$ is a surface associated to a quadratic form with Witt index $\frac{d-3}{2}$ then one conjectures
\begin{equation}\label{eq:conjEvenSqare}
 || (fd \sigma)^{\vee}||_{L^{\frac{2d}{d-1}}(\mathbb{F}^d,dx)} \lesssim ||f||_{L^{\frac{2d^2+2d}{d^2+3}}(\mathcal{S},d\sigma)}.
\end{equation}
If $d$ is even one conjectures
\begin{equation}\label{eq:conjEvenSqare}
 || (fd \sigma)^{\vee}||_{L^{\frac{2d}{d-1}}(\mathbb{F}^d,dx)} \lesssim ||f||_{L^{\frac{2d^2}{d^2-d+2}}(\mathcal{S},d\sigma)}.
\end{equation}
The Stein-Tomas method, which relies only on decay properties of $(d\sigma)^{\vee}$, gives the weaker estimate
\begin{equation}\label{eq:STpar}
 || (fd \sigma)^{\vee}||_{L^{\frac{2d+2}{d-1}}(\mathbb{F}^d,dx)} \lesssim ||f||_{L^{2}(\mathcal{S},d\sigma)}
\end{equation}
in all of these cases.
There has been a fair amount of work aimed at proving estimates beyond Stein-Tomas for the paraboloid in the second case (that is when $d$ is even, or $d$ is odd and the Witt index is when $\frac{d-3}{2}$). For instance, in the case of the $3$ dimensional paraboloid over a field in which $-1$ is not a square, one has the inequality
\begin{equation}
 || (fd \sigma)^{\vee}||_{L^{p}(\mathbb{F}^d,dx)} \lesssim ||f||_{L^{2}(\mathcal{P},d\sigma)}
\end{equation}
with $p=4$ by the Stein-Tomas method. Mockenhaupt and Tao \cite{MT} reduced this to $p >3.6$. This was later refined by Bennett,  Carbery, Garrigos, and Wright \cite{BCGW} to $p = 3.6$ (see also \cite{LL}), and by the current author \cite{LewkoNew} to $p > 3.6 - \delta$ for some small $\delta> 0$. The Mockenhaupt-Tao method was also extended by Iosevich and Koh \cite{IKParaboloid} to certain higher dimensional cases, where additional difficulties enter. All of this work, however, has been in cases where the conjectured inequalities remain $L^2$ based, which allows for methods that seem to have no analog for other $L^p$ norms. In the case of odd dimensions when the Witt index is larger ($\frac{d-1}{2}$), the Stein-Tomas inequality is the best one may obtain with an $L^2$ norm on the right. Indeed, no estimates beyond Stein-Tomas were previously known in this setting. This is analogous to the Euclidean setting where similar issues arise.

Our main result is the following improvement to the Stein-Tomas estimate for $\mathcal{P}$.
\begin{theorem}\label{thm:Main}Let $\mathcal{P}$ denote the $d=2n+1$ dimensional paraboloid over a field in which $-1$ is a square. We then have that
\begin{equation}\label{eq:Main}
|| (fd \sigma)^{\vee}||_{L^{\frac{2d+2} {d-1} - \delta_d}(\mathbb{F}^d,dx)} \lesssim_{d} ||f||_{L^{\frac{2d+2}{d-1}}(\mathcal{P},d\sigma)}
\end{equation}
for some $\delta_d >0$.
\end{theorem}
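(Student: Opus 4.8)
\emph{Outline of the argument.} The plan is to reduce the extension estimate for $\mathcal{P}$ to a Kakeya maximal inequality for lines in $\mathbb{F}^{n+1}$, feed in the sharp such estimate of Ellenberg--Oberlin--Tao, and interpolate the outcome against Stein--Tomas \eqref{eq:STpar}. Since $-1$ is a square the form $\xi\cdot\xi$ on $\mathbb{F}^{2n}$ is split, hence equivalent to the hyperbolic form of Witt index $n$, and by the formal equivalence of the restriction problem for surfaces attached to equivalent quadratic forms it suffices to prove \eqref{eq:Main} for the surface $\mathcal{S}=\{(\xi,Q(\xi)):\xi\in\mathbb{F}^{2n}\}$ of a standard hyperbolic $Q$, whose Lagrangian structure is explicit. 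Fix a Lagrangian subspace $V\subset\mathbb{F}^{2n}$, so $\dim V=n$ and $V^{\perp}=V$. Along any coset $w+V$ the restriction of $Q$ is affine--linear, so if $f$ is supported on $w+V$ then $(fd\sigma)^{\vee}(x)$ equals a unimodular function of $x$ times the $n$--dimensional Fourier transform of $f$ (regarded on $V$) precomposed with the surjective linear map $\pi_{w}\colon x\mapsto (x'+2x_{d}w)|_{V}\in V^{*}$. Hence $|(fd\sigma)^{\vee}|$ is constant on the fibres of $\pi_{w}$, which are the translates of the $(n+1)$--plane $H_{w}:=V\oplus\mathbb{F}\cdot(-2w,1)$. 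Writing a general $f$ as $\sum_{j}f_{j}$ over the $|\mathbb{F}|^{n}$ cosets of $V$ gives $(fd\sigma)^{\vee}=\sum_{j}F_{j}$ with each $|F_{j}|$ being $H_{j}$--periodic, the $F_{j}$ pairwise orthogonal in $L^{2}(\mathbb{F}^{d})$ (disjoint Fourier supports) with $\sum_{j}\|F_{j}\|_{2}^{2}=|\mathbb{F}|\,\|f\|_{L^{2}(d\sigma)}^{2}$, and the planes $H_{j}$ running over the $|\mathbb{F}|^{n}$ lines of $\mathbb{F}^{d}/V\cong\mathbb{F}^{n+1}$ transverse to the hyperplane at infinity.

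\emph{The core estimate.} The goal is an $L^{p}$ bound for $\sum_{j}F_{j}$ with $p$ a little below the Stein--Tomas exponent. Perform the usual dyadic pigeonholing: reduce to $f$ a multiple of the indicator of a set on which it is constant, and to a region on which the multiplicity $\mu(x):=\#\{j:F_{j}(x)\neq0\}$ has a fixed dyadic size, at the cost of powers of $\log|\mathbb{F}|$. On such a region one estimates $\|\sum_{j}F_{j}\|_{p}^{p}$ by a Cauchy--Schwarz/$TT^{*}$ organisation that plays off two facts: the $L^{2}$ orthogonality above, which retains the cancellation between the slabs, and a bound on $\mu$. For the latter, note that since each $|F_{j}|$ is $H_{j}$--periodic it descends to $\mathbb{F}^{d}/V\cong\mathbb{F}^{n+1}$, where it is constant along the lines of a single direction $H_{j}/V$, and the $|\mathbb{F}|^{n}$ directions $\{H_{j}/V\}$ form an $n$--parameter family in $\mathbb{P}^{n}(\mathbb{F})$; the sharp Kakeya maximal estimate of Ellenberg--Oberlin--Tao in $\mathbb{F}^{n+1}$ then controls the relevant $L^{r}$ average of $\sum_{j}\chi_{H_{j}}$ by $|\mathbb{F}|^{o(1)}$ times the trivial count, a genuine gain over the pointwise bound $\mu(x)\le|\mathbb{F}|^{n}$. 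Handling separately the degenerate range in which the output concentrates on a lower--dimensional subspace such as $V\times\{0\}$, where the relevant character sum cancels completely, this yields
\[ \|(fd\sigma)^{\vee}\|_{L^{p_{0}}(\mathbb{F}^{d},dx)}\ \lesssim_{d}\ \|f\|_{L^{q_{0}}(\mathcal{S},d\sigma)} \]
for some exponents with $p_{0}<\tfrac{2d+2}{d-1}$ and --- necessarily, since Stein--Tomas is sharp with an $L^{2}$ norm on the right --- some $q_{0}>2$.

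\emph{Conclusion.} Interpolating this estimate against \eqref{eq:STpar} gives $\|(fd\sigma)^{\vee}\|_{L^{p_{\theta}}}\lesssim_{d}\|f\|_{L^{q_{\theta}}(d\sigma)}$ with $p_{\theta}$ between $p_{0}$ and $\tfrac{2d+2}{d-1}$ and $q_{\theta}$ between $2$ and $q_{0}$; choosing the interpolation parameter small makes $q_{\theta}<\tfrac{2d+2}{d-1}$, and since $d\sigma$ is a probability measure one may then replace $\|f\|_{L^{q_{\theta}}(d\sigma)}$ by $\|f\|_{L^{(2d+2)/(d-1)}(d\sigma)}$. Taking $\delta_{d}:=\tfrac{2d+2}{d-1}-p_{\theta}>0$ and transferring back to $\mathcal{P}$ via the equivalence from the first paragraph yields \eqref{eq:Main}.

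\emph{Main obstacle.} The crux is the core estimate: one must verify that the gain furnished by the Ellenberg--Oberlin--Tao maximal inequality over the trivial multiplicity bound, in combination with the $L^{2}$ orthogonality, really does force $p_{0}$ strictly below the Stein--Tomas exponent, while keeping all $\log|\mathbb{F}|$ losses from the pigeonholing and from the case analysis on the size of $\mu$ --- in particular the fully concentrated range, where every $f_{j}$ is a point mass and $|F_{j}|$ is constant on all of $\mathbb{F}^{d}$ --- field--size independent. Quantifying the admissible $\delta_{d}$ then amounts to optimising over these parameters.
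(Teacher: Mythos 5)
Your outline correctly identifies several genuine ingredients of the argument --- the reduction to the hyperbolic form when $-1$ is a square (Lemma \ref{lem:ParEqu}), the role of maximal totally isotropic (Lagrangian) subspaces, the fact that the extension of a function on a coset of a Lagrangian $V$ has modulus constant along translates of an $(n+1)$-plane and thus descends to a function on $\mathbb{F}^d/V\cong\mathbb{F}^{n+1}$ that is constant along a single line, and the appearance of the Ellenberg--Oberlin--Tao Kakeya maximal estimate. These observations are exactly what drive the paper's Proposition giving the mixed-norm estimate \eqref{eq:ExpKak} in Section~\ref{sec:MNKak}. But the core of your argument has a serious gap, and what you have sketched is at best one component of the proof.

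First, the multiplicity function $\mu(x)=\#\{j:F_j(x)\neq0\}$ does not furnish a useful pigeonholing tool here. Each $F_j=(f_j d\sigma)^\vee$ is a complete exponential sum, generically nonvanishing on all of $\mathbb{F}^d$, so $\mu(x)$ is typically the full count $|\mathbb{F}|^n$ and there is no gain to be extracted from its level sets. This is the point at which the finite-field problem diverges from Bourgain's Euclidean argument, where wave-packets are genuinely localized and the Kakeya estimate controls the overlap of tubes. In the finite field setting one has only the analogue of a single scale, and the Kakeya maximal estimate, once correctly inserted, only produces the \emph{mixed-norm} bound
$\|(fd\sigma)^{\vee}\|_{L^{(2d+2)/(d-1)}_{V,t}L^2_W}\lesssim\|f\|_{L^{(2d+2)/(d-1)}_{V}L^2_W}$, which is \emph{not} a Lebesgue-norm improvement over the Stein--Tomas exponent for a general $f$. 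Converting this to an unmixed $L^{p_0}$ estimate with $p_0<(2d+2)/(d-1)$ requires knowing that the relevant function is suitably aligned with $V$, which is precisely the hypothesis in Corollary~\ref{cor:KakeyaRegSet} (in the dual form, that each slice $E_z$ of the support is covered by a controlled number of Lagrangian cosets), and the corollary's proof involves nontrivial combinatorics about pairwise intersections of cosets of \emph{different} Lagrangians (Lemma~\ref{lem:elcosets}), not a fixed single $V$.

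Second, and more fundamentally, the ``degenerate range'' you allude to is not a lower-dimensional special case that can be handled by a side remark about a character sum cancelling. For $d\geq5$ there is a large $O(|\mathbb{F}|^{n(n-1)/2})$-parameter family of Lagrangians, and each vertical slice of the function may concentrate on cosets of a \emph{different} one. Your decomposition fixes one Lagrangian $V$; a function concentrated on cosets of some other Lagrangian $U$ with $\dim(V\cap U)\geq1$ gets shattered into many low-dimensional pieces in the $V$-decomposition, and the $L^2$-orthogonality plus Kakeya estimate does not recover a sharp bound in this regime. The paper's proof requires the full machinery of Sections~\ref{sec:HighEnergy}--\ref{sec:HighMT}: a structure theorem for energetic subsets of quadratic surfaces (Proposition~\ref{prop:EnergyGen}), developed by induction on dimension through degenerate forms, and the Mockenhaupt--Tao slicing argument (Corollary~\ref{cor:HDMT}) to handle the ``uniform'' case in which the slices are not concentrated on isotropic subspaces. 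The Kakeya input appears only in the complementary ``concentrated'' case. Your sketch omits all of this, and it is exactly here --- not in the interpolation at the end, which is fine --- that the theorem is actually won.
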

We will show that one can set $\delta_3 = \frac{4}{10}$ (see Theorem \ref{thm:ResWSP} for a sharper result) and $\delta_5 = \frac{1}{16}- \epsilon$ (for $\epsilon>0$). In general, however, part of the argument proceeds by an induction that leads to recursively defined estimates that are not easily analyzed.

The proof of this theorem will make use of a connection with the finite field Kakeya problem.  Recall that a Kakeya set in $\mathbb{F}^m$ is a subset $E \subseteq \mathbb{F}^{m}$ that contains a line in every direction. The finite field Kakeya problem asks for a lower bound the size of any such set. This problem was settled by Dvir in 2008 using the polynomial method. Indeed, one has the lower bound $|E| \gg_{m} |\mathbb{F}|^m$. We will show that this fact is implied by the finite field restriction conjecture. More precisely:
\begin{theorem}\label{thm:ResToKak}Assume the restriction conjecture holds for some $d=2n+1$ paraboloid over finite fields in which $-1$ is a square. Then a Kakeya set $E$ in dimension $n+1$ must have full dimension. In other words, $|E| \gg |\mathbb{F}|^{n+1} $.
\end{theorem}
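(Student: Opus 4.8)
The plan is to run a finite-field analogue of the classical implication ``restriction implies Kakeya.'' The structural point that makes it possible is that, since $-1$ is not a square, the base form $\xi\cdot\xi$ on $\mathbb{F}^{2n}$ has Witt index $n-1$, so $\mathcal{P}$ genuinely contains affine subspaces of dimension $n-1$ (``flat caps''). The first step is the elementary duality computation underlying the whole argument: if $V\subset\mathbb{F}^{2n}$ is totally isotropic of dimension $n-1$ and $C\subset\mathcal{P}$ is the flat cap lying over a coset $c+V$, then
\[(\mathbf{1}_C d\sigma)^{\vee}(x',x_d)=|\mathbb{F}|^{-(n+1)}\,e(c\cdot x'+x_d Q(c))\,\mathbf{1}\big[x'+2x_d c\in V^{\perp}\big],\]
whose support (the ``dual tube'' $T_C$) is an affine subspace of $\mathbb{F}^{2n+1}$ of dimension $n+2$ on which the amplitude is the constant $|\mathbb{F}|^{-(n+1)}$.

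The crux of the proof is then to set up, for each of the $\sim|\mathbb{F}|^{n}$ directions $\omega$ in $\mathbb{F}^{n+1}$, a flat cap (or a short sum of flat caps) $C_\omega\subset\mathcal{P}$, so organized that (i) the tubes $T_{C_\omega}$ for distinct $\omega$ are essentially disjoint, and (ii) in a fixed $(n+1)$-dimensional quotient of $\mathbb{F}^{2n+1}$ they reproduce lines in all directions. Building this correspondence is where the geometry of the Witt decomposition $\mathbb{F}^{2n}=V\oplus V'\oplus A$ ($A$ the anisotropic plane) is used, and in particular where one must vary the isotropic direction $V$: flat caps through the origin have trivial phase, but their dual tubes all pass through the common flat $V^{\perp}\times\{0\}$, giving only a ``bush'' of tubes and no Kakeya content, so honestly translated tubes require moving $V$ and then controlling the phases $e(c\cdot x'+x_d Q(c))$ that appear.

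Granting such a family, and given a Kakeya set $E\subset\mathbb{F}^{n+1}$ with a line $\ell_\omega\subset E$ for every $\omega$, I would take $f=\sum_\omega\varepsilon_\omega\mathbf{1}_{C_\omega}$ with random signs $\varepsilon_\omega$ and apply $\|(fd\sigma)^{\vee}\|_{L^{p}(\mathbb{F}^{2n+1})}\lesssim\mathcal{R}^{*}(q\to p)\,\|f\|_{L^{q}(\mathcal{P},d\sigma)}$. The right-hand side is evaluated directly from the number and common size of the (disjoint) caps. On the left, Khintchine's inequality over $\mathbb{F}$ gives, in expectation over the signs, a pointwise lower bound for $|(fd\sigma)^{\vee}(x)|$ by $|\mathbb{F}|^{-(n+1)}\big(\#\{\omega:x\in T_{C_\omega}\}\big)^{1/2}$; summing over the preimage of $E$ and applying Hölder is exactly the standard equivalence between a Kakeya maximal inequality and a Kakeya set lower bound, and after fixing a favourable sign pattern it produces $|E|\gg_{\mathcal{R}^{*}}|\mathbb{F}|^{n+1}$. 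A short exponent check shows that for the conjectural pair $q=2$, $p=\tfrac{2d}{d-1}$ the exponent on $|\mathbb{F}|$ is exactly $n+1$, recovering Dvir's theorem, and that weaker restriction exponents give weaker but still nontrivial Kakeya bounds (which are later reused in the converse direction).

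The step I expect to be the genuine obstacle is the construction of the correspondence between lines in $\mathbb{F}^{n+1}$ and flat caps in $\mathcal{P}$ with the disjointness in (i): this is the new idea relative to the folklore belief that no such connection exists, and it is what pins the Kakeya problem to the ``middle'' dimension $n+1$ rather than to $2n+1$ or $2n$. Once that is in hand, the rest is routine: the Fourier computation above, Khintchine, Hölder, and bookkeeping with the exponents.
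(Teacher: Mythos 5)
You have read the ``not'' in the theorem statement literally, but it is a typo: the abstract stresses that $-1$ \emph{is} a square, the sentence immediately following the theorem in the introduction says the hypothesis may be replaced by the restriction conjecture for any quadratic surface of Witt index $\frac{d-1}{2}=n$, and the proof in Section~\ref{sec:KaktoRes} is carried out for the hyperbolic paraboloid $\mathcal{H}$, which is equivalent to the paraboloid in the $-1$-square case. The sign matters to your proposal. With Witt index $n-1$ a maximal totally isotropic $V\subset\mathbb{F}^{2n}$ has dimension $n-1$, so there are $|\mathbb{F}|^{n+1}$ cosets of any single $V$ while there are only $\sim|\mathbb{F}|^{n}$ directions in $\mathbb{F}^{n+1}$; no bijection between directions and flat caps over cosets of a single $V$ is possible, and you are forced into the construction you yourself flag as the unresolved obstacle. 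That step is a genuine gap --- the Fourier computation of the dual tube, the Khintchine reduction, and the H\"older bookkeeping you sketch are all routine, but nothing in the proposal produces the disjoint tube family they act on.

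The paper's proof dissolves the obstacle rather than overcoming it. Working with $\mathcal{H}=\{(\xi,\eta,\xi\cdot\eta):\xi,\eta\in\mathbb{F}^n\}$, both $V=\mathbb{F}^n\times\{0\}$ and $V'=\{0\}\times\mathbb{F}^n$ are $n$-dimensional totally isotropic and complementary, so the cosets of $V$ are in bijection with $\theta\in V'\cong\mathbb{F}^n$, i.e.\ with the (non-horizontal) directions in $\mathbb{F}^{n+1}$; the correspondence is automatic, not something to construct. The test function is also quite different from a random-sign sum of cap indicators: it is the \emph{single} modulated function $f(\xi,\theta)=h^{1/2}(\theta)\,e(-b(-\theta)\cdot\xi)$, with modulus constant on each coset of $V$ and a phase encoding the Kakeya translate $b(\theta)$. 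A short computation gives
\[
(fd\sigma)^{\vee}(x_1,x_2,t)=\frac{1}{|\mathbb{F}|^{n}}\sum_{\theta\in\mathbb{F}^{n}}h^{1/2}(\theta)\,1_{\ell(-b(\theta),-\theta)}(x_1,t)\,e(\theta\cdot x_2),
\]
and because the characters $e(\theta\cdot x_2)$ are \emph{exactly} orthogonal in $L^2_{x_2}$, the $L^2_{x_2}$ norm of the extension \emph{equals} the square function $\bigl(|\mathbb{F}|^{-n}\sum_{\theta}h(\theta)1_{\ell(-b(\theta),-\theta)}(x_1,t)\bigr)^{1/2}$: no Khintchine, no fixing of a favourable realization. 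H\"older in $x_2$ then carries the mixed norm $L^{r}_{x_1,t}L^{2}_{x_2}$ up to the full $L^{r}(\mathbb{F}^d)$ norm, the assumed restriction estimate bounds that, and one lands on a dual Kakeya maximal inequality (Proposition~\ref{prop:ResKak}), from which $|E|\gg|\mathbb{F}|^{n+1}$ follows by testing on the indicator of a Kakeya set. So your high-level instinct --- a restriction bound controls a superposition of tubes indexed by directions, and that is Kakeya --- is the right one, but the missing ingredients are (i) the correct Witt index, which makes the direction/coset correspondence trivial, and (ii) exact orthogonality of characters on the complementary isotropic subspace in place of the probabilistic square-function argument.
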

More generally, the hypothesis can be replaced with the  restriction conjecture for any $d$ dimensional quadratic surface associated to a quadratic form with Witt index $\frac{d-1}{2}$, such as the hyperbolic paraboloid. One can, in fact, embed the $n+1$ dimensional Kakeya maximal operator into the finite field restriction problem. Indeed we show that every maximal totally isotropic subspace gives rise to a distinct such embedding. Note that the numerology here differs from the Euclidean setting where the $d$ dimensional restriction conjecture implies the $d$ dimensional Kakeya conjecture.

It is natural to ask if one can reverse this implication and use Kakeya estimates to prove restriction estimates. In the Euclidean case this was carried out in a celebrated paper of Bourgain \cite{Brest} in 1991. The main contribution of the current paper is to show that this can be done in the finite field case as well. Indeed this is the idea behind the proof of Theorem \ref{thm:Main}. Here we make use of the finite field Kakeya maximal operator estimates of Ellenberg, Oberlin, and Tao \cite{EOT}. While there are many analogies with the Euclidean case, the relationship between the two settings is still rather tenuous. Indeed, the arguments developed here appear to be significantly distinct from the arguments previously applied in the Euclidean case.

We start by presenting a $3$ dimensional version of our argument.  In this case we can prove stronger results. The analysis here will be based on, among other things, a careful analysis of the near-extremizers of the Stein-Tomas inequality. Here it will be convenient (when $d=2n+1$ is odd) to work with the hyperbolic paraboloid defined by $\mathcal{H}:=\{(\xi_1,\xi_2, \xi_1 \cdot \xi_2) : \xi_1,\xi_2 \in \mathbb{F}^{n} \} $. As we will see, the restriction problem for this surface is equivalent to the restriction problem for the Paraboloid over fields in which $-1$ is a square.
 \begin{theorem}\label{thm:Res3}Let $\mathcal{H}$ be the $3$ dimensional hyperbolic paraboloid. Then:
\begin{equation}
|| (fd \sigma)^{\vee}||_{L^{18/5}(\mathbb{F}^3,dx)} \lesssim ||f||_{L^{9/4}(\mathcal{H},d\sigma)}.
\end{equation}
\end{theorem}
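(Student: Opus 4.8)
The plan is to run a finite-field version of Bourgain's Kakeya-based approach to the restriction problem, organised around the fact that $\mathcal H=\{(a,b,ab):a,b\in\mathbb F\}$ is doubly ruled. Throughout, write $N=|\mathbb F|$, so $|\mathcal H|=N^2$; since $\mathcal H$ is the quadratic surface attached to the hyperbolic form $Q(a,b)=ab$, which has Witt index $1=\tfrac{d-1}{2}$, the Stein--Tomas bound $\|(fd\sigma)^\vee\|_{L^4(\mathbb F^3)}\lesssim\|f\|_{L^2(\mathcal H,d\sigma)}$ is available. By the usual dyadic and layer-cake reductions (as in \cite{MT}) together with real interpolation against Stein--Tomas, it suffices to prove a restricted-type bound $\|(1_A d\sigma)^\vee\|_{L^{p_0}(\mathbb F^3)}\lesssim(|A|/N^2)^{1/q_0}$ for every $A\subseteq\mathcal H$, for \emph{some} pair $(q_0,p_0)$ lying, on the scaling line, strictly beyond $(\tfrac94,\tfrac{18}{5})$ in the direction of the conjectural endpoint $(3,3)$; interpolating such an estimate with Stein--Tomas then yields $\mathcal{R}^{*}(\tfrac94\to\tfrac{18}{5})\lesssim1$ with no loss in $N$. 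Normalising $|A|=\alpha N^2$ and setting $\Omega_\lambda=\{x:|(1_A d\sigma)^\vee(x)|>\lambda\}$, the bound $|(1_A d\sigma)^\vee|\le\alpha$ and the Plancherel identity $\|(1_A d\sigma)^\vee\|_{L^2(\mathbb F^3)}^2=|A|/N$ already settle, through Stein--Tomas, the regime $\lambda\sim\alpha$; the whole difficulty lies in the small-amplitude regime $\lambda\ll\alpha$.

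The new input there is a gain coming from the ruled geometry. Writing $A_a=\{b:(a,b)\in A\}$ for the fibre over $a$ and $h_a(y)=\sum_{b\in A_a}e(yb)$, a direct computation gives
\[
(1_A d\sigma)^\vee(x_1,x_2,x_3)=\frac1{N^2}\sum_{a\in\mathbb F}e(x_1a)\,h_a(x_2+x_3a),
\]
so that, for each fixed $x_3$, the extension is (up to the twist $e(x_1a)$) a restriction of the two-variable function $G(a,y):=h_a(y)$ to the line $\{(a,x_2+x_3a):a\in\mathbb F\}$ in the $(a,y)$-plane. Applying Hausdorff--Young in $x_1$ and summing over $(x_2,x_3)$ reorganises $\|(1_A d\sigma)^\vee\|_{L^{p_0}(\mathbb F^3)}^{p_0}$, up to a fixed power of $N$, as a sum over all non-vertical lines $\ell\subseteq\mathbb F^2$ of the quantities $\bigl(\sum_{(a,y)\in\ell}|G(a,y)|^{p_0'}\bigr)^{p_0-1}$; splitting the lines into parallel pencils and extracting the maximum over each pencil brings in exactly the $2$-dimensional Kakeya maximal operator applied to $|G|^{p_0'}$. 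One then plays this against the hard constraint $\sum_{a,y}|G(a,y)|^2=\sum_aN|A_a|=N|A|$ (Plancherel on the fibres), invoking the sharp Ellenberg--Oberlin--Tao Kakeya maximal estimate (in this dimension an elementary bush argument suffices) to extract a bound on $|\Omega_\lambda|$ that beats Stein--Tomas for small $\lambda$.

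The Kakeya input is only effective when the fibres are spread out; it degrades precisely when $A$ concentrates along a bounded number of rulings, which is also the configuration that makes the Stein--Tomas quadruple count $Q(A)=\#\{\xi_1+\xi_2=\xi_3+\xi_4:\xi_i\in A\}$ nearly extremal. Eliminating two of the four points via the linear coordinate equations shows that $Q(A)$ counts configurations constrained by the bilinear relation $u_1v_2+u_2v_1=0$ among the difference vectors $(u_i,v_i)$ of $\widetilde A\subseteq\mathbb F^2$, and a near-extremal count forces $\widetilde A$ to cluster on a few lines of one of the two rulings. The argument therefore splits: in this structured case one uses the explicit fact that a single ruling contributes to $(fd\sigma)^\vee$ a function of size $N^{-1}$ supported on a $2$-plane through the origin, so that a union of $k$ rulings yields a function bounded by $k/N$, of size $O(N^{-1})$ off an $O(N)$-point exceptional set, and supported on $O(kN^2)$ points---from which the required level-set bounds follow by hand (and in particular handle $A=\mathcal H$ itself); in the complementary spread-out case the Kakeya bound of the previous paragraph applies with a genuine gain over Stein--Tomas.

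Finally I would assemble everything through the layer-cake identity $\|(1_A d\sigma)^\vee\|_{L^{p_0}}^{p_0}=p_0\sum_{\lambda\ \mathrm{dyadic}}\lambda^{p_0}|\Omega_\lambda|$: the Stein--Tomas and $L^2$ bounds control $|\Omega_\lambda|$ for $\lambda$ near $\alpha$, the Kakeya bound (once the structured case has been peeled off) controls it below the crossover, and the resulting sum is geometrically convergent, pinned at the crossover value of $\lambda$, so no logarithmic loss in $N$ appears; optimising the two-parameter bound in the exponents produces a pair $(q_0,p_0)$ of the required shape, and interpolation with Stein--Tomas then gives $\|(fd\sigma)^\vee\|_{L^{18/5}(\mathbb F^3)}\lesssim\|f\|_{L^{9/4}(\mathcal H,d\sigma)}$. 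I expect the main obstacle to be the second and third paragraphs in combination: isolating the $2$-dimensional Kakeya maximal operator inside the extension sum with the correct normalisation and Lebesgue exponent so that the Ellenberg--Oberlin--Tao estimate applies cleanly, controlling the pigeonholing losses, and---most delicately---identifying exactly which near-extremal configurations must be separated out and disposed of by the ruled-geometry argument before the Kakeya estimate can yield a quantitative improvement over Stein--Tomas.
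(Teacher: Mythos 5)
Your proposal shares the paper's high-level philosophy --- that the restriction theory of $\mathcal H$ beyond Stein--Tomas is governed by how sets can or cannot concentrate on the rulings, with Kakeya-type information supplying the new leverage --- but the route is genuinely different from the paper's, and it has real gaps at exactly the points you flag yourself. The paper works in the dual (restriction) form and runs a \emph{three}-way decomposition of the dyadic level sets: slices not concentrated on VH lines are fed into a Mockenhaupt--Tao slicing argument (Lemma~\ref{lem:mt1}) powered by the additive-energy/incidence bound of Proposition~\ref{prop:Energy} and Corollary~\ref{cor:L4}; slices concentrated on lines lying in few VH planes are handled by a dedicated orthogonality argument for planar functions (Section~\ref{sec:Planar}, Lemmas~\ref{lem:planeL} and~\ref{lem:planeS}); and slices concentrated on lines spread across many planes are handled by an $L^2$ C\'ordoba/bush-type Kakeya bound for the Bochner--Riesz operator (Corollary~\ref{cor:BR}). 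Your two-way dichotomy (``few rulings'' versus ``Kakeya applies'') collapses the latter two cases, but they require entirely different mechanisms, and the middle regime --- each slice nearly a line, but the lines distributed across many planes --- is exactly where the paper's real Kakeya input lives; it is not visible to a split based only on the number of rulings covering $A$.

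Two concrete gaps. First, the claim that ``splitting the lines into parallel pencils and extracting the maximum over each pencil brings in exactly the $2$-dimensional Kakeya maximal operator'' does not follow as stated: after Hausdorff--Young in $x_1$ you are left, up to normalization, with $\sum_{x_3}\sum_{x_2}\bigl(\sum_a|G(a,x_2+x_3a)|^{p_0'}\bigr)^{p_0-1}$, a sum over \emph{all} intercepts, whereas the dual Kakeya maximal bound (\ref{eq:dualKakeya}) controls only a single line per direction. Replacing the $x_2$-sum by its maximum costs a factor of $N$ which destroys the gain, unless one supplies a pigeonholing or randomization device that selects one intercept per direction while preserving the Plancherel constraint on $G$; the paper does this via a probabilistic selection weighted by the fibrewise Fourier mass, but only in the higher-dimensional mixed-norm argument of Section~\ref{sec:MNKak}, where the resulting estimate is strictly weaker than what is needed for the sharp $q=9/4$ in three dimensions. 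The sharp $3$-dimensional bound in the paper comes from the $L^2$ Bochner--Riesz estimate of Corollary~\ref{cor:BR}, of which your sketch has no analogue. Second, the ``structured'' case is underestimated: your explicit computation is correct for characteristic functions of full rulings, but the set $A$ you must handle is merely concentrated on partial subsets of many rulings with an arbitrary amplitude profile, and the interference between the corresponding planes is then not confined to a small exceptional set. In the paper this is the content of Lemma~\ref{lem:planar}, which requires splitting each $\widehat{F_{(a,b)}}$ into the piece where the plane normal aligns with the surface and the remainder, followed by an orthogonality argument in the intercept parameter --- and it is this step (not the Kakeya step) that escapes $L^2$ methods and delivers the sharp right-hand exponent $q=9/4$. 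Without it the final layer-cake sum in your plan does not close on the critical line.
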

This result is on the critical line in the sense that the exponent $q=9/4$ is sharp for $p=18/5$. Compared with the analogous problem in higher dimensions, the $3$ dimensional case enjoys two simplifying features. First, the relevant Kakeya problem is $2$ dimensional. This is helpful because the $2$ dimensional Kakeya problem is particularly well understood, and the relevant maximal inequalities are $L^2$ based. Second, this surface contains only a (uniformly) bounded number (indeed two) of maximal totally isotropic subspaces. This significantly restricts the manner in which a Kakeya problem can be embedded into the restriction problem. In $5$ and higher dimensions this is no longer the case, and we must deal with the possibility that Kakeya type sets are embedded into the restriction problem in many different ways.

In section \ref{sec:3dthmSumPro}, following the ideas of \cite{LewkoNew}, we show how to insert the Bourgain, Katz, and Tao \cite{BKT} incidence/sum-product estimate into the previous argument to obtain the following slight improvement.
\begin{theorem}\label{thm:ResWSP}Let $\mathcal{H}$ be the $3$ dimensional hyperbolic paraboloid. There exists a $\delta >0$ such that
\begin{equation}
|| (fd \sigma)^{\vee}||_{L^{18/5-\delta}(\mathbb{F}^3,dx)} \lesssim ||f||_{L^{\frac{18-5\delta}{8-5\delta}}(\mathcal{H},d\sigma)}
\end{equation}
holds.
\end{theorem}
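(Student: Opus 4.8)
The plan is to revisit the proof of Theorem~\ref{thm:Res3} and perform a single surgical replacement: to locate the one point where a two-dimensional point--line incidence count (equivalently, a multiplicative/additive energy estimate for a subset of $\mathbb{F}$) is invoked with its trivial Cauchy--Schwarz / Szemer\'edi--Trotter exponent, and to substitute the quantitatively stronger bound coming from the Bourgain--Katz--Tao sum-product theorem \cite{BKT} --- exactly as was done in \cite{LewkoNew} for the $3$-dimensional paraboloid over fields in which $-1$ is not a square. Recall the structure of the proof of Theorem~\ref{thm:Res3}: one combines the Stein--Tomas $L^4$ bound \eqref{eq:STpar} (which for $d=3$ is the $L^2 \to L^4$ estimate) with an analysis of its near-extremizers. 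A wave-packet configuration that would violate the target restriction estimate is forced to concentrate in a structured way, and for the hyperbolic paraboloid $\mathcal{H}$ such structure amounts, via the two rulings of $\mathcal{H}$, to a Kakeya-type family of lines in $\mathbb{F}^2$; the size of such a family is then controlled by a two-dimensional Kakeya maximal / incidence estimate, which Theorem~\ref{thm:Res3} uses in its sharp $L^2$ form, i.e.\ with incidence exponent $\tfrac{3}{2}$.

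The new input enters precisely in bounding that structured contribution. Rather than estimating the associated incidence count by $I(P,L) \lesssim N^{3/2}$ for $|P|,|L| \lesssim N$, I would invoke the Bourgain--Katz--Tao estimate: there is an $\epsilon_0 > 0$ such that $I(P,L) \lesssim N^{3/2 - \epsilon_0}$ whenever $N \lesssim |\mathbb{F}|^{2-\epsilon_1}$, for a suitable $\epsilon_1 = \epsilon_1(\epsilon_0) > 0$. One then retraces the interpolation and optimization in the proof of Theorem~\ref{thm:Res3} carrying this extra power of $N$; the balancing now closes at the exponents $p = 18/5 - \delta$ and $q = (18-5\delta)/(8-5\delta)$ for some $\delta = \delta(\epsilon_0) > 0$. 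Note that $\delta = 0$ returns precisely Theorem~\ref{thm:Res3}, and that these exponents remain on the line where $q$ is sharp for $p$ (namely $p = 2q'$), so their exact shape is simply what the bookkeeping dictates.

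The step I expect to be the main obstacle is the complementary degenerate range $N \gtrsim |\mathbb{F}|^{2-\epsilon_1}$, where the Bourgain--Katz--Tao estimate gives nothing beyond the trivial $N^{3/2}$. Here one must go back through the structure theorem underlying Theorem~\ref{thm:Res3}, track how the size $N$ of the incidence configuration is coupled to the height of the near-extremizer, and verify that such a large configuration cannot in fact threaten the target inequality --- equivalently, that in this range the underlying subset of $\mathbb{F}$ is already so close to all of $\mathbb{F}$ that the trivial estimates, together with Theorem~\ref{thm:Res3} itself, suffice. Taking $\epsilon_1$ small enough relative to $\delta$ makes this range harmless. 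Beyond this, what remains is the routine but slightly delicate bookkeeping of re-balancing the two contributions at the stated exponents and summing over the $O(\log|\mathbb{F}|)$ dyadic scales. Consistently with the statement, the Bourgain--Katz--Tao gain $\epsilon_0$ is ineffective, hence so is $\delta$.
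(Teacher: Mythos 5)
Your proposal matches the paper's proof essentially verbatim in strategy: the paper's Section~\ref{sec:3dthmSumPro} replaces the Cauchy--Schwarz incidence bound \eqref{eq:CSinc} inside the energy estimate (Proposition~\ref{prop:Energy}) with the general-field Bourgain--Katz--Tao bound from \cite{LewkoNew} (packaged here as Lemma~\ref{lem:refinedIN} $\Rightarrow$ Lemma~\ref{lem:EnergyRefined} $\Rightarrow$ Lemma~\ref{lem:mt2}), and then re-runs the proof of Theorem~\ref{thm:Res3} with the tightened parameters, handling the range where the incidence gain is unavailable by observing that the slices there already give an estimate with slack (the $Z_1,Z_3$ split inside Lemma~\ref{lem:mt2} and the $F_L^{(2)}/F_U^{(2)}$ split in the main proof). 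One small correction: the range where the incidence improvement fails is not characterized by $N\gtrsim|\mathbb{F}|^{2-\epsilon_1}$ but by the slice size being bounded away from the critical scale $|\mathbb{F}|^{4/5}$ in either direction (the general-field BKT form is only applied in a narrow window around $4/5$ where subfield-Cartesian obstructions cannot occur), and both the too-small and the too-large regimes must be shown to have slack --- which they do, exactly as you anticipate.
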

Again the exponent on the right, $(18-5\delta)/(8-5\delta)$, is best possible given the exponent on the left.  As we have already mentioned, the finite field restriction problem was introduced as a model for analogous problems in the Euclidean Fourier analysis. More generally, however, restriction problems in the discrete setting may be viewed as part of the wider program of understanding exponential sums with arbitrary coefficients. This is an area in its infancy, but one which seems to hold much potential. See \cite{BourDir}, \cite{BourLam}, and \cite{Green} for problems and applications along these lines. The results of this paper, as well as previous work on these problems, are summarized in the table at the end of the paper.

\textbf{Acknowledgment:} The author would like to thank Doowon Koh for corrections to an earlier draft of this manuscript.

\section{Overview of the proof}\label{sec:overview}In this section, we give an overview of the proofs of Theorem \ref{thm:Res3} and Theorem \ref{thm:Main}.

We start by recalling how the Mockenhaupt-Tao argument works in the case that the $3$ dimensional paraboloid in the case of a finite field in which $-1$ is not a square. In this setting, the Stein-Tomas method gives the $L^4$ estimate
\begin{equation}\label{eq:STL4}
 || (fd \sigma)^{\vee}||_{L^{4}(\mathbb{F}^3,dx)} \lesssim ||f||_{L^{2}(\mathcal{P},d\sigma)} .
\end{equation}
One is primarily interested in lowering the exponent on the left. The argument, however, is indirect and starts off by obtaining an improvement in the right exponent. An appealing feature of $L^4$ estimates is that one can approach them by expanding out the norm. Indeed, expanding this out one sees that the left hand side of \eqref{eq:STL4} is controlled by the additive energy (see section \ref{sec:HighEnergy}) of the support of $f$. Some combinatorics/incidence theory then allows one to obtain the following improvement to the above inequality
\begin{equation}\label{eq:IN85}
|| (fd \sigma)^{\vee}||_{L^{4}(\mathbb{F}^3,dx)} \lesssim ||f||_{L^{8/5}(\mathcal{P},d\sigma)}.
\end{equation}
We now return to the problem of improving the left exponent. One considers the problem in the dual/restriction format
\begin{equation}\label{eq:DualForm}
 ||\hat{G}||_{L^{q'}(\mathcal{P},d\sigma)} \leq  \mathcal{R}^{*}(p\rightarrow q)||G||_{L^{p'}(\mathbb{F}^3,dx) }.
\end{equation}
Here the restriction operator is being applied to a function, $G$, on $\mathbb{F}^d$ (in comparison to the extension operator that is applied to a function on $\mathcal{P}$ which can be parameterized by $\mathbb{F}^{d-1}$). The Mockenhaupt-Tao machine (which uses an idea of Carbery \cite{Carbery}) shows that the restriction operator applied to a single vertical slice of $G$, say $G_{z}$, can be controlled by the extension operator applied to $G_{z}$ (thought of as a function on $\mathcal{P}$).  One can then use the inequality (\ref{eq:IN85}) to bound the contribution on each slice. This does not directly give an improvement to Stein-Tomas, but it does give a favorable local restriction estimate\footnote{By a local restriction estimate we mean an estimate of the form (\ref{eq:ExtDef}) where the constant has some dependence on the field size. A global restriction estimate will be an estimate of the form (\ref{eq:ExtDef}) with no dependence on the field size.} at a low exponent. One can then insert this local restriction estimate into the Stein-Tomas machine to prove a global restriction estimate at a higher exponent (but one that is still below the Stein-Tomas exponent).

The initial obstruction with using this approach over fields in which $-1$ is a square (we will use the 3 dimensional hyperbolic paraboloid $\mathcal{H}$ in the following discussion) is that the $L^2$ on the right side of the $L^4$ Stein-Tomas extension estimate (\ref{eq:STL4}) is sharp in this setting. This can be seen by testing the extension operator on the characteristic function of a line contained within the surface. A second obstruction is that the Mockenhaupt-Tao method for converting an improvement in the right exponent to an improvement in the left exponent makes crucial use of the fact that the inequality one is attempting to prove has a $L^2$ norm on the right (and thus allows for a $T T^{*}$ type argument). On the other hand, any extension estimate of the form
$$|| (fd \sigma)^{\vee}||_{L^{p}(\mathbb{F}^3,dx)} \lesssim ||f||_{L^{q}(\mathcal{H},d\sigma)} $$
with $p < 4$ must have $q >2$. Thus one cannot hope to rely on $L^2$ methods alone.

We now describe our approach, and how it circumvents these issues. We will consider the problem in the dual/restriction formulation (\ref{eq:DualForm}). The first step will be to prove that the only near extremizers to the inequality (\ref{eq:STL4}) are functions that are nearly the characteristic functions of lines. Although not formulated as such, this argument has a bilinear flavor. We then show that one may implement the Mockenhaupt-Tao argument (using the $L^2$ based methods) as long as each vertical slice $G_{z}$ is far from being (the characteristic function of) a line. It remains to consider the case that every vertical slice of $G$ is concentrated on a line. We divide this problematic case into two further cases. The first case occurs when the union of all of the lines (obtained from the vertical slices) lie in the union of a small number of planes. Note that if the function is supported on a single plane then its Fourier transform will be constant on lines perpendicular to the plane. Here, one can obtain sufficient estimates from an explicit understanding of how these perpendicular lines intersect the surface. In addition, one can combine this idea with an orthogonality argument to obtain satisfactory estimates when the number of planes involved is small. It is this part of the argument that avoids the use of $L^2$ methods.  In the final case, one has that each vertical slice $G_z$ is nearly supported on a line, but these lines are not contained in the union of a small number of planes. Informally, the hypothesis in this case ensures that the Fourier transform of $G$ will not concentrate on a small number of hyperplanes. This enables one to return to $L^2$ methods. More formally, we can control the restriction operator applied to $G$ by (an appropriate norm of) the associated Bochner-Riesz operator applied to $G$. One can then exploit a geometric argument (similar to C\'{o}rdoba's Kakeya estimate) to control the Bochner-Riesz operator in this case. It is in this step that we make use of Kakeya-type information (although we do not use it explicitly).
Sections \ref{sec:3dEnergy}, \ref{sec:3dMT}, and \ref{sec:Planar} are devoted to proving quantitative forms of these claims. The proof of Theorem \ref{thm:Res3} will be assembled from these parts in Section \ref{sec:3dthm}. In section \ref{sec:3dthmSumPro} we will describe how to use incidence theory to obtain a further small improvement. Roughly speaking, the incidence theory is inserted into the analysis of the $L^4$ norm, which is then used in the Mockenhaupt-Tao case in the above summary.

We now describe our generalization of this approach to higher dimensions. The structure of the argument is similar, however there are several additional complications that enter, and we proceed more crudely in several stages in the argument.  In the $3$ dimensional case described above, the only way that the additive energy of a slice $G_z$ can be large is if the support of $G_z$ is concentrated on a vertical or horizontal line. Note that the vertical lines are just the cosets of a $1$ dimensional (totally) isotropic subspace, and the horizontal lines are the cosets of the other $1$ dimensional (totally) isotropic subspace. In higher dimensions, the appropriate generalization is that $G_z$ must concentrate on affine totally isotropic subspaces (with respect to the bilinear form associated to the quadratic surface). However, there are now large families of totally isotropic subspaces on which the set can concentrate. This prevents one from using an argument similar to the one we develop in the $3$ dimensional case. Our argument now proceeds by induction on dimension. The argument starts by converting the additive energy estimate into a incidence problem. If the incidence problem is not too degenerate (we will not formally define this  here), then one immediately obtains a favorable additive energy estimate. If the problem is degenerate, we can deduce that a large portion of the set concentrates on a lower dimensional subsurface. If this subsurface is fully degenerate (as a quadratic surface) then it follows that a large subset of the set must lie in an affine totally isotropic subspace. If the subsurface is not fully degenerate, one can then reduce to a lower dimensional instance of the problem. To implement this induction, however, one must work with all quadratic surfaces and not just the paraboloid. These arguments make fairly extensive use of the theory quadratic forms over finite fields (such as Witt's decomposition theorem). Unfortunately, this iterative procedure leads to recursively defined estimates that are not easily analyzed (and are likely very inefficient). Combining this with the Mockenhaupt-Tao method, as in the $3$ dimensional case, we are able to obtain a favorable restriction estimate unless each slice of $G_z$ is concentrated on a small number of maximal totally isotropic affine subspaces.

On the other hand, we are able to convert the Kakeya maximal operator estimate of Ellenberg, Oberlin, and Tao into a mixed norm restriction estimate for $\mathcal{P}$ (see (\ref{eq:reparam}) for a precise formulation). The mixed norm involves cosets of a (any) maximal totally isotropic subspace. This immediately yields an improvement over the Stein-Tomas exponent whenever the slices of $G$ are concentrated on cosets of the \emph{same} maximal totally isotropic subspace. It is possible, however, that each slice $G_z$ concentrates on a coset of a distinct maximal totally isotropic subspace. Fortunately, some elementary combinatorics ensures that the slices $G_z$ cannot be too equidistributed with respect to the cosets of distinct maximal isotropic subspaces, which is enough to complete the proof.

One will note that the higher dimensional argument we just described omits an analog of the planar function case. We have been able to subsume this case into the mixed norm Kakeya estimate. On the other hand, this approach is more crude and we no longer obtain a $q$ exponent on the critical line.

\section{Notation}\label{sec:notation}
We use the usual asymptotic notation. We write $X\lesssim Y$ to indicate that there exists a universal constant $C$ such that $X \leq C Y$. In addition, we will write $X \sim Y$ to indicate that $Y/2 \leq X \leq 2 Y$.
As defined in the introduction, we will use the notation $ ||f||_{L^p(\mathbb{F}^n,dx)} = \left(\sum_{x \in \mathbb{F}^n} |f(x)|^p \right)^{1/p} $ and $||g||_{L^{q}(\mathcal{S},d\sigma)} = \left(|\mathcal{S}|^{-1}\sum_{\xi \in \mathcal{S}}  |g(\xi)|^q \right)^{1/q} $. This long form notation, which was used in \cite{MT}, is intended to avoid confusion over which measure/space the norm is taken with respect to. In order to streamline computations, when there is no possibility of confusion, we will  abbreviate  $ ||f||_{L^p(\mathbb{F}^n,dx)}$ as simply $||f||_{L^p_x}$. Given a multivariate function, say $f(x_1,x_2)$, we will define
$||f||_{L_{x_1}^p}(x_2) = \left(\sum_{x_1}|f(x_1,x_2)|^{p} \right)^{1/p} $
to be the quantity obtained by fixing $x_2$ and computing the $L^p$ norm in the $x_1$ variable. Similarly, we define the mixed norm $||f||_{L_{x_2}^{q} L_{x_1}^p} = \left( \sum_{x_2} \left(\sum_{x_1}|f(x_1,x_2)|^{p} \right)^{q/p} \right)^{1/q} $.
We refer the reader to \cite{BP} for the basic theory of these spaces.  We will find it convenient to identify a $d$ dimensional surface $\mathcal{S}$ with $\mathbb{F}^{d-1}$ via the map
$$\xi \rightarrow (\xi, Q(\xi)).$$
Here, $\xi \in \mathbb{F}^{d-1}$ and $(\xi,Q(\xi) ) \in \mathbb{F}^{d-1} \times \mathbb{F}$.

We define a VH line (we use VH to stand for vertical or horizontal) in $\mathbb{F}^2$ to be a line of the form
$$\{(x,y_0) : x \in \mathbb{F} \}\text{, or } \{(x_0,y) : y \in \mathbb{F} \}.  $$
We call lines of the first form type 1 and lines of the second form type 2. We extend these definitions to lines in $\mathbb{F}^3$. To be precise, a \emph{VH line} in $\mathbb{F}^3$ takes the form
$$\{(x,y_0,t_0) : x \in \mathbb{F} \}\text{, or } \{(x_0,y,t_0) : y \in \mathbb{F} \}, $$
with the first being type $1$ and the second being a type $2$. We will say that a set $E \subseteq \mathbb{F}^2$ (respectively, $E \subseteq \mathbb{F}^3$) is a $VH(\alpha)$ set if for any VH line $\ell$ we have that
$$ |E \cap \ell | \leq |\mathbb{F}|^{\alpha} .$$
We define a VH plane to be any plane in $\mathbb{F}^3$ consisting of VH lines, excluding planes of the form $\{(x_1,x_2,t) : x_1,x_2 \in \mathbb{F} \} $ for $t$ fixed. With this exclusion, we see that all of the lines in a VH plane will be of the same type. If the lines in a given VH plane are all type 1 (respectively, type 2) we call the plane type 1 (respectively, type 2).
We will say that $E\subseteq \mathbb{F}^3$ has planar entropy $e$ if the minimal number of VH planes needed to cover $E$ is $|\mathbb{F}|^e$.

We note that (when considering the $3$ dimensional hyperbolic paraboloid $\mathcal{H}$) VH lines are precisely the totally isotropic affine subspaces of the associated bilinear form. This point of view from quadratic form theory will be important in our higher dimensional arguments (see section \ref{sec:HighBack}), but does not add much in the $3$ dimensional case we present first.

\section{Restriction implies Kakeya}\label{sec:KaktoRes} We recall the Kakeya maximal operator estimates proven by Ellenberg, Oberlin, and Tao \cite{EOT}. First we need to introduce some notation. In $\mathbb{F}^m$ given $b,\eta \in \mathbb{F}^{m-1}$ we define the (non-horizontal) line
\[\ell_{(b,\eta)} := \{(b+ \eta t,t), t \in \mathbb{F}  \}. \]
For $F : \mathbb{F}^m \rightarrow \mathbb{C}$ we define the Kakeya maximal function $F^{*} : \mathbb{F}^{m-1} \rightarrow \mathbb{R}$ (which should be thought of as a function on the set of non-horizontal directions)  by
\begin{equation}\label{eq:KakeyaMaxDef}
F^{*}(\eta) := \max_{b \in \mathbb{F}^{m-1}} \sum_{x \in \ell(b,\eta)} |F(x)| .
\end{equation}
We then have that:
\begin{proposition}(Ellenberg-Oberlin-Tao) For any $F : \mathbb{F}^m \rightarrow \mathbb{R}$ we have that the estimate
\begin{equation}\label{eq:KakeyaMaxIn}
 || F^{*}||_{L^{m}(\mathbb{F}^{m-1},d\theta)} := \left( \frac{1}{|\mathbb{F}|^{m-1}} \sum_{\theta \in \mathbb{F}^{m-1}} |F^{*}(\theta)|^{m} \right)^{1/m} \lesssim_{m}  ||F||_{L^{m}(\mathbb{F}^{m},dx)}.
\end{equation}
holds. Here $d\theta$ denotes normalized counting measure on  $\mathbb{F}^{m-1}$.
\end{proposition}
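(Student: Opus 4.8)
The plan is to establish the Kakeya maximal estimate \eqref{eq:KakeyaMaxIn} directly from Dvir's polynomial method bound, without passing through any more sophisticated machinery. The starting observation is that the $L^m \to L^m$ bound is equivalent (by a standard dyadic pigeonholing argument) to a restricted weak-type statement: it suffices to show that for every $\lambda > 0$ and every set $E \subseteq \mathbb{F}^m$, the set of directions $\eta$ with $F^*(\eta) \geq \lambda$ for $F = 1_E$ has measure controlled appropriately. Concretely, if $N_\lambda := \#\{\eta \in \mathbb{F}^{m-1} : \text{some line } \ell_{(b,\eta)} \text{ meets } E \text{ in at least } \lambda \text{ points}\}$, then we want $N_\lambda \lesssim_m |E| / \lambda^{m}$ after accounting for the normalizations. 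Multiplying through by the normalization factor $|\mathbb{F}|^{-(m-1)}$ on the left and noting $\|1_E\|_{L^m}^m = |E|$, the target inequality $\|F^*\|_{L^m(d\theta)}^m \lesssim_m \|F\|_{L^m(dx)}^m$ becomes $|\mathbb{F}|^{-(m-1)} \sum_\eta F^*(\eta)^m \lesssim_m |E|$, and after dyadic decomposition in the value of $F^*$ this reduces to the bound $N_\lambda \, \lambda^m \lesssim_m |\mathbb{F}|^{m-1} |E|$ for each dyadic $\lambda$.

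The heart of the matter is thus the bound $N_\lambda \lesssim_m |\mathbb{F}|^{m-1} |E| \lambda^{-m}$. First I would handle the trivial range $\lambda \le |\mathbb{F}|^{1-1/m}$: here $\lambda^{-m} \ge |\mathbb{F}|^{-(m-1)}$, so the claimed bound follows from $N_\lambda \le |\mathbb{F}|^{m-1}$ together with $|E|\ge \lambda$ (any line counted in $N_\lambda$ forces at least $\lambda$ points in $E$). The remaining range is $\lambda > |\mathbb{F}|^{1-1/m}$, where I would feed the collection of lines into Dvir's argument. Pick for each of the $N_\lambda$ directions one line $\ell_{(b,\eta)}$ meeting $E$ in $\ge \lambda$ points; these lines point in $N_\lambda$ distinct directions. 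The Dvir-type polynomial argument (precisely, the Ellenberg--Oberlin--Tao refinement, which gives quantitative control when one only has many directions rather than all of them) shows that a union of lines in $k$ distinct directions, each meeting a set $E$ in at least $\lambda$ points, forces $|E| \gtrsim_m \min(k, |\mathbb{F}|^{m-1}) \cdot \lambda^m / |\mathbb{F}|^{m-1}$ — one builds a low-degree polynomial vanishing on $E$, shows its degree is small because $|E|$ is small, then shows it must vanish on all the lines (each line carries $>\deg$ zeros once $\lambda$ exceeds the degree bound), and finally extracts a contradiction with the number of directions by examining the top-degree form on the hyperplane at infinity. Rearranging gives exactly $N_\lambda \lesssim_m |\mathbb{F}|^{m-1}|E|\lambda^{-m}$ in this range.

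The main obstacle is the last step: getting the \emph{quantitative} dependence on the number of directions right, since the naive Dvir argument only exploits the all-directions hypothesis qualitatively. This is precisely the content of the Ellenberg--Oberlin--Tao paper \cite{EOT}, so in practice I would cite their theorem rather than reprove it; the role of the argument here is simply to package their maximal inequality into the normalized form \eqref{eq:KakeyaMaxIn} via the dyadic-pigeonholing reduction above. A secondary technical point is bookkeeping with the two measures $dx$ (counting) and $d\theta$ (normalized counting): one must be careful that the $|\mathbb{F}|^{-(m-1)}$ weight on the direction side is exactly what converts the combinatorial Kakeya bound into an inequality with a field-size-independent constant, and checking that the exponent $m$ on both sides is the one that makes this work out is the arithmetic that the dyadic reduction is designed to verify.
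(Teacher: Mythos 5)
The paper states this as a cited result from Ellenberg--Oberlin--Tao \cite{EOT} and gives no proof, and your proposal ultimately does the same thing --- your last paragraph explicitly says you would ``cite their theorem rather than reprove it.'' So at the level of what the paper actually does, your route coincides with the paper's: a citation.

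That said, the expository sketch you wrap around the citation has two real gaps that are worth flagging. First, the reduction of the $L^m\to L^m$ inequality to characteristic functions via ``standard dyadic pigeonholing'' is not as automatic as you assert: decomposing $F = \sum_k 2^{-k}1_{E_k}$ and summing $\|(1_{E_k})^*\|_{L^m}$ over dyadic levels $k$ incurs a factor of the number of levels, which in a finite field depends on $|\mathbb{F}|$ and would destroy the field-size-independence of the constant. One needs either real-variable interpolation between a weak-type bound at exponent $m$ and the trivial $L^\infty$ bound, or a direct argument as in \cite{EOT}; just stacking restricted-type estimates dyadically does not close the argument. Second, your handling of the ``trivial range'' $\lambda \le |\mathbb{F}|^{1-1/m}$ does not give the claimed bound. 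From $N_\lambda \le |\mathbb{F}|^{m-1}$ and $|E|\ge\lambda$ you only get $N_\lambda\lambda^m \le |\mathbb{F}|^{m-1}\lambda^m$, and $\lambda^m$ can be far larger than $|E|$ in that range (take $\lambda$ near $|\mathbb{F}|^{1-1/m}$ and $|E|$ only slightly larger than $\lambda$); the desired inequality $N_\lambda\lambda^m\lesssim|\mathbb{F}|^{m-1}|E|$ does not follow. Closing the low-$\lambda$ regime actually requires a genuine geometric input (bush argument / C\'ordoba-type $L^2$ estimate) or just running the polynomial argument uniformly, which is what \cite{EOT} does. Since your bottom line is to cite \cite{EOT}, these gaps don't invalidate the proposal, but they should not be presented as a correct alternative derivation.
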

Let us recall that a Kakeya maximal function estimate of the form
$$|| F^{*}||_{L^{q'}(\mathbb{F}^{m-1},d\theta)}  \leq K_{n}(p' \rightarrow q')  ||F||_{L^{p'}(\mathbb{F}^{m},dx)}.$$
has a dual formulation. Let $dv$ denote the normalized counting measure on $\mathbb{F}^{n-1}$ that assigns $|\mathbb{F}^{(n-1)}|^{-1}$ to each point. The above Kakeya maximal estimate then is equivalent to the following estimate
\begin{equation}\label{eq:dualKakeya}
 \left|\left| \int_{v \in \mathbb{F}^{m-1}} h(v) 1_{\ell_{(x_0(v),v)}} dv   \right|\right|_{L^{p}(\mathbb{F}^{m},dx)} \leq K_{m}^{*}(q \rightarrow p) ||h||_{L^{q}(\mathbb{F}^{m-1},dv)}
\end{equation}
for all functions $h$ on $\mathbb{F}^{m-1}$ and all functions $x_0 : \mathbb{F}^{m-1} \rightarrow \mathbb{F}^{m-1}$, where $K_m^{*}(q \rightarrow p) = K_{n}(p' \rightarrow q')$.

Here we will prove that the finite field restriction conjecture for the $d=2n+1$ hyperbolic paraboloid implies the $n+1$ dimensional Kakeya set conjecture. For convenience we set $m=n+1$. With this notation we will be using a $d$ dimensional restriction estimate to prove a $m$ dimensional Kakeya estimate. Curiously, our argument does not give the full range of Kakeya maximal estimates, but does give an estimate sufficient to prove that any Kakeya set in $\mathbb{F}^m$ must have size $\gg |\mathbb{F}|^{m}$.  Perhaps, one can refine this argument to obtain better Kakeya maximal operator estimates from the conjectured restriction exponents. However, since the full range of Kakeya maximal estimates have been unconditionally proven, we will not distract ourselves with these issues.  We prove the following:
\begin{proposition}\label{prop:ResKak}Let $d=2n+1$ and $m=n+1$. Assume the $d$ dimensional restriction conjecture for the hyperbolic paraboloid holds. Then we have that
$$ K_{m}^{*} \left(\frac{2m-1}{2m-2} \rightarrow \frac{2m-1}{2m-2} \right) \lesssim |\mathbb{F}|^{\frac{m-1}{2m-1}}.$$
Moreover, any $m$ dimensional Kakeya set has size $\gg |\mathbb{F}|^m$.
\end{proposition}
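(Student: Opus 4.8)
The plan is to deduce the Kakeya maximal estimate from the restriction conjecture for the $d = 2n+1$ dimensional hyperbolic paraboloid $\mathcal{H} = \{(\xi_1, \xi_2, \xi_1 \cdot \xi_2) : \xi_1, \xi_2 \in \mathbb{F}^n\}$, and then derive the Kakeya set lower bound from the maximal estimate by a standard bootstrapping argument. The key observation driving the first part is that $\mathcal{H}$ contains a large supply of $n$-dimensional affine totally isotropic subspaces: for each fixed $\xi_2 = a$, the set $\{(\xi_1, a, \xi_1 \cdot a) : \xi_1 \in \mathbb{F}^n\}$ is such a subspace (and likewise with the roles of $\xi_1, \xi_2$ swapped). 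The first step is to choose a test function $f$ on $\mathcal{H}$ tuned to a configuration of $m = n+1$ dimensional lines. Specifically, given $h$ on $\mathbb{F}^{m-1} = \mathbb{F}^n$ and a choice function $x_0 : \mathbb{F}^n \to \mathbb{F}^n$ prescribing the lines $\ell_{(x_0(v), v)}$, I would build $f$ on $\mathcal{H}$ so that $(f d\sigma)^\vee$, restricted to an appropriate $m$-dimensional affine subspace of $\mathbb{F}^d$ (coming from one of the families of isotropic $n$-planes together with the base variable $t$), reproduces the bush $\int h(v) 1_{\ell_{(x_0(v),v)}} \, dv$ appearing in \eqref{eq:dualKakeya}. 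The dimension count $d = 2n+1$ versus $m = n+1$ is exactly what makes this work: the extra $n$ coordinates are absorbed by the parameter $\xi_1$ (or $\xi_2$) that sweeps out each isotropic plane, and the phase $e(x \cdot \xi)$ on $\mathcal{H}$ degenerates, along the chosen subspace, to a phase that only sees the Kakeya variables.

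With that reduction in hand, the second step is bookkeeping: unwind the restriction inequality \eqref{eq:conjOddSqare} for $\mathcal{H}$ (which is legitimate since $\mathcal{H}$'s quadratic form has Witt index $\frac{d-1}{2}$) with this choice of $f$, and compute both sides. The left side $\|(f d\sigma)^\vee\|_{L^{2d/(d-1)}(\mathbb{F}^d)}$ dominates the contribution of the $m$-dimensional slice on which it looks like the Kakeya bush, up to a factor accounting for the remaining $d - m = n$ dimensions; this factor is a fixed power of $|\mathbb{F}|$. The right side $\|f\|_{L^{2d/(d-1)}(\mathcal{H}, d\sigma)}$ translates into a power of $|\mathbb{F}|$ times $\|h\|_{L^q}$ for the appropriate $q$. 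Matching exponents, with $d = 2n+1$ so $\frac{2d}{d-1} = \frac{2(2n+1)}{2n} = \frac{2m-1}{m-1} \cdot \tfrac{?}{}$ — more precisely one checks $\frac{2d}{d-1}$ corresponds, after the slice reduction, to the Kakeya exponent $\frac{2m-1}{2m-2}$ — yields exactly $K_m^*\left(\frac{2m-1}{2m-2} \to \frac{2m-1}{2m-2}\right) \lesssim |\mathbb{F}|^{\frac{m-1}{2m-1}}$. I expect the main obstacle to be precisely this exponent arithmetic together with verifying that the $|\mathbb{F}|$-power losses from the dimensional reduction are exactly those claimed and no worse; getting the normalization of $d\sigma$ versus $d\theta$ versus $dv$ right is fiddly, and an off-by-a-power error would be fatal.

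The third step converts the maximal estimate into the Kakeya set bound, which is routine. If $E \subseteq \mathbb{F}^m$ is a Kakeya set, then for every direction $\eta \in \mathbb{F}^{m-1}$ (I may first dispose of the bounded-measure family of horizontal directions by a change of coordinates, or note that a Kakeya set containing a line in every direction in particular contains a non-horizontal line in every non-horizontal direction) there is a line $\ell_{(b,\eta)} \subseteq E$, so that with $F = 1_E$ we have $F^*(\eta) = |\ell_{(b,\eta)} \cap E| = |\mathbb{F}|$ for all such $\eta$. Plugging $F = 1_E$ into \eqref{eq:KakeyaMaxIn} — or rather into the conditional estimate just derived, which suffices — gives $|\mathbb{F}| \cdot |\mathbb{F}|^{(m-1)/m} \lesssim |\mathbb{F}|^{(m-1)/m} \cdot |E|^{1/m}$ after accounting for the $|\mathbb{F}|^{\frac{m-1}{2m-1}}$ constant; rearranging yields $|E| \gg |\mathbb{F}|^m$. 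Here one should double-check that the particular endpoint exponent $\frac{2m-1}{2m-2}$ we obtained (rather than the full sharp range) is still strong enough: testing on $1_E$ with $F^*$ identically $|\mathbb{F}|$ on a positive proportion of directions, the estimate forces $|\mathbb{F}|^m \cdot |\mathbb{F}|^{-(m-1)} \lesssim |\mathbb{F}|^{\frac{m(m-1)}{2m-1}} |E|^{-1} \cdot |E|$, and chasing the constant confirms full dimension. The one point requiring a little care is handling the directions excluded by the non-horizontal parametrization, but since those form a lower-dimensional family one can either rotate or simply apply the estimate to the $m$ generic directions and absorb the rest.
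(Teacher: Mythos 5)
There is a genuine gap, located in the reduction from restriction to Kakeya. You propose to build a test function $f$ on $\mathcal{H}$ so that $(fd\sigma)^\vee$, \emph{restricted to a single $m$-dimensional affine slice} of $\mathbb{F}^d$, reproduces the Kakeya bush $\int h(v)1_{\ell_{(x_0(v),v)}}\,dv$. This slicing step is too lossy. Concretely: if you take $f(\xi,\theta)=h(\theta)e(-b(-\theta)\cdot\xi)$ on $\mathcal{H}$, then
\[
(fd\sigma)^\vee(x_1,x_2,t)=\frac{1}{|\mathbb{F}|^n}\sum_{\theta\in\mathbb{F}^n}h(\theta)\,1_{\ell(-b(\theta),-\theta)}(x_1,t)\,e(\theta\cdot x_2),
\]
so the $x_2=0$ slice does give the bush $B_h(x_1,t)$, and $\|B_h\|_{L^p(\mathbb{F}^m)}\leq\|(fd\sigma)^\vee\|_{L^p(\mathbb{F}^d)}$. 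Following your plan through the restriction estimate (\ref{eq:conjOddSqare}) at the conjectural exponent $p=q=\tfrac{2d}{d-1}=\tfrac{2m-1}{m-1}$ (note this is $\tfrac{2m-1}{m-1}$, not $\tfrac{2m-1}{2m-2}$) one lands at $K_m^*\!\left(\tfrac{2m-1}{m-1}\rightarrow\tfrac{2m-1}{m-1}\right)\lesssim 1$. Testing the primal form of this on $1_E$ for a Kakeya set $E$ gives only $|E|\gtrsim|\mathbb{F}|^{(2m-1)/m}=|\mathbb{F}|^{2-1/m}$, which is strictly weaker than $|\mathbb{F}|^m$ for $m\geq 2$ and in fact weaker than the unconditional Ellenberg--Oberlin--Tao estimate (\ref{eq:KakeyaMaxIn}). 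So the slicing approach cannot yield the stated conclusion.

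The missing idea is that one must not throw away the $n$-dimensional $x_2$-variable but instead take an $L^2$ norm in it and exploit orthogonality of the characters $e(\theta\cdot x_2)$. This is what the paper's Theorem~\ref{thm:ResToKak} does: with $h^{1/2}$ in place of $h$, Plancherel in $x_2$ gives
\[
\|(fd\sigma)^\vee\|_{L^2_{x_2}}(x_1,t)=\Bigl(\tfrac{1}{|\mathbb{F}|^n}\sum_\theta h(\theta)1_{\ell(\cdot)}(x_1,t)\Bigr)^{1/2}=B_h(x_1,t)^{1/2},
\]
so that $\|(fd\sigma)^\vee\|_{L^r_{x_1,t}L^2_{x_2}}^2=\|B_h\|_{L^{r/2}_{x_1,t}}$; then H\"older in $x_2$ (using $r\geq 2$) compares $\|\cdot\|_{L^r_{x_1,t}L^2_{x_2}}$ to $\|\cdot\|_{L^r(\mathbb{F}^d)}$ at the cost of the factor $|\mathbb{F}|^{n(1/2-1/r)}$. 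This squaring is exactly what produces the relation $K_m^*(q\rightarrow p)\lesssim|\mathbb{F}|^{(m-1)(1-1/p)}(\mathcal{R}^*(2q\rightarrow 2p))^2$, i.e.\ the restriction exponent $\tfrac{2d}{d-1}$ becomes the dual-Kakeya exponent $\tfrac{1}{2}\cdot\tfrac{2d}{d-1}=\tfrac{2m-1}{2m-2}$, and the explicit $|\mathbb{F}|$-power on the right is what makes $|E|\gg|\mathbb{F}|^m$ come out sharply. Your third step (deducing the set bound from the maximal estimate) is fine in spirit, though the intermediate expressions you wrote down match the $L^m$ endpoint of (\ref{eq:KakeyaMaxIn}) rather than the $L^{2m-1}$ estimate you need; in any case, once the correct maximal estimate is in hand, testing on $F=1_E$ as in the paper gives $|\mathbb{F}|\lesssim|\mathbb{F}|^{(\gamma+m-1)/(2m-1)}$, hence $\gamma\geq m$.
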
The first part is a corollary to Theorem \ref{thm:ResToKak} below. To see the second claim, which is stated in the introduction as Theorem \ref{thm:ResToKak}, write the first estimate in the form
$$ || F^{*}||_{L^{2m-1}(\mathbb{F}^{m-1},d\theta)} \lesssim |\mathbb{F}|^{\frac{m-1}{2m-1}} ||F||_{L^{2m-1}(\mathbb{F}^m,dx)} .$$
If we take $F$ to be the characteristic function of a Kakeya set of size $|\mathbb{F}|^{\gamma}$, the above inequality takes the form
$$|\mathbb{F}| \lesssim |\mathbb{F}|^{\frac{\gamma+m-1}{2m-1}}$$
which implies the second claim.
We recall the finite field restriction conjecture for the hyperbolic paraboloid. We consider the $d=2n+1$ dimensional hyperbolic paraboloid defined by $\{(\xi, \eta, \xi\cdot\eta) : \xi,\eta \in \mathbb{F}^n \}$. Given a function $f: \mathbb{F}^n\times \mathbb{F}^n \rightarrow \mathbb{C}$ we consider the extension operator (for $x=(x_1,x_2,t ) \in \mathbb{F}^{n} \times \mathbb{F}^n \times \mathbb{F}$)
\[ (fd \sigma)^{\vee}(x_1,x_2,t) := \frac{1}{|\mathbb{F}|^{2n}} \sum_{\xi,\eta \in \mathbb{F}^n} f(\xi,\eta)e(\xi\cdot x_1 + \eta \cdot x_2 + t\xi\cdot \eta) . \]
The conjecture states
\[ || (fd \sigma)^{\vee}||_{L^{2d/(d-1)}(\mathbb{F}^{d},dx)} \lesssim ||f||_{L^{2d/(d-1)}(\mathcal{H},d\sigma)}. \]
Thus Proposition \ref{prop:ResKak} follows from the following relation.
\begin{theorem}\label{thm:ResToKak}Let $\mathcal{R}^{*}(p \rightarrow q$ denote the optimal constant in the restriction problem for the for the hyperbolic paraboloid $\mathcal{H}$ as defined above in $d=2n+1$ dimensions. In addition, set $m=n+1$.  We then have that
\[ K_{m}^{*}(q \rightarrow p) \lesssim |\mathbb{F}|^{(m-1)(1-1/p)} \left(\mathcal{R}^{*}(2q \rightarrow 2p)\right)^{2}.\]
\end{theorem}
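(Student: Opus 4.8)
The plan is to connect the dual Kakeya inequality \eqref{eq:dualKakeya} directly to the extension operator for $\mathcal{H}$ by viewing a bundle of non-horizontal lines in $\mathbb{F}^m$ as living inside a single coset of a maximal totally isotropic subspace of $\mathbb{F}^{d-1} = \mathbb{F}^n \times \mathbb{F}^n$. Concretely, write a point of $\mathcal{H}$ as $(\xi,\eta,\xi\cdot\eta)$ with $\xi,\eta\in\mathbb{F}^n$, and recall that the extension operator is
\[ (fd\sigma)^{\vee}(x_1,x_2,t) = \frac{1}{|\mathbb{F}|^{2n}}\sum_{\xi,\eta}f(\xi,\eta)\,e(\xi\cdot x_1 + \eta\cdot x_2 + t\,\xi\cdot\eta). \]
First I would restrict attention to functions $f$ supported on the slice $\{\eta = \eta_0\}$ for a fixed $\eta_0$; this slice is a graph over $\xi\in\mathbb{F}^n$ and the phase $\xi\cdot x_1 + \eta_0\cdot x_2 + t\,\xi\cdot\eta_0 = \xi\cdot(x_1 + t\eta_0) + \eta_0\cdot x_2$ is \emph{linear} in $\xi$. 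Thus the inner sum is, up to a unimodular factor depending on $(x_2,t)$, exactly $\widehat{f_{\eta_0}}$ evaluated at $x_1 + t\eta_0$ — i.e.\ the extension operator applied to a function supported on a coset of the totally isotropic subspace $\{\eta=0\}$ reproduces a function that is \emph{constant along} the line $\ell_{(x_1, \eta_0)}$ in the $(x_1,t)$ variables (with $x_2$ a harmless extra parameter, which after averaging will cost a power of $|\mathbb{F}|$). This is the mechanism that embeds a Kakeya bundle: given the direction function $v \mapsto x_0(v)$ and weights $h(v)$ from \eqref{eq:dualKakeya}, I would build $f$ on $\mathcal{H}$ so that summing over $\eta_0 = v$ with appropriate coefficients produces $\sum_v h(v)\,1_{\ell_{(x_0(v),v)}}$ after taking an $L^p$ norm in $x$; here I am using $m-1 = n$, so $v$ ranges over $\mathbb{F}^{m-1} = \mathbb{F}^n$, matching the $\eta$-variable.

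The second ingredient is the passage from $\mathcal{R}^{*}(2q\to 2p)$ to the factor $(\mathcal{R}^{*})^2$. The natural way to see the square is to work with $|(fd\sigma)^{\vee}|^2$, or equivalently to apply the extension estimate to $f$ and then Cauchy–Schwarz / square the output so that the $L^{2p}$ bound becomes an $L^p$ bound on the square; since the square of a sum of indicators of lines governs line \emph{incidences}, and the dual Kakeya quantity $K_m^{*}(q\to p)$ is itself $L^p$ of a sum of line indicators, squaring aligns the two sides. Equivalently, I would set up the inequality so that $g := \overline{(f d\sigma)^\vee}$ appears and expand $\|(f d\sigma)^\vee\|_{L^{2p}}^2 = \|\,|(f d\sigma)^\vee|^2\,\|_{L^p}$; the $L^{2q}$ norm of $f$ on $\mathcal{H}$ then collapses to the $L^q$ norm of $|f|^2$, which is where $\|h\|_{L^q}$ (with $h$ built from $|f|^2$-type data) enters. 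Tracking the measure normalizations — the surface measure on $\mathcal{H}$ assigns mass $|\mathbb{F}|^{-2n}$ to each point, $dx$ is counting measure on $\mathbb{F}^d$, and $d\theta$, $dv$ are normalized on $\mathbb{F}^{m-1}$ — will produce exactly the power $|\mathbb{F}|^{(m-1)(1-1/p)}$; the exponent $(m-1)(1-1/p) = n(1-1/p)$ should emerge as the cost of (i) the extra free variable $x_2 \in \mathbb{F}^n$ over which the reproduced function is roughly constant and (ii) converting between the normalized $\sigma$-measure and counting measure on the $n$-dimensional slice.

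The main obstacle I anticipate is the bookkeeping in the embedding step: one must choose $f$ on $\mathcal{H}$ so that its extension operator, restricted to the relevant $(x_1,t)$-subspace and at a good choice of $x_2$ (or after averaging in $x_2$), dominates $\sum_v h(v) 1_{\ell_{(x_0(v),v)}}$ pointwise up to constants, while simultaneously keeping $\|f\|_{L^{2q}(\mathcal{H},d\sigma)}$ comparable to $\|h\|_{L^q(\mathbb{F}^{m-1},dv)}$. The unimodular factors $e(\eta_0\cdot x_2 + \dots)$ that depend on $\eta_0$ are the subtle point — they cause interference across different $\eta_0$, so one cannot simply add the slices. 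The resolution I would pursue is either a pigeonhole/averaging argument over $x_2$ (fixing $x_2$ to a value where the phases align, at the cost of a factor $|\mathbb{F}|^{-n}$ in measure which is then recovered in the normalization), or a randomization of the coefficients $h(v)$ by independent signs so that the cross terms vanish in expectation and only the diagonal $\sum_v |h(v)|^2 1_{\ell_{(x_0(v),v)}}$ survives — this again meshes naturally with the squaring that produces $(\mathcal{R}^{*})^2$. Once the embedding is set up cleanly, comparing the resulting inequality with the definition of $K_m^{*}(q\to p)$ in \eqref{eq:dualKakeya} and reading off exponents should be routine.
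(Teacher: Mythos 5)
Your outline identifies all the right ingredients and is essentially the paper's approach: decompose $f$ over the slices $\{\eta=\eta_0\}$ of $\mathcal H$, exploit that the phase is linear in $\xi$ so each slice's extension is supported on a single non-horizontal line in the $(x_1,t)$--variables (modulated by $e(\eta_0\cdot x_2)$), and then square to match the $L^p$ form of the dual Kakeya quantity against $\mathcal{R}^{*}(2q\to 2p)^2$. The explicit choice in the paper is $f(\xi,\theta)=h^{1/2}(\theta)\,e(-b(-\theta)\cdot\xi)$, where the modulation in $\xi$ by an arbitrary function $b$ translates each line to the prescribed intercept $x_0(v)$; you describe this implicitly but do not write it down.

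The one place your plan is not pinned down is the mechanism that kills the cross terms between different $\eta_0$. You mention three possibilities: expanding $\|(fd\sigma)^\vee\|_{L^{2p}}^2$ as $\| |(fd\sigma)^\vee|^2\|_{L^p}$ (this alone does nothing, the cross terms in $|(fd\sigma)^\vee|^2$ survive), fixing a good $x_2$ by pigeonhole (this will not work, since there is no single $x_2$ at which the phases $e(\eta_0\cdot x_2)$ align across all $\eta_0$), and sign randomization of $h$ (this would work, at the cost of a constant). The paper uses a cleaner device that you should note: it takes the \emph{mixed} norm $\|(fd\sigma)^\vee\|_{L^{2p}_{x_1,t}L^2_{x_2}}$. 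The inner $L^2_{x_2}$ norm, by orthogonality of the characters $e(\theta\cdot x_2)$, is \emph{exactly} equal to the square root of the diagonal sum $\left(|\mathbb{F}|^{-n}\sum_\theta h(\theta)1_{\ell(b(\theta),\theta)}(x_1,t)\right)^{1/2}$ with no cross terms and no loss, and then H\"older's inequality $\|\cdot\|_{L^2_{x_2}}\le |\mathbb{F}|^{n(1/2-1/(2p))}\|\cdot\|_{L^{2p}_{x_2}}$ upgrades the mixed norm to the full $L^{2p}_{x_1,x_2,t}$ norm; after squaring, this H\"older step is precisely the source of the factor $|\mathbb{F}|^{(m-1)(1-1/p)}$. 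So the exponent you correctly anticipate comes from a concrete interpolation between $L^2_{x_2}$ (forced by orthogonality) and $L^{2p}_{x_2}$ (forced by the restriction estimate), not from an averaging or pigeonhole loss.
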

\begin{proof}Clearly, it suffices to prove (\ref{eq:dualKakeya}) for $h$ non-negative. Let $b : \mathbb{F}^n \rightarrow \mathbb{F}^n$ be an arbitrary function. We consider the extension operator (defined above) applied to the function (for $\xi, \theta \in \mathbb{F}^n$)
$$f(\xi,\theta) :=   h^{1/2}(\theta)e(-b(-\theta) \cdot \xi).$$
We now have that
\[ (fd \sigma)^{\vee}(x_1,x_2,t)  =\frac{1}{|\mathbb{F}|^{2n}}\sum_{\theta \in \mathbb{F}^{n}}  \sum_{\xi \in \mathbb{F}^n} h^{1/2}(\theta)e(-b(-\theta) \cdot \xi)  e(\xi\cdot x_1 + \theta \cdot x_2 + t \xi \cdot \theta)   \]
\[ =\frac{1}{|\mathbb{F}|^{2n}}\sum_{\theta \in \mathbb{F}^{n}}  \sum_{\xi \in \mathbb{F}^n} h^{1/2}(\theta)  e(\xi\cdot (x_1-b(-\theta)+t \theta) + \theta \cdot x_2)   \]
\[= \frac{1}{|\mathbb{F}|^{n}}\sum_{\theta \in \mathbb{F}^{n}}   h^{1/2}(\theta) \delta\left( x_1-b(-\theta)+t \theta\right) e(\theta \cdot x_2)  \]
\[= \frac{1}{|\mathbb{F}|^{n}}  \sum_{\theta \in \mathbb{F}^{n}}  h^{1/2}(\theta) 1_{\ell(-b(\theta),-\theta)}(x_1,t) e(\theta \cdot x_2).   \]
By orthogonality of characters we have that
\[ \left|\left| \frac{1}{|\mathbb{F}|^{n}}  \sum_{\theta \in \mathbb{F}^{n}}  h^{1/2}(\theta) 1_{\ell(b(\theta),\theta)}(x_1,t) e(\theta \cdot x_2) \right|\right|_{L^{2}_{x_2}} = \left(\frac{1}{|\mathbb{F}|^{n}}\sum_{\theta \in \mathbb{F}^{n}}  h(\theta) 1_{\ell(b(\theta),\theta)}(x_1,t) \right)^{1/2}. \]
By H\"{o}lder's inequality, we have, provided $r \geq 2$, that
\[ ||(fd \sigma)^{\vee}||_{L^{r}_{x_1,t}L^{2}_{x_2}} \lesssim |\mathbb{F}|^{n(1/2 -1/r)} ||(fd \sigma)^{\vee}||_{L^{r}_{x_1,x_2,t}} \lesssim |\mathbb{F}|^{n(1/2 -1/r)} \mathcal{R}^{*}(s \rightarrow r)  ||f||_{L^{s}(\mathcal{H},d\sigma)}\]
 \[=  |\mathbb{F}|^{n(1/2 -1/r)} \mathcal{R}^{*}(s \rightarrow r) \left(\frac{|\mathbb{F}|^n}{|\mathbb{F}|^{2n}}  \sum_{\theta \in \mathbb{F}^{n}} |h(\theta)|^{s/2}    \right)^{1/s}    \]
\[=  \mathcal{R}^{*}(s \rightarrow r) |\mathbb{F}|^{n/2 -n/r} ||h||_{L^{s/2}(\mathbb{F}^{n},d\theta)}^{1/2}. \]
Now
\[ ||(fd \sigma)^{\vee}||_{L^{r}_{x_1,t}L^{2}_{x_2}} = \left|\left| \left( \frac{1}{|\mathbb{F}|^{n}}\sum_{\theta \in \mathbb{F}^{n}}  h(\theta) 1_{\ell(b(\theta),\theta)}(x_1,t) \right)^{1/2}\right|\right|_{L^{r}_{x_1,t}}  \]
\[= \left|\left|  \frac{1}{|\mathbb{F}|^{n}} \sum_{\theta \in \mathbb{F}^{n}}  h(\theta) 1_{\ell(b(\theta),\theta)}(x_1,t)\right|\right|_{L^{r/2}_{x_1,t}}^{1/2}. \]
Thus
\[\left|\left|  \frac{1}{|\mathbb{F}|^{n}} \sum_{\theta \in \mathbb{F}^{n}}  h(\theta) 1_{\ell(-b(\theta),-\theta)}(x_1,t)\right|\right|_{L^{r/2}_{x_1,t}} =||(fd \sigma)^{\vee}||_{L^{r}_{x_1,t}L^{2}_{x_2}}^2  \]
\[\lesssim \mathcal{R}^{*}(s \rightarrow r)^2 |\mathbb{F}|^{n -2n/r} ||h||_{L^{s/2}(\mathbb{F}^{n},d\theta)}. \]
Setting $n=m-1$, $r=2p$ and $s=2q$ completes the proof.
\end{proof}
\section{Standard results from harmonic analysis}\label{sec:StanHarm}
In this section we will recall some results from discrete Fourier analysis.  We start by recalling the notion of Fourier dimension. Given $S \subset \mathbb{F}^d$ we denote by  $d\sigma$ the normalized counting measure on $S$. The inverse Fourier transform of $d\sigma$ is then given by
\[(d\sigma)^{\vee}(x) = \frac{1}{|S|}\sum_{\xi \in S} e(x\cdot \xi).\]
Note that $(d\sigma)^{\vee}(0)=1$, however for certain $S$ we may hope that $|(d\sigma)^{\vee}(x)|$ is small for $x\neq 0$. In particular, we define the Fourier dimension of $S$ to be the largest $\tilde{d}$ such that
\[|(d\sigma)^{\vee}(x)| \leq |\mathbb{F}|^{-\tilde{d}/2}  \]
for all $x \neq 0$.  If $S$ has dimension $\beta$, that is if $|S|\sim |\mathbb{F}|^\beta$, then the Fourier dimension of $S$ is at most $\beta$ as can be seen from Plancherel's theorem. Up to logarithmic factors this will hold for a random surface/subset of size $|\mathbb{F}|^\beta$. However, if the surface contains additive structure the Fourier dimension maybe smaller. It is also convenient to define the Bochner-Riesz kernel $K$ associated to $S$ by $K(x) = (d\sigma)^{\vee}(x) - \delta_{0}(x)$ (where the delta function $\delta_{0}$ is defined to be $1$ at $0$ and $0$ otherwise). As usual, we let $d=2n+1$.

\begin{lemma}\label{lem:FTcomp}Let $S=\mathcal{H}$ the $d$ dimensional hyperbolic paraboloid and d$\sigma$ be as above. Let $x=(x_1,x_2,t) \in \mathbb{F}^d$ with $x_1,x_2 \in \mathbb{F}^n$ and $t \in \mathbb{F}$. Then
\begin{equation}
  (d\sigma)^{\vee}(x)=\begin{cases}
    1, & \text{if $x=(0,0,0)$},\\
    0, & \text{if $t = 0$ and $(x_1,x_2) \neq (0,0)$ },\\
    |\mathbb{F}|^{-n} e(-x_2 \cdot x_1/t), & \text{otherwise}.
  \end{cases}
\end{equation}
Thus the $d=2n+1$ dimensional hyperbolic paraboloid $\mathcal{H}$ has Fourier dimension $2n$.
\end{lemma}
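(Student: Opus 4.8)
The plan is an entirely direct computation of the character sum defining $(d\sigma)^{\vee}$, organised according to whether the last coordinate $t$ vanishes. Since $\mathcal{H}$ is parameterised bijectively by $(\xi,\eta)\in\mathbb{F}^n\times\mathbb{F}^n$ via $\xi,\eta\mapsto(\xi,\eta,\xi\cdot\eta)$, we have $|\mathcal{H}|=|\mathbb{F}|^{2n}$ and, for $x=(x_1,x_2,t)$,
\[(d\sigma)^{\vee}(x)=\frac{1}{|\mathbb{F}|^{2n}}\sum_{\xi,\eta\in\mathbb{F}^n} e\big(\xi\cdot x_1+\eta\cdot x_2+t\,\xi\cdot\eta\big).\]
The value at $x=(0,0,0)$ is immediate since every summand equals $1$. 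When $t=0$ the exponent splits, so the sum factors as $\big(\sum_{\xi}e(\xi\cdot x_1)\big)\big(\sum_{\eta}e(\eta\cdot x_2)\big)$; by orthogonality of additive characters on $\mathbb{F}^n$ each factor equals $|\mathbb{F}|^n$ or $0$ according as its argument is zero or not, and since $(x_1,x_2)\neq(0,0)$ at least one factor vanishes, giving $0$.

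For the remaining case $t\neq 0$ I would perform the $\xi$-summation first. For each fixed $\eta$ the inner sum is $\sum_{\xi\in\mathbb{F}^n}e\big(\xi\cdot(x_1+t\eta)\big)$, which by orthogonality equals $|\mathbb{F}|^n$ if $x_1+t\eta=0$ and $0$ otherwise; here invertibility of $t$ is used to see that $x_1+t\eta=0$ holds for the single value $\eta=-t^{-1}x_1$. Substituting this value into the leftover factor $e(\eta\cdot x_2)$ yields $(d\sigma)^{\vee}(x)=|\mathbb{F}|^{-n}e(-t^{-1}x_1\cdot x_2)$, which is the claimed formula after using symmetry of the dot product.

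The Fourier dimension assertion is then read off: for every $x\neq 0$ the value $(d\sigma)^{\vee}(x)$ is either $0$ or has modulus exactly $|\mathbb{F}|^{-n}$, so $\sup_{x\neq 0}|(d\sigma)^{\vee}(x)|=|\mathbb{F}|^{-n}=|\mathbb{F}|^{-(2n)/2}$; the exponent $2n$ is admissible and cannot be increased since the bound $|\mathbb{F}|^{-n}$ is attained (for instance at $x=(0,0,1)$), so $\tilde{d}=2n$. There is no genuine obstacle in this lemma: everything reduces to orthogonality of additive characters over $\mathbb{F}^n$. The only points demanding a little attention are the bookkeeping of normalisations — the factor $|\mathcal{H}|^{-1}=|\mathbb{F}|^{-2n}$ against the $|\mathbb{F}|^n$ produced by each complete sum — and the observation that it is precisely the non-vanishing of $t$ that makes the quadratic term $t\,\xi\cdot\eta$ act linearly in $\xi$, collapsing the double sum to a single surviving value of $\eta$.
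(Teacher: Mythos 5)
Your proof is correct and takes essentially the same route as the paper: a direct computation using orthogonality of additive characters, split on whether $t$ vanishes, with the double sum collapsing to a single surviving point when $t\neq 0$. The only cosmetic difference is that you handle all $t\neq 0$ uniformly, whereas the paper redundantly treats $(x_1,x_2)=(0,0)$ as a separate subcase.
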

\begin{proof}The first two cases are obvious. We assume that $t \neq 0$ and $x_2 \neq 0$ (the case with $t \neq 0$ and $x_1 \neq 0$ will following by symmetry). We have that
$$(d\sigma)^{\vee}(x_1,x_2,t) = |\mathbb{F}|^{-2n}\sum_{\xi_1,\xi_2 \in \mathbb{F}^n} e(x_1\cdot \xi_1 + x_2\cdot \xi_2 + t \xi_1 \cdot \xi_2) = |\mathbb{F}|^{-2n}\sum_{\xi_1 \in \mathbb{F}^n}e(\xi_1 \cdot x_1) \sum_{\xi_2 \in \mathbb{F}^n} e(\xi_2 \cdot (x_2 +t\xi_1 ))   $$
$$= |\mathbb{F}|^{-n} \sum_{\xi_1 \in \mathbb{F}^n }e(\xi_1 \cdot x_1) \delta(x_2 +t \xi_1)  = |\mathbb{F}|^{-n} e(-x_2 \cdot x_1/t).$$
Lastly, assume that $t \neq 0$ and $(x_1,x_2)=(0,0)$. We then have
$$(d\sigma)^{\vee}(0,0,t) = |\mathbb{F}|^{-2n}\sum_{\xi_1,\xi_2 \in \mathbb{F}^n} e(t \xi_1 \cdot \xi_2) = |\mathbb{F}|^{-n}.$$
This completes the proof.
\end{proof}
A similar argument gives the following result for the standard paraboloid. A proof can be found in \cite{MT} (see equation (17) there, however note the use of slightly different notation).
\begin{lemma}\label{lem:FTcompP}Let $\mathcal{P}$ be the $d$ dimensional paraboloid and d$\sigma$ the normalized surface measure on $\mathcal{P}$. Let $x=(\underline{x},t) \in \mathbb{F}^d$ with $\underline{x} \in \mathbb{F}^{2n}$ and $t \in \mathbb{F}$. Then
\begin{equation}
  (d\sigma)^{\vee}(x)=\begin{cases}
    1, & \text{if $x=(0,0,0)$},\\
    0, & \text{if $t = 0$ and $\underline{x} \neq 0$ },\\
    |\mathbb{F}|^{-n} S(t)  e(\underline{x}\cdot \underline{x} /4t), & \text{otherwise}.
  \end{cases}
\end{equation}
Here $S(t)= |\mathbb{F}|^{-n} \left(\sum_{\xi \in \mathbb{F}}e(t\xi^2) \right)^{2n}$ (where $|S(t)|= 1$ for $t \neq 0$). Thus the $d=2n+1$ dimensional paraboloid $\mathcal{P}$ has Fourier dimension $2n$.
\end{lemma}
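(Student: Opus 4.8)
The plan is to follow the proof of Lemma~\ref{lem:FTcomp} almost line for line; the one new ingredient is the evaluation of a single one-variable quadratic Gauss sum. Identifying $\mathcal{P}$ with $\mathbb{F}^{2n}$ via $\xi\mapsto(\xi,\xi\cdot\xi)$, I would begin from
\[ (d\sigma)^{\vee}(\underline{x},t)=\frac{1}{|\mathbb{F}|^{2n}}\sum_{\xi\in\mathbb{F}^{2n}}e\big(\underline{x}\cdot\xi+t\,\xi\cdot\xi\big). \]
The case $x=(0,0,0)$ gives $1$ trivially, and the case $t=0$, $\underline{x}\neq0$ gives $0$ by orthogonality of characters ($\sum_{\xi}e(\underline{x}\cdot\xi)=0$). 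This disposes of the first two cases.

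For $t\neq0$ I would use that both $\xi\cdot\xi=\sum_{j=1}^{2n}\xi_j^2$ and $\underline{x}\cdot\xi=\sum_{j=1}^{2n}x_j\xi_j$ are diagonal, so the sum factors as $\prod_{j=1}^{2n}\big(\sum_{\xi_j\in\mathbb{F}}e(t\xi_j^2+x_j\xi_j)\big)$. Since the characteristic exceeds $2$, both $2t$ and $4t$ are invertible in $\mathbb{F}$, so I can complete the square, $t\xi_j^2+x_j\xi_j=t(\xi_j+x_j/(2t))^2-x_j^2/(4t)$, and the bijective change of variable $\xi_j\mapsto\xi_j+x_j/(2t)$ turns the $j$-th factor into $e(-x_j^2/(4t))\sum_{\xi\in\mathbb{F}}e(t\xi^2)$. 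Multiplying over $j$ yields the phase $e\big(-\underline{x}\cdot\underline{x}/(4t)\big)$ (which is the phase in the statement up to the sign convention for the character $e$, as in \cite{MT}) times $\big(\sum_{\xi\in\mathbb{F}}e(t\xi^2)\big)^{2n}=|\mathbb{F}|^{n}S(t)$; note this automatically includes the subcase $\underline{x}=0$, where the phase is trivial, so the third case of the formula is established.

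It remains to record that $|S(t)|=1$ for $t\neq0$, equivalently that $\big|\sum_{\xi\in\mathbb{F}}e(t\xi^2)\big|=|\mathbb{F}|^{1/2}$. I would prove this by expanding the square: $\big|\sum_{\xi}e(t\xi^2)\big|^2=\sum_{\xi,\eta}e\big(t(\xi-\eta)(\xi+\eta)\big)$, and the substitution $(u,v)=(\xi-\eta,\xi+\eta)$ --- a bijection of $\mathbb{F}^2$ because $2$ is invertible --- collapses this to $\sum_{u,v}e(tuv)=|\mathbb{F}|$ since $t\neq0$; alternatively one simply cites the classical Gauss-sum evaluation as in \cite{MT}. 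The Fourier dimension claim is then immediate: for $x\neq0$ one has $|(d\sigma)^{\vee}(x)|\leq|\mathbb{F}|^{-n}=|\mathbb{F}|^{-(2n)/2}$, with equality attained at $x=(0,0,t)$, $t\neq0$, so the Fourier dimension is exactly $2n$. I expect no serious obstacle here: the only step beyond routine character manipulation is the Gauss-sum modulus, which is classical.
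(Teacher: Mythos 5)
Your proof is correct, and it fills in a computation the paper itself omits: in the text, Lemma~\ref{lem:FTcompP} is simply attributed to \cite{MT} (``A proof can be found in \cite{MT}''), while the preceding Lemma~\ref{lem:FTcomp} is proved by a direct character computation that avoids Gauss sums because the hyperbolic form $\xi_1\cdot\xi_2$ lets one sum one variable at a time. Your adaptation --- diagonalize, complete the square using that $2t$ and $4t$ are units, factor the $2n$-fold sum into one-dimensional quadratic Gauss sums, and evaluate $\bigl|\sum_\xi e(t\xi^2)\bigr|=|\mathbb{F}|^{1/2}$ by the $(u,v)=(\xi-\eta,\xi+\eta)$ substitution --- is exactly the standard argument, and every step is justified.

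One small point worth flagging: with the paper's convention $(d\sigma)^{\vee}(x)=|S|^{-1}\sum_{\xi\in S}e(x\cdot\xi)$, the completion of the square produces the phase $e\bigl(-\underline{x}\cdot\underline{x}/(4t)\bigr)$, whereas the statement of Lemma~\ref{lem:FTcompP} records $e\bigl(\underline{x}\cdot\underline{x}/(4t)\bigr)$. This is a harmless sign slip in the paper (or a silent change of the nontrivial character $e\mapsto e(-\,\cdot)$, which is allowed since $e$ is only specified to be non-principal); it does not affect $|(d\sigma)^{\vee}|$ and hence not the Fourier-dimension conclusion or any downstream use. You were right to notice it, and your derivation with the minus sign is the one that matches the stated definitions.
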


Over the next few lemmas we recall the Stein-Tomas argument. The first lemma exploits the decay of the Fourier transform $(d\sigma)^{\vee}$.
\begin{lemma}\label{Lem:STsupp}Let $S$ have Fourier dimension $\tilde{d}$ and $p,q \geq 2$ and $0\leq \theta \leq 1$. Let $f: \mathbb{F}^d \rightarrow \mathbb{C}$ such that $ |f| \geq \lambda$ (on its support) and $||f||_{L^{(q/\theta)'}}=1$. Then
\[||\hat{f}||_{L^{p'}(S,d\sigma)} \lesssim 1+  |\mathbb{F}|^{-\tilde{d}/4} \lambda^{-\theta/(q-\theta)}.\]
\end{lemma}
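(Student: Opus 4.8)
The plan is to follow the standard Stein--Tomas $TT^*$ strategy, adapted to the finite field setting and to the lower bound hypothesis $|f|\ge\lambda$ on its support. Squaring the left-hand side and writing $||\hat f||_{L^{p'}(S,d\sigma)}^2 = ||\widehat{f}\,\overline{\widehat f}||$-type expression, the key identity is that the extension-restriction composition is convolution against $(d\sigma)^\vee$; concretely, by duality and Plancherel one has $\||\hat f\|_{L^{2}(S,d\sigma)}^2 = \langle f * (d\sigma)^\vee, f\rangle$, and more generally for $p'\ge 2$ one interpolates between the $L^2(S,d\sigma)$ bound and the trivial $L^\infty(S,d\sigma)$ bound $||\hat f||_{L^\infty(S,d\sigma)}\le ||f||_{L^1}$. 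So first I would split $(d\sigma)^\vee = \delta_0 + K$ where $K$ is the Bochner--Riesz kernel, which by the Fourier dimension hypothesis satisfies $|K(x)|\le |\mathbb{F}|^{-\tilde d/2}$ for all $x$. The $\delta_0$ term contributes $\langle f,f\rangle = ||f||_{L^2}^2$ (or the appropriate $L^2(S,d\sigma)$-type quantity after the duality unwinding), while the $K$ term contributes something bounded by $|\mathbb{F}|^{-\tilde d/2}||f||_{L^1}^2$.

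The second step is to convert these two pieces — which naturally live at the $L^2$ and $L^1$ endpoints — into the stated bound involving $||f||_{L^{(q/\theta)'}} = 1$ and the amplitude parameter $\lambda$. Here the hypothesis $|f|\ge\lambda$ on $\mathrm{supp}(f)$ is what lets one trade between norms: if $E = \mathrm{supp}(f)$, then $\lambda^{(q/\theta)'}|E| \le ||f||_{(q/\theta)'}^{(q/\theta)'} = 1$, so $|E|\le \lambda^{-(q/\theta)'}$, and one can bound $||f||_{L^1}$ and $||f||_{L^2}$ in terms of $||f||_{L^{(q/\theta)'}}=1$ with a loss that is a power of $|E|$, hence a negative power of $\lambda$. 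The exponent $\theta/(q-\theta)$ appearing in the conclusion should emerge precisely from tracking the Hölder exponents: one is interpolating the extension operator's action between the $L^{q'}\to L^{p'}$-type endpoint governed by Stein--Tomas and the trivial endpoint, with $\theta$ the interpolation parameter, so that $(q/\theta)'$ is the effective integrability being assumed and $q-\theta$ is its Hölder conjugate gap. I would set up the interpolation so the $\delta_0$-piece yields the "$1$" and the $K$-piece yields the "$|\mathbb{F}|^{-\tilde d/4}\lambda^{-\theta/(q-\theta)}$" (note the exponent $\tilde d/4$ rather than $\tilde d/2$, consistent with having taken a square root at the end).

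Concretely the step order is: (i) reduce to estimating $||\hat f||_{L^{p'}(S,d\sigma)}^2$ and express it via $f$, $\overline f$, and $(d\sigma)^\vee$, using duality; (ii) split off $\delta_0$ to get the main term $\lesssim ||f||_{L^2}^2$-type quantity, which under the normalization and using $\theta\le 1$, $p,q\ge 2$ is $\lesssim 1$; (iii) bound the $K$-term by $|\mathbb{F}|^{-\tilde d/2}$ times an $L^1$-norm squared, then use $|E|\le \lambda^{-(q/\theta)'}$ together with Hölder to replace $||f||_{L^1}$ by $|E|^{1-1/(q/\theta)'}||f||_{L^{(q/\theta)'}} = |E|^{\,\theta/(q-\theta)\cdot(\text{const})}$; (iv) take square roots and combine. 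The main obstacle I anticipate is purely bookkeeping: getting the exponent of $\lambda$ to come out as exactly $\theta/(q-\theta)$ requires being careful about which norm is squared when, whether the $L^1$ loss is $|E|^{1-1/r}$ with $r=(q/\theta)'$ and simplifying $1 - 1/(q/\theta)' = 1/(q/\theta) = \theta/q$, then reconciling that with the $(q-\theta)$ denominator — likely the interpolation is not between $L^1$ and $L^2$ directly but between the Stein--Tomas exponent pair and $L^\infty(S)$, which shifts the arithmetic. There is no serious analytic difficulty here beyond the Fourier-decay input already recorded in Lemma \ref{lem:FTcomp}/\ref{lem:FTcompP}; the content is entirely in the careful choice of interpolation exponents, so I would expect the author's proof to be short and to differ from mine only in how explicitly the interpolation is spelled out.
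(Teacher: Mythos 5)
Your core plan is the same as the paper's: decompose $(d\sigma)^\vee = \delta_0 + K$, bound the $\delta_0$-piece by $\|f\|_{L^2}^2 \le \|f\|_{L^{(q/\theta)'}}^2 = 1$, bound the $K$-piece by $|\mathbb{F}|^{-\tilde d/2}\|f\|_{L^1}^2$, and then use the support-size bound $|\mathrm{supp}(f)| \le \lambda^{-q/(q-\theta)}$ (from $|f|\ge\lambda$ and $\|f\|_{(q/\theta)'}=1$) together with H\"older to get $\|f\|_{L^1} \le \lambda^{-\theta/(q-\theta)}$, and finally take square roots. One small correction: you speak of interpolating up to $L^\infty(S,d\sigma)$ "for $p'\ge 2$", but since $p\ge 2$ one has $p'\le 2$, and since $d\sigma$ is a probability measure the inequality $\|\hat f\|_{L^{p'}(S,d\sigma)} \le \|\hat f\|_{L^2(S,d\sigma)}$ is immediate with no interpolation at all — the paper's proof opens with exactly this one-line reduction, after which everything proceeds as you describe. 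So there is no interpolation to "set up"; the lemma is really just the $L^2(S,d\sigma)$ estimate plus this trivial embedding.
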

\begin{proof}We decompose $(d\sigma)^{\vee} = \delta_{0}+ K$ where $\delta_{0}(x)$ is $1$ for $x=0$ and $0$ otherwise. By Plancherel's identity we have
\[||\hat{f}||_{L^{p'}(S,d\sigma)}^2 \leq ||\hat{f}||_{L^{2}(S,d\sigma)}^2 \leq  |\left<f, f \right>| + |\left<f, f*K \right>|.\]
We then have
\[||\hat{f}||_{L^{p'}(S,d\sigma)}^2  \leq ||f||_{L^2(\mathbb{F}^d, dx) }^2 + |\mathbb{F}|^{-\tilde{d}/2}||f||_{L^1(F^d,dx)}^{2}.\]
Notice that $||f||_{L^2(\mathbb{F}^d, dx) } \leq ||f||_{L^{(q/\theta)'}(\mathbb{F}^d, dx)}=1$. Now $|\text{supp}(f)|^{(q-\theta)/q} \lambda \leq 1$ so $|\text{supp}(f)| \leq \lambda^{-q/(q-\theta)}$. Therefore,
\[ \sum_{x \in \mathbb{F}^d} |f(x)| \leq (\sum_{x \in \mathbb{F}^d} |f(x)|^{q/(q-\theta)})^{(q-\theta)/q} |\text{supp}(f)|^{\theta/q} \leq \lambda^{-\theta/(q-\theta)}.  \]
Thus:
\[||\hat{f}||_{L^{p'}(S,d\sigma)} \lesssim 1+  |\mathbb{F}|^{-\tilde{d}/4} \lambda^{-\theta/(q-\theta)}   \]
which completes the proof of the claim.
\end{proof}

\begin{lemma}\label{Lem:STinfty}Let $f: \mathbb{F}^d \rightarrow \mathbb{C}$, $p,q \geq 2$ and $0\leq \theta \leq 1$. Furthermore, assume $||f||_{L^{\infty}} \leq \lambda$ and $||f||_{L^{(q/\theta)'}}=1$. Then
\[ ||\hat{f}||_{L^{p'}(S,d\sigma)} \leq \mathcal{R}^{*}(p\rightarrow q) \lambda^{(1-\theta)/(q-\theta)}.\]
\end{lemma}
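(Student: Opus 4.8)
The plan is to reduce the claimed inequality to the (dual form of the) definition of the restriction constant together with elementary interpolation of Lebesgue norms on $\mathbb{F}^d$. Recall that, by duality, $\mathcal{R}^{*}(p\rightarrow q)$ is also the best constant in the inequality $\|\hat{f}\|_{L^{p'}(S,d\sigma)} \leq \mathcal{R}^{*}(p\rightarrow q)\|f\|_{L^{q'}(\mathbb{F}^d,dx)}$. Hence the entire content of the lemma is the pointwise-in-$f$ bound $\|f\|_{L^{q'}(\mathbb{F}^d,dx)} \leq \lambda^{(1-\theta)/(q-\theta)}$, which I would then substitute into the dual restriction estimate.

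To prove that bound, first observe that since $q \geq 2$ and $0 \leq \theta \leq 1$ one has $(q/\theta)' = \tfrac{q}{q-\theta} \leq \tfrac{q}{q-1} = q'$, so the exponent $q'$ lies between $(q/\theta)'$ and $\infty$. Therefore, by the log-convexity of $L^r$ norms (i.e.\ H\"older's inequality on $(\mathbb{F}^d, dx)$), writing $\tfrac{1}{q'} = \tfrac{1-s}{(q/\theta)'} + \tfrac{s}{\infty} = \tfrac{1-s}{(q/\theta)'}$ for the appropriate $s\in[0,1]$, we get
\[
\|f\|_{L^{q'}} \leq \|f\|_{L^{(q/\theta)'}}^{\,1-s}\,\|f\|_{L^{\infty}}^{\,s} \leq \lambda^{s},
\]
using the two hypotheses $\|f\|_{L^{(q/\theta)'}} = 1$ and $\|f\|_{L^{\infty}} \leq \lambda$.

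It remains to identify $s$. Solving $\tfrac{1}{q'} = \tfrac{1-s}{(q/\theta)'}$ gives $1 - s = \tfrac{(q/\theta)'}{q'} = \tfrac{q-1}{q-\theta}$, hence $s = \tfrac{1-\theta}{q-\theta}$, which is exactly the exponent in the statement; it lies in $[0,1]$ since $\theta \leq 1 < 2 \leq q$. Combining $\|f\|_{L^{q'}} \leq \lambda^{(1-\theta)/(q-\theta)}$ with the dual restriction inequality yields the claim.

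There is no genuine obstacle here: the only points to watch are the correct identification of which slot of $\mathcal{R}^{*}(\cdot\rightarrow\cdot)$ corresponds to the surface $S$ and which to $\mathbb{F}^d$, and keeping the conjugate-exponent arithmetic straight when computing $s$. The endpoint $\theta = 1$ is degenerate — the interpolation is vacuous and the bound reduces to the plain dual restriction estimate — while $\theta \to 0$ recovers the familiar $L^1$-to-$L^\infty$ interpolation with exponent $1/q$.
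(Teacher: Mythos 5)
Your proposal is correct and is essentially the paper's argument: the paper proves the bound $\|f\|_{L^{q'}(\mathbb{F}^d,dx)} \leq \lambda^{(1-\theta)/(q-\theta)}$ by the pointwise estimate $|f|^{q/(q-1)} \leq |f|^{q/(q-\theta)}\lambda^{q/(q-1)-q/(q-\theta)}$, which is exactly the standard proof of the $L^{(q/\theta)'}$--$L^{\infty}$ log-convexity you invoke, and then substitutes into the dual restriction inequality just as you do. Your exponent arithmetic ($s=(1-\theta)/(q-\theta)$) matches the paper's.
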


\begin{proof}Letting $||f||_{L^{(q/\theta)'}}=1$ and using that $(q/\theta)' = \frac{q}{q-\theta}$ we have:
\[||f||_{q'} =  \left(\sum_{x \in \mathbb{F}^d } |f(x)|^{q/(q-1)}\right)^{(q-1)/q} \leq \left(\sum_{x \in \mathbb{F}^d } |f(x)|^{q/(q-\theta)} \lambda^{q/(q-1) - q/(q-\theta)}\right)^{(q-1)/q} \leq \lambda^{(1-\theta)/(q-\theta)} \]
thus
\[||\hat{f}||_{L^{p'}(S,d\sigma)} \leq \mathcal{R}^{*}(p \rightarrow q) \lambda^{(1-\theta)/(q-\theta)}.\]
\end{proof}
One may easily calculate that $\mathcal{R}^{*}(2 \rightarrow 2) = \left( \frac{|\mathbb{F}|^d}{|S|} \right)^{1/2}$ for any codimension $1$ surface $\mathcal{S}$. Thus we have the following:
\begin{lemma}\label{lem:RestL2} Let $S$ be a codimension $1$ surface in $\mathbb{F}^d$ with surface measure $d\sigma$. Then
$$||\hat{F}||_{L^2(S,d\sigma)} \lesssim |\mathbb{F}|^{1/2}||f||_{L^2(\mathbb{F}^d,dx)}.$$
Moreover, if $f \sim 1$ on its support $E$ such that $|E|=|\mathbb{F}|^{\gamma}$, then
$$ ||\hat{F}||_{L^2(S,d\sigma)} \lesssim  ||f||_{L^{\frac{2\gamma}{1+\gamma}}(\mathbb{F}^d,dx)}.$$
\end{lemma}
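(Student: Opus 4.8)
The plan is to read off both estimates from the value of $\mathcal{R}^{*}(2\to 2)$ recorded immediately above the lemma, together with the duality built into the definition (\ref{eq:ExtDef}); no new ideas are needed.

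For the first inequality, recall the (one-line Plancherel/character-orthogonality) identity $\mathcal{R}^{*}(2\to 2) = \left(|\mathbb{F}|^{d}/|S|\right)^{1/2}$ valid for any codimension one surface. Since every surface we consider is a graph over $\mathbb{F}^{d-1}$ we have $|S| = |\mathbb{F}|^{d-1}$, hence $\mathcal{R}^{*}(2\to 2) = |\mathbb{F}|^{1/2}$ (and for the statement one only needs $|S| \sim |\mathbb{F}|^{d-1}$, which gives $\mathcal{R}^{*}(2\to 2) \lesssim |\mathbb{F}|^{1/2}$). By the duality remark following (\ref{eq:ExtDef}), this same constant controls the restriction inequality, so $||\hat{f}||_{L^2(S,d\sigma)} \le \mathcal{R}^{*}(2\to 2)\,||f||_{L^2(\mathbb{F}^d,dx)} = |\mathbb{F}|^{1/2}\,||f||_{L^2(\mathbb{F}^d,dx)}$, which is the first claim.

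For the second inequality I would simply feed a near-indicator function into the first one and juggle exponents. If $f \sim 1$ on its support $E$ with $|E| = |\mathbb{F}|^{\gamma}$, then $||f||_{L^r(\mathbb{F}^d,dx)} \sim |E|^{1/r}$ for every $r>0$; in particular $||f||_{L^2(\mathbb{F}^d,dx)} \sim |\mathbb{F}|^{\gamma/2}$ and $||f||_{L^{2\gamma/(1+\gamma)}(\mathbb{F}^d,dx)} \sim |E|^{(1+\gamma)/(2\gamma)} = |\mathbb{F}|^{(1+\gamma)/2}$. Combining the first part with the first of these, $||\hat{f}||_{L^2(S,d\sigma)} \lesssim |\mathbb{F}|^{1/2}\cdot|\mathbb{F}|^{\gamma/2} = |\mathbb{F}|^{(1+\gamma)/2} \sim ||f||_{L^{2\gamma/(1+\gamma)}(\mathbb{F}^d,dx)}$, as required.

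There is no genuine obstacle; the only point that needs to be noted is $|S| \sim |\mathbb{F}|^{d-1}$, which is immediate since the surfaces in question are parameterized by $\mathbb{F}^{d-1}$. The actual content of the lemma is the bookkeeping in the second part: the trivial $L^2 \to L^2$ restriction estimate, when applied to an essentially flat function, automatically improves to a restriction estimate with the smaller exponent $2\gamma/(1+\gamma)$ on the right-hand side, and it is this packaged form that will be plugged into the Stein--Tomas type arguments later in the paper.
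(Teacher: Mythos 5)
Your proposal is correct and matches the paper's approach: the paper deduces the lemma directly from the identity $\mathcal{R}^{*}(2\to 2) = (|\mathbb{F}|^{d}/|S|)^{1/2}$ stated just before it, combined with $|S|\sim|\mathbb{F}|^{d-1}$ and the observation that for a near-indicator function all $L^r$ norms are determined by $|E|^{1/r}$. Nothing to add.
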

Taking $\lambda= | \mathbb{F}|^{-\tilde{d}(q-\theta)/4} (\mathcal{R}^{*}(p \rightarrow q))^{-(q-\theta)}$ in Lemma \ref{Lem:STinfty} and Lemma \ref{Lem:STsupp}, we recover the formulation of the Stein-Tomas theorem given in \cite{MT}.
\begin{lemma}\label{lem:mtST}Let $p,q \geq 2$ and assume that $S$ has Fourier dimension $\tilde{d} >0$. Then for any $0 < \theta < 1$ we have that
\[\mathcal{R}^{*}(p \rightarrow q/\theta) \lesssim 1 +  \mathcal{R}^{*}(p \rightarrow q)^{\theta} |\mathbb{F}|^{\frac{-\tilde{d}(1-\theta)}{4}}.\]
\end{lemma}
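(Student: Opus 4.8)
The plan is to run the usual Stein--Tomas dichotomy: split $f$ into a ``good'' (bounded) part and a ``bad'' (large) part at a single height $\lambda$, estimate the two parts by Lemma~\ref{Lem:STinfty} and Lemma~\ref{Lem:STsupp} respectively, and then optimize in $\lambda$. By the duality formulation of the extension inequality recorded in the introduction, $\mathcal{R}^{*}(p\rightarrow q/\theta)$ is the best constant in
\[
\|\hat f\|_{L^{p'}(S,d\sigma)}\le \mathcal{R}^{*}(p\rightarrow q/\theta)\,\|f\|_{L^{(q/\theta)'}(\mathbb{F}^{d},dx)},
\]
so by homogeneity it suffices to take $f$ with $\|f\|_{L^{(q/\theta)'}}=1$ and prove $\|\hat f\|_{L^{p'}(S,d\sigma)}\lesssim 1+\mathcal{R}^{*}(p\rightarrow q)^{\theta}|\mathbb{F}|^{-\tilde d(1-\theta)/4}$.

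For a parameter $\lambda>0$ to be chosen, I would write $f=g+b$ with $g=f\cdot 1_{\{|f|\le\lambda\}}$ and $b=f\cdot 1_{\{|f|>\lambda\}}$. Since $|g|,|b|\le|f|$ pointwise, both $\|g\|_{L^{(q/\theta)'}}$ and $\|b\|_{L^{(q/\theta)'}}$ lie in $[0,1]$. The good part satisfies $\|g\|_{L^{\infty}}\le\lambda$, so applying Lemma~\ref{Lem:STinfty} to $g/\|g\|_{L^{(q/\theta)'}}$ and scaling back gives $\|\hat g\|_{L^{p'}(S,d\sigma)}\lesssim \mathcal{R}^{*}(p\rightarrow q)\,\lambda^{(1-\theta)/(q-\theta)}$; the rescaling only contributes a factor $\|g\|_{L^{(q/\theta)'}}^{(q-1)/(q-\theta)}\le1$, hence is harmless. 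The bad part satisfies $|b|\ge\lambda$ on its support, so Lemma~\ref{Lem:STsupp}, applied the same way, gives $\|\hat b\|_{L^{p'}(S,d\sigma)}\lesssim 1+|\mathbb{F}|^{-\tilde d/4}\lambda^{-\theta/(q-\theta)}$, the rescaling now costing a factor $\|b\|_{L^{(q/\theta)'}}^{1+\theta/(q-\theta)}\le1$. By the triangle inequality,
\[
\|\hat f\|_{L^{p'}(S,d\sigma)}\lesssim 1+\mathcal{R}^{*}(p\rightarrow q)\,\lambda^{(1-\theta)/(q-\theta)}+|\mathbb{F}|^{-\tilde d/4}\lambda^{-\theta/(q-\theta)}.
\]

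It remains to choose $\lambda$. The two $\lambda$-dependent terms balance at $\lambda=|\mathbb{F}|^{-\tilde d(q-\theta)/4}\,\mathcal{R}^{*}(p\rightarrow q)^{-(q-\theta)}$, the value flagged before the statement; plugging this in, each of them equals $\mathcal{R}^{*}(p\rightarrow q)^{\theta}|\mathbb{F}|^{-\tilde d(1-\theta)/4}$, and the lemma follows. I do not expect any genuine obstacle here; the only points needing care are (i) that Lemma~\ref{Lem:STsupp} and Lemma~\ref{Lem:STinfty} are stated for functions with unit $L^{(q/\theta)'}$ norm, whereas $g$ and $b$ only have norm $\le1$ --- this discrepancy is absorbed because the relevant rescaling factors $\|g\|_{L^{(q/\theta)'}}^{(q-1)/(q-\theta)}$ and $\|b\|_{L^{(q/\theta)'}}^{1+\theta/(q-\theta)}$ carry nonnegative exponents (using $q\ge2$ and $0<\theta<1$) and so are bounded by $1$; and (ii) the exponent arithmetic in the final substitution, which is routine and uses $\tilde d>0$, $0<\theta<1$ only to keep the powers of $|\mathbb{F}|$ and of $\lambda$ of the expected signs.
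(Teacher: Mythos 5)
Your proof is correct and follows exactly the route the paper intends: split $f$ at height $\lambda$, control the bounded piece with Lemma~\ref{Lem:STinfty} and the large piece with Lemma~\ref{Lem:STsupp}, and balance at $\lambda= |\mathbb{F}|^{-\tilde{d}(q-\theta)/4}\,\mathcal{R}^{*}(p \rightarrow q)^{-(q-\theta)}$, which is precisely the choice of $\lambda$ the paper flags just before the statement. The rescaling remark in your point (i) is also right, since the proofs of both auxiliary lemmas are monotone in $\|f\|_{L^{(q/\theta)'}}$ and the exponents $(q-1)/(q-\theta)$ and $1+\theta/(q-\theta)$ are nonnegative for $q\ge 2$, $0<\theta<1$.
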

We also find it useful to have the following variants of Lemma \ref{Lem:STinfty} and Lemma \ref{Lem:STsupp}:
\begin{lemma}\label{lem:stdecay}Let $S \subset \mathbb{F}^d$ with surface measure $d\sigma$. Assume $S$ has Fourier dimension $\tilde{d}$. Let $f: \mathbb{F}^d \rightarrow \mathbb{C}$ such that $f \sim 1$ on its support $E \subseteq \mathbb{F}^d$. Moreover, assume that $|E|=|\mathbb{F}|^\gamma$. Then
\begin{equation}\label{eq:decay}
 ||\widehat{f}||_{L^{2}(S,d\sigma)} \lesssim  ||f||_{L^{2}(\mathbb{F}^d, dx)} +||f ||_{L^{\frac{4\gamma}{4\gamma-\tilde{d}}}(\mathbb{F}^d, dx)}.
 \end{equation}
\end{lemma}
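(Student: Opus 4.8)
The plan is to run the Stein--Tomas $TT^{*}$/Plancherel computation exactly as in the proof of Lemma~\ref{Lem:STsupp}, but to stop before invoking H\"older in the $\xi$ variable: since the left-hand side of \eqref{eq:decay} is already an honest $L^{2}(S,d\sigma)$ norm, no parameter $p$ and no decomposition of the spectral side is needed. Concretely, I would first write $(d\sigma)^{\vee} = \delta_{0} + K$, where $K$ is the Bochner--Riesz kernel associated to $S$, and record that by the definition of Fourier dimension $|K(x)| \leq |\mathbb{F}|^{-\tilde{d}/2}$ for \emph{every} $x \in \mathbb{F}^{d}$ (for $x \neq 0$ this is the definition, and $K(0) = (d\sigma)^{\vee}(0) - 1 = 0$). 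Expanding the square and using orthogonality of characters,
\[ ||\widehat{f}||_{L^{2}(S,d\sigma)}^{2} = \sum_{x,y \in \mathbb{F}^{d}} f(x)\overline{f(y)}\,(d\sigma)^{\vee}(y-x) = ||f||_{L^{2}(\mathbb{F}^{d},dx)}^{2} + \sum_{x,y \in \mathbb{F}^{d}} f(x)\overline{f(y)}\,K(y-x), \]
and the second sum is bounded in absolute value by $|\mathbb{F}|^{-\tilde{d}/2}\,||f||_{L^{1}(\mathbb{F}^{d},dx)}^{2}$.

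\textbf{Converting the $L^{1}$ bound.} It then remains to rewrite the error term using the hypothesis that $f \sim 1$ on $E$ with $|E| = |\mathbb{F}|^{\gamma}$. Since $f \sim 1$ on a set of size $|\mathbb{F}|^{\gamma}$ one has $||f||_{L^{1}} \sim |\mathbb{F}|^{\gamma}$ and, for any $r \geq 1$, $||f||_{L^{r}} \sim |\mathbb{F}|^{\gamma/r}$. Choosing $r = \tfrac{4\gamma}{4\gamma - \tilde{d}}$ gives $\gamma/r = \gamma - \tilde{d}/4$, hence $|\mathbb{F}|^{-\tilde{d}/2}\,||f||_{L^{1}}^{2} \sim |\mathbb{F}|^{2\gamma - \tilde{d}/2} \sim ||f||_{L^{4\gamma/(4\gamma-\tilde{d})}}^{2}$. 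Combining this with the displayed identity and using $\sqrt{a+b} \leq \sqrt{a} + \sqrt{b}$ yields \eqref{eq:decay}.

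\textbf{Main obstacle.} There is essentially no hard step: the statement is a direct variant of Lemma~\ref{Lem:STsupp}, obtained by feeding the concrete information $f \sim 1$ on a set of size $|\mathbb{F}|^{\gamma}$ directly into the error estimate. The only points requiring minor care are the verification that $|K| \leq |\mathbb{F}|^{-\tilde{d}/2}$ including at the origin, and the elementary exponent arithmetic $\gamma/r = \gamma - \tilde{d}/4$; one should also note in passing that the exponent $\tfrac{4\gamma}{4\gamma - \tilde{d}}$ is only meaningful when $4\gamma > \tilde{d}$, which is precisely the regime in which the second term on the right-hand side of \eqref{eq:decay} is not already dominated by the first.
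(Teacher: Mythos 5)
Your proof is correct and follows essentially the same route as the paper: the paper's proof also writes $(d\sigma)^{\vee}=\delta_0+K$, uses Plancherel (``following the proof of Lemma~\ref{Lem:STsupp}'') to get $||\hat f||_{L^2(S,d\sigma)}^2 \lesssim |E| + |\mathbb{F}|^{-\tilde d/2}|E|^2$, and then converts $|E|$ and $|\mathbb{F}|^{-\tilde d/2}|E|^2$ into $||f||_{L^2}^2$ and $||f||_{L^{4\gamma/(4\gamma-\tilde d)}}^2$ via the same exponent arithmetic $\gamma/r=\gamma-\tilde d/4$. You have simply spelled out the steps (including the observation $K(0)=0$ and the remark about the regime $4\gamma>\tilde d$) that the paper leaves implicit.
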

\begin{proof}Following the proof of Lemma \ref{Lem:STsupp}, we may bound
$$||\hat{f}||_{L^{2}(S,d\sigma)}^2 \lesssim |\left<f, f*(d\sigma)^{\vee} \right>| \lesssim ||f||_{L^2(\mathbb{F}^d,dx)}^2 + |\left<f, f*K \right>| \lesssim |E| + |\mathbb{F}|^{-\tilde{d}/2} |E|^2.$$
This implies the claim.
\end{proof}

\begin{lemma}\label{Lem:STinfty2}Let $f: \mathbb{F}^d \rightarrow \mathbb{C}$ such that $f \sim 1$ on its support $E\subseteq \mathbb{F}^d$, with $|E| = |\mathbb{F}|^{\gamma}$. Also assume $\mathcal{R}^{*}(p \rightarrow q) \lesssim |\mathbb{F}|^{\alpha}$.  Then
\[ ||\hat{f}||_{L^{p'}(S,d\sigma)} \leq ||f||_{L^{\frac{q\gamma}{q\gamma - \gamma + \alpha q}}(\mathbb{F}^d.dx)} .\]
\end{lemma}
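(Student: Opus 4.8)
The plan is to combine the definition of the restriction constant with the elementary fact that an $L^{s}$ norm of a function comparable to $1$ on a set of fixed size is essentially determined by that size. No real difficulty is expected; this is the analogue, in the ``$f\sim 1$ on a set of size $|\mathbb{F}|^{\gamma}$'' normalization, of Lemma \ref{Lem:STinfty}.

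First I would invoke the dual form of the definition of $\mathcal{R}^{*}$: for every $f:\mathbb{F}^{d}\to\mathbb{C}$ one has $||\hat f||_{L^{p'}(S,d\sigma)}\le \mathcal{R}^{*}(p\to q)\,||f||_{L^{q'}(\mathbb{F}^{d},dx)}$, so the hypothesis $\mathcal{R}^{*}(p\to q)\lesssim|\mathbb{F}|^{\alpha}$ yields
\[
||\hat f||_{L^{p'}(S,d\sigma)}\lesssim |\mathbb{F}|^{\alpha}\,||f||_{L^{q'}(\mathbb{F}^{d},dx)}.
\]

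Next, since $f\sim 1$ on its support $E$ with $|E|=|\mathbb{F}|^{\gamma}$, for any exponent $s\in(0,\infty]$ we have $||f||_{L^{s}(\mathbb{F}^{d},dx)}\sim |E|^{1/s}=|\mathbb{F}|^{\gamma/s}$. Applying this with $s=q'$ and again with $s=r:=\frac{q\gamma}{q\gamma-\gamma+\alpha q}$, the claim reduces to the numerical identity $\alpha+\gamma/q'=\gamma/r$. Using $1/q'=(q-1)/q$, the right-hand side equals $\frac{q\gamma-\gamma+\alpha q}{q}$ and the left-hand side equals $\alpha+\frac{q\gamma-\gamma}{q}$; these agree, so $|\mathbb{F}|^{\alpha}||f||_{L^{q'}}\sim||f||_{L^{r}}$ and the lemma follows.

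The argument is pure bookkeeping with exponents, so there is essentially no obstacle; the only point requiring a word of care is that the implicit constants hidden in $f\sim1$ and in $\mathcal{R}^{*}(p\to q)\lesssim|\mathbb{F}|^{\alpha}$ are absorbed into the conclusion, so the displayed estimate is to be read with a suppressed absolute constant (equivalently, with $\le$ replaced by $\lesssim$).
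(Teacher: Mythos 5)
Your proposal is correct and matches the paper's proof exactly: invoke the dual formulation of $\mathcal{R}^{*}(p\to q)$ to get $||\hat f||_{L^{p'}(S,d\sigma)}\lesssim|\mathbb{F}|^{\alpha}||f||_{L^{q'}}$, compute $||f||_{L^{q'}}\sim|\mathbb{F}|^{\gamma/q'}$, and rewrite $|\mathbb{F}|^{\alpha+\gamma/q'}$ as $||f||_{L^{r}}$ by checking the exponent identity. Your remark about the suppressed absolute constant (reading $\le$ as $\lesssim$) is a sensible reading and consistent with how the lemma is used in the paper.
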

\begin{proof}We have
$$||\hat{f}||_{L^{p'}(S,d\sigma)} \leq |\mathbb{F}|^{\alpha} ||f||_{L^{q'}(\mathbb{F}^d,dx)} = |\mathbb{F}|^{\alpha + \gamma/q'} \lesssim ||f||_{L^{\frac{q\gamma}{q\gamma - \gamma + \alpha q}}(\mathbb{F}^d.dx)} $$
\end{proof}
In addition, we will use the following consequence of Lemma \ref{lem:mtST}:
\begin{lemma}\label{lem:eprem}($\epsilon$-removal lemma) Let $S$ have Fourier dimension $\tilde{d}>0$ and assume that for every $\epsilon >0$ one has $\mathcal{R}^{*}(p\rightarrow q) \lesssim_{\epsilon} |\mathbb{F}|^{\epsilon}$. Then $\mathcal{R}^{*}(p \rightarrow q+\delta) \lesssim_{\delta} 1$ holds for any $\delta>0$.
\end{lemma}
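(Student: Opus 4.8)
The plan is to deduce this directly from the interpolated Stein--Tomas bound in Lemma \ref{lem:mtST}, which is exactly the place where the positivity of the Fourier dimension $\tilde{d}$ buys a power of $|\mathbb{F}|$ that can absorb an $\epsilon$ loss. The statement should then follow from a single (non-iterative) application of that lemma together with a careful choice of the interpolation parameter.

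First I would reparametrize the target exponent. Fix $\delta>0$ and set $\theta := q/(q+\delta)$, so that $0<\theta<1$ and $q/\theta = q+\delta$. Applying Lemma \ref{lem:mtST} with this value of $\theta$ (note $p,q\geq 2$ are inherited from the standing hypotheses on the exponent pair, and $q/\theta = q+\delta>2$) gives
\[
\mathcal{R}^{*}(p \rightarrow q+\delta) \lesssim 1 + \mathcal{R}^{*}(p \rightarrow q)^{\theta}\, |\mathbb{F}|^{-\tilde{d}(1-\theta)/4}.
\]
Next I would feed in the hypothesis: for every $\epsilon>0$ we have $\mathcal{R}^{*}(p\rightarrow q)\lesssim_{\epsilon}|\mathbb{F}|^{\epsilon}$, so the right-hand side is $\lesssim_{\epsilon} 1 + |\mathbb{F}|^{\theta\epsilon - \tilde{d}(1-\theta)/4}$. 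Since $1-\theta = \delta/(q+\delta)>0$ and $\tilde{d}>0$, the quantity $\tilde{d}(1-\theta)/4$ is a fixed positive number depending only on $\delta$, $q$, and $\tilde{d}$; choosing $\epsilon$ small enough (for instance $\epsilon := \tilde{d}(1-\theta)/(8\theta)$) makes the exponent equal to $-\tilde{d}(1-\theta)/8<0$, so $|\mathbb{F}|^{\theta\epsilon - \tilde{d}(1-\theta)/4}\leq 1$ and we conclude $\mathcal{R}^{*}(p \rightarrow q+\delta)\lesssim_{\delta} 1$.

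The only point requiring care --- and the closest thing to an obstacle --- is the bookkeeping of implicit constants: the bound $\mathcal{R}^{*}(p\rightarrow q)\lesssim_{\epsilon}|\mathbb{F}|^{\epsilon}$ has a constant that blows up as $\epsilon\to 0$, so one must first commit to the value of $\epsilon$ (which is legitimate, since it is determined by $\delta$, $q$, and $\tilde{d}$ alone) and only afterwards observe that the resulting constant depends on $\delta$ only. No induction or iteration is needed here, which is why the statement is naturally phrased for an arbitrary but fixed $\delta>0$ rather than as a limiting assertion.
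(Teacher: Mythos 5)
Your proof is correct and is essentially the argument the paper has in mind: the paper simply states Lemma \ref{lem:eprem} as a consequence of Lemma \ref{lem:mtST} without writing out the details, and your choice $\theta=q/(q+\delta)$ followed by committing to $\epsilon$ in terms of $\delta$, $q$, and $\tilde{d}$ before invoking the hypothesis is exactly the intended one-step deduction.
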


We note that the use of such $\epsilon$-removal lemmas have been previously used in both the finite field \cite{LewkoNew} and Euclidean settings \cite{TaoBR}.

We now prove the equivalence of the restriction problems associated to (surfaces associated to) equivalent quadratic forms. Let $A$, $B$ and $M$ be invertible $(d-1) \times (d-1)$ dimensional matrices over $\mathbb{F}$, such that $A = M^{T}B M$. In other words the quadratic forms associated to $A$ and $B$ are equivalent. Consider the non-degenerate quadratic surfaces
$$\mathcal{S} := \{(x, x^{T} A x) \} $$
and
$$\mathcal{U} := \{(x, x^{T} B x) \}.$$
We show that the restriction problems associated to $\mathcal{S}$ and $\mathcal{U}$ are equivalent.
\begin{lemma}\label{lem:ParEqu}For any $p,q \geq 0$, the claim that the inequality
\begin{equation}\label{eq:Equiv1}
 || (fd \sigma)^{\vee}||_{L^{p}(\mathbb{F}^d,dx)} \leq C ||f||_{L^{q}(\mathcal{S},d\sigma)}
\end{equation}
holds is equivalent to the claim that the inequality
\begin{equation}\label{eq:Equiv2}
|| (gd \sigma)^{\vee}||_{L^{p}(\mathbb{F}^d,dx)} \leq C ||g||_{L^{q}(\mathcal{U},d\sigma)}.
\end{equation}
holds. Indeed, the optimal constant for each inequality is the same.
\end{lemma}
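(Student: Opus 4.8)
The plan is to exhibit the explicit linear change of variables relating $\mathcal{S}$ and $\mathcal{U}$ and to check that it intertwines the two extension operators while preserving every norm that appears. Write $d\sigma_{\mathcal{S}}$ and $d\sigma_{\mathcal{U}}$ for the normalized surface measures and identify each surface with $\mathbb{F}^{d-1}$ in the usual way. Given a function $f$ on $\mathcal{S}$, set $g := f\circ M^{-1}$, a function on $\mathcal{U}$. Beginning from
\[ (fd\sigma_{\mathcal{S}})^{\vee}(x',t) = \frac{1}{|\mathbb{F}|^{d-1}}\sum_{x\in\mathbb{F}^{d-1}} f(x)\,e\bigl(x\cdot x' + t\,x^{T}Ax\bigr), \]
I would substitute $x = M^{-1}y$. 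From $A = M^{T}BM$ one gets $(M^{-1}y)^{T}A(M^{-1}y) = y^{T}By$, and also $x\cdot x' = y\cdot\bigl((M^{-1})^{T}x'\bigr)$; hence the sum is exactly $(gd\sigma_{\mathcal{U}})^{\vee}\bigl((M^{-1})^{T}x',\,t\bigr)$. In other words the two extension operators coincide up to the invertible linear change of the physical-space variable $(x',t)\mapsto\bigl((M^{-1})^{T}x',\,t\bigr)$.

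The equivalence of the two inequalities is then immediate and uses nothing about $p$ or $q$. Since the counting measure on $\mathbb{F}^{d}$ is invariant under the bijection $(x',t)\mapsto\bigl((M^{-1})^{T}x',t\bigr)$, we have $\|(fd\sigma_{\mathcal{S}})^{\vee}\|_{L^{p}(\mathbb{F}^{d},dx)} = \|(gd\sigma_{\mathcal{U}})^{\vee}\|_{L^{p}(\mathbb{F}^{d},dx)}$. Likewise $x\mapsto M^{-1}x$ is a bijection of $\mathbb{F}^{d-1}$ and both surfaces carry a normalized counting measure of total mass one, so $\|g\|_{L^{q}(\mathcal{U},d\sigma)} = \|f\|_{L^{q}(\mathcal{S},d\sigma)}$. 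As $f\mapsto g = f\circ M^{-1}$ is a bijection of the space of functions on the surface, \eqref{eq:Equiv1} holds with constant $C$ for all $f$ if and only if \eqref{eq:Equiv2} holds with the same $C$ for all $g$; in particular the optimal constants are equal.

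There is no genuine obstacle: the argument is pure bookkeeping. The only point requiring care is that the frequency variable (the argument of $f$) gets transformed by $M^{-1}$ while the first $d-1$ physical coordinates get transformed by the inverse transpose $(M^{-1})^{T}$, with the last coordinate $t$ untouched, and that each substitution occurring is a bijection of a finite set and therefore leaves the relevant counting measures unchanged. One may also note that non-degeneracy of $A$ is equivalent to non-degeneracy of $B$ since $M$ is invertible, so the assumption that $\mathcal{S}$ is a non-degenerate quadratic surface transfers to $\mathcal{U}$.
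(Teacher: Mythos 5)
Your proposal is correct and is essentially the same argument as the paper's: a linear change of variables in the frequency variable transforms the quadratic form $A$ into $B$, while the dual change of variables in the spatial variable is a bijection of $\mathbb{F}^{d}$ preserving counting measure, so all norms are preserved. (The paper substitutes $\xi\mapsto M\xi$ rather than $\xi\mapsto M^{-1}\xi$ — effectively relabeling $M\leftrightarrow M^{-1}$ — but this is only cosmetic; your version is the one consistent with the stated convention $A=M^{T}BM$.)
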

\begin{proof}We identify functions on $\mathcal{S}$ (respectively $\mathcal{U}$) with functions on $\mathbb{F}^{d-1}$ in the usual manner. The left hand side of (\ref{eq:Equiv1}) is given by
\[ \left( \sum_{x \in \mathbb{F}^{2n}, t \in \mathbb{F} }\left| |\mathbb{F}|^{-2n} \sum_{\xi \in \mathbb{F}^{2n} } f(\xi) e (x \cdot \xi + t \xi^{T} A \xi  ) \right|^p \right)^{1/p}.\]
This is equal to
\[ \left( \sum_{x \in \mathbb{F}^{2n}, t \in \mathbb{F} }\left| |\mathbb{F}|^{-2n} \sum_{\xi \in \mathbb{F}^{2n} } f(M\xi) e (x \cdot M \xi + t (M\xi)^{T} A (M \xi)  ) \right|^p \right)^{1/p}\]
\[= \left( \sum_{x \in \mathbb{F}^{2n}, t \in \mathbb{F} }\left| |\mathbb{F}|^{-2n} \sum_{\xi \in \mathbb{F}^{2n} } f(M\xi) e (M^{T}x \cdot \xi + t \xi^{T} B  \xi  ) \right|^p \right)^{1/p}\]
\[= \left( \sum_{x \in \mathbb{F}^{2n}, t \in \mathbb{F} }\left| |\mathbb{F}|^{-2n} \sum_{\xi \in \mathbb{F}^{2n} } f(M\xi) e (x \cdot \xi + t \xi^{T} B  \xi  ) \right|^p \right)^{1/p}.\]
which is the left side of (\ref{eq:Equiv2}) with $g(\xi) = f(M\xi)$. In addition, note that $||g||_{L^{q}(\mathcal{U},d\sigma)} =||f||_{L^{q}(\mathcal{S},d\sigma)}$. This completes the proof.
\end{proof}

\section{Geometric estimates for the Bochner-Riesz operator}\label{sec:GeoBR}
In this section we will prove estimates for the Bochner-Riesz operator. We let $\mathcal{H} \subseteq \mathbb{F}^{3}$ and $d \sigma$ be defined as above and assume that $\text{char}(\mathbb{F}) >2$. We previously defined the Bochner-Riesz operator as convolution with the kernel $K = (d\sigma)^{\vee}(x_1,x_2,t) - \delta(x_1,x_2,t)$, for $x_1,x_2,t \in \mathbb{F}$. Here it is more convenient to work with the kernel $\tilde{K}(x_1,x_2,t) = (d\sigma)^{\vee}(x_1,x_2,t)$, and the (modified) Bochner-Riesz operator defined by
$$Tf = f * \tilde{K}.$$
For $m \in \mathbb{F}$ let $\mathcal{J}_{m}(x_1,x_2,t)$ denote the characteristic function of the affine subspace containing the subspace $\{(x_1,0,0) : x_1 \in \mathbb{F} \}$ and pointing in the direction $-m$. More formally we define:
$$\mathcal{J}_{m}(x_1,x_2,t):= \delta(x_2/t+m) \text{ for } t\neq 0$$
$$\mathcal{J}_{m}(x_1,x_2,t) := \delta(x_2) \text{ for } t=0. $$
More generally, we consider the set of subspaces $\mathbb{T} := \{\text{supp}\left(\mathcal{J}_{m}(x_1,x_2-x_2',t-t')\right): m \in \mathbb{F}, x_2'\in \mathbb{F} ,t' \in \mathbb{F}\}$  (which we identify with their characteristic functions on $\mathbb{F}^{3}$). We caution that the indexing in the definition of $\mathbb{T}$ does not correspond to unique elements of the set. We will use the symbol $\mathcal{T}$ to denote an element of $\mathbb{T}$. For a given $\mathcal{T}$ we will denote by $\overline{\mathcal{T}}$ the projection of $\mathcal{T}$ (as a set in $\mathbb{F}^3$) onto the $x_2-t$ axes (identified with $\mathbb{F}^2$). Thus $\overline{\mathcal{T}}$ will be a line in $\mathbb{F}^2$. We will further denote the direction of the line $\overline{\mathcal{T}}$ by $m_{\mathcal{T}}$. We will write $\mathcal{T}(x_1,x_2,t)$ for the characteristic function of $\mathcal{T}$, and $\overline{\mathcal{T}}(x_2,t)$ for the characteristic function of $\overline{\mathcal{T}}$.
We now observe that the Bochner-Riesz operator acts in a simple geometric way when applied to $1$-dimensional functions supported on VH lines.
\begin{remark}Notice that the Bochner-Riesz operator is symmetric in the $x_1$ and $x_2$ variables. As a result, the following results regarding functions supported on a line of the form $\{(x_1,\overline{x_2},\overline{t_0}) : x_1 \in \mathbb{F} \}$, have symmetric formulations for functions supported on lines of the form $\{(\overline{x_1},x_2,\overline{t_0}) : x_2 \in \mathbb{F} \}$.
\end{remark}

\begin{lemma}\label{BRolines}Consider the $1$ dimensional function $f(x_1) = \sum_{m \in \mathbb{F}} a_m e(m \cdot x_1) $. Letting $F(x_1,x_2,t) = \delta(x_2-x_2')\delta(t-t') f(x_1)$, we have that
\begin{equation}
  TF(x,y,t)=\sum_{m\in \mathbb{F}^n} a_m  e\left(m  x_1 \right) \mathcal{J}_{m}(x_1,x_2-x_2' ,t- t').
\end{equation}
\end{lemma}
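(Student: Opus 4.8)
The plan is to compute $TF = F * \tilde K$ directly from the definition, using the explicit formula for $\tilde K(x_1,x_2,t) = (d\sigma)^\vee(x_1,x_2,t)$ supplied by Lemma~\ref{lem:FTcomp} (in the $3$-dimensional case, $n=1$). Since $F(x_1,x_2,t) = \delta(x_2-x_2')\,\delta(t-t')\,f(x_1)$ is supported on a single VH line, the convolution collapses to a sum over the $x_1$-variable only: writing out
\[
TF(x_1,x_2,t) = \sum_{y_1,y_2,s} F(y_1,y_2,s)\,\tilde K(x_1-y_1,\,x_2-y_2,\,t-s),
\]
the $\delta$'s in $F$ force $y_2 = x_2'$ and $s = t'$, leaving $TF(x_1,x_2,t) = \sum_{y_1} f(y_1)\,\tilde K(x_1-y_1,\,x_2-x_2',\,t-t')$.

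Next I would substitute $f(y_1) = \sum_m a_m e(m y_1)$ and interchange the sums, so that
\[
TF(x_1,x_2,t) = \sum_m a_m \sum_{y_1} e(m y_1)\,\tilde K(x_1-y_1,\,x_2-x_2',\,t-t').
\]
Fix $m$ and write $u = x_2 - x_2'$, $v = t - t'$. The inner sum is $\sum_{y_1} e(m y_1)\,(d\sigma)^\vee(x_1-y_1,u,v)$. I would now split on whether $v = 0$. If $v=0$: by Lemma~\ref{lem:FTcomp} the kernel vanishes unless $(x_1-y_1,u)=(0,0)$, i.e.\ unless $u = 0$ and $y_1 = x_1$; so the inner sum equals $e(m x_1)\,\delta(u) = e(m x_1)\,\mathcal{J}_m(x_1,u,v)$, matching the claimed formula in the $t=t'$ case. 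If $v \neq 0$: Lemma~\ref{lem:FTcomp} gives $(d\sigma)^\vee(x_1-y_1,u,v) = |\mathbb{F}|^{-1} e\big(-u(x_1-y_1)/v\big)$ (here $n=1$), except we must be careful at $(x_1-y_1,u)=(0,0)$ where the value is $1$; but in odd characteristic these formulas in fact agree since $e(0)=1$ and $|\mathbb{F}|^{-1}$ versus $1$ — so I'd note the ``otherwise'' branch already covers the point $u=0,\,y_1=x_1$ correctly, or simply record that the discrepancy is a single term of relative size $|\mathbb F|^{-1}$ which the stated identity absorbs (this is the one bookkeeping point to get right). Then the inner sum becomes $|\mathbb{F}|^{-1} e(-u x_1/v)\sum_{y_1} e\big(y_1(m + u/v)\big) = |\mathbb{F}|^{-1} e(-u x_1/v)\cdot |\mathbb{F}|\,\delta(m + u/v) = e(-u x_1/v)\,\delta(u/v + m)$.

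Finally I would observe that on the support of $\delta(u/v+m)$ we have $u/v = -m$, so $e(-u x_1 / v) = e(m x_1)$, and $\delta(u/v+m) = \mathcal{J}_m(x_1,u,v)$ by the definition of $\mathcal{J}_m$ for $v \neq 0$. Hence the inner sum equals $e(m x_1)\,\mathcal{J}_m(x_1,x_2-x_2',t-t')$ in all cases, and summing against $a_m$ yields exactly
\[
TF(x_1,x_2,t) = \sum_m a_m\, e(m x_1)\,\mathcal{J}_m(x_1,x_2-x_2',t-t'),
\]
which is the asserted identity. The only genuinely delicate step is the $v\neq 0$, $(x_1-y_1,u)=(0,0)$ boundary term in the kernel formula; everything else is a routine interchange of finite sums plus the character orthogonality relation $\sum_{y_1} e(y_1\lambda) = |\mathbb{F}|\,\delta(\lambda)$.
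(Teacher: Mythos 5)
Your computation is correct and takes essentially the paper's route: reduce the convolution $F*\tilde K$ to a one–variable sum using the $\delta$ factors in $F$, expand $f$ in characters and interchange sums (this replaces the paper's explicit reduction-by-linearity to $f=e(mx_1)$), and then use orthogonality to identify the inner sum with $e(mx_1)\mathcal{J}_m$. The paper additionally translates to assume $x_2'=t'=0$, which you bypass by carrying $u=x_2-x_2'$, $v=t-t'$ throughout; that is an equivalent bookkeeping choice.

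The one place where your writeup goes astray is the ``delicate boundary term'' you flag in the $v\neq 0$ case. There is no such term. The first branch of Lemma~\ref{lem:FTcomp} (value $1$) applies only when \emph{all three} coordinates vanish, i.e.\ $x=(0,0,0)$. When $v\neq 0$, the point $(x_1-y_1,u,v)=(0,0,v)$ lies squarely in the ``otherwise'' branch, and the closed form $|\mathbb{F}|^{-1}e(-u(x_1-y_1)/v)$ is exactly right there — indeed the proof of Lemma~\ref{lem:FTcomp} checks $(d\sigma)^{\vee}(0,0,t)=|\mathbb{F}|^{-n}$ for $t\neq 0$ separately and finds it agrees. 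So the sentence ``the value is $1$'' is a misreading of the case structure, and the fallback ``record that the discrepancy is a single term of relative size $|\mathbb{F}|^{-1}$ which the stated identity absorbs'' is incorrect in spirit: the identity you are proving is exact, not approximate, and there is no error term to absorb. Once that confusion is removed, your argument is a clean and complete proof.
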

\begin{proof}
Since convolution commutes with translations we may assume that $x_2'=0$, $t'=0$. By linearity it suffices to consider the case $f(x_1) = e(m  x_1)$.
Recall the formula for $K=(d \sigma )^{\vee}$ given in Lemma \ref{lem:FTcomp}.  If $t\neq 0$ we have that
\[T f(x_1,x_2,t) =|\mathbb{F}|^{-1} \sum_{y_{1} \in \mathbb{F} } e(m y_1 - x_2  (x_1-y_1)/t) \]
\[= |\mathbb{F}|^{-1}\sum_{y_{1} \in \mathbb{F} } e\left(y_1 (m + x_2/t) - x_2  x_1/t\right)= \delta(m + x_2/t) e\left( - x_2 x_1/t\right)\]
\[= \delta(m + x_2/t) e\left( m   x_1\right) = \mathcal{J}_m(x_1,x_2,t) e\left( m   x_1\right). \]
If $t = 0$ we have that
\[TF(x_1,x_2,t)  =  \delta(x_2)\delta(t) \sum_{y \in \mathbb{F}} f(x_1-y) \delta(y) = \delta(x_2)\delta(t)F(x_1,x_2,t) .\]
Thus
\[
TF(x_1,0,0)=f(x_1) = \sum_{m\in \mathbb{F}} a_m e(mx_1) =  \sum_{m \in \mathbb{F}} a_m  e\left( m x_1\right) \mathcal{J}_{m}(x_1,0,0)
\]
which completes the proof.
\end{proof}

We now introduce some additional notation. Let $I \subseteq \mathbb{F}^{2}$ and define $\alpha$ by $|I| = |\mathbb{F}|^{\alpha}$. We will write the coordinates of $i \in I$ as $i = (x_{0}^{(i)},s_{0}^{(i)})$. For each $i \in I$ we let $\tilde{f}_i(x_1) = \sum_{m \in \mathbb{F}}a_{m}^{(i)}e(m x_{1}) $ and  $f_i(x_1,x_2,t) = \delta(x_2-x_{0}^{(i)})\delta(t-s_{0}^{(i)}) \tilde{f}(x_1)$. Finally, if $\ell$ is a line in $\mathbb{F}^2$ assume that
$$ |I \cap \ell | \leq |\mathbb{F}|^{u}. $$

\begin{lemma}\label{lem:BR}Assume the notation above, and let $F=\sum_{i \in I} f_i$. We have that
$$||TF||_{L^2(\mathbb{F}^3,dx)} \lesssim |\mathbb{F}|^{(1+u)/2}||F||_{L^2(\mathbb{F}^3,dx)}. $$
\end{lemma}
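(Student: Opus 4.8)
The plan is to compute $TF$ explicitly using Lemma~\ref{BRolines}, square out the resulting $L^2$ norm, and reduce the estimate to a count of incidences between the "tubes" $\mathcal{T} \in \mathbb{T}$ that the Bochner--Riesz operator produces. First I would apply Lemma~\ref{BRolines} to each piece $f_i$: since $f_i(x_1,x_2,t) = \delta(x_2 - x_0^{(i)})\delta(t - s_0^{(i)})\tilde f_i(x_1)$ with $\tilde f_i(x_1) = \sum_m a_m^{(i)} e(m x_1)$, we get
\[
TF(x_1,x_2,t) = \sum_{i \in I} \sum_{m \in \mathbb{F}} a_m^{(i)} e(m x_1)\, \mathcal{J}_m\bigl(x_1, x_2 - x_0^{(i)}, t - s_0^{(i)}\bigr).
\]
Each summand is (a character times) the characteristic function of a line $\mathcal{T}_{i,m} \in \mathbb{T}$ whose projection $\overline{\mathcal{T}_{i,m}}$ to the $x_2$-$t$ plane is the line through $(x_0^{(i)}, s_0^{(i)})$ with slope $-m$ (or the horizontal line $\{x_2 = x_0^{(i)}\}$ when $m$ is "the $t=0$ direction"). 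Crucially $\mathcal{T}_{i,m}$ is genuinely one-dimensional in $\mathbb{F}^3$: fixing $m$, the constraint $\mathcal{J}_m$ pins down $x_2$ (hence $t$, via the line) as a function of... actually it pins $x_2/t = -m$, so on the slab $t \ne 0$ the support is the $2$-plane $\{x_2 = -mt + \dots\}$ — no: re-examining, $\mathcal{J}_m(x_1,x_2,t) = \delta(x_2/t + m)$ constrains only the pair $(x_2,t)$, leaving $x_1$ free, so $\mathcal{T}_{i,m}$ is a line in the $x_1$-direction sitting over a line in the $x_2$-$t$ plane.

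Then I would expand $\|TF\|_{L^2}^2 = \sum_{i,i',m,m'} a_m^{(i)} \overline{a_{m'}^{(i')}} \sum_{x} e((m-m')x_1) \mathcal{T}_{i,m}(x)\mathcal{T}_{i',m'}(x)$. The $x_1$-sum over the common line vanishes unless $m = m'$ (orthogonality of characters along the $x_1$-fiber, when the supports overlap on a full fiber) — so the diagonal $m = m'$ dominates, leaving
\[
\|TF\|_{L^2}^2 \lesssim \sum_{m} \sum_{i,i'} a_m^{(i)} \overline{a_m^{(i')}} \,|\mathbb{F}|\cdot \mathbf{1}\bigl[\overline{\mathcal{T}_{i,m}} = \overline{\mathcal{T}_{i',m}}\bigr],
\]
since two tubes with the same slope $-m$ either project to the same line in $\mathbb{F}^2$ (then they coincide in $\mathbb{F}^3$, contributing $|\mathbb{F}|$) or to parallel disjoint lines (contributing $0$). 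For fixed $m$, the points $i \in I$ whose slope-$(-m)$ line equals a given line $\ell$ are exactly $I \cap \ell$, which has size $\le |\mathbb{F}|^u$. So the inner double sum is $\sum_{\ell : \text{slope } -m} \bigl|\sum_{i \in I \cap \ell} a_m^{(i)}\bigr|^2 \le |\mathbb{F}|^u \sum_{\ell} \sum_{i \in I \cap \ell} |a_m^{(i)}|^2 = |\mathbb{F}|^u \sum_{i \in I} |a_m^{(i)}|^2$ by Cauchy--Schwarz. Summing over $m$ and recalling that $\|F\|_{L^2}^2 = \sum_i \|\tilde f_i\|_{L^2_{x_1}}^2 = \sum_i |\mathbb{F}| \sum_m |a_m^{(i)}|^2$ (Plancherel in $x_1$) gives
\[
\|TF\|_{L^2}^2 \lesssim |\mathbb{F}|^{1+u} \sum_{i,m} |a_m^{(i)}|^2 = |\mathbb{F}|^{u}\, \|F\|_{L^2}^2,
\]
which is even a little better than claimed; the stated $|\mathbb{F}|^{(1+u)/2}$ bound follows (with room to spare, which presumably absorbs the $t=0$ degenerate fibers and any slop in the overlap-counting).

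The main obstacle I anticipate is the bookkeeping around the $t = 0$ stratum and, more seriously, making precise the claim that two distinct tubes of the same slope contribute nothing unless they coincide. When the supports of $\mathcal{T}_{i,m}$ and $\mathcal{T}_{i',m}$ overlap, the overlap is a full $x_1$-fiber, so the character sum $\sum_{x_1} e((m-m')x_1)$ is a clean orthogonality relation — but I must be careful that for $m = m'$ the two tubes, if they share any point, share the whole fiber, so the "coincide or disjoint" dichotomy really does hold at the level of lines in $\mathbb{F}^2$, and that the horizontal ($t=0$) tubes interact with the $t \ne 0$ tubes only on a lower-dimensional set that is harmless after Cauchy--Schwarz. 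I would handle this by treating the horizontal direction as just one more value of $m$ in the indexing of $\mathbb{T}$, noting that the geometry (a line in the $x_2$-$t$ plane with a fixed slope, including the slope corresponding to $t=0$) is uniform across all cases, so the same incidence bound $|I \cap \ell| \le |\mathbb{F}|^u$ applies. Everything else is routine Plancherel and Cauchy--Schwarz.
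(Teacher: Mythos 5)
Your approach is essentially the paper's: expand $TF$ via Lemma~\ref{BRolines}, exploit orthogonality of the characters $e(mx_1)$ over the $x_1$-fibers, and then control the off-diagonal contribution by Cauchy--Schwarz using the hypothesis $|I\cap\ell|\le|\mathbb{F}|^u$. However, there is a concrete bookkeeping gap that leads you to claim a bound $|\mathbb{F}|^{u/2}$, strictly stronger than the stated $|\mathbb{F}|^{(1+u)/2}$, and that stronger claim is not correct.

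The error is the dimension of the tubes $\mathcal{T}\in\mathbb{T}$. Despite a mid-course correction, you end up treating $\mathcal{T}$ as one-dimensional. In fact $\text{supp}(\mathcal{J}_m)=\{(x_1,x_2,t):x_2=-mt\}$ is a $2$-plane in $\mathbb{F}^3$: it is the full $x_1$-fiber over each of the $|\mathbb{F}|$ points of a line $\overline{\mathcal{T}}\subset\mathbb{F}^2$, so $|\mathcal{T}|=|\mathbb{F}|^2$. Consequently, when two same-slope tubes coincide, the quantity $\sum_x \mathcal{T}_{i,m}(x)\mathcal{T}_{i',m}(x)$ equals $|\mathbb{F}|^2$, not the $|\mathbb{F}|$ you inserted. (Equivalently, in the paper's organisation: orthogonality in $x_1$ at a fixed $(x_2,t)$ produces one factor $|\mathbb{F}|$, and then summing $\|TF\|_{L^2_{x_1}}^2(x_2,t)$ over the $|\mathbb{F}|$ points of each line $\overline{\mathcal{T}}$ produces a second factor $|\mathbb{F}|$.) Restoring the lost power, your chain reads
$$\|TF\|_{L^2}^2 \lesssim |\mathbb{F}|^{2+u}\sum_{i,m}|a_m^{(i)}|^2 = |\mathbb{F}|^{1+u}\,\|F\|_{L^2}^2,$$
which is precisely the paper's estimate and yields $|\mathbb{F}|^{(1+u)/2}$ after taking square roots --- there is no slack to absorb, and the speculation that the extra $|\mathbb{F}|^{1/2}$ covers ``slop'' should be deleted. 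The other issues you flag (the $t=0$ stratum, the coincide-or-disjoint dichotomy for same-slope tubes) are indeed unproblematic and are handled uniformly by treating the horizontal direction as one more value of the direction parameter, exactly as you suggest.
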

\begin{proof}Recalling the notation $i = (x_{0}^{(i)},s_{0}^{(i)})$ for $i \in I$, by Lemma \ref{BRolines} we have
$$Tf_i (x_1,x_2,t) = \sum_{\substack{\mathcal{T} \in \mathbb{T} \\ (x_{0}^{(i)},s_{0}^{(i)}) \in \overline{\mathcal{T}}  }} a_{m_\mathcal{T}}^{(i)}  e\left(m_\mathcal{T} x_1 \right) \mathcal{T}(x_1,x_2 ,t).$$
It follows that we may write
$$TF (x_1,x_2,t) = \sum_{\mathcal{T} \in \mathbb{T}} b_{m_\mathcal{T}}  e\left(m_\mathcal{T} x_1 \right) \mathcal{T}(x_1,x_2 ,t)$$
where the coefficients $b_{m_\mathcal{T}}$ are given by the formula
$$b_{\mathcal{T}} = \sum_{\substack{ i \in I \\ (x_{0}^{(i)},s_{0}^{(i)}) \in \overline{\mathcal{T}} }} a_{m_\mathcal{T}}^{(i)}.$$
By orthogonality, using the fact that the set of $\mathcal{T}$ that pass through a single point must all have distinct directions $m_{\mathcal{T}}$, we have
\[ ||TF||_{L^2_{x_1}}^2(x_2,t) = |\mathbb{F}| \sum_{\substack{ \mathcal{T} \in \mathbb{T} \\ (x_2,t) \in \overline{\mathcal{T}}  }} |b_\mathcal{T}|^2.\]
Since there are $|\mathbb{F}|$ points in each line $\overline{\mathcal{T}}$ it follows that
\[ ||TF||_{L^2(\mathbb{F}^3,dx)}^{2} = |\mathbb{F}|^2\sum_{\mathcal{T} \in \mathbb{T} } |b_\mathcal{T}|^2 .\]
From Cauchy-Schwarz and the hypothesis $|I \cap \overline{\mathcal{T}}| \leq |\mathbb{F}|^{u}$, we have that
$$ |b_{\mathcal{T}}|^2 = |\sum_{\substack{ i \in I \\ (x_{0}^{(i)},s_{0}^{(i)}) \in \overline{\mathcal{T}} }} a_{m_\mathcal{T}}^{(i)}|^2 \leq |\mathbb{F}|^{ u} \sum_{\substack{ i \in I \\ (x_{0}^{(i)},s_{0}^{(i)}) \in \overline{\mathcal{T}} }} |a_{m(\mathcal{T})}^{(i)}|^2.$$
Thus
\[ ||TF||_{L^2(\mathbb{F}^3,dx)}^{2} \lesssim |\mathbb{F}|^{2+ u}\sum_{\mathcal{T} \in \mathbb{T} }  \sum_{\substack{ i \in I \\ (x_{0}^{(i)},s_{0}^{(i)}) \in \overline{\mathcal{T}} }} |a_{m(\mathcal{T})}^{(i)}|^2 = |\mathbb{F}|^{2+ u} \sum_{i\in I} \sum_{m \in \mathbb{F}} |a_{m}^{(i)}|^2.  \]
Using the relation $||\tilde{f}_i||_{L^2_{x_1}}^{2} = |\mathbb{F}| \sum_{m \in \mathbb{F}} |a_{m}^{(i)}|^2 $, which implies that $ \sum_{i\in I} \sum_{m \in \mathbb{F}} |a_{m}^{(i)}|^2 = |\mathbb{F}|^{-1}||F||_{L^2(\mathbb{F}^3,dx)}^2 $, we conclude that
\[||TF||_{L^2(\mathbb{F}^3,dx)}^{2} \lesssim |\mathbb{F}|^{1+u}||F||_{L^2(\mathbb{F}^3,dx)}^{2}.   \]
Taking square roots completes the proof.
\end{proof}
We now record the following consequence of Lemma \ref{lem:BR}, which we have formulated in a convenient manner for our application.
\begin{corollary}\label{cor:BR}Let $F$ be a function on $\mathbb{F}^3$ such that $|F| \sim 1$ on its support $E \subseteq \mathbb{F}^3$. Let $|E| = |\mathbb{F}|^{\gamma}$ and assume that $E = \cup_i U_i$ is the union of sets of the form $U_{i}$ where each $U_i$ is supported on a VH line and $|U_i| \geq |\mathbb{F}|^{\beta}$. Moreover, assume that $|P \cap E| \leq |\mathbb{F}|^{\alpha}$ for every VH plane $P$. Then:
$$||TF||_{L^2(\mathbb{F}^3,dx)} \lesssim |\mathbb{F}|^{(1+\alpha- \beta)/2}||F||_{L^2}. $$
In addition, we have that
$$||\widehat{F} ||_{L^2(\mathcal{H},d\sigma)} \lesssim |\mathbb{F}|^{(1+\alpha- \beta)/4} ||F||_{L^2} .$$
\end{corollary}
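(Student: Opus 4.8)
The plan is to read the first inequality off from Lemma~\ref{lem:BR}, the only real content being the translation of the hypotheses. First I would split according to the type of VH line. Let $\mathcal{L}_1$ be the set of type $1$ VH lines $\ell$ with $|E\cap\ell|\geq|\mathbb{F}|^{\beta}$, put $E_1:=\bigcup_{\ell\in\mathcal{L}_1}(E\cap\ell)$, and define $\mathcal{L}_2$, $E_2$ analogously for type $2$. Since every $U_i$ is supported on a VH line $\ell$ with $|E\cap\ell|\geq|U_i|\geq|\mathbb{F}|^{\beta}$, we get $E=E_1\cup E_2$. Distinct type $1$ VH lines are disjoint, so $E_1=\bigsqcup_{\ell\in\mathcal{L}_1}(E\cap\ell)$, and likewise for $E_2$. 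Set $F_1:=F\cdot 1_{E_1}$ and $F_2:=F\cdot 1_{E_2\setminus E_1}$, so that $F=F_1+F_2$, $TF=TF_1+TF_2$, and $\|F_j\|_{L^2}\leq\|F\|_{L^2}$.

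Next I would apply Lemma~\ref{lem:BR} to $F_1$. Writing the lines of $\mathcal{L}_1$ as $\{(x_1,a,b):x_1\in\mathbb{F}\}$ and letting $I\subseteq\mathbb{F}^2$ be the set of base points $(a,b)$, the function $F_1=\sum_{\ell\in\mathcal{L}_1}F\cdot 1_{E\cap\ell}$ has exactly the form considered in Lemma~\ref{lem:BR}. The one thing to verify is the incidence bound, and here I would use the observation that the proof of Lemma~\ref{lem:BR} invokes $|I\cap\overline{\mathcal{T}}|\leq|\mathbb{F}|^{u}$ only for lines $\overline{\mathcal{T}}$ with $\mathcal{T}\in\mathbb{T}$, which are precisely the lines of the $x_2$--$t$ plane of the form $\{x_2=-mt+c\}$, i.e.\ all lines other than those of the form $\{t=\mathrm{const}\}$. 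For such a line $\ell^{*}=\{x_2=-mt+c\}$, its preimage $P^{*}:=\{(x_1,x_2,t)\in\mathbb{F}^3:x_2=-mt+c\}$ is a $2$-plane that is a union of type $1$ VH lines and is not of the excluded form $\{t=\mathrm{const}\}$; hence $P^{*}$ is a VH plane and $|E\cap P^{*}|\leq|\mathbb{F}|^{\alpha}$. Every $\ell\in\mathcal{L}_1$ with base point on $\ell^{*}$ lies inside $P^{*}$ and contributes at least $|\mathbb{F}|^{\beta}$ points to $E\cap P^{*}$, and for distinct such $\ell$ these point sets are disjoint; therefore $|I\cap\ell^{*}|\,|\mathbb{F}|^{\beta}\leq|E\cap P^{*}|\leq|\mathbb{F}|^{\alpha}$, i.e.\ $|I\cap\ell^{*}|\leq|\mathbb{F}|^{\alpha-\beta}$. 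Lemma~\ref{lem:BR} with $u=\alpha-\beta$ then gives $\|TF_1\|_{L^2}\lesssim|\mathbb{F}|^{(1+\alpha-\beta)/2}\|F_1\|_{L^2}$; by the symmetry of $T$ in the $x_1$ and $x_2$ variables (the remark preceding Lemma~\ref{BRolines}), running the same argument with the roles of $x_1$ and $x_2$ interchanged gives $\|TF_2\|_{L^2}\lesssim|\mathbb{F}|^{(1+\alpha-\beta)/2}\|F_2\|_{L^2}$. Summing and using $\|F_j\|_{L^2}\leq\|F\|_{L^2}$ proves the first inequality.

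The second inequality is then immediate from the Plancherel identity used in the proof of Lemma~\ref{Lem:STsupp}. Since $\widehat{F}$ is supported on $\mathcal{H}$ and $\tilde{K}=(d\sigma)^{\vee}$, we have $\|\widehat{F}\|_{L^2(\mathcal{H},d\sigma)}^2=\langle F,F*\tilde{K}\rangle=\langle F,TF\rangle$, which is real and nonnegative; Cauchy--Schwarz together with the bound just proved gives $\|\widehat{F}\|_{L^2(\mathcal{H},d\sigma)}^2\leq\|F\|_{L^2}\|TF\|_{L^2}\lesssim|\mathbb{F}|^{(1+\alpha-\beta)/2}\|F\|_{L^2}^2$, and taking square roots finishes the proof. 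I expect the only delicate point to be the middle step, and this is exactly why the hypothesis is phrased for VH planes rather than arbitrary planes: the excluded planes $\{t=\mathrm{const}\}$ are precisely the preimages of the lines $\{t=\mathrm{const}\}$ that never occur as some $\overline{\mathcal{T}}$, so the VH-plane bound $|E\cap P|\leq|\mathbb{F}|^{\alpha}$ delivers exactly the incidence information that Lemma~\ref{lem:BR} consumes and nothing more is needed. Everything else is routine bookkeeping.
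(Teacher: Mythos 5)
Your proof is correct and follows essentially the same route as the paper's own argument: split $F$ according to the type of VH line, invoke Lemma~\ref{lem:BR} with $u=\alpha-\beta$, and obtain the second inequality from $\|\widehat F\|_{L^2(\mathcal{H},d\sigma)}^2=\langle F,TF\rangle$ plus Cauchy--Schwarz. One small place where you are actually \emph{more} careful than the paper: the paper asserts $|I\cap\ell|\lesssim|\mathbb{F}|^{\alpha-\beta}$ ``for every line $\ell$ in $\mathbb{F}^2$,'' but this is only justified by the VH-plane hypothesis for lines $\ell$ whose preimage is a genuine VH plane, i.e.\ not the excluded planes $\{t=\mathrm{const}\}$. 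You correctly note that these excluded lines never arise as some $\overline{\mathcal{T}}$ inside the proof of Lemma~\ref{lem:BR}, so the hypothesis on VH planes supplies exactly the incidence information Lemma~\ref{lem:BR} consumes; this observation quietly patches a minor imprecision in the paper's wording. Otherwise the two arguments agree.
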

\begin{proof}We start by proving the first claim. By splitting the function into two parts, we may assume that all of the VH lines in the union defining $E$ are of the same type. By symmetry we may assume that these are all type 1. As above, we let $I$ denote the projection of $E$ onto the $x_2$-$t$ axes. By the hypothesis, each VH plane intersects $E$ on a set of size at most $|\mathbb{F}|^{\alpha}$. Since the intersection of $E$ with a VH plane will contain a point in $U_i$ if and only if it contains all of $U_i$, it follows that
$$ |I \cap \ell| \lesssim |\mathbb{F}|^{\alpha-\beta}  $$
for every line $\ell$ in $\mathbb{F}^2$. Noting that $||F||_{L^2(\mathbb{F}^3,dx)}=|\mathbb{F}|^{\gamma/2}$ the first claim now follows from Lemma \ref{lem:BR}. Applying the first claim, we see that
$$||\widehat{F} ||_{L^2(\mathcal{H},d\sigma)}  = |\left<F , TF \right>|^{1/2} \leq ||F||_{L^2(\mathbb{F}^3, dx)}^{1/2} ||TF||_{L^2(\mathbb{F}^3, dx)}^{1/2} \lesssim |\mathbb{F}|^{(1+\alpha- \beta)/4} ||F||_{L^2(\mathbb{F}^3, dx)}. $$
\end{proof}

\section{An energy estimate for the $3$ dimensional hyperbolic paraboloid}\label{sec:3dEnergy}
Let $E \subseteq \mathcal{H}$. The purpose of this section is to develop an estimate for the additive energy of $E$ defined by
$$\Lambda(E) := \sum_{\substack{a+b = c+d \\ a,b,c,d \in E} }1 = \sum_{\substack{a-d = c-b \\ a,b,c,d \in E} } 1.$$
We wish to understand the ways in which the above quantity can be large. Define $E_j := \{(j,y,jy): (j,y,jy)\in E \}$ and $E^{j} := \{(x,j,xj): (x,j,xj)\in E \}$, that is the intersection of $E$ with a given vertical and horizontal line, respectively. In addition for $a,b,c,d \in E$ we will write $a=(a_1,a_2,a_1\cdot a_2)$, $b=(b_1,b_2,b_1  b_2)$, $c=(c_1,c_2,c_1 c_2)$ and $d=(d_1,d_2,d_1  d_2)$. We then have that

$$\Lambda(E) := \sum_{\substack{a+b = c+d \\ a,b,c,d \in E} }1 \lesssim \sum_{\substack{a-d = c-b \\ a,b,c,d \in E \\ b_1 \neq d_1, b_2 \neq d_2} } 1 +
\sum_{j \in \mathbb{F}} \sum_{\substack{a-d = c-b \\ a,b,c,d \in E \\ b_1 = d_1 = j \\ b_2 \neq d_2} } 1
+\sum_{k \in \mathbb{F}} \sum_{\substack{a-d = c-b \\ a,b,c,d \in E \\ b_2 = d_2 = k \\ b_1 \neq d_1} } 1
+ \sum_{\substack{a-d = c-b \\ a,b,c,d \in E \\b=d \\ a=c}} 1 .$$
Define $\Lambda^{*}(E)$ to be the first quantity on the right side of the above inequality. The purpose of the next lemma is to estimate the second and third terms on the right.
\begin{lemma}With the notation above, we have the estimate
$$ \sum_{\substack{a-d = c-b \\ a,b,c,d \in E \\ b_1 = d_1 = j \\ b_2 \neq d_2} } 1 \lesssim \Lambda(E_{j}).$$
\end{lemma}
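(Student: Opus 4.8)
The plan is to count solutions to $a - d = c - b$ with the constraint $b_1 = d_1 = j$ and $b_2 \neq d_2$, and compare this to the additive energy $\Lambda(E_j)$ of the vertical slice $E_j = \{(j,y,jy) \in E\}$. First I would write out the constraint componentwise. Writing $a = (a_1, a_2, a_1 a_2)$ etc., the equation $a - d = c - b$ becomes the three scalar equations $a_1 - d_1 = c_1 - b_1$, $a_2 - d_2 = c_2 - b_2$, and $a_1 a_2 - d_1 d_2 = c_1 c_2 - b_1 b_2$. Since $b_1 = d_1 = j$, the first equation forces $a_1 = c_1$; call this common value $x$. So $a = (x, a_2, x a_2)$ and $c = (x, c_2, x c_2)$ lie on the \emph{same} vertical line $E^x$... wait, that is the horizontal-line notation; more precisely $a, c$ have the same first coordinate $x$, while $b, d$ have first coordinate $j$.

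Next I would substitute into the third equation. With $b = (j, b_2, j b_2)$, $d = (j, d_2, j d_2)$, the third equation reads $x a_2 - j d_2 = x c_2 - j b_2$, i.e. $x(a_2 - c_2) = j(d_2 - b_2)$. The second equation says $a_2 - d_2 = c_2 - b_2$, equivalently $a_2 - c_2 = d_2 - b_2$. Combining, $x(a_2 - c_2) = j(a_2 - c_2)$. Since $b_2 \neq d_2$ we have $a_2 - c_2 = d_2 - b_2 \neq 0$, hence $x = j$. Therefore all four points $a, b, c, d$ lie on the \emph{same} vertical line $E_j$, and the remaining relations $a_2 - d_2 = c_2 - b_2$ (with the third coordinate relation now automatic) say exactly that $(a,b,c,d)$ is an additive quadruple in $E_j$ (identifying $E_j$ with a subset of $\mathbb{F}$ via the second coordinate, or just noting $a + b = c + d$ in $\mathbb{F}^3$ restricted to this line is equivalent to $a_2 + b_2 = c_2 + d_2$). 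Summing over $j$, the number of such quadruples is at most $\Lambda(E_j)$, giving the claimed bound (the $\lesssim$ absorbing the fact that we may have relaxed $b_2 \neq d_2$ to all quadruples, which only increases the count).

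There is essentially no obstacle here — the lemma is a bookkeeping computation. The only mild subtlety is making sure the degenerate substructure genuinely collapses: one must use both the $b_1 = d_1$ hypothesis (to force $a_1 = c_1$) \emph{and} the $b_2 \neq d_2$ hypothesis (to force the common first coordinate to equal $j$ rather than just some arbitrary $x$). If one skipped the $b_2 \neq d_2$ condition, the quadruples with $b_2 = d_2$ would contribute the "horizontal" degenerate term instead, which is precisely why the original decomposition of $\Lambda(E)$ separates these cases. So I would be careful to invoke $b_2 \neq d_2$ at exactly the step $x(a_2 - c_2) = j(a_2 - c_2) \Rightarrow x = j$, and then note that the resulting conditions on $(a,b,c,d) \in E_j^4$ are implied by the single scalar equation $a_2 + b_2 = c_2 + d_2$, so the count is bounded by $\Lambda(E_j)$.
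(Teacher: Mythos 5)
Your proof is correct and follows essentially the same route as the paper: use the first coordinate equation to force $a_1=c_1$, substitute the second-coordinate relation into the third to get $a_1(a_2-c_2)=j(a_2-c_2)$, and use $b_2\neq d_2$ to cancel and conclude $a_1=c_1=j$, so all four points lie on $E_j$. One small slip in wording: at the end you say "summing over $j$," but $j$ is fixed in this lemma, so no sum is taken; the count for the fixed $j$ is bounded by $\Lambda(E_j)$ directly.
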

\begin{proof}We may rephrase the claim in the following way: if
$$(a_1,a_2,a_1 a_2)-(j,d_2,j d_2) = (c_1,c_2,c_1 c_2)-(j,b_2,j b_2)  $$
then $a_1=c_1 =j$. Considering the coordinates of this relation, we have that
$$a_1 =c_1,\text{   } a_2-d_2 =c_2 -b_2,\text{   } a_1  a_2- c_1  c_2  =  j  (d_2 - b_2 ).  $$
Substituting $a_1 = c_1$ and $d_2 -b_2 = a_2 -c_2$ into the third relation gives
$$ a_1  ( a_2-  c_2)  =  j  (a_2 - c_2 )   $$
which implies that $c_1=a_1=j$ as long as $a_2 - c_2 \neq 0$ (which is implied by the fact that $b_2 \neq d_2$). This completes the proof.
\end{proof}

Let us define the subset $\mathcal{H}^{*} \subset \mathcal{H}$,  by $\mathcal{H}^{*} = \{(\omega_1,\omega_2, \omega_1  \omega_2)  \in \mathcal{H} : \omega_1 \neq 0, \omega_2 \neq 0\} $. That is $\mathcal{H}^{*}$ are the elements of $\mathcal{H}$ with non-zero coordinates.

\begin{proposition}\label{prop:Energy}Let $E \subseteq \mathcal{H}^{*}$ then
$$\Lambda^{*}(E) \lesssim |E|^{5/2}.$$
\end{proposition}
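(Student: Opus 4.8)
The plan is to expand $\Lambda^{*}(E)$ into a point-counting problem in the base plane $\mathbb{F}^{2}$ and then estimate the latter by a Cauchy--Schwarz amplification. Write a generic quadruple counted by $\Lambda^{*}(E)$ as $a=d+p$, $c=b+p$ with $p=(p_{1},p_{2},p_{3})\in\mathbb{F}^{3}$. The memberships $a,c\in\mathcal{H}$ force $p_{3}=d_{1}p_{2}+d_{2}p_{1}+p_{1}p_{2}=b_{1}p_{2}+b_{2}p_{1}+p_{1}p_{2}$, so $p_{3}$ is determined by $(p_{1},p_{2})$, and subtracting gives $\langle (d_{1}-b_{1},d_{2}-b_{2}),(p_{1},p_{2})\rangle=0$, where $\langle x,y\rangle:=x_{1}y_{2}+x_{2}y_{1}$ is the symmetric bilinear form associated with $\mathcal{H}$. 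Put $A:=\pi(E)\subseteq\mathbb{F}^{2}$ and write $\bar a,\bar b,\ldots$ for projections. The constraint $b_{1}\ne d_{1}$, $b_{2}\ne d_{2}$ says $\bar d-\bar b$ has both coordinates nonzero; since $\langle\cdot,\cdot\rangle$ is nondegenerate this forces $(p_{1},p_{2})=t\,(\bar d-\bar b)^{\ast}$ for some $t\in\mathbb{F}$, where $x^{\ast}:=(x_{1},-x_{2})$ and $\langle x,x^{\ast}\rangle=0$. The term $t=0$ (i.e.\ $p=0$, $a=d$, $c=b$) contributes at most $|E|^{2}$. For $t\ne0$, set $\bar a=\bar d+(p_{1},p_{2})\in A$; then $\bar a-\bar d$ has nonzero coordinates, $\langle \bar a-\bar d,\bar b-\bar d\rangle=t\,\langle(\bar d-\bar b)^{\ast},\bar b-\bar d\rangle=0$, and $\bar c=\bar a+\bar b-\bar d$ is determined. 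Discarding the now-redundant requirement $\bar c\in A$, one obtains
\[
\Lambda^{*}(E)\ \le\ |E|^{2}+\Theta(A),\qquad \Theta(A):=\#\bigl\{(u,v,w)\in A^{3}:\ \langle v-u,\,w-u\rangle=0,\ (v_{1}-u_{1})(v_{2}-u_{2})\ne0\bigr\},
\]
where $u=\bar d$, $v=\bar a$, $w=\bar b$.

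The heart of the matter is the bound $\Theta(A)\lesssim|A|^{5/2}$. Write $\Theta(A)=\sum_{(v,w)\in A^{2}}N(v,w)$ with $N(v,w)$ the number of admissible $u\in A$; Cauchy--Schwarz gives $\Theta(A)\le|A|\bigl(\sum_{(v,w)}N(v,w)^{2}\bigr)^{1/2}$, and $\sum_{(v,w)}N(v,w)^{2}$ counts $4$-tuples $(v,w,u,u')$ with $u,u'$ both admissible for $(v,w)$. The diagonal $u=u'$ contributes exactly $\Theta(A)$. For the off-diagonal part fix $v,u,u'\in A$ (with $v-u,\ v-u'$ having nonzero coordinates, $u\ne u'$) and count $w\in A$ solving $\langle v-u,w-u\rangle=0$ and $\langle v-u',w-u'\rangle=0$: these are two affine-linear equations in $w$ whose linear parts have coefficient vectors obtained from $v-u$ and $v-u'$ by interchanging coordinates, hence proportional iff $v,u,u'$ are collinear. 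If $v,u,u'$ are not collinear there is at most one $w$, so this case contributes $\le|A|^{3}$ in total. If they are collinear, the common direction is non-axis (since $v-u$ has nonzero coordinates), so $v-u$ and $(v-u)^{\ast}$ are independent; the two solution sets are the parallel lines $u+\langle(v-u)^{\ast}\rangle$ and $u'+\langle(v-u)^{\ast}\rangle$, and since $u-u'$ is a nonzero multiple of $v-u$ it does not lie in $\langle(v-u)^{\ast}\rangle$, so the lines are distinct and there is no common $w$. Hence $\sum_{(v,w)}N(v,w)^{2}\le\Theta(A)+O(|A|^{3})$, so $\Theta(A)^{2}\lesssim|A|^{2}\Theta(A)+|A|^{5}$, which forces $\Theta(A)\lesssim|A|^{5/2}$. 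Since $|A|=|E|$, combining with the previous display yields $\Lambda^{*}(E)\lesssim|E|^{2}+|E|^{5/2}\lesssim|E|^{5/2}$.

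The step I expect to require the most care is the collinear case in the Cauchy--Schwarz expansion: one must verify that when $v,u,u'$ lie on a common line the two linear conditions on $w$ become genuinely inconsistent rather than redundant. This is exactly where one uses that $v-u$ has nonzero coordinates (inherited from the defining restriction of $\Lambda^{*}$) together with $\operatorname{char}\mathbb{F}>2$, so that a non-axis vector is never a scalar multiple of its $\ast$-image — equivalently, a non-axis direction is never $\langle\cdot,\cdot\rangle$-orthogonal to itself. A secondary, purely bookkeeping, point is to peel off the $O(|E|^{2})$ contributions — the $p=0$ term and the harmless weakenings made in passing from the main term to $\Theta(A)$ — before applying Cauchy--Schwarz.
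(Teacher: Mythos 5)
Your proof is correct, and it takes a genuinely different route from the paper's. The paper first extracts a factor $|E|$ by freezing $b$, then uses the dilation symmetry $(x,y,xy)\mapsto(\tau_1 x,\tau_2 y,\tau_1\tau_2 xy)$ to normalize $b=(1,1,1)$ (this is the only step that uses the hypothesis $E\subseteq\mathcal{H}^{*}$), turns the remaining two-variable count into a point--line incidence problem in $\mathbb{F}^{2}$, verifies by a somewhat lengthy algebraic case analysis that the map $d\mapsto\ell(d)$ is at most two-to-one, and finally applies the Cauchy--Schwarz incidence estimate \eqref{eq:CSinc} as a black box. Your argument instead keeps $b$ free, recasts the count as a three-point orthogonality condition $\langle v-u,w-u\rangle=0$ for the bilinear form $\langle x,y\rangle=x_{1}y_{2}+x_{2}y_{1}$ associated to $\mathcal{H}$, and runs the Cauchy--Schwarz amplification directly on $\Theta(A)=\sum_{(v,w)}N(v,w)$, handling the off-diagonal part of $\sum N^{2}$ by the collinearity dichotomy: when $v,u,u'$ are not collinear the two linear conditions on $w$ pin down at most one point, and when they are collinear the two parallel solution lines are disjoint because a vector with both coordinates nonzero is never proportional to its reflection $x\mapsto x^{\ast}$ in characteristic $>2$. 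That collinearity lemma plays precisely the role of the paper's two-to-one claim but is shorter and purely linear-algebraic, replacing the explicit solving of quadratic equations in $d$ with a direct non-isotropy observation. A small bonus of your route is that it avoids the scaling normalization entirely, so the argument in fact yields the bound for arbitrary $E\subseteq\mathcal{H}$, not only $E\subseteq\mathcal{H}^{*}$; the paper's $\mathcal{H}^{*}$ restriction is needed only to make the normalization of $b$ legitimate.
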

\begin{proof}
We estimate
\[
\Lambda^{*}(E) \leq |E| \max_{b}\sum_{\substack{a-d = \mathcal{H} -b \\ a,d \in E \\ b_1 \neq d_1, b_2 \neq d_2} } 1.
\]
Let $\tau:=(\tau_1,\tau_2)$ (with $\tau_1,\tau_2 \neq 0$) we note that applying the mapping $\tau: (x,y,xy) \rightarrow (\tau_{1}x,\tau_{2}y,\tau_{1}\tau_{2}xy)$ to the set $E$ preserves the number of solutions in the sum above. Thus, after an appropriate application of such a transformation, we may assume that $b=(1,1,1)$, which we will denote as $\mathbf{1}$. The restriction on the sum above now becomes that $d=(d_1,d_2,d_1 d_2)$ satisfies $d_1 \neq 1$ and $d_2 \neq 1$. Elements of $\mathcal{H}-\mathbf{1}$ are of the form $(h_1 -1 ,h_2 -1 , h_1 h_2 -1)$. Thus $(a_1,a_2,a_1 a_2)-(d_1,d_2,d_1 d_2) \in \mathcal{H}-\mathbf{1}$ will occur if and only if \[(a_1 - d_1, a_2 - d_2, a_1 a_2 - d_1 d_2) = \left( (a_1 - d_1 +1)-1, (a_2 - d_2 +1) -1, (a_1 - d_1 +1)(a_2 - d_2 +1) -1 \right) .\]
In other words, we must have:
\[ a_1 a_2 - d_1 d_2 =  (a_1 - d_1 +1)(a_2 - d_2 +1) -1,  \]
\[a_1 d_2 + a_2 d_1 +d_1 +d_2 = 2 d_1 d_2 +a_1 +a_2, \]
\[a_1(d_2 -1) + a_2 (d_1 -1) = 2d_1 d_2 -d_1 -d_2, \]
where we may assume that $d_2-1 \neq 0$ and $d_1 -1 \neq 0$. Now for a fixed $d$ (of this form) consider $\ell(d):= \{(a_1,a_2) \in \mathbb{F}^2 : a_1(d_2 -1) + a_2 (d_1 -1) = 2d_1 d_2 -d_1 -d_2  \}$. Clearly $\ell(d)$ is a line $\mathbb{F}^2$. We wish to understand for how many different choices of $d=(d_1,d_2,d_1 d_2)$ can give rise to the same line under the relation $ d\rightarrow \ell(d)$. The restriction on $d$ ensures that the line will be neither vertical or horizontal.  In other words, each line of this form will have an $x$ and $y$ intercept. Indeed, these points are explicitly given by:
$$(0, (2d_1 d_2 - d_1 -d_2 ) (d_1 -1)^{-1}),$$
$$((2d_1 d_2 - d_1 -d_2 ) (d_2 -1)^{-1}, 0 )$$
These two intercepts will determine the line, unless they both are $0$. This occurs if and only if $2d_1 d_2 -d_1 -d_2 =0$. Let us first assume that this is not the case.  Fix $A \neq 0$ and $B\neq 0$ and consider the equations:
$$(2d_1 d_2 - d_1 -d_2 ) (d_1 -1)^{-1}=A,$$
$$(2d_1 d_2 - d_1 -d_2 ) (d_2 -1)^{-1}=B.$$
From the first equation we deduce that $d_2(2 d_1 -1) = (A+1)d_1 - A$.  Now assume that $d_1 \neq 2^{-1}$. Then $d_2= ((A+1)d_1 -A)(2 d_1 -1)^{-1}$. Substituting this into the second equation we have
$$ ((2d_1 ((A+1)d_1 -A) - d_1(2 d_1 -1) - ((A+1)d_1 -A) )=B((A+1)d_1 -A -2 d_1 +1),  $$
$$2A d_1^2 + (-3A-BA+B)d_1 +(A+AB-B)=0.$$
From the factor theorem we see that there are at most two solutions in $d_1$. Now we consider the case $d_1 = 2^{-1}$. We have then (from the equation involving $B$) that $ -1  =2B(d_2 -1) $ which has only one solution.
Finally, we return to the case in which both intercepts are $0$ which occurs precisely when $2d_1 d_2 -d_1 -d_2 =0$. There are $O(|\mathbb{F}|)$ lines passing through $(0,0)$. To specify such a line we may select its intersection with, say, $x=-1$.  Some algebra shows that this intercept is given by
$$(-1,(d_1-1)^{-1} (2d_1 d_2 -d_1 -1)).$$
Let us set $C=(d_1-1)^{-1} (2d_1 d_2 -d_1 -1))$ (for $C \neq 0$), using the relation $2d_2=2d_1/(2d_1 -1)$ we may conclude that
$$2C d_1^2 -3 d_1 (C+1) +C+1 =0.$$
Again, this equation has at most $2$ solutions in $d_1$.  Since each line $\ell(d)$ has at most two representations, we have
\[\Lambda^{*}(E)  \lesssim |E| \sum_{a, d \in E} 1_{a \in \ell(d)}.\]
We now recall the Cauchy-Schwarz incidence estimate. Let $P \subseteq \mathbb{F}^2$ and $L$ a set of distinct lines in $\mathbb{F}^2$, such that $|P|,|L| \leq N$. Then, one has the incidence estimate
\begin{equation}\label{eq:CSinc}
|\{(x,\ell) \in P \times L :x \in L \} | \lesssim N^{3/2}.
\end{equation}From this, we may conclude that
\[\Lambda^{*}(E)  \lesssim |E|^{5/2}\]
which completes the proof.
\end{proof}
Combining the previous two lemmas, and recalling the trivial estimates $\Lambda(E) \leq |E|^3$ and $ \sum_{\substack{a-d = c-b \\ a,b,c,d \in E \\b=d \\ a=c}} 1 \lesssim |E|^2$ , we may conclude that
\begin{lemma}\label{lem:L52}Let $E \subseteq \mathcal{H}$ then
$$\Lambda(E) \lesssim |E|^{5/2} + \sum_{j \in \mathbb{F}}|E_{j}|^3 + \sum_{k \in \mathbb{F}}|E^{k}|^3.$$
\end{lemma}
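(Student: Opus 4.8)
The plan is to run the decomposition of $\Lambda(E)$ displayed at the beginning of this section and bound each of its four pieces. Two of the pieces (the sums over $j$ and over $k$) are handled directly by the preceding lemma and its mirror image under swapping the two $\mathbb{F}^n$ factors; the diagonal piece $\sum_{b=d,\,a=c}1$ is bounded trivially by $|E|^2$; and the main piece $\Lambda^*(E)$ is exactly what Proposition~\ref{prop:Energy} estimates. The only wrinkle is that Proposition~\ref{prop:Energy} requires the set to lie in $\mathcal{H}^*$, so before invoking it I would peel off the portion of $E$ sitting on the two coordinate axes of $\mathcal{H}$.

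To make this precise, note that $\mathcal{H}\setminus\mathcal{H}^*$ is exactly the union of the lines $\{(0,y,0):y\in\mathbb{F}\}$ and $\{(x,0,0):x\in\mathbb{F}\}$, whose traces on $E$ are the slices $E_0$ and $E^0$. Since additive energy is subadditive under finite unions up to an absolute constant --- a consequence of $1_{A\cup B}\le 1_A+1_B$, Plancherel, and Cauchy--Schwarz, which yield $\Lambda(A\cup B)^{1/2}\le \Lambda(A)^{1/2}+\Lambda(B)^{1/2}$ --- we get $\Lambda(E)\lesssim \Lambda(E\cap\mathcal{H}^*)+\Lambda(E_0)+\Lambda(E^0)$. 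The last two terms are bounded by the trivial estimate $\Lambda(\cdot)\le|\cdot|^3$, and since $|E_0|^3$ and $|E^0|^3$ are the $j=0$ and $k=0$ summands of $\sum_j|E_j|^3$ and $\sum_k|E^k|^3$, they are already accounted for on the right-hand side of the claim. For the remaining term I would apply the decomposition of $\Lambda$ to $E':=E\cap\mathcal{H}^*$: its $\Lambda^*$ piece is $\lesssim|E'|^{5/2}\le|E|^{5/2}$ by Proposition~\ref{prop:Energy} (now legitimately applicable); its $j$- and $k$-pieces are $\lesssim \sum_j\Lambda(E'_j)+\sum_k\Lambda((E')^k)\le \sum_j|E_j|^3+\sum_k|E^k|^3$ by the preceding lemma, its mirror image, and $\Lambda(\cdot)\le|\cdot|^3$ (using $E'_j\subseteq E_j$ and $(E')^k\subseteq E^k$); and its diagonal piece is $|E'|^2\le|E|^2\le|E|^{5/2}$ (we may assume $|E|\ge 1$, the statement being trivial otherwise). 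Adding these up and combining with the previous display proves the lemma.

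The bulk of this argument is therefore purely organizational: all of the genuine work lives in Proposition~\ref{prop:Energy} (whose engine is the Cauchy--Schwarz incidence bound, applied after normalizing $b$ to $\mathbf{1}$) and in the slice lemma. The one step that requires a bit of care is precisely the hypothesis $E\subseteq\mathcal{H}^*$ in Proposition~\ref{prop:Energy}: on the coordinate axes the surface degenerates, the scaling $\tau$ used to normalize $b$ is no longer invertible, and the incidence argument breaks down; but this is harmless, because those axes are exactly the $j=0$ and $k=0$ slices, whose full additive energy $|E_0|^3+|E^0|^3$ is dominated by terms already present in the conclusion.
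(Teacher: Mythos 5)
Your argument is correct and is essentially identical to the paper's: both decompose $\Lambda(E)$ into the four displayed pieces, bound $\Lambda^*$ by Proposition~\ref{prop:Energy}, the $j$- and $k$-pieces by the slice lemma and its reflection, the diagonal trivially, and handle the $\mathcal{H}^*$ hypothesis by peeling off the two isotropic lines and invoking the quasi-triangle inequality~(\ref{eq:quasitriangleEnergy}). The one small slip is that the exact subadditivity exponent for additive energy is $1/4$ rather than $1/2$, but since you only need $\Lambda(A\cup B)\lesssim\Lambda(A)+\Lambda(B)$, this affects only the implied constant.
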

Decomposing the complete hyperbolic paraboloid as $\mathcal{H}=\mathcal{H}^{*}\cup \ell_1 \cup \ell_2$ (for isotropic lines $\ell_1$ and $\ell_2$) and applying the quasi-triangle inequality for additive energy (see (\ref{eq:quasitriangleEnergy}), below) we see that the above result holds for the the complete hyperboloid $\mathcal{H}$, after, perhaps, adjusting the implicit constant. We now recall that one may parameterize $E\subseteq \mathcal{H}$ by $\underline{E} \subseteq \mathbb{F}$ using the map $(x_1,x_2) \rightarrow (x_1,x_2,x_1 x_2)$. With this parametrization $\underline{E}_{j}$ and $\underline{E}^{k}$ correspond to $VH$ lines in $\underline{E}$.

\begin{corollary}\label{cor:L4}Let $E \subseteq \mathcal{H}$ such that $|E| \leq |\mathbb{F}|^{\gamma}$. Moreover, assume that $\underline{E}$ is a $\text{VH}(3\gamma/4)$ set and let $f$ be a function such that $|f| \sim 1$ on its support $E$.  Then
\begin{equation}
  || (f d\sigma)^{\vee} ||_{L^4(\mathbb{F}^3,dx)} \lesssim  |\mathbb{F}|^{-5}|\mathbb{F}|^{5\gamma/2}.
\end{equation}
\end{corollary}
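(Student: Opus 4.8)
The plan is to bound the fourth power of the left-hand side by the additive energy $\Lambda(E)$ and then quote Lemma \ref{lem:L52}. Write $(fd\sigma)^{\vee}(x)=|\mathbb{F}|^{-2}\sum_{\xi\in\mathcal{H}}f(\xi)e(x\cdot\xi)$ and expand
$||(fd\sigma)^{\vee}||_{L^4(\mathbb{F}^3,dx)}^4=\sum_{x\in\mathbb{F}^3}|(fd\sigma)^{\vee}(x)|^4$ into a sum over quadruples $\xi_1,\xi_2,\xi_3,\xi_4\in\mathcal{H}$. The orthogonality relation $\sum_{x\in\mathbb{F}^3}e\big(x\cdot(\xi_1-\xi_2+\xi_3-\xi_4)\big)=|\mathbb{F}|^{3}\,1_{\xi_1+\xi_3=\xi_2+\xi_4}$ collapses the $x$-sum, leaving $|\mathbb{F}|^{3-8}\sum_{\xi_1+\xi_3=\xi_2+\xi_4}f(\xi_1)\overline{f(\xi_2)}f(\xi_3)\overline{f(\xi_4)}$. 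Since $|f|\sim 1$ on $E$ and vanishes off $E$, the summand has modulus $\sim 1$ precisely when all four points lie in $E$, so this is $\lesssim|\mathbb{F}|^{-5}\Lambda(E)$. Everything thus reduces to an upper bound for $\Lambda(E)$.

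Next I apply Lemma \ref{lem:L52}, which gives $\Lambda(E)\lesssim|E|^{5/2}+\sum_{j\in\mathbb{F}}|E_j|^3+\sum_{k\in\mathbb{F}}|E^k|^3$. The first term is at most $(|\mathbb{F}|^{\gamma})^{5/2}=|\mathbb{F}|^{5\gamma/2}$ by the hypothesis $|E|\le|\mathbb{F}|^{\gamma}$. For the slice terms, recall (Section \ref{sec:notation}) that under the parametrization $E\leftrightarrow\underline{E}$ the slices $\underline{E}_j$ and $\underline{E}^{k}$ are exactly the intersections of $\underline{E}$ with VH lines of type $1$ and type $2$ in $\mathbb{F}^2$; since $\underline{E}$ is a $\mathrm{VH}(3\gamma/4)$ set we get $|E_j|,|E^k|\le|\mathbb{F}|^{3\gamma/4}$. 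The vertical slices $\{E_j\}_{j\in\mathbb{F}}$ partition $E$, hence $\sum_j|E_j|^3\le(\max_j|E_j|)^2\sum_j|E_j|\le|\mathbb{F}|^{3\gamma/2}\cdot|E|\le|\mathbb{F}|^{3\gamma/2+\gamma}=|\mathbb{F}|^{5\gamma/2}$, and identically $\sum_k|E^k|^3\lesssim|\mathbb{F}|^{5\gamma/2}$. (Note the threshold $3\gamma/4$ in the VH hypothesis is chosen precisely so these two contributions balance the $|E|^{5/2}$ term.) Combining, $\Lambda(E)\lesssim|\mathbb{F}|^{5\gamma/2}$ and therefore $||(fd\sigma)^{\vee}||_{L^4(\mathbb{F}^3,dx)}^4\lesssim|\mathbb{F}|^{-5}|\mathbb{F}|^{5\gamma/2}$, which is the asserted bound.

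There is no serious obstacle here: the corollary is a repackaging of the energy estimate Lemma \ref{lem:L52} together with the elementary convexity inequality $\sum_j a_j^3\le(\max_j a_j)^2\sum_j a_j$ and the fact that the slices partition $E$. The only points needing care are bookkeeping ones — correctly tracking the powers of $|\mathbb{F}|$ through the Plancherel/orthogonality step (the measure conventions make $(fd\sigma)^\vee$ carry the factor $|\mathbb{F}|^{-2}$), and verifying that the $\mathrm{VH}(3\gamma/4)$ hypothesis on $\underline{E}$ translates into the pointwise slice bounds $|E_j|,|E^k|\le|\mathbb{F}|^{3\gamma/4}$ via the identification of vertical/horizontal slices of $\mathcal{H}$ with the two families of VH lines in $\mathbb{F}^2$.
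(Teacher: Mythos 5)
Your proof is correct and follows the same approach as the paper's: expand the $L^4$ norm by orthogonality to reduce to the additive energy $\Lambda(E)$, then invoke Lemma~\ref{lem:L52} together with the $\mathrm{VH}(3\gamma/4)$ hypothesis and the partition $\sum_j|E_j|=|E|$ to bound the slice terms. (The only wrinkle is a typo in the paper's statement of the corollary — the displayed inequality is the bound on $\|(fd\sigma)^{\vee}\|_{L^4}^{4}$ rather than on the $L^4$ norm itself, as both your derivation and the paper's own ``taking fourth roots completes the proof'' make clear; you handled this consistently.)
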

\begin{proof}We have that
\[ || (f d\sigma)^{\vee} ||_{L^4(\mathbb{F}^3,dx)}^4 \lesssim \frac{1}{|\mathcal{H}|^4} \sum_{\substack{a,c \in E \\ b,d \in E}} \sum_{x \in \mathbb{F}^3}e(x \cdot (a+b-c-d))  = \frac{|\mathbb{F}|^{3}}{|\mathcal{H}|^4} \Lambda(E) .\]
Using the hypothesis that $E$ is a $VH(3\gamma/4)$ set, we have that $ \sum_{j \in \mathbb{F}}|E_{j}|^3, \sum_{k \in \mathbb{F}}|E^{k}|^3 \lesssim |\mathbb{F}|^{5\gamma/2}$. We may bound the quantity above
\[ \lesssim \frac{|\mathbb{F}|^{3}}{|\mathcal{H}|^4}  |\mathbb{F}|^{5\gamma/2} \lesssim |\mathbb{F}|^{-5} |\mathbb{F}|^{5\gamma/2}. \]
Taking fourth roots completes the proof.
\end{proof}
We now describe how to obtain a slight refinement to the prior estimate in certain cases. These refinements will only be used in proof of Theorem \ref{thm:ResWSP}. The main idea is to replace the use of the trivial incidence estimate (\ref{eq:CSinc}) at the end of the proof of Proposition \ref{prop:Energy} with (a variant of) the Bourgain-Katz-Tao incidence theorem from \cite{BKT}. The incidence estimate in \cite{BKT} is proved in prime order fields, and here we require an estimate in general fields. This was worked out, for essentially the same purpose as we are using it here, in the appendix of \cite{LewkoNew} (see Theorem 19, there). Roughly speaking, the result states that if the inequality (\ref{eq:CSinc}) is nearly sharp, then (after an appropriate transformation) the set of points, $P$, can be effectively contained in the Cartesian product of subfields. Noting that a set $P$ of size $|P|=|\mathbb{F}|^{\gamma}$  for $4/5 - \epsilon < \gamma < 4/5 +\epsilon$ can not be effectively contained in the Cartesian product of subfields (since subfields of a necessary size do not exist), Theorem 19 of \cite{LewkoNew} gives the following lemma.
\begin{lemma}\label{lem:refinedIN}There exists $\delta, \epsilon >0$ such that for every set of points $P$ and lines $L$ in $\mathbb{F}^2$ satisfying $|P| \sim |L| \sim |\mathbb{F}|^{\gamma}$ for $4/5 - \epsilon < \gamma < 4/5 +\epsilon$, we have that
\begin{equation}
|\{(x,\ell) \in P \times L :x \in L \} | \lesssim |\mathbb{F}|^{3\gamma/2 - \delta}.
\end{equation}
\end{lemma}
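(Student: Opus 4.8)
The plan is to invoke the refined Szemerédi–Trotter type theorem over general finite fields established in the appendix of \cite{LewkoNew} (Theorem 19 there), which is itself a form of the Bourgain–Katz–Tao incidence theorem adapted to non-prime fields. That result is a dichotomy statement: it asserts that for a set of points $P$ and lines $L$ in $\mathbb{F}^2$ with $|P| \sim |L| \sim |\mathbb{F}|^\gamma$ and $\gamma$ bounded away from $0$ and $2$, \emph{either} one has the gain
\[
|\{(x,\ell) \in P \times L : x \in \ell \}| \lesssim |\mathbb{F}|^{3\gamma/2 - \delta}
\]
for some absolute $\delta>0$, \emph{or} there is an invertible affine transformation of $\mathbb{F}^2$ after which a positive proportion of $P$ lies in $G_1 \times G_2$ for subfields $G_1, G_2 \subseteq \mathbb{F}$ of size roughly $|\mathbb{F}|^{\gamma/2}$ (more precisely, in a bounded union of translates/dilates of such a product, but this refinement does not affect the argument). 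So the first step is simply to quote this dichotomy with the appropriate quantitative constants.

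The second step is to rule out the structured alternative for the specified window of exponents. Here I would use the elementary fact that the only subfields of $\mathbb{F} = \mathbb{F}_{p^k}$ are the $\mathbb{F}_{p^j}$ with $j \mid k$, so the possible subfield sizes are of the form $p^j$. If a proportion of $P$ were contained in a product of two subfields, then (since $|P| \sim |\mathbb{F}|^\gamma$) one of these subfields would need to have size $\gtrsim |\mathbb{F}|^{\gamma/2} = p^{k\gamma/2}$, which forces a divisor $j$ of $k$ with $j/k$ close to $\gamma/2$. Taking $\gamma$ in a sufficiently small window around $4/5$ means $\gamma/2$ lies in a small window around $2/5$; by choosing $\epsilon$ small enough and passing to fields whose characteristic $p$ is large (or, more carefully, handling the finitely many small residual cases by a direct estimate), no proper divisor $j$ of $k$ can have ratio $j/k$ in that window — the gaps between admissible ratios $j/k$ near $2/5$ are bounded below. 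Hence the structured case is vacuous, and the incidence gain must hold. This is essentially the argument sketched in the paragraph preceding the lemma, and it is exactly how Theorem 19 of \cite{LewkoNew} is applied there.

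The main obstacle — and really the only nontrivial point — is making the exclusion of subfields uniform over \emph{all} finite fields of odd characteristic, not just over prime fields or over a fixed tower. One has to be slightly careful that the implicit window $\epsilon$ can be chosen once and for all: the worry is a sequence of fields $\mathbb{F}_{p_k^{n_k}}$ with divisors $j_k \mid n_k$ whose ratios $j_k/n_k \to 2/5$. But $2/5$ is irrational-free in the sense that for it to be approximated by $j/n$ one needs $n$ divisible by (a multiple of) $5$, and then the nearest ratios are $\frac{2n/5 \pm 1}{n}$, which differ from $2/5$ by $\pm 5/n$; if $n$ is bounded this is a finite check, and if $n \to \infty$ the points $P$ would have size $|\mathbb{F}|^{\gamma}$ with $\gamma$ forced to within $O(1/n) \to 0$ of exactly $4/5$, a measure-zero sliver that one simply excludes by shrinking $\epsilon$ relative to the implied constants. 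I would organize this as: fix the $\delta$ coming from Theorem 19 of \cite{LewkoNew}, then choose $\epsilon>0$ small enough that no product of subfields of the relevant size can capture a constant fraction of a $|\mathbb{F}|^\gamma$-point set for $\gamma \in (4/5-\epsilon, 4/5+\epsilon)$, and conclude. Since \cite{LewkoNew} already carries out exactly this bookkeeping for the same purpose, the cleanest write-up is to cite it and state only the resulting clean inequality, which is what the lemma does.
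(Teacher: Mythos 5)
Your proposal takes the same route as the paper, which does not give a formal proof but simply cites Theorem 19 of \cite{LewkoNew} and observes that subfields of the required size do not exist for $\gamma$ near $4/5$. Your discussion of uniformity contains an unnecessary detour, though: the worry about a sequence $j_k/n_k \to 2/5$ with $j_k \mid n_k$ cannot arise, because any ratio $j/k$ with $j \mid k$ is of the form $1/m$ for a positive integer $m$, and the set $\{1, 1/2, 1/3, \dots\}$ is at distance at least $1/15$ from $2/5$ (the nearest members being $1/2$ and $1/3$). So no case analysis on the size of $k$, and no passage to large characteristic, is needed; one simply takes $\epsilon$ small enough that $\gamma/2$ stays in, say, $(1/3 + 1/100, 1/2 - 1/100)$, and the structured alternative of the BKT-type dichotomy is then vacuous uniformly over all fields. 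With that simplification your proof is correct and aligns with the paper.
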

Using this result, we can obtain the following slight improvement to the estimate above.
\begin{lemma}\label{lem:EnergyRefined}There exists $\delta,\epsilon >0$ such that the following holds. Let $E \subseteq \mathbb{H}$ be a set such that $|E| \leq |\mathbb{F}|^{\gamma}$ with $4/5 - \epsilon < \gamma < 4/5 +\epsilon$.  Moreover, assume that $\underline{E}$ is a $\text{VH}(3\gamma/4-\delta/2)$ set and let $f$ be a function supported on $E$ such that $|f| \sim 1$ on its support.  Then
\begin{equation}
  || (f d\sigma)^{\vee} ||_{L^4(\mathbb{F}^3,dx)} \lesssim  |\mathbb{F}|^{-5/4} |E|^{5/8-\delta}.
\end{equation}
\end{lemma}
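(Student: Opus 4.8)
The plan is to repeat the proof of Corollary~\ref{cor:L4} essentially verbatim, the one new ingredient being that the single application of the trivial incidence estimate (\ref{eq:CSinc}) inside the proof of Proposition~\ref{prop:Energy} is replaced by the Bourgain--Katz--Tao type bound of Lemma~\ref{lem:refinedIN}. We may assume $|E|\sim|\mathbb{F}|^{\gamma}$, the regime in which the hypotheses are informative. As in Corollary~\ref{cor:L4}, expanding the $L^{4}$ norm and using $|f|\sim 1$ on $E$ and $|\mathcal{H}|=|\mathbb{F}|^{2}$ gives
\[
\|(fd\sigma)^{\vee}\|_{L^{4}(\mathbb{F}^{3},dx)}^{4}\;\lesssim\;\frac{|\mathbb{F}|^{3}}{|\mathcal{H}|^{4}}\,\Lambda(E)\;=\;|\mathbb{F}|^{-5}\,\Lambda(E),
\]
so everything comes down to improving the bound $\Lambda(E)\lesssim|\mathbb{F}|^{5\gamma/2}$ used in Corollary~\ref{cor:L4} to $\Lambda(E)\lesssim|\mathbb{F}|^{5\gamma/2-c}$ for some fixed $c>0$. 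Recalling from Section~\ref{sec:3dEnergy} (the decomposition preceding Lemma~\ref{lem:L52}) that $\Lambda(E)\lesssim\Lambda^{*}(E)+\sum_{j}|E_{j}|^{3}+\sum_{k}|E^{k}|^{3}+|E|^{2}$, the two slice sums are controlled directly by the VH hypothesis: each $E_{j}$ (resp.\ $E^{k}$) projects into a VH line, so $|E_{j}|,|E^{k}|\le|\mathbb{F}|^{3\gamma/4-\delta/2}$ and hence $\sum_{j}|E_{j}|^{3}\le(\max_{j}|E_{j}|)^{2}|E|\lesssim|\mathbb{F}|^{5\gamma/2-\delta}$, similarly for the other sum, while $|E|^{2}$ is of lower order. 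It remains to establish the refined estimate $\Lambda^{*}(E)\lesssim|\mathbb{F}|^{5\gamma/2-\delta_{0}}$ for some fixed $\delta_{0}>0$.

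For this I would run the proof of Proposition~\ref{prop:Energy} with no change at all until its last step. As there, one first reduces (via $\mathcal{H}=\mathcal{H}^{*}\cup\ell_{1}\cup\ell_{2}$ and the quasi-triangle inequality for additive energy) to $E\subseteq\mathcal{H}^{*}$, the pieces on $\ell_{1},\ell_{2}$ being energies of sets lying on a single VH line, i.e.\ at most $|\mathbb{F}|^{3(3\gamma/4-\delta/2)}\le|\mathbb{F}|^{9\gamma/4}$, which is harmless. One then uses the scaling symmetry $\tau$ to normalise the outer point to $b=\mathbf{1}$, discards the degenerate slices $d_{1}=1$ and $d_{2}=1$ (of size $\lesssim|\mathbb{F}|^{3\gamma/4-\delta/2}$ by the VH hypothesis), and uses that the map $d\mapsto\ell(d)$ is at most two-to-one, arriving exactly as in Proposition~\ref{prop:Energy} at
\[
\Lambda^{*}(E)\;\lesssim\;|E|\cdot\bigl|\{(a,\ell)\in P\times L:a\in\ell\}\bigr|,
\]
where $P:=\underline{E}$ and $L:=\{\ell(d):d\in E,\ d_{1}\neq 1,\ d_{2}\neq 1\}$ both have cardinality $\sim|\mathbb{F}|^{\gamma}$ with $\gamma\in(4/5-\epsilon,4/5+\epsilon)$. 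Invoking Lemma~\ref{lem:refinedIN} in place of (\ref{eq:CSinc}) bounds the incidence count by $|\mathbb{F}|^{3\gamma/2-\delta_{0}}$, giving $\Lambda^{*}(E)\lesssim|E|\,|\mathbb{F}|^{3\gamma/2-\delta_{0}}\lesssim|\mathbb{F}|^{5\gamma/2-\delta_{0}}$, as needed.

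Putting the pieces together, $\Lambda(E)\lesssim|\mathbb{F}|^{5\gamma/2-c}$ with $c:=\min(\delta,\delta_{0})$; feeding this into the $L^{4}$ identity above and taking fourth roots yields
\[
\|(fd\sigma)^{\vee}\|_{L^{4}(\mathbb{F}^{3},dx)}\;\lesssim\;|\mathbb{F}|^{-5/4}\,|\mathbb{F}|^{5\gamma/8-c/4},
\]
which, upon rewriting $|\mathbb{F}|^{5\gamma/8}=|E|^{5/8}$ and relabelling the gain, is the asserted estimate, provided $\delta$ and $\epsilon$ are chosen small enough in terms of the constants furnished by Lemma~\ref{lem:refinedIN}.

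The one genuinely new point — and the step I expect to need the most care — is verifying that the incidence configuration $(P,L)$ emerging from the proof of Proposition~\ref{prop:Energy} really does satisfy the hypotheses of Lemma~\ref{lem:refinedIN}: that $L$ is a family of $\sim|\mathbb{F}|^{\gamma}$ \emph{distinct} lines whose cardinality is comparable to $|P|=|\underline{E}|$, with $\gamma$ in the admissible window. This is exactly where the at-most-two-to-one property of $d\mapsto\ell(d)$ established in Proposition~\ref{prop:Energy}, and the use of the VH hypothesis to discard the degenerate $d_{i}=1$ slices as negligible, are needed. The remaining work — bookkeeping the three small parameters (the incidence gain $\delta_{0}$, the VH gain $\delta/2$, and the window width $\epsilon$) so that they are mutually consistent — is routine.
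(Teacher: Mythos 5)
Your proposal matches the paper's intent exactly: the paper's own proof of this lemma is the single sentence ``This follows by replacing the application of (\ref{eq:CSinc}) in the proof of Proposition~\ref{prop:Energy} with the estimate given by Lemma~\ref{lem:refinedIN},'' and your write-up is a faithful and rather more careful expansion of that instruction — you reproduce the $L^{4}$-to-energy reduction from Corollary~\ref{cor:L4}, reuse the decomposition underlying Lemma~\ref{lem:L52}, control the slice sums $\sum_{j}|E_{j}|^{3}$, $\sum_{k}|E^{k}|^{3}$ via the $\mathrm{VH}(3\gamma/4-\delta/2)$ hypothesis, and rerun Proposition~\ref{prop:Energy} with the Bourgain--Katz--Tao input in place of \eqref{eq:CSinc}. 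You also correctly identify the one substantive point the paper leaves implicit: that the $\mathrm{VH}$ hypothesis (together with the at-most-two-to-one property of $d\mapsto\ell(d)$) is what guarantees the incidence configuration $(P,L)$ lands in the regime $|P|\sim|L|\sim|\mathbb{F}|^{\gamma}$ required by Lemma~\ref{lem:refinedIN}. One small caveat you already gesture at: if one insists on using literally the same $\delta$ in both the $\mathrm{VH}$ hypothesis and the exponent $5/8-\delta$, the bookkeeping does not quite close (the gain you produce is of the form $|\mathbb{F}|^{-c/4}$ with $c=\min(\delta,\delta_{\mathrm{BKT}})$, and matching $|E|^{-\delta}\sim|\mathbb{F}|^{-\gamma\delta}$ with $\gamma\approx 4/5$ would force $c\geq 4\gamma\delta>\delta$); this is an imprecision in the paper's phrasing rather than in your argument, and in the application (Lemma~\ref{lem:mt2}) only the existence of \emph{some} positive gain is used, so your ``relabelling the gain'' resolution is the right reading.
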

\begin{proof}This follows be replacing the application of (\ref{eq:CSinc}) in the proof of Proposition \ref{prop:Energy} with the estimate given up by Lemma \ref{lem:refinedIN}.
\end{proof}
\section{The Mockenhaupt-Tao machine}\label{sec:3dMT}
We now adapt the Mockenhaupt-Tao method \cite{MT} to this setting.
\begin{lemma}\label{lem:mt1} Let $h$ be a function such that $|h| \sim 1$ on its support $E\subset \mathbb{F}^3$. Furthermore, assume that $|E| \lesssim |\mathbb{F}|^{\gamma}$ (with $\gamma \geq 1$) and that $E$ is a VH($(3(\gamma-1)/4$) set. Then
$$||\hat{h}||_{L^{2}(\mathcal{H},d\sigma)} \lesssim |\mathbb{F}|^{11\gamma/16 +1/16}.$$
\end{lemma}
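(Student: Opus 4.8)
The plan is to interpolate between two endpoint bounds on $\|\hat h\|_{L^2(\mathcal{H},d\sigma)}$: the straight Stein–Tomas $L^2$ estimate of Lemma~\ref{lem:RestL2}, which gives $\|\hat h\|_{L^2(\mathcal H,d\sigma)}\lesssim |E|^{1/2}=|\mathbb F|^{\gamma/2}$ with no hypothesis on $E$, and an improved bound coming from the $L^4$ extension estimate. For the latter I would dualize Corollary~\ref{cor:L4}: the bound $\|(fd\sigma)^\vee\|_{L^4_x}\lesssim |\mathbb F|^{-5}|\mathbb F|^{5\gamma/2}$ for $f\sim 1_E$, combined with the support size $|E|=|\mathbb F|^\gamma$, is exactly a local restriction estimate $\mathcal R^*(p\to q)\lesssim |\mathbb F|^{\alpha}$ for a suitable pair $(p,q)$, and feeding this into Lemma~\ref{Lem:STinfty2} (or computing directly as in Lemma~\ref{lem:stdecay}) converts it into a bound on $\|\hat h\|_{L^2(\mathcal H,d\sigma)}$ in terms of a low $L^{q}_x$-norm of $h$. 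The VH$(3(\gamma-1)/4)$ hypothesis is the natural one: if $|E|\le |\mathbb F|^\gamma$ and $E$ meets every VH plane in $\lesssim |\mathbb F|^{?}$ points one would want the slices to be VH$(3\gamma/4)$-type to invoke Corollary~\ref{cor:L4}, but the Mockenhaupt–Tao reduction passes to a $(d-1)$-dimensional object, so the relevant parameter shifts from $\gamma$ to $\gamma-1$; that accounts for the $3(\gamma-1)/4$ appearing in the statement.

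Concretely, I would: (1) decompose $\hat h$ on $\mathcal H$ using $(d\sigma)^\vee=\delta_0+K$ as in Lemma~\ref{Lem:STsupp}, so $\|\hat h\|_{L^2(\mathcal H,d\sigma)}^2\lesssim \|h\|_{L^2_x}^2+|\langle h,h*K\rangle|$; (2) bound the diagonal term by $|E|=|\mathbb F|^\gamma$; (3) bound the off-diagonal term $|\langle h,h*K\rangle|$ not by the crude $|\mathbb F|^{-n/2}|E|^2$ (which only recovers Stein–Tomas), but by exploiting the explicit form of $K$ from Lemma~\ref{lem:FTcomp} together with the $L^4$ energy estimate — i.e. writing $|\langle h, h*K\rangle|\le \|(hd\sigma)^\vee\|_{L^4_x}^2 \cdot (\text{dual factor})$ or, more cleanly, observing $\|\hat h\|_{L^4(\mathcal H,d\sigma)}$ is controlled by $\Lambda(E)$ and then using Hölder between $L^2$ and $L^4$ on the surface against the known Fourier dimension $\tilde d = 2n = d-1$. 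The exponent bookkeeping: interpolating $|\mathbb F|^{\gamma/2}$ against the $L^4$-derived bound with the right weight produces $11\gamma/16+1/16$; the coefficient $11/16$ should come out of combining the $5\gamma/2$ from the energy with the $\gamma/2$ Stein–Tomas term at an interpolation parameter of roughly $3/4$ versus $1/4$, and the additive $1/16$ from the field-size powers $|\mathbb F|^{-5}$, $|\mathbb F|^{1/2}$ carried through the dualization.

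The main obstacle I expect is step (3): making the passage from the $L^4_x$ extension estimate to an $L^2(\mathcal H,d\sigma)$ restriction estimate genuinely Mockenhaupt–Tao-style, which requires applying the $L^4$/energy bound not to $h$ directly but to its behavior on vertical slices $h_z$ and then reassembling via a $TT^*$ / Carbery-type argument. In particular one has to check that the VH$(3(\gamma-1)/4)$ hypothesis on $E\subset\mathbb F^3$ really does force each relevant slice to satisfy the VH$(3\gamma'/4)$ hypothesis of Corollary~\ref{cor:L4} with $\gamma'$ the (reduced) slice-size parameter — i.e. that intersecting $E$ with a VH \emph{plane} and then projecting controls VH-line intersections of the slice, which is the same combinatorial observation used in the proof of Corollary~\ref{cor:BR}. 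Once that bookkeeping is in place, the rest is Hölder, Plancherel, and tracking powers of $|\mathbb F|$.
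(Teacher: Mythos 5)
Your proposal tracks the paper's actual proof quite closely: decompose $(d\sigma)^{\vee}=\delta_0+K$, bound $\langle h,h*(d\sigma)^{\vee}\rangle$ by treating the two terms separately, and for the $K$ term slice $h=\sum_z h_z$, use the pseudo-conformal (Carbery) change of variables to rewrite $h_z*K$ as $|\mathbb{F}|\,(\widetilde{h_z}\,d\sigma)^{\vee}$, invoke Corollary~\ref{cor:L4} on each slice, and reassemble by H\"older. That is exactly the argument, so the roadmap is right. Two small corrections to what you wrote. First, the passage from the VH hypothesis on $E\subset\mathbb{F}^3$ to the VH hypothesis on each slice $E_z\subset\mathbb{F}^2$ is \emph{not} the plane/projection combinatorics from Corollary~\ref{cor:BR}: a VH line in $\mathbb{F}^3$ already lies entirely inside a single horizontal slab $\{t=t_0\}$, so the hypothesis ``$|E\cap\ell|\le|\mathbb{F}|^{3(\gamma-1)/4}$ for all VH lines $\ell$'' restricts verbatim to each $E_z$; one then applies Corollary~\ref{cor:L4} with exponent $\gamma_z$ when $|E_z|\ge|\mathbb{F}|^{\gamma-1}$, and with the fixed threshold $\gamma-1$ when $|E_z|<|\mathbb{F}|^{\gamma-1}$, which is why the proof splits those two cases. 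Second, the final coefficient $11/16$ does not come from interpolating the Stein--Tomas $L^2$ bound against an $L^4$-derived restriction bound as you sketch; it comes from a single H\"older step $|\langle h,h*(d\sigma)^{\vee}\rangle|\le\|h\|_{L^{4/3}}\|h*(d\sigma)^{\vee}\|_{L^4}\lesssim|\mathbb{F}|^{3\gamma/4}\cdot|\mathbb{F}|^{5\gamma/8+1/8}$, followed by a square root. Your mid-paragraph attempt to write $|\langle h,h*K\rangle|\le\|(hd\sigma)^{\vee}\|_{L^4_x}^2\cdot(\text{dual factor})$ is not well-formed — $h$ is a function on $\mathbb{F}^3$, not on $\mathcal{H}$, so $(hd\sigma)^{\vee}$ is undefined; the slicing-plus-Carbery step you flag as the ``main obstacle'' is precisely what replaces it and is the correct move.
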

\begin{proof}
Define $h_{z}(x_1,x_2,t) = h(x_1,x_2,t)\delta(t-z)$, and $E_z \subseteq \mathbb{F}^2$ to be the support of $h_z(x_1,x_2,z)$ (for $z$ fixed). Thus $h(x_1,x_2,t) = \sum_{z \in \mathbb{F}} h_{z}(x_1,x_2,t)$. We start by estimating
$$||h_{z} * (d\sigma)^{\vee}||_{L^{4}(\mathbb{F}^3,dx)}.$$
By translation symmetry, we may assume that $z=0$. Recall that $(d\sigma)^{\vee} = \delta_{0} + K$ where $K$ is the Bochner-Riesz kernel associated to $\mathcal{H}$. From its definition we have that $K(\underline{x},t) =  |\mathbb{F}|^{-1} e(-x_2 x_1/t)$ (for $t \neq 0$ and $0$ otherwise). Thus
\[||h_{z} *K ||_{L^{4}(\mathbb{F}^3,dx)} =\left( \sum_{t \in \mathbb{F}, t \neq 0}  \sum_{\underline{x} \in \mathbb{F}^2} \left| |\mathbb{F}|^{-1} \sum_{\underline{y} \in \mathbb{F}^2} h_{0}(\underline{y},0) e( (x_1 - y_1) (x_2 - y_2) /t)    \right|^4  \right)^{1/4}. \]
Using the pseudo-conformal transformation (for $t \neq 0$) $t':=1/t$ and $w:= -(x_2,x_1)/t$  we have that
\[  (x_1 - y_1) (x_2 - y_2) /t = w_1  w_2/t'  + w \cdot y + t' y_1 y_2. \]
Let $\phi : \mathcal{H} \rightarrow \mathbb{F}^2$ denote the map given by $(x,x\cdot x) \rightarrow x$. Given $h_z$, define $\widetilde{h}_{z}$ on $\mathcal{H} $ by $\widetilde{h_z}(x) = h_z(\phi(x))$. We then have that
\[|| h_{z} * K||_{L^{4}(\mathbb{F}^3,dx)} \lesssim |\mathbb{F}| \left( \sum_{(w,t') \in \mathbb{F}^3} |(\widetilde{h_z} d\sigma)^{\vee}(w,t')|^{4}  \right)^{1/4}. \]
By Corollary \ref{cor:L4},
$$\left( \sum_{(w,t') \in \mathbb{F}^3} |(\widetilde{h_z} d\sigma)^{\vee}(w,t')|^{4}  \right)^{1/4} \lesssim |\mathbb{F}|^{-5/4} \left(|E_z|^{5/8} + |\mathbb{F}|^{5(\gamma-1)/8} \right) $$
where the first term on the right appears from considering the case $|E_z| \geq |\mathbb{F}|^{\gamma-1}$ and the second from considering the case $|E_z| \leq |\mathbb{F}|^{\gamma-1}$.
We then have
\[ || h * K||_{L^{4}(\mathbb{F}^3,dx)} \leq  \sum_{z \in \mathbb{F}}|| h_{z} * K||_{L^{4}(\mathbb{F}^3,dx)} \lesssim |\mathbb{F}|^{-1/4} \sum_{z \in \mathbb{F}}|E_{z}|^{5/8}  + |\mathbb{F}|^{5\gamma/8 +1/8}  \]
\[ \lesssim |\mathbb{F}|^{-1/4} \left( \sum_{z \in \mathbb{F}}|E_{z}| \right)^{5/8} |\mathbb{F}|^{3/8} + |\mathbb{F}|^{5\gamma/8 + 1/8} \lesssim |\mathbb{F}|^{5\gamma/8 +1/8}.  \]
By Young's inequality, $||h * \delta_{0}||_{L^4(\mathbb{F}^3, dx)} \lesssim ||h ||_{L^4(\mathbb{F}^3, dx)} \lesssim |\mathbb{F}|^{\gamma/4} $. Thus
\[ || h * (d \sigma)^{\vee}||_{L^{4}(\mathbb{F}^3,dx)} \lesssim |\mathbb{F}|^{5\gamma/8 + 1/8}. \]
By H\"{o}lder's inequality, we have
\[ |\left<h, h * (d \sigma)^{\vee} \right> | \leq  ||h||_{L^{4/3}(\mathbb{F}^3, dx)} || h * (d \sigma)^{\vee}||_{L^{4}(\mathbb{F}^3,dx)} \lesssim |\mathbb{F}|^{11\gamma/8 +1/8} .\]
This may be restated as
\[ ||\hat{h}||_{L^{2}(\mathcal{H},d\sigma)} \lesssim |\mathbb{F}|^{11\gamma/16 +1/16} . \]
\end{proof}
We will now describe a slight refinement which will be used in the proof of Theorem \ref{thm:ResWSP}.
\begin{lemma}\label{lem:mt2}There exist $\delta,\epsilon >0$ such that the following holds. Let $h$ be a function such that $|h| \sim 1$ on its support $E\subset \mathbb{F}^3$, $|E| = |\mathbb{F}|^{\gamma}$. Furthermore, assume that $9/5 - \epsilon < \gamma < 9/5 +\epsilon$ and that $E$ is a VH($(3(\gamma-1 - 2 \delta)/4 $) set. Then,
$$||\hat{h}||_{L^{2}(S,d\sigma)} \lesssim |\mathbb{F}|^{11\gamma/16 +1/16 -\delta}.$$
\end{lemma}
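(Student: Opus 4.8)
The plan is to rerun the proof of Lemma \ref{lem:mt1} verbatim up to the point where the slices are separated, arriving at
\[ ||h * K||_{L^4(\mathbb{F}^3,dx)} \leq \sum_{z \in \mathbb{F}} ||h_z * K||_{L^4(\mathbb{F}^3,dx)} \lesssim |\mathbb{F}|^{-1/4} \sum_{z \in \mathbb{F}} \Lambda(E_z)^{1/4}, \]
where $h_z$ and $E_z \subseteq \mathbb{F}^2$ denote the $t = z$ slice of $h$ and of its support, and where we have used that, after the pseudo-conformal change of variables and the identification with $\mathcal{H}$, $||h_z * K||_{L^4} \lesssim |\mathbb{F}|\,||(\widetilde{h_z}d\sigma)^\vee||_{L^4} = |\mathbb{F}|^{-1/4}\Lambda(E_z)^{1/4}$. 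The only modification needed is in how $\Lambda(E_z)$ is estimated.

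I would dyadically decompose the slices by size, writing $Z_\mu = \{z : |E_z| \sim \mu\}$ so that $|Z_\mu| \lesssim \min(|\mathbb{F}|,\,|\mathbb{F}|^\gamma/\mu)$, and estimate the contribution of each scale separately. Since $\gamma$ lies in the narrow window $(9/5 - \epsilon, 9/5 + \epsilon)$, the typical slice has size $\approx |\mathbb{F}|^{\gamma-1} \approx |\mathbb{F}|^{4/5}$, which is precisely a scale at which no subfield of $\mathbb{F}$ of comparable size can exist and hence where the Bourgain--Katz--Tao incidence estimate of Lemma \ref{lem:refinedIN} yields a genuine gain. Concretely, for scales $\mu = |\mathbb{F}|^\nu$ with $\nu$ in a small neighbourhood of $\gamma - 1$ I would replace the use of Corollary \ref{cor:L4} by Lemma \ref{lem:EnergyRefined}: the $\mathrm{VH}(3(\gamma-1-2\delta)/4)$ hypothesis on $E$ (hence on each $E_z$) is exactly what is needed to verify the VH--line hypothesis of Lemma \ref{lem:EnergyRefined} for such slices once $\delta$ is taken to be a suitable small multiple of the incidence gain, and one then gains a power $|\mathbb{F}|^{-c\delta}$ over the bound used in Lemma \ref{lem:mt1}. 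For scales $\mu \gg |\mathbb{F}|^{\gamma - 1}$ the constraint $|Z_\mu| \lesssim |\mathbb{F}|^\gamma/\mu$ already pushes the exponent below the extremal one; for $\mu \ll |\mathbb{F}|^{\gamma-1}$ the weakened $\mathrm{VH}$ bound makes the VH--line energy terms $\sum_j |(E_z)_j|^3$, $\sum_k |(E_z)^k|^3$ of Lemma \ref{lem:L52} strictly subordinate, again producing a gain. Summing the $O(\log |\mathbb{F}|)$ dyadic pieces (absorbing the logarithm by slightly shrinking the exponent) gives $\sum_z \Lambda(E_z)^{1/4} \lesssim |\mathbb{F}|^{5\gamma/8 + 3/8 - 2\delta}$, hence $||h * K||_{L^4(\mathbb{F}^3,dx)} \lesssim |\mathbb{F}|^{5\gamma/8 + 1/8 - 2\delta}$.

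From here the argument is identical to that of Lemma \ref{lem:mt1}: Young's inequality gives $||h * \delta_0||_{L^4} = ||h||_{L^4} = |\mathbb{F}|^{\gamma/4}$, which for $\gamma$ near $9/5$ is far smaller than the bound just obtained, so $||h * (d\sigma)^\vee||_{L^4} \lesssim |\mathbb{F}|^{5\gamma/8 + 1/8 - 2\delta}$; pairing with $||h||_{L^{4/3}} = |\mathbb{F}|^{3\gamma/4}$ via Hölder's inequality yields $|\langle h, h * (d\sigma)^\vee\rangle| \lesssim |\mathbb{F}|^{11\gamma/8 + 1/8 - 2\delta}$; and since the left side equals $||\hat h||_{L^2(\mathcal{H},d\sigma)}^2$, taking square roots gives the claim.

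The delicate point, and the main obstacle, is the simultaneous choice of parameters. The incidence theorem supplies fixed constants (its gain and the half-width of its admissible window); $\delta$ and $\epsilon$ must be chosen small in terms of these so that (i) the band of slice-sizes on which Lemma \ref{lem:EnergyRefined} is applied sits inside the admissible window around $|\mathbb{F}|^{4/5}$, (ii) the hypothesis $\mathrm{VH}(3(\gamma-1-2\delta)/4)$ is strong enough to supply the VH--line input to Lemma \ref{lem:EnergyRefined} on those slices yet weak enough to survive the way the lemma is invoked inside the proof of Theorem \ref{thm:ResWSP}, and (iii) the gains contributed by the large-scale and small-scale dyadic pieces each dominate the common power $|\mathbb{F}|^{-2\delta}$. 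Checking that these constraints can be met together is a routine but somewhat fussy optimization, and, exactly as with Lemma \ref{lem:mt1}, it produces a possibly very small but explicit $\delta > 0$.
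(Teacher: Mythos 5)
Your proposal matches the paper's proof of Lemma~\ref{lem:mt2}: the paper also partitions the slices by size (into three bands rather than a full dyadic scale, but this is cosmetic), applies Lemma~\ref{lem:EnergyRefined} to the central band of slices with $|E_z|\approx |\mathbb{F}|^{\gamma-1}\approx |\mathbb{F}|^{4/5}$, and extracts the needed power gain from the extreme bands by crude counting --- the total mass $\sum_{z\in Z_1}|E_z|\leq |\mathbb{F}|^{\gamma-\delta_0}$ at the small end, and the cardinality $|Z_3|\lesssim |\mathbb{F}|^{1-\delta_0}$ at the large end. The final assembly via Young's inequality, H\"older, and $\|\hat{h}\|_{L^2(\mathcal{H},d\sigma)}^2 = |\langle h,h*(d\sigma)^\vee\rangle|$ is identical.

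One small caveat about your description of the small-slice regime. The weakened hypothesis $\mathrm{VH}\bigl(3(\gamma-1-2\delta)/4\bigr)$ does \emph{not} make the cubic terms $\sum_j|(E_z)_j|^3$, $\sum_k|(E_z)^k|^3$ of Lemma~\ref{lem:L52} subordinate to $|E_z|^{5/2}$ when $|E_z|$ is very small: once $|E_z|\ll |\mathbb{F}|^{\gamma-1-2\delta}$ the bound $\sum_j|(E_z)_j|^3\lesssim |\mathbb{F}|^{3(\gamma-1-2\delta)/2}|E_z|$ in fact \emph{exceeds} $|E_z|^{5/2}$, so those terms dominate, not vanish. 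What saves the day is that even then $\Lambda(E_z)\lesssim |\mathbb{F}|^{3(\gamma-1-2\delta)/2}|E_z|\leq |\mathbb{F}|^{5(\gamma-1-2\delta)/2}$, which is already a gain of $|\mathbb{F}|^{-5\delta}$ over the corresponding bound $|\mathbb{F}|^{5(\gamma-1)/2}$ used in Lemma~\ref{lem:mt1}; so the conclusion you want at that scale is correct, but the mechanism is that the weakened VH hypothesis \emph{controls} the cubic terms, not that it suppresses them below $|E_z|^{5/2}$.
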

\begin{proof}Let $\delta_0 >0$ be a small real number to be chosen later. We will use the same notation as in the previous proof. We partition $\mathbb{F}= Z_1 \cup Z_2 \cup Z_3$ into a disjoint union of three sets. The set $Z_1$ will consist of $z \in \mathbb{F}$ such that $|E_z| \leq |\mathbb{F}|^{\gamma-1 -\delta_0}$, the set $Z_2$ will consist of $z \in \mathbb{F}$ such that $ |\mathbb{F}|^{\gamma-1-\delta_0} \leq |E_z| \leq |\mathbb{F}|^{\gamma-1+\delta_0}$, and the set $Z_3$ will consist of  $z \in \mathbb{F}$ such that $|E_z| \geq |\mathbb{F}|^{\gamma-1+\delta_0}$. Since $\sum_{z \in Z_1} |E_z|  \leq |\mathbb{F}|^{\gamma-\delta_0}$ we have
\[
\sum_{z \in Z_1 }|| h_{z} * K||_{L^{4}(\mathbb{F}^3,dx)} \lesssim |\mathbb{F}|^{-5/4} \sum_{z \in Z_3 } |E_z|^{5/8} \lesssim |\mathbb{F}|^{5\gamma/8+1/8 - 5\delta_0/8  }   .
\]
On the other hand, since $|Z_3| \leq |\mathbb{F}|^{\gamma-\delta_0} $ we have
\[ \sum_{z \in Z_3 }|| h_{z} * K||_{L^{4}(\mathbb{F}^3,dx)} \lesssim |\mathbb{F}|^{-5/4} \sum_{z \in Z_3 } |E_z|^{5/8} \lesssim |\mathbb{F}|^{5\gamma/8+1/8 - 3\delta_0/8  } .\]
It remains to estimate the contribution from $Z_2$. Here will require $\delta_0$ to be smaller than the $\epsilon >0$ for which Lemma \ref{lem:EnergyRefined} holds. In addition let $\delta_1$ be the $\delta$ given in the hypothesis of Lemma \ref{lem:EnergyRefined}. We then have that
\[ \sum_{z \in Z_2 }|| h_{z} * K||_{L^{4}(\mathbb{F}^3,dx)} \lesssim |\mathbb{F}|^{-5/4} \sum_{z \in Z_2 } |E_z|^{5/8} \lesssim |\mathbb{F}|^{(5/8-\delta_1) \gamma+1/8 } \lesssim |\mathbb{F}|^{5/8 \gamma+1/8 - \delta_2 }, \]
where the last inequality has used the fact that $\gamma$ is restricted to a small range of values. Combining these estimates completes the proof.
\end{proof}
\section{Estimates for planar functions}\label{sec:Planar}
In this section we will derive restriction estimates for functions supported on a hyperplane and, more generally, the union of a small number of hyperplanes.

\begin{lemma}\label{lem:planar3to2}Let $f(y_1,y_2)$ be a function on $\mathbb{F}^2$ and let $\hat{f}(\eta)$ denote the ($2$ dimensional) Fourier transform of $f$. Consider the function (on $\mathbb{F}^3$) defined by $F(x_1,x_2,x_3) := \delta(x_2-ax_3-b)f(x_1,x_3)$ for $a,b \in \mathbb{F}$. Then the ($3$ dimensional) Fourier transform of $F(x_1,x_2,x_3)$ is given by
$$ \hat{F}(\xi_1,\xi_2,\xi_3) = \hat{f}(\xi_1, \xi_3 + a \xi_2) e(-\xi_2 b) .$$
\end{lemma}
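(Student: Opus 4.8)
The plan is to compute the $3$-dimensional Fourier transform directly from the definition, using the $\delta$-function constraint $x_2 = a x_3 + b$ to eliminate the $x_2$-sum, and then recognize the remaining double sum over $x_1, x_3$ as a $2$-dimensional Fourier transform of $f$ evaluated at a shifted frequency. Concretely, I would start by writing
\[
\hat{F}(\xi_1,\xi_2,\xi_3) = \sum_{x_1,x_2,x_3 \in \mathbb{F}} \delta(x_2 - a x_3 - b)\, f(x_1,x_3)\, e(-x_1 \xi_1 - x_2 \xi_2 - x_3 \xi_3).
\]
The inner sum over $x_2$ collapses to the single term $x_2 = a x_3 + b$, producing a factor $e(-(a x_3 + b)\xi_2) = e(-b \xi_2)\, e(-a x_3 \xi_2)$.

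Next I would pull the $x_2$-independent factor $e(-b\xi_2)$ out of the remaining sum, leaving
\[
\hat{F}(\xi_1,\xi_2,\xi_3) = e(-b\xi_2) \sum_{x_1, x_3 \in \mathbb{F}} f(x_1, x_3)\, e\bigl(-x_1 \xi_1 - x_3(\xi_3 + a\xi_2)\bigr).
\]
By the definition of the $2$-dimensional Fourier transform, the sum is exactly $\hat{f}(\xi_1,\, \xi_3 + a\xi_2)$, which yields the claimed formula $\hat{F}(\xi_1,\xi_2,\xi_3) = \hat{f}(\xi_1, \xi_3 + a\xi_2)\, e(-\xi_2 b)$.

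There is no real obstacle here — the only thing to be careful about is bookkeeping the sign and the placement of $a$ in the shifted second coordinate (i.e., that it is $\xi_3 + a\xi_2$ and not $\xi_3 - a\xi_2$ or with $a$ attached to the wrong variable), which is pinned down by tracking the exponent $-x_3(\xi_3 + a\xi_2)$ arising after substituting $x_2 = a x_3 + b$. The characteristic assumption plays no role in this lemma; it is a purely formal manipulation of the defining sums.
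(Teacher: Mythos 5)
Your proof is correct. The computation is the direct forward Fourier transform: you use the $\delta$-constraint to collapse the $x_2$-sum, factor out $e(-b\xi_2)$, and recognize the remaining sum as $\hat{f}(\xi_1,\xi_3+a\xi_2)$. The paper instead works in the inverse direction: it expands $f$ and $\delta$ as inverse Fourier transforms via $f(x_1,x_3)=|\mathbb{F}|^{-2}\sum_{\xi_1,\xi_3}\hat f(\xi_1,\xi_3)e(x_1\xi_1+x_3\xi_3)$ and $\delta(y)=|\mathbb{F}|^{-1}\sum_{\xi_2}e(\xi_2 y)$, multiplies, performs the frequency-space change of variables $\eta_3=\xi_3-a\xi_2$, and reads off the Fourier coefficients from the resulting expansion, finally translating $x_2$ by $-b$ to pick up the phase factor. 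The two arguments are dual to one another and encode the same algebraic shift $\xi_3\mapsto\xi_3+a\xi_2$; yours is arguably more direct since it avoids invoking Fourier inversion and the normalization constants that come with it, and handles the $b$-translation in the same step rather than as a separate translation argument at the end. Either way the lemma is established.
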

\begin{proof} Using the relations
$$f(x_1,x_3) = |\mathbb{F}|^{-2}\sum_{\xi_1,\xi_3 \in \mathbb{F} }\hat{f}(\xi_1,\xi_3) e(x_1 \xi_1 + x_3 \xi_3) $$ and $$\delta(y)= |\mathbb{F}|^{-1}\sum_{\xi_2 \in \mathbb{F} }  e(\xi_2 y )$$ we have that
$$\delta(x_2-ax_3)f(x_1,x_3) = |\mathbb{F}|^{-3}\sum_{\xi_1,\xi_2 \in \mathbb{F} } \hat{f}(\xi_1,\xi_3) e(\xi_1 x_1 +\xi_3 x_3 + \xi_2(x_2-ax_3)  )   $$
$$= |\mathbb{F}|^{-3}\sum_{\xi_1,\xi_2,\xi_3 \in \mathbb{F} } \hat{f}(\xi_1,\xi_3) e(\xi_1 x_1 +\xi_2x_2 + (\xi_3 -a\xi_2) x_3  ) .$$
Applying the change of variables $\eta_1 = \xi_1$, $\eta_2 = \xi_2$, and $\eta_3 = \xi_3 -a \xi_2$, we have
$$\delta(x_2-ax_3)f(x_1,x_3) =|\mathbb{F}|^{-3}\sum_{\eta_1,\eta_2,\eta_3 \in \mathbb{F} } \hat{f}(\eta_1,\eta_3+a\eta_2)e(\eta_1 x_1 + \eta_2 x_2+\eta_3 x_3  ) . $$
Finally, translating $x_2$ by $-b$ gives
$$\delta(x_2-ax_3-b)f(x_1,x_3) = |\mathbb{F}|^{-3}\sum_{\eta_1,\eta_2,\eta_3 \in \mathbb{F} } \hat{f}(\eta_1,\eta_3+a\eta_2)e(-b\eta_2) e(\eta_1 x_1 + \eta_2x_2 +\eta_3 x_3  ).$$
After relabeling variables, this is the claim.
\end{proof}

\begin{lemma}\label{lem:planar}Let  $F:\mathbb{F}^3 \rightarrow \mathbb{C}$ be a function with support $E \subseteq \mathbb{F}^3$. Suppose that $F(x) \sim 1$ for $x \in E$, that $|E|= |\mathbb{F}|^{\gamma}$, and that $E$ has planar entropy at most $e$. For $1\leq p \leq 2$ we have the following estimates. If $\gamma \leq 2$, then
\[ \left( \int_{\xi \in \mathcal{H}} |\hat{F}(\xi)|^{p} d\sigma  \right)^{1/p} \lesssim |\mathbb{F}|^{\gamma-1/p} + |\mathbb{F}|^{\gamma/2 + e/2}.\]
If $\gamma > 2$, then
\[ \left( \int_{\xi \in \mathcal{H}} |\hat{F}(\xi)|^{p} d\sigma  \right)^{1/p} \lesssim   |\mathbb{F}|^{2+ (\gamma-2)/p - 1/p} + |\mathbb{F}|^{\gamma/2 + e/2}. \]
\end{lemma}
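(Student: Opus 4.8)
\textbf{Proof plan for Lemma \ref{lem:planar}.}
The strategy is to cover $E$ by the minimal number $N = |\mathbb{F}|^{e}$ of VH planes, write $E = \bigcup_{j=1}^{N} E_j$ with $E_j$ contained in the $j$-th plane, and decompose $F = \sum_j F_j$ accordingly (discarding overlaps so that the $E_j$ are disjoint). Then $\widehat{F} = \sum_j \widehat{F_j}$, and I would estimate $\|\widehat{F}\|_{L^p(\mathcal{H},d\sigma)}$ in two complementary ways, taking the better of the two bounds. First, since each plane is a VH plane, I can apply Lemma \ref{lem:planar3to2}: after a linear change of variables each $F_j$ looks like $\delta(x_2 - a x_3 - b) f_j(x_1,x_3)$, so $\widehat{F_j}$ is (a modulation of) the pullback of a $2$-dimensional Fourier transform $\widehat{f_j}(\xi_1,\xi_3 + a\xi_2)$, i.e.\ it is constant along the lines perpendicular to the plane. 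The point is then to understand how the surface $\mathcal{H}$ meets these perpendicular lines: generically each such line meets $\mathcal{H}$ in boundedly many points, so one gets $\|\widehat{F_j}\|_{L^p(\mathcal{H},d\sigma)} \lesssim |\mathcal{H}|^{-1/p}\,\|\widehat{f_j}\|_{\ell^p}$ up to multiplicity, and then Hausdorff--Young (for $1\le p\le 2$) bounds $\|\widehat{f_j}\|_{\ell^p}$ by $\|f_j\|_{\ell^{p'}} \sim |E_j|^{1/p'}$. Summing the $N$ pieces and optimizing against $\sum_j |E_j| = |E| = |\mathbb{F}|^\gamma$ (by H\"older, $\sum_j |E_j|^{1/p'} \le N^{1/p}\,|\mathbb{F}|^{\gamma/p'}$, roughly) yields the first-type term; this is where the exponent $\gamma - 1/p$ versus $2 + (\gamma-2)/p - 1/p$ dichotomy at $\gamma = 2$ enters, reflecting whether a single plane's worth of mass, $|E_j| \le |\mathbb{F}|^2$, is the binding constraint.

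The second estimate is the orthogonality bound: because distinct VH planes have normals pointing in distinct directions, the functions $\widehat{F_j}$ restricted to $\mathcal{H}$ are (nearly) orthogonal in $L^2(\mathcal{H}, d\sigma)$ — more precisely one bounds $\|\widehat F\|_{L^2(\mathcal{H},d\sigma)}^2 \lesssim \sum_j \|\widehat{F_j}\|_{L^2(\mathcal{H},d\sigma)}^2$ after checking the cross terms are controlled, which follows from the explicit form of $(d\sigma)^\vee$ in Lemma \ref{lem:FTcomp} (the Bochner--Riesz kernel decays and the two planes' perpendicular directions cause cancellation). Combining this with the trivial $L^2$ restriction bound of Lemma \ref{lem:RestL2}, $\|\widehat{F_j}\|_{L^2(\mathcal{H},d\sigma)} \lesssim |\mathbb{F}|^{1/2}\|F_j\|_{L^2} \sim |\mathbb{F}|^{1/2}|E_j|^{1/2}$, and $\sum_j |E_j| = |\mathbb{F}|^\gamma$, gives $\|\widehat F\|_{L^2(\mathcal{H},d\sigma)} \lesssim |\mathbb{F}|^{(1+\gamma)/2}$; then embedding $L^p \hookrightarrow L^2$ on the probability space $(\mathcal{H}, d\sigma)$ (valid since $p \le 2$) produces the term $|\mathbb{F}|^{\gamma/2 + 1/2}$. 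To actually get $|\mathbb{F}|^{\gamma/2 + e/2}$ one does not use full orthogonality but rather splits the mass across the $N = |\mathbb{F}|^e$ planes more carefully, or equivalently applies the $\ell^2$-decoupling into planes and then Cauchy--Schwarz in $j$: $\|\widehat F\|_{L^p(\mathcal{H},d\sigma)} \le \big(\sum_j \|\widehat{F_j}\|_{L^p}^2\big)^{1/2} N^{1/2 - \text{(something)}}$, combined with a per-plane bound. The honest route is: each $\widehat{F_j}$ is supported (as a function on $\mathcal{H}$, via the fibering of $\mathcal{H}$ over the perpendicular lines) on a set whose $d\sigma$-measure is controlled, so $\|\widehat{F_j}\|_{L^p(\mathcal{H},d\sigma)}$ relates to $\|\widehat{F_j}\|_{L^2(\mathcal{H},d\sigma)}$ with a loss governed by that measure, and after Cauchy--Schwarz over the $j$'s the entropy $e$ appears as claimed.

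\textbf{Main obstacle.} The delicate point is the second bound: getting the clean $|\mathbb{F}|^{\gamma/2 + e/2}$ rather than a weaker power of the entropy. This requires genuinely using that there are only $|\mathbb{F}|^e$ planes (not just that $E$ is covered by them), and making the near-orthogonality of the $\widehat{F_j}$ on $\mathcal{H}$ quantitative — one must verify via Lemma \ref{lem:FTcomp} that the contribution of cross terms $\langle \widehat{F_j}, \widehat{F_k}\rangle_{L^2(\mathcal{H},d\sigma)}$ for $j \ne k$ is dominated by the diagonal, which amounts to an incidence/geometry statement about how two affine planes with distinct VH-type normals intersect $\mathcal{H}$ and how the Bochner--Riesz kernel pairs the associated perpendicular-line-structured functions. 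Handling the interplay between the VH restriction on $E$ (implicit in ``VH plane'' structure), the parametrization of $\mathcal{H}$, and bookkeeping the two regimes $\gamma \le 2$ and $\gamma > 2$ (which come from whether $|E_j|$ can saturate a whole plane, $|\mathbb{F}|^2$) is the rest of the work; the first, Hausdorff--Young-based bound is comparatively routine once Lemma \ref{lem:planar3to2} is in hand.
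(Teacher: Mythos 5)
There is a genuine gap here, and it centers on where the two terms in the lemma actually come from. You correctly invoke Lemma \ref{lem:planar3to2} to see that $\widehat{F_{(a,b)}}(\xi) = \widehat{f_{(a,b)}}(\xi_1, \xi_3 + a\xi_2)e(-\xi_2 b)$, and you correctly observe that the fibers (perpendicular lines) generically hit $\mathcal{H}$ in boundedly many points. But the whole argument hinges on isolating the \emph{non-generic} fiber: for the type-1 plane with direction $a$, the slice $\xi_1 = -a$ collapses — on $\mathcal{H}$, $\widehat{f_{(a,b)}}(\xi_1, \xi_2(\xi_1+a))$ becomes the constant $\widehat{f_{(a,b)}}(-a,0)$ along the entire line $\{\xi_1 = -a\} \cap \mathcal{H}$, and $|\widehat{f_{(a,b)}}(-a,0)|$ can be as large as $|E_{(a,b)}|$. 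That degenerate line is exactly what produces the first term $|\mathbb{F}|^{\gamma - 1/p}$ (and the $\gamma>2$ analogue). Your ``Hausdorff--Young then H\"older over planes'' route cannot see this: first, the inequality you write ($\|\widehat{f_j}\|_{\ell^p} \lesssim \|f_j\|_{\ell^{p'}}$ for $p\le 2$) goes the wrong way — Hausdorff--Young controls $\|\widehat{f}\|_{\ell^{p'}}$ by $\|f\|_{\ell^p}$, not the reverse — and second, even a corrected Plancherel bound only gives the generic (non-degenerate) contribution. The paper's proof makes the split explicit: it writes $\widehat{F_{(a,b)}} = \widehat{G_{(a,b)}} + \widehat{H_{(a,b)}}$ with $\widehat{G_{(a,b)}} = \delta(\xi_1 + a)\widehat{F_{(a,b)}}$, treats the $G$-part as a one-variable exponential sum in $\eta_2$ with coefficients $\widehat{f_{(a,b)}}(-a,0)$ (bounded in modulus by $|E_{(a,b)}|$), and treats the $H$-part by the change of variables $\omega_2 = \eta_2(\eta_1+a)$ (valid since $\eta_1 \ne -a$) followed by $2$D Plancherel. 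You never make this split, which is the key idea.

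Your second bound also misidentifies the mechanism. The $|\mathbb{F}|^{\gamma/2 + e/2}$ term does \emph{not} come from near-orthogonality of the $\widehat{F_j}$ on $\mathcal{H}$ or from cancellation of cross terms via the Bochner--Riesz kernel; the paper uses nothing of the sort. It comes from the per-plane estimate $\|\widehat{H_{(a,b)}}\|_{L^p(\mathcal{H},d\sigma)} \lesssim |E_{(a,b)}|^{1/2}$ (note: \emph{no} factor of $|\mathbb{F}|^{1/2}$, which is exactly what the change-of-variables/Plancherel argument buys and what the generic $\mathcal{R}^*(2\to2) \lesssim |\mathbb{F}|^{1/2}$ bound you invoke would lose), followed by the plain triangle inequality and Cauchy--Schwarz over the $|\mathbb{F}|^e$ planes: $\sum_{a,b}|E_{(a,b)}|^{1/2} \le |\mathbb{F}|^{e/2}\bigl(\sum_{a,b}|E_{(a,b)}|\bigr)^{1/2} = |\mathbb{F}|^{e/2 + \gamma/2}$. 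Your route via $\|\widehat{F_j}\|_{L^2} \lesssim |\mathbb{F}|^{1/2}|E_j|^{1/2}$ is off by a factor of $|\mathbb{F}|^{1/2}$, and you flag this yourself without resolving it. Finally, the $\gamma \le 2$ versus $\gamma > 2$ dichotomy is handled in the paper entirely inside the analysis of the $G$-part (by further splitting planes according to whether $|E_a| \gtrless |\mathbb{F}|^2$, using H\"older in $\eta_2$ and the bound $|\mathcal{A}| \le |\mathbb{F}|^{\gamma-2}$ when $\gamma>2$); your sketch gestures at the right threshold but does not contain this argument.
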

\begin{proof}It is no loss of generality to assume that all of the $|\mathbb{F}|^e$ planes in the covering given by the hypothesis are of the same type. If they are not, we may split the function into two functions supported on disjoint sets, apply the triangle inequality and apply the result to each. Moreover, by symmetry, we may assume that all of the planes are type $1$.  A type $1$ VH plane will a have characteristic function of the form
$$\delta(x_2 - a x_3 -b) .$$
We may parameterize the set of type $1$ VH planes by the intercept $b \in \mathbb{F}$ and direction $a \in \mathbb{F}$. Given the covering of size at most $|\mathbb{F}|^{e}$ in the hypothesis, we let $\mathcal{A}$ denote the set of all directions that occur among these planes. For each $a \in \mathcal{A}$ we let $\mathcal{B}(a)$ denote the set of intercepts that occur among the planes with direction $a$.  We may then write
$$F(x_1,x_2,x_3) = \sum_{a \in \mathcal{A}} \sum_{b\in\mathcal{B}(a)} F_{(a,b)}(x_1,x_2,x_3)  $$
where $F_{(a,b)}(x_1,x_2,x_3)$ is supported on a type $1$ VH plane with direction $a$ and intercept $b$. Moreover, by Lemma \ref{lem:planar3to2}, we have $F_{(a,b)}(x_1,x_2,x_3)= \delta(x_2-ax_3-b)f_{(a,b)}(x_1,x_2)$ for some function bivariate function $f_{(a,b)}(x_1,x_2)$, and $\hat{F}_{(a,b)}(\xi_1,\xi_2,\xi_3) = \hat{f}_{(a,b)}(\xi_1, \xi_3 + a \xi_2) e(-\xi_2 b)$.

We note that the planes might intersect, and thus there might be some freedom in the definition of the functions $F_{(a,b)}$. We will only require that this choice is made so that the supports of $F_{(a,b)}$ are disjoint. Any choice of the $F_{(a,b)}$'s of this form will be sufficient. Set $\widehat{F_{(a,b)}}(\xi_1,\xi_2,\xi_3) = \widehat{G_{(a,b)}}(\xi_1,\xi_2,\xi_3) + \widehat{H_{(a,b)}}(\xi_1,\xi_2,\xi_3) $ where
$$ \widehat{G_{(a,b)}}(\xi_1,\xi_2,\xi_3) :=  \delta(\xi_{1}+a)\widehat{F_{(a,b)}}(\xi_1,\xi_2,\xi_3)= \delta(\xi_{1}+a)\hat{f}_{(a,b)}(\xi_1, \xi_3 + a \xi_2) e(-\xi_2 b)  $$
and $\widehat{H_{(a,b)}}= \widehat{F_{(a,b)}}-\widehat{G_{(a,b)}}$. We then have
\[ \int_{\xi \in \mathcal{H}} | \sum_{a \in \mathcal{A}} \sum_{b\in\mathcal{B}(a)} \widehat{G_{(a,b)}}(\xi)|^{p} d\sigma(\xi) = |\mathbb{F}|^{-2}  \sum_{\eta_1,\eta_2 \in \mathbb{F}} |\sum_{a \in \mathcal{A}} \sum_{b\in\mathcal{B}(a)} \delta(\eta_{1}+a) \widehat{f_{(a,b)}}(\eta_1, \eta_1 \eta_2 + a \eta_2 )e(-\eta_2 b)   |^{p}  \]
\begin{equation}\label{eq:plane1}
= |\mathbb{F}|^{-2} \sum_{\eta_2 \in \mathbb{F} }| \sum_{a \in \mathcal{A}}\sum_{b\in\mathcal{B}(a)} \widehat{f_{(a,b)}}(-a, 0 )e(-\eta_2 b)   |^{p}
\end{equation}
Let $E_{(a,b)}$ denote the support of the  the function $F_{(a,b)}$. Using that the supports are disjoint, we have that $\sum_{a,b \in \mathbb{F}}|E_{(a,b)}| = |\mathbb{F}|^{\gamma}$.  Since $|\widehat{f_{(a,b)}}(-a, 0 )| \leq |E_{(a,b)}|$, the quantity (\ref{eq:plane1}) is bounded by
$$|\mathbb{F}|^{-1+p \gamma}.$$
We will use this estimate when $\gamma \leq 2$.  When $\gamma>2$ we can do better by exploiting the fact that the support of $F$ must be spread out over multiple planes.

Assuming $\gamma>2$, let us return to (\ref{eq:plane1}).  Let us write $E_{a} = \cup_{b \in \mathcal{B}(a)} E_{(a,b)}$. We will consider the contribution from terms with $|E_{a}| \leq |\mathbb{F}|^2$ and   $|E_{a}| \geq |\mathbb{F}|^2$ separately. Let us start with the first case. Using, again, $|\widehat{f_{(a,b)}}(-a, 0 )| \leq |E_{(a,b)}|$, we may estimate \eqref{eq:plane1} as
\[ \lesssim   |\mathbb{F}|^{-2}\sum_{\eta_2 \in \mathbb{F} } \sum_{a \in \mathcal{A}}|  E_{a} |^{p}
\lesssim |\mathbb{F}|^{-1}\sum_{a \in \mathcal{A}}|  E_{a} |^{p}\lesssim |\mathbb{F}|^{-1} |\mathbb{F}|^{2(p-1)} \sum_{a \in \mathcal{A}}|  E_{a} | \lesssim   |\mathbb{F}|^{-1 +(2 p-2) + \gamma}.\]

We now consider the contribution from the terms which satisfy $|E_{a}| \geq |\mathbb{F}|^2$. From this hypothesis we can deduce that $|\mathcal{A}| \leq |\mathbb{F}|^{\gamma-2}$.  By H\"{o}lder's inequality we may bound (\ref{eq:plane1}) by
\[\lesssim |\mathbb{F}|^{-2} \sum_{a \in \mathcal{A}} (\sum_{\eta_2 \in \mathbb{F}}| \sum_{b\in\mathcal{B}(a)} \widehat{f_{(a,b)}}(-a, 0 )e(-\eta_2 b)   |^{2})^{p/2} |\mathbb{F}|^{1-p/2}.\]
Using the relation $\sum_{\eta_2 \in \mathbb{F} }| \sum_{b\in\mathcal{B}(a)} \widehat{f_{(a,b)}}(-a, 0 )e(-\eta_2 b)   |^{2}  \leq |\mathbb{F}| \sum_{b \in \mathcal{B}(a)}|E_{(a,b)}|^2$ and the trivial estimate $|E_{(a,b))}| \leq |\mathbb{F}|^2 $, we have that the above is
\[\lesssim |\mathbb{F}|^{-2}|\mathbb{F}|^{p/2} \sum_{a \in \mathcal{A}} ( \sum_{b\in\mathcal{B}(a)} |E_{(a,b)}  |^{2})^{p/2} |\mathbb{F}|^{1-p/2} \lesssim
|\mathbb{F}|^{-1} \sum_{a \in \mathcal{A}} (|\mathbb{F}|^2 \sum_{b\in\mathcal{B}(a)} |E_{(a,b)}  |)^{p/2}\]
\[  \lesssim |\mathbb{F}|^{-1+p} \sum_{a \in \mathcal{A}}  |E_{a} |^{p/2} \lesssim  |\mathbb{F}|^{-1+p} (\sum_{a \in \mathcal{A}}  |E_{a}| )^{p/2} |\mathbb{F}|^{(\gamma-2)(2-p)/2}\]
\[\lesssim |\mathbb{F}|^{-1+p}  |\mathbb{F}|^{\gamma-2 + p} = |\mathbb{F}|^{-1+ (\gamma-2) + 2p}.\]
Next we estimate the contribution from the terms $\widehat{H_{(a,b)}}(\xi)$. Here, we proceed more crudely making no distinction between the roles of the $a$ and $b$ parameters. Recall that $|\mathbb{F}|^e := \sum_{a \in \mathcal{A}} \sum_{b \in \mathcal{B}(a)} 1$ is the total number of possible planes. By the triangle inequality
\begin{equation}\label{ eq:plane2}\left( \int_{\xi \in \mathcal{H}} | \sum_{a \in \mathcal{A}} \sum_{b\in\mathcal{B}(a)} \widehat{H_{(a,b)}}(\xi) |^{p}d\sigma(\xi) \right)^{1/p} \lesssim \sum_{a \in \mathcal{A}}\sum_{b\in\mathcal{B}(a)} \left( \int_{\xi \in \mathcal{H}} | \widehat{H_{(a,b)}}(\xi) |^{p}d\sigma(\xi) \right)^{1/p}.
\end{equation}
We have
\[ \int_{\xi \in \mathcal{H}} | \widehat{H_{(a,b)}}(\xi) |^{p}d\sigma(\xi) =  |\mathbb{F}|^{-2}\sum_{\eta_1 \neq -a } \sum_{\eta_2 \in \mathbb{F} } | \widehat{f_{(a,b)}}(\eta_1, \eta_2 (\eta_1  + a)  )e(-\eta_2 b)   |^{p}. \]
Next, using the fact the restriction that $a \neq \eta_1$, we may make the change of variables $\eta_1=\omega_1$ and $\eta_2(\eta_1 +a)=\omega_2$. This allows us to bound the quantity above by
\[  |\mathbb{F}|^{-2} \sum_{\omega_1,\omega_2 \in \mathbb{F}} |\widehat{f_{(a,b)}}(\omega_1, \omega_2  ) |^{p} \lesssim |\mathbb{F}|^{-2}\left(\sum_{\omega_1,\omega_2 \in \mathbb{F} } |\widehat{f_{(a,b)}}(\omega_1, \omega_2  )|^{2}\right)^{p/2} |\mathbb{F}|^{2-p}.  \]
Using that $\sum_{\omega_1,\omega_2 \in \mathbb{F}} |\widehat{f_{(a,b)}} (\omega_1, \omega_2  )|^{2} = |\mathbb{F}|^2 \sum_{x_1,x_2 \in \mathbb{F}}|f_{(a,b)}(x_1, x_2  )|^{2} \lesssim |\mathbb{F}|^2 |E_{(a,b)}|   $, the above quantity is
\[ \lesssim |E_{(a,b)}|^{p/2}. \]
From this, we have
\[\sum_{a \in \mathcal{A}}\sum_{b\in\mathcal{B}(a)} \left( \int_{\xi \in \mathcal{H}} | \widehat{H_{(a,b)}}(\xi) |^{p}d\sigma(\xi) \right)^{1/p} \lesssim \sum_{a,b \in \mathbb{F}}  |E_{(a,b)}|^{1/2} \lesssim |\mathbb{F}|^{\gamma/2 + e/2} .\]
Putting these estimates together concludes the proof.
\end{proof}

Given $0<\gamma<3$ (which will be the dimension of a set) and $1\leq q \leq 2$ (which will be the right exponent of a restriction inequality) in what follows the quantity  $|\mathbb{F}|^{-\gamma/q}$ will occur repeatedly.  This is the largest value that a function $F$ may take on a level set of size $|\mathbb{F}|^{\gamma}$ if $||F||_{L^{q}(\mathbb{F}^3,dx)}=1$. In what follows we will always take $q= 2p/(2p-1)$. Recall that this is the sharp right exponent for a $L^p$ restriction inequality.

\begin{lemma}\label{lem:planeL}Let $3/2 < p < 2$, $q= 2p/(2p-1)$, and $F$ a function on $\mathbb{F}^3$ such that $||F||_{L^q}=1$. In addition, let $\gamma < 2 $ and suppose that
$$  |\mathbb{F}|^{(1-2p)/p} \leq |F| \leq |\mathbb{F}|^{\gamma (1-2p)/2p} $$
holds on the support of $F$. Moreover, assume that the planar entropy $e$ of the support of $F$ satisfies $e \leq \gamma( p-1)/p$. Then $F$ satisfies the (sharp) restriction estimate
$$||\hat{F}||_{L^p(S,d\sigma)} \lesssim ||F||_{L^q} .$$
\end{lemma}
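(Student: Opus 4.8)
The plan is to chop $F$ into level sets and feed each piece into the planar restriction estimate of Lemma~\ref{lem:planar}. Normalise $\|F\|_{L^q}=1$ and write $F=\sum_\lambda F_\lambda$, where $F_\lambda=F\cdot 1_{\{\lambda\le|F|<2\lambda\}}$ and $\lambda$ runs over the $O(\log|\mathbb{F}|)$ dyadic values in the interval $[\,|\mathbb{F}|^{(1-2p)/p},\,|\mathbb{F}|^{\gamma(1-2p)/2p}\,]$ permitted by the hypothesis. Set $E_\lambda=\operatorname{supp}F_\lambda$ and $|E_\lambda|=|\mathbb{F}|^{\gamma_\lambda}$. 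Since $E_\lambda\subseteq E$ we have $\gamma_\lambda\le\gamma<2$, and $E_\lambda$ inherits planar entropy at most $e$; moreover $\|F\|_{L^q}=1$ together with the pointwise bound $|F|\le|\mathbb{F}|^{\gamma(1-2p)/2p}=|\mathbb{F}|^{-\gamma/q}$ forces $\lambda\le|\mathbb{F}|^{-\gamma/q}$ on every level. By the triangle inequality in $L^p(\mathcal{H},d\sigma)$ (legitimate since $p\ge1$) it is enough to bound $\sum_\lambda\|\widehat{F_\lambda}\|_{L^p(\mathcal{H},d\sigma)}$.

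On each level $F_\lambda/\lambda\sim1$ on $E_\lambda$, which has size $|\mathbb{F}|^{\gamma_\lambda}$ with $\gamma_\lambda<2$ and planar entropy at most $e$, so the $\gamma\le2$ case of Lemma~\ref{lem:planar} gives
\[
\|\widehat{F_\lambda}\|_{L^p(\mathcal{H},d\sigma)}\ \lesssim\ \lambda\,|\mathbb{F}|^{\gamma_\lambda-1/p}\ +\ \lambda\,|\mathbb{F}|^{\gamma_\lambda/2+e/2}.
\]
For the first term one uses the identity $\gamma_\lambda-1/p=\gamma_\lambda/q+(\gamma_\lambda-2)/(2p)$ to rewrite it as $\|F_\lambda\|_{L^q}\,|\mathbb{F}|^{(\gamma_\lambda-2)/(2p)}\le|\mathbb{F}|^{(\gamma-2)/(2p)}\|F_\lambda\|_{L^q}$; the gain $|\mathbb{F}|^{(\gamma-2)/(2p)}$ is a genuine negative power of $|\mathbb{F}|$ because $\gamma<2$, so after summing the $O(\log|\mathbb{F}|)$ levels this contributes $\lesssim_{\gamma,p}\|F\|_{L^q}$. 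For the second term, insert $\lambda\le|\mathbb{F}|^{-\gamma/q}$ and $\gamma_\lambda\le\gamma$ to get $\lambda\,|\mathbb{F}|^{\gamma_\lambda/2+e/2}\le|\mathbb{F}|^{-\gamma/q+\gamma/2+e/2}$; since $-\gamma/q+\gamma/2=-\gamma(p-1)/(2p)$ and, by hypothesis, $e\le\gamma(p-1)/p$, the exponent is $\le0$, so this piece is $\lesssim\|F\|_{L^q}$ on every level.

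Adding up yields the desired estimate $\|\widehat{F}\|_{L^p(\mathcal{H},d\sigma)}\lesssim\|F\|_{L^q}$, up to at worst a logarithmic factor arising in the extremal configuration $e=\gamma(p-1)/p$ when one sums the second term over levels; such a logarithm is harmless, being dominated by the polynomial gains already present (or removed afterwards by the $\epsilon$-removal mechanism of Lemma~\ref{lem:eprem}). The delicate point of the argument is that the second, ``dispersive'' term of the planar estimate must be kept under control even on the \emph{low-density} level sets $E_\lambda$ (those with $\gamma_\lambda$ far below $\gamma$), where that estimate is nowhere near sharp; this is precisely why the hypothesis supplies a pointwise ceiling on $|F|$ — equivalently a ceiling on the height $\lambda$ — rather than merely the normalisation $\|F\|_{L^q}=1$, since it is that ceiling which rescues the low-density levels.
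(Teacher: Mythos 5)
Your overall strategy — dyadic decomposition of $F$ and applying the first part of Lemma~\ref{lem:planar} level by level — is exactly the paper's, but there is a concrete error that both invalidates your first-term bound and produces the log loss you try to wave away. You assert that $\gamma_\lambda\le\gamma$ ``since $E_\lambda\subseteq E$,'' but the hypothesis never bounds $|E|$ by $|\mathbb{F}|^\gamma$. From $\|F\|_{L^q}=1$ together with the pointwise lower bound $|F|\ge|\mathbb{F}|^{(1-2p)/p}$ one only gets $|E|\le|\mathbb{F}|^2$, while the pointwise upper bound gives $|E|\ge|\mathbb{F}|^\gamma$; so for the low-height levels (those with $\lambda$ near $|\mathbb{F}|^{(1-2p)/p}$) the exponent $\gamma_\lambda$ can be as large as $2>\gamma$. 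On those levels your claimed gain $|\mathbb{F}|^{(\gamma-2)/(2p)}$ in the first term is simply not available, and the uniform-in-$\lambda$ bound on the second term really does sum to $O(\log|\mathbb{F}|)$. That logarithm cannot be discarded by Lemma~\ref{lem:eprem}, which only trades an $|\mathbb{F}|^\epsilon$ loss for an enlargement of the $q$-exponent — whereas this Lemma is invoked in the proof of Theorem~\ref{thm:Res3} exactly at the critical exponent, with no polynomial gain of its own to absorb a log.

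The fix is to use the \emph{level-dependent} size bound that $\|F\|_{L^q}=1$ provides, namely $|E_\lambda|\le\lambda^{-q}$ (equivalently $|S_i|\le 2^{iq}$), rather than the uniform bound $\gamma_\lambda\le\gamma$. Substituting, the first-term contribution becomes $\sum_i 2^{-i}\cdot 2^{i\cdot 2p/(2p-1)}\,|\mathbb{F}|^{-1/p}=\sum_i 2^{i/(2p-1)}\,|\mathbb{F}|^{-1/p}$, a geometric series with positive ratio-exponent; it is dominated by its largest term, achieved at $2^{-i}=|\mathbb{F}|^{(1-2p)/p}$, which is exactly $|\mathbb{F}|^{1/p}\cdot|\mathbb{F}|^{-1/p}=1$. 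The second-term contribution becomes $\sum_i 2^{-i}\cdot 2^{ip/(2p-1)}\,|\mathbb{F}|^{e/2}=\sum_i 2^{i(1-p)/(2p-1)}\,|\mathbb{F}|^{e/2}$, geometric with negative ratio-exponent, dominated by its term at $2^{-i}=|\mathbb{F}|^{\gamma(1-2p)/(2p)}$, which evaluates to $|\mathbb{F}|^{e/2-\gamma(p-1)/(2p)}\le 1$ by the hypothesis $e\le\gamma(p-1)/p$. Both series converge, so there is no logarithmic loss. This is precisely what the paper's proof does, once one reads $|S_i|\le 2^{i2p/(2p-1)}$ as the level-dependent bound.
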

\begin{proof}Following the standard dyadic pigeonholing method, define $S_i \subseteq \mathbb{F}^3$ to be the set where $F(x) \sim 2^{-i}$. Now $|S_{i}| \leq 2^{i2p/(2p-1)
}$. Letting $1_{S_i}$ denote the characteristic function of the set $S_i$, by the first part of Lemma \ref{lem:planar} we have that
$$ ||\hat{F}||_{L^p(S,d\sigma)} \leq \sum_{i \in \mathbb{N}}  2^{-i} ||2^{i}F 1_{S_{i}}||_{L^{p}}$$

$$\lesssim |\mathbb{F}|^{-1/p} \sum_{\substack{i \\ |\mathbb{F}|^{(1-2p)/p}   \leq 2^{-i} \leq |\mathbb{F}|^{\gamma (1-2p)/2p} }}  2^{-i} 2^{i 2p/(2p-1)} + |\mathbb{F}|^{e/2}\sum_{\substack{i \\ |\mathbb{F}|^{(1-2p)/p}   \leq 2^{-i} \leq |\mathbb{F}|^{\gamma (1-2p)/2p} }}  2^{-i} 2^{i p/(2p-1)}$$
 $$ \lesssim |\mathbb{F}|^{(2p-1)/p \cdot (2p/(2p-1) -1) - 1/p} +|\mathbb{F}|^{e/2 + (p-1)/(2p-1) \cdot \gamma (1-2p)/2p } \lesssim 1. $$
\end{proof}

\begin{lemma}\label{lem:planeS}Let $3/2 < p < 2$, $q= 2p/(2p-1)$, and $F$ a function on $\mathbb{F}^3$ such that $||F||_{L^q}=1$. In addition, let $\gamma > 2 $ and suppose that
$$  |\mathbb{F}|^{\gamma (1-2p)/2p} \leq |F| \leq |\mathbb{F}|^{(1-2p)/p}  $$
holds on the support of $F$. Moreover, assume that the planar entropy $e$ of the support of $F$ satisfies $e \leq 2( p-1)/p$. Then $F$ satisfies the (sharp) restriction estimate
$$||\hat{F}||_{L^p(S,d\sigma)} \lesssim ||F||_{L^q} .$$
\end{lemma}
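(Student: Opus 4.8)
The plan is to follow the dyadic pigeonholing scheme used in the proof of Lemma \ref{lem:planeL}, now feeding in the second ($\gamma>2$) half of Lemma \ref{lem:planar} instead of the first. Write $F=\sum_i 2^{-i}\,(2^iF\,1_{S_i})$, where $S_i\subseteq\mathbb{F}^3$ is the dyadic level set on which $F(x)\sim 2^{-i}$, so that $2^iF\,1_{S_i}\sim 1$ on $S_i$. Since $\|F\|_{L^q}=1$ with $q=2p/(2p-1)$, we have $|S_i|\lesssim 2^{iq}$, and the hypothesis $|\mathbb{F}|^{\gamma(1-2p)/2p}\le|F|\le|\mathbb{F}|^{(1-2p)/p}$ on $\operatorname{supp}(F)$ confines the relevant indices to the range $|\mathbb{F}|^{(2p-1)/p}\lesssim 2^i\lesssim|\mathbb{F}|^{\gamma(2p-1)/2p}$. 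Each $S_i$ lies inside $\operatorname{supp}(F)$, hence has planar entropy at most $e$. By the triangle inequality in $L^p(S,d\sigma)$ it then suffices to bound $\sum_i 2^{-i}\,\|\widehat{1_{S_i}}\|_{L^p(S,d\sigma)}$.

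For a fixed level set, put $N_i:=|S_i|$ and apply Lemma \ref{lem:planar}. Its two cases combine into the single estimate $\|\widehat{1_{S_i}}\|_{L^p(S,d\sigma)}\lesssim N_i^{1/p}|\mathbb{F}|^{2-3/p}+N_i^{1/2}|\mathbb{F}|^{e/2}$: when $N_i>|\mathbb{F}|^2$ this is exactly the $\gamma>2$ bound, while when $N_i\le|\mathbb{F}|^2$ the $\gamma\le 2$ bound $N_i|\mathbb{F}|^{-1/p}+N_i^{1/2}|\mathbb{F}|^{e/2}$ is even smaller, since $N_i|\mathbb{F}|^{-1/p}\le N_i^{1/p}|\mathbb{F}|^{2-3/p}$ is equivalent to $N_i\le|\mathbb{F}|^2$. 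Plugging in $N_i\lesssim 2^{iq}$ turns $\sum_i 2^{-i}\|\widehat{1_{S_i}}\|_{L^p(S,d\sigma)}$ into a sum of two geometric series in $2^i$, with ratios $2^{q/p-1}$ and $2^{q/2-1}$ respectively.

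It remains to sum over the admissible range. Since $p>3/2$ we have $q/p-1=(3-2p)/(2p-1)<0$, so the first series is summable and dominated by its term at the smallest admissible $i$, i.e. at $2^i\sim|\mathbb{F}|^{(2p-1)/p}$; a short computation (using $N_i\lesssim 2^{iq}$ and $q/p=2/(2p-1)$) shows that term is $\lesssim|\mathbb{F}|^0=1$. Likewise $q/2-1=(1-p)/(2p-1)<0$, so the second series is dominated by its term at $2^i\sim|\mathbb{F}|^{(2p-1)/p}$, which equals $|\mathbb{F}|^{(1-p)/p+e/2}$; this is $\lesssim 1$ precisely because the hypothesis $e\le 2(p-1)/p$ is the same as $(1-p)/p+e/2\le 0$. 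Altogether $\|\widehat{F}\|_{L^p(S,d\sigma)}\lesssim 1=\|F\|_{L^q}$, which is the claim.

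Once Lemma \ref{lem:planar} is available the argument is essentially bookkeeping, and I expect the only delicate points to be: (i) checking that the $\gamma>2$ estimate of Lemma \ref{lem:planar} dominates the $\gamma\le 2$ estimate across \emph{all} level-set sizes, so that the dyadic pieces can be handled by one uniform bound; and (ii) the role of the strict hypotheses — $p>3/2$ is exactly what prevents the first geometric series from degenerating into $\sim\log|\mathbb{F}|$ equal terms (at $p=3/2$ one has $q/p=1$), which would cost a logarithmic factor, and $e\le 2(p-1)/p$ is exactly the threshold keeping the second sum bounded. No genuine obstacle is anticipated beyond this balancing of exponents.
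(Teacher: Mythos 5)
Your proof is correct and takes essentially the same route as the paper: dyadic pigeonhole into level sets $S_i$, apply the $\gamma>2$ branch of Lemma \ref{lem:planar} to each piece, and sum the two resulting decreasing geometric series (summable precisely because $p>3/2$, with the $e\le 2(p-1)/p$ threshold killing the second one). The one place you go beyond the paper's write-up is point (i) in your final paragraph — you explicitly verify that the $\gamma>2$ bound of Lemma \ref{lem:planar} dominates the $\gamma\le 2$ bound whenever $N_i\le|\mathbb{F}|^2$, which justifies applying the $\gamma>2$ formula uniformly across all admissible level sets (since $|S_i|$ is only bounded above by $2^{iq}$, some level sets in the range may well have size $\le|\mathbb{F}|^2$); the paper invokes ``the second part of Lemma \ref{lem:planar}'' without making this check explicit, so your version is, if anything, slightly more careful.
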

\begin{proof}As above, define $S_i \subseteq \mathbb{F}^3$ to be the set where $F(x) \sim 2^{-i}$, which implies that $|S_{i}| \leq 2^{i2p/(2p-1)
}$. By the second part of Lemma \ref{lem:planar} we have that
$$ ||\hat{F}||_{L^p(S,d\sigma)} \leq \sum_{i \in \mathbb{N} }  2^{-i} ||2^{i}F 1_{S_{i}}||_{L^{p}}$$

$$\lesssim |\mathbb{F}|^{2-3/p} \sum_{\substack{i \\ |\mathbb{F}|^{\gamma (1-2p)/2p}  \leq 2^{-i} \leq |\mathbb{F}|^{(1-2p)/p}   }}  2^{-i} 2^{i 2/(2p-1)} + |\mathbb{F}|^{e/2} \sum_{\substack{i \\ |\mathbb{F}|^{\gamma (1-2p)/2p}  \leq 2^{-i} \leq |\mathbb{F}|^{(1-2p)/p}   }}  2^{-i} 2^{i p/(2p-1)}$$
 $$ \lesssim |\mathbb{F}|^{2-3/p +(3-2p)/p} + |\mathbb{F}|^{e/2 +(p-1)/(2p-1) \cdot (1-2p)/p }  \lesssim 1 .$$
\end{proof}
\section{Proof of theorem \ref{thm:Res3}}\label{sec:3dthm}
We now are ready to put the above estimates together to prove Theorem \ref{thm:Res3}.
\newtheorem*{thm:Res3}{Theorem \ref{thm:Res3}}
\begin{thm:Res3}In the case of the $3$-dimensional hyperbolic paraboloid $\mathcal{H}$, we have that
\[\mathcal{R}^{*}(9/4 \rightarrow 18/5) \lesssim 1 .\]
\end{thm:Res3}
\begin{proof}We will prove the inequality in its dual form
$$||\widehat{F}||_{L^{9/5}(\mathcal{H},d\sigma)} \lesssim ||F||_{\frac{18}{13}} .$$
Let $F$ be an arbitrary function on $\mathbb{F}^3$ such that $||F||_{\frac{18}{13}}=1$. Our goal is to prove
\[ ||\widehat{F}||_{L^{9/5}(\mathcal{H},d\sigma) }  \lesssim 1. \]
Let $F_{L} = F 1_{ \{x : |f(x)| \geq |\mathbb{F}|^{-13/10}\}}$.  It follows from Lemma \ref{Lem:STsupp} (with $q/\theta = 18/5$) that $||\widehat{F_{L}}||_{L^2(\mathcal{H},d\sigma)} \lesssim ||F||_{\frac{18}{13}}$.  Next, let $F_{s} = F 1_{ \{x : |f(x)| \leq |\mathbb{F}|^{-13/8} \}} $.  Then by Lemma \ref{Lem:STinfty} with $\mathcal{R}(2 \rightarrow 2) \lesssim |\mathbb{F}|^{1/2}$ and $\theta = 5/9$ we have that $||\widehat{F_{s}}||_{L^2(\mathcal{H},d\sigma)} \lesssim ||F||_{\frac{18}{13}}$. Now let $F' = F - F_{L} - F_{s}$.  For $i$ such that  $|\mathbb{F}|^{-13/8} \leq 2^{-i} \leq  |\mathbb{F}|^{-13/10} $ we consider the level sets $S_i = \{x : |F'(x)| \sim 2^{-i} \}$. We decompose this set into a disjoint union as $S_{i} =S^{(1)}_{i} \cup S_{i}^{(2)}$ as follows: let $H_{i} \subset \mathbb{F}^3$ denote the union of every VH line $\ell$ in $\mathbb{F}^3$ for which $|\ell \cap S_{i}| \geq 2^{i 36/65} |\mathbb{F}|^{-1/5}$ (there is some flexibility in this choice, which will be motivated shortly). We then define $S_{i}^{(1)}= S_{i} \cap H$ and $S_{i}^{(2)} = S_{i} \setminus S_{i}^{(1)} $. Let $S^{(1)} = \bigcup_{i} S_{i}^{(1)}$ and $S^{(2)} = \bigcup_{i} S_{i}^{(2)}$ and define $F^{(1)} = F' 1_{S^{(1)}}$ and $F^{(2)} = F' 1_{S^{(2)}}$. First, we estimate the contribution from $F^{(2)}$ using Lemma \ref{lem:mt1}. Since $||F||_{\frac{18}{13}}=1$, we have $|S^{(2)}_{i}| \leq 2^{i18/13}$. Define $\gamma_i$ by $2^{i18/13}= |\mathbb{F}|^{\gamma_i}$. By construction, we have that $S^{(2)}$ is a VH($(2\gamma_{i}-1)/5$) set. The hypothesis of Lemma \ref{lem:mt1} requires that the set be a VH($3(\gamma_{i}-1)/4$) set. However, one has that $(2\gamma_{i}-1)/5 \leq 3(\gamma_{i}-1)/4$ whenever $\gamma_i \geq 11/7$. This is sufficient since we are currently interested in the case that $|\mathbb{F}|^{-13/8} \leq 2^{i} \leq  |\mathbb{F}|^{-13/10}$ which implies that $9/5 \leq \gamma_i \leq 9/4$. Thus, invoking Lemma \ref{lem:mt1}, in the relevant range we have
$$||2^{i}\widehat{F^{(2)} 1_{S_{i}^{(2)}}}||_{L^2(\mathcal{H},d\sigma)} \lesssim 2^{i 18/13 \cdot 11/16}|\mathbb{F}|^{1/16} \lesssim |\mathbb{F}|^{1/16} 2^{i 99/104}.$$
We estimate
\[||\widehat{F^{(2)}}||_{L^2(\mathcal{H},d\sigma)} \lesssim  \sum_{\substack{ i \\ |\mathbb{F}|^{-13/8} \leq 2^{-i} \leq  |\mathbb{F}|^{-13/10}  }}  2^{-i} ||2^{i}\widehat{F^{(2)} 1_{S_{i}^{(2)}}}||_{L^2(\mathcal{H},d\sigma)}   \]
\[\lesssim  |\mathbb{F}|^{1/16} \sum_{\substack{ i \\ |\mathbb{F}|^{-13/8} \leq 2^{-i} \leq  |\mathbb{F}|^{-13/10}  }}  2^{-i 5/104}  \lesssim
|\mathbb{F}|^{1/16} |\mathbb{F}|^{- (13/10) \cdot(5/104)} \sim 1. \]
It remains to estimate the contribution from $F^{(1)}$. We will now need to further decompose the sets $S_{i}^{(1)}$.  Let $\tilde{W_{i}} \subseteq \mathbb{F}^3$ denote a set which is the union of $|\mathbb{F}|^{4/5}$ distinct VH planes which maximizes the quantity $|\tilde{W_{i}} \cap S_{i}^{(1)}|$. We define $W_{i} = \tilde{W_{i}} \cap S_{i}^{(1)}$ and $U_{i} = S_{i}^{(1)} \setminus W_{i}$. Now, for each VH plane we have that $|U_{i} \cap P| \lesssim 2^{i 18/13}|\mathbb{F}|^{-4/5}$. To see this, assign each point in $W_i$ to one of the planes (among the$|\mathbb{F}|^{4/5}$ planes comprising $\tilde{W}_i$) which contain it (assigning points which are contained in multiple planes to a single plane arbitrarily). Since $|S_i^{(1)}| \leq 2^{i18/13}$, at least one plane, say $\tilde{P}$, must have $\leq 2^{i 18/13}|\mathbb{F}|^{-4/5}$ associated points. Now if $|U_{i} \cap P| > 2^{i 18/13}|\mathbb{F}|^{-4/5}$ for some plane $P$, we could replace $\tilde{P}$ with $P$ in the composition of $\tilde{W_{i}}$ to increase the quantity $|\tilde{W_{i}} \cap S_{i}^{(1)}|$. We define
$$W^{s} = \bigcup_{\substack{i \\ |\mathbb{F}|^{-13/8} \leq 2^{-i} \leq  |\mathbb{F}|^{-13/9} }}W_i,$$
 $$W^{L} = \bigcup_{\substack{i \\   |\mathbb{F}|^{-13/9}  \leq 2^{-i} \leq |\mathbb{F}|^{-13/10}}} W_i, $$
$$U = \bigcup_{\substack{i \\ |\mathbb{F}|^{-13/8} \leq 2^{-i} \leq |\mathbb{F}|^{-13/10}}}U_i. $$

Now it follows from Lemma \ref{lem:planeL} (with $p=9/5$, $\gamma=9/5$ and $e=4/5$) that $|| F^{(1)}1_{W^L} ||_{L^{9/5}(\mathcal{H},d\sigma) } \lesssim ||F||_{L^{18/13}(\mathbb{F}^3,dx)}$. Similarly, from Lemma \ref{lem:planeS} (with $p=9/5$, $\gamma=9/4$, and $e=8/9$) we have that $|| F^{(1)}1_{W^S} ||_{L^{9/5}(\mathcal{H},d\sigma) } \lesssim ||F||_{L^{18/13}(\mathbb{F}^3,dx)}$.

Finally, we estimate  $|| F^{(1)}1_{U} ||_{L^{2}(\mathcal{H},d\sigma) }$.  By Corollary \ref{cor:BR} we have that
$$||2^{i}\widehat{F^{(1)} 1_{U_{i}}}||_{L^2(\mathcal{H},d\sigma)} \lesssim |\mathbb{F}|^{-3/20} 2^{i 54/260} 2^{i 18/26} = |\mathbb{F}|^{-3/20} 2^{i 9/10}.$$
We estimate
\[|| F^{(1)}1_{U} ||_{L^{2}(\mathcal{H},d\sigma) } \lesssim \sum_{\substack{ i \\ |\mathbb{F}|^{-13/8} \leq 2^{-i} \leq  |\mathbb{F}|^{-13/10}  }}  2^{-i} ||2^{i}\widehat{F^{(1)} 1_{U_{i}}}||_{L^2(\mathcal{H},d\sigma)}   \]
\[\lesssim |\mathbb{F}|^{-3/20} \sum_{\substack{ i \\ |\mathbb{F}|^{-13/8} \leq 2^{-i} \lesssim  |\mathbb{F}|^{-13/10}  }}  2^{-i/10} \lesssim |\mathbb{F}|^{-1/50} \lesssim 1. \]
This completes the proof.
\end{proof}
\section{Further improvements in $3$-d based on sum-product/incidence theory}\label{sec:3dthmSumPro}
In this section we prove Theorem \ref{thm:ResWSP}, which gives a slight improvement to the previous theorem. Our goal here is simplicity and we do not attempt to optimize the calculations. I believe with more care one can recover essentially the same dependencies on the incidence hypothesis as given in \cite{LewkoNew}.
\newtheorem*{thm:ResWSP}{Theorem \ref{thm:ResWSP}}
\begin{thm:ResWSP}In the case of the $3$-dimensional hyperbolic paraboloid $\mathcal{H}$, there exists a $\delta>0$ such that the following holds
$$\mathcal{R}^{*}\left(\frac{18-5\delta}{8-5\delta} \rightarrow 18/5-\delta\right) \lesssim 1.$$
\end{thm:ResWSP}
\begin{proof}The proof will closely follow the previous proof. We again work in the dual formulation. We prove that there exists a $\delta >0$ such that
$$||\widehat{F}||_{L^{\frac{18+13\delta}{10+26\delta}}(\mathcal{H},d\sigma)} \lesssim ||F||_{\frac{18}{13}+\delta} .$$
The quantity $\delta>0$ will be chosen later. In addition, we will introduce a finite sequence of related small quantities, $\delta_1,\delta_2,\ldots,\delta_7 >0$, where $\delta_i$ will be allowed to be an arbitrary continuous function of $\delta$ as long as $\delta_i \rightarrow 0$ as $\delta \rightarrow 0$. Let $F$ be an arbitrary function on $\mathbb{F}^3$ such that $||F||_{\frac{18}{13}+\delta}=1$. Our goal is to prove
\[ ||\widehat{F}||_{L^{\frac{18+13\delta}{10+26\delta}}(\mathcal{H},d\sigma)}  \lesssim 1. \]

If $\delta>0$ is sufficiently small, then by Lemma \ref{Lem:STsupp} there exists a $\delta_1 >0$ such that the function $F_{L} = F 1_{ \{x : |f(x)| \geq |\mathbb{F}|^{-13/10 +\delta_1} \}}$ satisfies
$$||\widehat{F_{L}}||_{L^2(\mathcal{H},d\sigma)} \lesssim ||F||_{\frac{18}{13}+\delta}.$$ Similarly, by Lemma \ref{Lem:STinfty} (with $\mathcal{R}(2 \rightarrow 2) \lesssim |\mathbb{F}|^{1/2}$) there exists a $\delta_2 >0$ such that the function $F_{s} = F 1_{ \{x : |f(x)| \leq |\mathbb{F}|^{-13/8-\delta_2} \}} $ satisfies
$$||\widehat{F_{s}}||_{L^2(\mathcal{H},d\sigma)} \lesssim ||F||_{\frac{18}{13}+\delta}.$$
Now let $F' = F - F_{L} - F_{s}$.  For $i$ such that  $|\mathbb{F}|^{-13/8-\delta_2} \leq 2^{-i} \leq  |\mathbb{F}|^{-13/10+\delta_1} $ we consider the level sets $S_i = \{x : |F'(x)| \sim 2^{-i} \}$. We decompose this set into a disjoint union as $S_{i} =S^{(1)}_{i} \cup S_{i}^{(2)}$ as follows: let $H_{i} \subset \mathbb{F}^3$ denote the union of every VH line $\ell$ in $\mathbb{F}^3$ for which $|\ell \cap S_{i}| \geq 2^{i 36/65} |\mathbb{F}|^{-1/5}$. We then define $S_{i}^{(1)}= S_{i} \cap H$ and $S_{i}^{(2)} = S_{i} \setminus S_{i}^{(1)} $. Let $S^{(1)} = \bigcup_{i} S_{i}^{(1)}$ and $S^{(2)} = \bigcup_{i} S_{i}^{(2)}$ and define $F^{(1)} = F' 1_{S^{(1)}}$ and $F^{(2)} = F' 1_{S^{(2)}}$.

First, we estimate the contribution from $F^{(2)}$ using Lemma \ref{lem:mt1} and Lemma \ref{lem:mt2}. Since $||F||_{\frac{18}{13}+\delta}=1$, we have $|S^{(2)}_{i}| \leq 2^{i(18/13+\delta)}$. Define $\gamma_i$ by $2^{i(18/13+\delta)}= |\mathbb{F}|^{\gamma_i}$. By construction, we have that $S^{(2)}_{i}$ is a VH($(2\gamma_{i}-1)/5$) set. The hypothesis of Lemma \ref{lem:mt1} requires that the set be a VH($3(\gamma_{i}-1)/4$) set, and the hypothesis of Lemma \ref{lem:mt2} requires that the set be a VH($3(\gamma_{i}-1-\delta)/4$) set. Note that $(2\gamma_{i}-1)/5 \leq 3(\gamma_{i}-1 - \delta)/4$ whenever $\gamma_i \geq 11/7+\epsilon'$ (for some small $\epsilon' >0$, depending only on $\delta$). From this, we see that the (VH) hypothesis of both Lemmas will be satisfied, since we are currently interested in the case that $|\mathbb{F}|^{-13/8-\delta_3} \leq 2^{i} \leq  |\mathbb{F}|^{-13/10+\delta_4}$ which implies that $9/5-\delta_5 \leq \gamma_i \leq 9/4 + \delta_6$. We will further divide this case into two. We partition $S^{(2)} = S_L^{(2)} \cup S_U^{(2)}$ where $S_L^{(2)} := \{x \in \mathbb{F}^3 :  |\mathbb{F}|^{-13/8-\delta_2} \leq |F| \leq  |\mathbb{F}|^{-13/10-\delta^{1/2}}  \} $ and $S_U^{(2)} := \{x \in \mathbb{F}^3 :  |\mathbb{F}|^{-13/10-\delta^{1/2}} \leq |F| \leq |\mathbb{F}|^{-13/10+\delta_4} \} $.  We define $F_{L}^{(2)}$ and $F_{U}^{(2)}$, analogously. Invoking Lemma \ref{lem:mt1}, in the relevant range we have
$$||2^{i}\widehat{F_{L}^{(2)} 1_{S_{i}^{(2)}}}||_{L^2(\mathcal{H},d\sigma)} \lesssim 2^{i (18/13+\delta) \cdot 11/16}|\mathbb{F}|^{1/16} \lesssim |\mathbb{F}|^{1/16} 2^{i (99/104+11\delta/16)}.$$
We estimate
\[||\widehat{F^{(2)}_{L}}||_{L^2(\mathcal{H},d\sigma)} \lesssim  \sum_{\substack{ i \\ |\mathbb{F}|^{-13/8-\delta_2} \leq 2^{-i} \leq  |\mathbb{F}|^{-13/10-\delta^{1/2}  }}}  2^{-i} ||2^{i}\widehat{F^{(2)} 1_{S_{i}^{(2)}}}||_{L^2(\mathcal{H},d\sigma)}   \]
\[\lesssim  |\mathbb{F}|^{1/16} \sum_{\substack{ i \\ |\mathbb{F}|^{-13/8-\delta_2} \leq 2^{-i} \leq  |\mathbb{F}|^{-13/10-\delta^{1/2}  }}}  2^{-i (5/104-11\delta/16)}  \lesssim |\mathbb{F}|^{1/16} |\mathbb{F}|^{ (13/10-\delta^{1/2}) \cdot(-5/104+11\delta/16)}\]
\[\lesssim |\mathbb{F}|^{11 \delta^{3/2}/16 + 143 \delta/160 - 5 \delta^{1/2}/104}  \sim 1 \]
provided $\delta \leq \frac{26167 - 13 \sqrt{4027881}}{28600}$ ($\delta \leq .002$ suffices).
Next we consider $F_{U}^{(2)}$. Here, we apply Lemma \ref{lem:mt2}. Denoting $\epsilon, \tilde{\delta}$ the parameters given in the conclusion of that lemma, provided $\delta$ is small enough compared to $\epsilon$, we have
$$||2^{i}\widehat{F_{U}^{(2)} 1_{S_{i}^{(2)}}}||_{L^2(\mathcal{H},d\sigma)} \lesssim 2^{i (18/13+\delta) \cdot 11/16}|\mathbb{F}|^{1/16- \tilde{\delta}} \lesssim |\mathbb{F}|^{1/16 -\tilde{\delta}} 2^{i (99/104+11\delta/16)}.$$
We estimate
\[||\widehat{F^{(2)}_{U}}||_{L^2(\mathcal{H},d\sigma)} \lesssim  \sum_{\substack{ i \\ |\mathbb{F}|^{-13/10-\delta^{1/2}} \leq 2^{-i} \leq  |\mathbb{F}|^{-13/10+\delta_4}  }}  2^{-i} ||2^{i}\widehat{F^{(2)} 1_{S_{i}^{(2)}}}||_{L^2(\mathcal{H},d\sigma)}   \]
\[|\mathbb{F}|^{1/16 -\tilde{\delta}}  \sum_{\substack{ i \\ |\mathbb{F}|^{-13/10-\delta^{1/2}} \leq 2^{-i} \leq  |\mathbb{F}|^{-13/10+\delta_4}  }}2^{-i (5/104-11\delta/16)}\]
\[\lesssim |\mathbb{F}|^{1/16 -\tilde{\delta}} |\mathbb{F}|^{(13/10-\delta_4)(-5/104+11\delta/16) }  \lesssim |\mathbb{F}|^{143 \delta_4/ 160 + 5 \delta/104 - 11 \delta_4 \delta /16 - \tilde{\delta} } \lesssim 1  \]
provided that $\delta$ and $\delta_4$ are sufficiently small compared to $\tilde{\delta}$.

It remains to estimate the contribution from $F^{(1)}$. We proceed as before. We decompose the sets $S_{i}^{(1)}$.  Let $\tilde{W_{i}} \subseteq \mathbb{F}^3$ denote a set which is the union of $|\mathbb{F}|^{4/5-\delta_7}$ distinct VH planes which maximizes the quantity $|\tilde{W_{i}} \cap S_{i}^{(1)}|$. We define $W_{i} = \tilde{W_{i}} \cap S_{i}^{(1)}$ and $U_{i} = S_{i}^{(1)} \setminus W_{i}$. Now, for each VH plane we have that $|U_{i} \cap P| \lesssim 2^{i 18/13}|\mathbb{F}|^{-4/5}$. We define
$$W^{s} = \bigcup_{\substack{i \\ |\mathbb{F}|^{-13/8-\delta_2} \leq 2^{-i} \leq  |\mathbb{F}|^{-26/(18 +13 \delta)} }}W_i,$$
 $$W^{L} = \bigcup_{\substack{i \\   |\mathbb{F}|^{-26/(18 +13 \delta)}  \leq 2^{-i} \leq |\mathbb{F}|^{-13/10+\delta_1}}} W_i, $$
$$U = \bigcup_{\substack{i \\ |\mathbb{F}|^{-13/8-\delta_2} \leq 2^{-i} \leq |\mathbb{F}|^{-13/10 + \delta_1}}}U_i. $$

As before,  it follows from Lemma \ref{lem:planeL} that $|| F^{(1)}1_{W^L} ||_{L^{\frac{18+13\delta}{10+26\delta}}(\mathcal{H},d\sigma)} \lesssim ||F||_{L^{18/13+\delta}(\mathbb{F}^3,dx)}$. We note that it is to satisfy the hypothesis of Lemma \ref{lem:planeL} that we had to reduce the planar entropy from $4/5$ to $4/5 -\delta_7$. Similarly, from Lemma \ref{lem:planeS} we have that $|| F^{(1)}1_{W^S} ||_{L^{\frac{18+13\delta}{10+26\delta}}(\mathcal{H},d\sigma)} \lesssim ||F||_{L^{18/13+\delta}(\mathbb{F}^3,dx)}$.

Finally, we estimate  $|| F^{(1)}1_{U} ||_{L^{2}(\mathcal{H},d\sigma) }$.  By Corollary \ref{cor:BR} we have that
$$||2^{i}\widehat{F^{(1)} 1_{U_{i}}}||_{L^2(\mathcal{H},d\sigma)} \lesssim |\mathbb{F}|^{-3/20+\delta_7/4} 2^{i 54/260} 2^{i 18/26} = |\mathbb{F}|^{-3/20} 2^{i 9/10}.$$
We estimate
\[|| F^{(1)}1_{U} ||_{L^{2}(\mathcal{H},d\sigma) } \lesssim \sum_{\substack{ i \\ |\mathbb{F}|^{-13/8-\delta_2} \leq 2^{-i} \leq  |\mathbb{F}|^{-13/10 + \delta_4}  }}  2^{-i} ||2^{i}\widehat{F^{(1)} 1_{U_{i}}}||_{L^2(\mathcal{H},d\sigma)}   \]
\[\lesssim |\mathbb{F}|^{-3/20} \sum_{\substack{ i \\ |\mathbb{F}|^{-13/8 -\delta_2} \leq 2^{-i} \lesssim  |\mathbb{F}|^{-13/10 + \delta_4}  }}  2^{-i/10} \lesssim |\mathbb{F}|^{-1/50 + \delta_4/10 +\delta_7/4 } \lesssim 1 \]
provided $\delta_4,\delta_7$ are sufficiently small. This completes the proof.
\end{proof}
\section{Background for higher dimensional arguments}\label{sec:HighBack}
We now develop a higher dimensional version of the $3$ dimensional arguments just presented. While these arguments will be applied to the restriction problem for the paraboloid in odd dimensions, we must study quadratic surfaces in all dimensions due to the inductive nature of the arguments. In this section we recall various properties of quadratic forms over finite fields which we will need later. One might refer to Chapter $1$ of \cite{EKM} for a more thorough introduction to this subject.

Let $A$ be a symmetric $m \times m$ matrix with coefficients in $\mathbb{F}$. We will consider the associated bilinear form $x \circ y: \mathbb{F}^m \times  \mathbb{F}^{m} \rightarrow  \mathbb{F}$ given by
$$x \circ y := x^{T} A y.$$
Associated to this bilinear form is the $m$ dimensional quadratic form $Q(x):= x \circ x$.

We say that $Q$ (respectively, $A$ or the bilinear form $\circ$) is non-degenerate if $A$ has full rank. We say $Q$ is fully degenerate if $Q=0$ identically. We say that $Q$ is partially degenerate and has rank $m$ if $A$ has rank $m < n$. In a finite field, quadratic forms may always be diagonalized (see Theorem 6.21 in \cite{LN}).
\begin{lemma}\label{lem:diag}Let $A$ denote a symmetric matrix with entries in $\mathbb{F}$ and $Q$ the associated  quadratic form. Then $Q$ is equivalent to a diagonalized quadratic form.
\end{lemma}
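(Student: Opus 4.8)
The plan is to argue by induction on the dimension $m$. The base case $m \le 1$ is immediate, since every $1\times 1$ matrix is already diagonal. For the inductive step I would first dispose of the fully degenerate case: if $Q\equiv 0$ then, using $\operatorname{char}(\mathbb{F})\neq 2$ and the polarization identity $x\circ y=\tfrac12\bigl(Q(x+y)-Q(x)-Q(y)\bigr)$, the bilinear form vanishes identically, so $A=0$, which is already diagonal.

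So assume $Q\not\equiv 0$, equivalently $A\neq 0$. The key step is to produce an \emph{anisotropic} vector, i.e. some $v\in\mathbb{F}^m$ with $Q(v)\neq 0$. Since $A\neq 0$ there exist $x,y$ with $x\circ y\neq 0$; expanding $Q(x+y)=Q(x)+2(x\circ y)+Q(y)$ and again invoking $\operatorname{char}(\mathbb{F})\neq 2$ forces at least one of $Q(x)$, $Q(y)$, $Q(x+y)$ to be nonzero, which furnishes the desired $v$. This is the only place the odd characteristic hypothesis is used, and it is essential: the statement is false in characteristic $2$, e.g.\ for $Q(x_1,x_2)=x_1 x_2$.

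Next I would split off the line spanned by $v$. The linear functional $w\mapsto v\circ w$ is nonzero, because it sends $v$ to $Q(v)\neq 0$; hence its kernel $v^{\perp}:=\{w: v\circ w=0\}$ has dimension $m-1$, and since $v\notin v^{\perp}$ we obtain the orthogonal direct sum decomposition $\mathbb{F}^m=\langle v\rangle\oplus v^{\perp}$. Choosing a basis adapted to this decomposition (i.e.\ $v$ together with a basis of $v^{\perp}$) is exactly a congruence transformation $A\mapsto M^{T}AM$, and it puts the Gram matrix of $Q$ into block-diagonal form with a $1\times1$ block $(Q(v))$ and an $(m-1)\times(m-1)$ block equal to the Gram matrix of the restriction $Q|_{v^{\perp}}$. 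Applying the inductive hypothesis to $Q|_{v^{\perp}}$ diagonalizes the lower block, and composing the two changes of basis completes the proof.

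I expect no genuine obstacle here; the argument is the classical Gram–Schmidt-type orthogonalization for symmetric bilinear forms. The only subtlety worth flagging is the existence of an anisotropic vector discussed above, and the observation that non-degeneracy of $A$ is nowhere needed, so the conclusion also covers partially degenerate and fully degenerate forms (the diagonal entries will then include some zeros).
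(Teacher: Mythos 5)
Your proof is correct, and it is the standard orthogonalization argument for symmetric bilinear forms over a field of odd characteristic: produce an anisotropic vector, split off its span against its orthogonal complement, and induct. Note, however, that the paper does not actually prove this lemma -- it simply cites Theorem 6.21 of Lidl and Niederreiter's \emph{Finite Fields} -- so there is no ``paper's approach'' to compare against; your argument is exactly what the cited reference does (and indeed what essentially any textbook proof does).

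Two minor remarks. First, since you are already in the case $Q\not\equiv 0$, you could take an anisotropic $v$ directly rather than first locating $x,y$ with $x\circ y\neq 0$ and then expanding $Q(x+y)$; the latter detour is only needed if one starts instead from the assumption that the \emph{bilinear} form is nonzero. Second, your characteristic-$2$ counterexample $Q(x_1,x_2)=x_1x_2$ is not quite in the paper's framework, since over $\mathbb{F}_2$ that form does not arise as $x^T A x$ for any symmetric $A$ (indeed $x^T A x = \sum_i a_{ii}x_i^2$ mod $2$). The sharper counterexample in the symmetric-matrix setting is that $A=\left(\begin{smallmatrix}0&1\\1&0\end{smallmatrix}\right)$ over $\mathbb{F}_2$ is nondegenerate but not congruent to any diagonal matrix, which is what actually witnesses the failure of the lemma in even characteristic. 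Neither point affects the validity of your argument under the paper's standing hypothesis $\operatorname{char}(\mathbb{F})>2$.
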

Given a non-degenerate quadratic form $Q: \mathbb{F}^{d-1} \rightarrow \mathbb{F}$, we define a $d$ dimensional quadratic surface to be the subset $S \subseteq \mathbb{F}^d$ given by
$$\mathcal{S} := \{ x\in \mathbb{F}^{d-1} : (x,x\circ x) \}.$$
The surface $\mathcal{S}$ is a subset of $\mathbb{F}^d$, however it is parameterized by $\mathbb{F}^{d-1}$. It will be convenient to work with both parameterizations of $\mathcal{S}$. Given $x \in \mathcal{S} \subseteq \mathbb{F}^d$ we will denote $\underline{x} \in \mathbb{F}^{d-1}$ to be the first $d-1$ coordinates of $x$.

Given a quadratic form $Q: \mathbb{F}^m \rightarrow \mathbb{F}$, a subspace $V \subseteq \mathbb{F}^{m}$ is called isotropic if $Q(v)=0$ for some $v \in V$ such that $v\neq 0$. A space is called totally isotropic if $Q(v)=0$ for every $v\in V$. An affine subspace $W \subseteq \mathbb{F}^{m}$ is a set of the form $t + V$ where $V$ is a subspace of $\mathbb{F}^m$. We will say an affine subspace is isotropic (respectively, totally isotropic) if $V$ is isotropic (respectively, totally isotropic). The following lemma records some properties of isotropic subspaces.
\begin{lemma}\label{lem:iostropicContain}Consider a non-degenerate quadratic from $Q : \mathbb{F}^{m} \rightarrow \mathbb{F}$. Every totally isotropic subspace (with respect to $Q$) is contained in a maximal isotropic subspace. Moreover, every maximal isotropic subspace has the same dimension.
\end{lemma}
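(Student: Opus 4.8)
The plan is to treat the two assertions separately. The first is immediate: $\mathbb{F}^m$ is finite-dimensional, so starting from the given totally isotropic subspace and repeatedly adjoining a nonzero vector that keeps the span totally isotropic for as long as this is possible, the chain stabilizes, and it stabilizes at a totally isotropic subspace that is maximal under inclusion and contains the original one. The content is therefore the second assertion, and the strategy is to show that \emph{any} $W$ maximal under inclusion must have dimension equal to the Witt index of $Q$, i.e. to $\max\{\dim V : V\subseteq\mathbb{F}^m \text{ totally isotropic}\}$, which is manifestly an invariant of $Q$ and in particular independent of the chosen $W$.

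The key step is an embedding lemma: a totally isotropic $W$ of dimension $k$ inside the non-degenerate space $(\mathbb{F}^m,\circ)$ sits as a ``Lagrangian half'' of a hyperbolic subspace, i.e. there is a non-degenerate $H\subseteq\mathbb{F}^m$ of dimension $2k$ with Gram matrix $\bigl(\begin{smallmatrix}0&I_k\\ I_k&0\end{smallmatrix}\bigr)$ in a basis whose first $k$ vectors span $W$. To prove this, use that $W$ totally isotropic gives $W\subseteq W^{\perp}$: pick a basis $e_1,\dots,e_k$ of $W$, extend it to a basis $e_1,\dots,e_k,g_{k+1},\dots,g_{m-k}$ of $W^{\perp}$, and then to a basis of $\mathbb{F}^m$ by adjoining $f_1,\dots,f_k$. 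The first $k$ rows of the Gram matrix in this basis are $(0\mid 0\mid M)$ with $M=(e_i\circ f_j)$, so non-degeneracy of $\circ$ forces $M$ invertible; replacing $(f_1,\dots,f_k)$ by $(f_1,\dots,f_k)M^{-1}$ we may assume $e_i\circ f_j=\delta_{ij}$, and then replacing $f_i$ by $f_i+\sum_j c_{ij}e_j$ and solving $c_{ij}+c_{ji}=-(f_i\circ f_j)$ (possible because $\operatorname{char}\mathbb{F}\neq 2$) we may further arrange $f_i\circ f_j=0$. Then $H:=\operatorname{span}(e_1,\dots,e_k,f_1,\dots,f_k)$ is the desired hyperbolic subspace, $\mathbb{F}^m=H\perp H^{\perp}$, and $H^{\perp}$ is non-degenerate of dimension $m-2k$.

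Now assume $W$ is maximal under inclusion, $k=\dim W$, and let $H,H^{\perp}$ be as above. I claim $H^{\perp}$ is anisotropic. If not, there is $0\neq v\in H^{\perp}$ with $Q(v)=0$; since $v\in H^{\perp}$, $W\subseteq H$ and $H\cap H^{\perp}=0$ we have $v\notin W$, and for $w\in W$, $\lambda\in\mathbb{F}$ the identity $Q(w+\lambda v)=Q(w)+2\lambda(w\circ v)+\lambda^2 Q(v)=0$ shows $W\oplus\mathbb{F}v$ is totally isotropic and strictly larger than $W$, contradicting maximality. Hence $\mathbb{F}^m=\mathbb{H}^{k}\perp W_0$ with $W_0:=H^{\perp}$ anisotropic, a Witt decomposition of $Q$. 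By Witt's cancellation/decomposition theorem (Chapter~1 of \cite{EKM}) the anisotropic part $W_0$, and hence the integer $k=(m-\dim W_0)/2$, depend only on the isometry class of $Q$; therefore every maximal totally isotropic subspace has this common dimension $k$, which is forced to equal the Witt index.

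I expect the main obstacle to be the embedding lemma, specifically passing from ``$M$ invertible'' to the clean hyperbolic basis while keeping every basis change inside $\mathbb{F}^m$ and preserving $W$; everything else is a one-line orthogonality computation or a direct appeal to Witt's theorem. If one prefers to avoid invoking Witt cancellation, the same conclusion follows from the bare observation that in $\mathbb{H}^{k}\perp W_0$ with $W_0$ anisotropic no totally isotropic subspace can have dimension exceeding $k$ (project onto the $k$ hyperbolic summands and onto $W_0$), but citing \cite{EKM} is cleaner and matches the style of this section.
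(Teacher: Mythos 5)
Your proof is correct. The paper itself does not prove this lemma but simply cites Proposition 8.11 of \cite{EKM} (the Witt decomposition theorem); your argument reconstructs that proof — embed the totally isotropic $W$ as a Lagrangian half of a hyperbolic subspace $H$, observe that maximality of $W$ forces $H^{\perp}$ to be anisotropic, and then invoke Witt cancellation to see that $\dim W = (m-\dim H^{\perp})/2$ is an invariant of $Q$ — so it is the same route, just written out rather than cited. The only thing worth flagging is cosmetic: in the basis change $(f_1,\dots,f_k)\mapsto(f_1,\dots,f_k)M^{-1}$ the matrix multiplication must be set up so that $e_i\circ f_j'=\sum_\ell (e_i\circ f_\ell)(M^{-1})_{\ell j}=\delta_{ij}$, which is what you intend; and the relation $W\subseteq W^{\perp}$ used to start the basis extension relies on $\operatorname{char}\mathbb{F}\neq 2$ (so that $Q\equiv 0$ on $W$ implies $\circ\equiv 0$ on $W\times W$), which the paper assumes throughout.
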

This is a consequence of the Witt decomposition theorem. See Proposition 8.11 in \cite{EKM}. Given $Q$, the dimension of the maximal isotropic subspaces of $\mathbb{F}^{m}$ is called the Witt index of $Q$. The quadratic forms with a given Witt index are characterized by the following propositions. The claims regarding properties of the Witt index can be found in chapter two of \cite{KL}, and the claims regarding equivalence classes of quadratic forms can be found in Section 6.2 of \cite{LN}.
\begin{proposition}\label{lem:WittCompute}Let $Q(x)= x^{T} A x$ be a $2n$ dimensional non-degenerate quadratic form over the field $\mathbb{F}$ of odd characteristic. The following statements hold:
\begin{enumerate}
 \item If $\det(A)$ is a square and $\frac{n(|\mathbb{F}|-1)}{2}$ is even, then the Witt index of $Q$ is $n$, \item If $\det(A)$ is a non-square and $\frac{n(|\mathbb{F}|-1)}{2}$ is odd, then the Witt index of $Q$ is $n$, \item If $\det(A)$ is a non-square and $\frac{n(|\mathbb{F}|-1)}{2}$ is even, then the Witt index of $Q$ is $n-1$, \item If $\det(A)$ is a square and $\frac{n(|\mathbb{F}|-1)}{2}$ is odd, then the Witt index of $Q$ is $n-1$.
\end{enumerate}
Moreover, all forms with the same Witt index are equivalent.
\end{proposition}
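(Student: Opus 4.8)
The plan is to derive the proposition from two classical facts about quadratic forms over a finite field $\mathbb{F}$ of odd characteristic: the Witt decomposition, and the classification of non-degenerate forms by dimension together with discriminant. The argument is then essentially a discriminant computation.

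First I would invoke the Witt decomposition: any non-degenerate quadratic form $Q$ on $\mathbb{F}^{2n}$ is equivalent to an orthogonal sum $m\,\mathbb{H} \perp Q_0$, where $\mathbb{H}$ denotes the hyperbolic plane $\langle 1,-1\rangle$, $Q_0$ is anisotropic, and the integer $m$ is precisely the Witt index of $Q$ --- each copy of $\mathbb{H}$ contributes a $1$-dimensional totally isotropic subspace, these assemble to a totally isotropic subspace of dimension $m$ which is maximal because $Q_0$ has no nonzero isotropic vectors, and by Lemma~\ref{lem:iostropicContain} all maximal isotropic subspaces have this common dimension. Over a finite field, the Chevalley--Warning theorem guarantees that any quadratic form in $\ge 3$ variables is isotropic, so $\dim Q_0 \le 2$; since $\dim \mathbb{H} = 2$ and $\dim Q = 2n$ is even, we obtain $\dim Q_0 \in \{0, 2\}$, and hence the Witt index of $Q$ is either $n$ (the split case $Q_0 = 0$) or $n-1$ (the case $\dim Q_0 = 2$).

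Next I would compute the discriminant $\disc(Q) = \det(A) \bmod (\mathbb{F}^*)^2$ in each case. Since $\disc(\mathbb{H}) = -1$, one has $\disc(m\,\mathbb{H}) = (-1)^m$. For the binary anisotropic summand, a diagonal form $\langle a,b\rangle$ (Lemma~\ref{lem:diag}) is anisotropic exactly when $-ab \notin (\mathbb{F}^*)^2$, so after rescaling we may take $Q_0 = \langle 1, -\epsilon\rangle$ with $\epsilon$ a fixed non-square, whence $\disc(Q_0) = -\epsilon$. Consequently $\disc(Q) = (-1)^n$ when the Witt index is $n$, while $\disc(Q) = (-1)^{n-1}(-\epsilon) = (-1)^n\epsilon$ when the Witt index is $n-1$; these two values lie in opposite cosets of $(\mathbb{F}^*)^2$, so the Witt index of a $2n$-dimensional non-degenerate form is completely determined by $\det(A) \bmod (\mathbb{F}^*)^2$, and conversely.

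It remains to identify when $(-1)^n$ is a square. Since $\mathbb{F}^*$ is cyclic of even order $|\mathbb{F}|-1$, the element $-1$ is a square iff $4 \mid |\mathbb{F}|-1$, i.e.\ iff $\tfrac{|\mathbb{F}|-1}{2}$ is even; hence $(-1)^n \in (\mathbb{F}^*)^2$ iff $n$ is even or $\tfrac{|\mathbb{F}|-1}{2}$ is even, which is exactly the condition that $\tfrac{n(|\mathbb{F}|-1)}{2}$ be even. Combining this with the previous paragraph yields the four stated implications directly: if $\det(A)$ is a square then $\disc(Q) \in (\mathbb{F}^*)^2$, which agrees with $(-1)^n$ precisely when $\tfrac{n(|\mathbb{F}|-1)}{2}$ is even (forcing Witt index $n$, case 1) and with $(-1)^n\epsilon$ precisely when it is odd (forcing Witt index $n-1$, case 4); the subcases with $\det(A)$ a non-square are handled symmetrically (cases 2 and 3). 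Finally, the ``moreover'' clause follows from the classification recalled in Section~6.2 of \cite{LN}: a non-degenerate form over a finite field is determined up to equivalence by its dimension and its discriminant, and we have just shown that in dimension $2n$ the Witt index pins down the discriminant, so two forms of the same Witt index have the same discriminant and are therefore equivalent. I do not expect a genuine obstacle --- the material is classical --- the one point requiring care is the parity bookkeeping converting ``$(-1)^n$ is a square'' into the stated condition on $\tfrac{n(|\mathbb{F}|-1)}{2}$, together with keeping straight which coset $\disc(Q)$ occupies once the anisotropic binary part is present.
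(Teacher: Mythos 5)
The paper does not actually prove Proposition~\ref{lem:WittCompute}; it states it as a citation, referring to chapter two of \cite{KL} for the properties of the Witt index and to Section~6.2 of \cite{LN} for the classification of quadratic forms. Your proposal fills this in with a complete, correct, and self-contained argument, which is the standard way to derive the dictionary between Witt index and discriminant over a finite field. The chain Witt decomposition $\Rightarrow$ Chevalley--Warning bound on $\dim Q_0 \Rightarrow$ discriminant bookkeeping is sound: $\disc(m\mathbb{H}) = (-1)^m$, the anisotropic binary summand has $\disc \equiv -\epsilon \pmod{(\mathbb{F}^*)^2}$ because anisotropy of $\langle a,b\rangle$ is exactly the condition $-ab \notin (\mathbb{F}^*)^2$ (so no appeal to the classification is needed at that step, avoiding circularity), and the parity translation of ``$(-1)^n$ is a square'' into ``$n\cdot\tfrac{|\mathbb{F}|-1}{2}$ is even'' is correct since $-1 \in (\mathbb{F}^*)^2$ iff $4 \mid |\mathbb{F}|-1$. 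The ``moreover'' clause is correctly reduced to the dimension-plus-discriminant classification from \cite{LN}, which is a legitimate external input here. Compared to the paper's approach of simply citing \cite{KL} and \cite{LN}, your version makes the content of the proposition transparent at the cost of a page of classical material; either choice is defensible, and yours has the pedagogical advantage of exhibiting exactly where the parity condition $\tfrac{n(|\mathbb{F}|-1)}{2}$ comes from.
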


\begin{proposition}\label{lem:WittComputeOdd}Let $Q$ be a $d=2n+1$ dimensional non-degenerate quadratic form over the field $\mathbb{F}$ of odd characteristic. The Witt index of $Q$ is $n$ and there are exactly two classes of equivalent forms. One class includes the form $Q(x_1,\ldots,x_{d} )=\sum_{i=1}^{d} x_i^2$, and the other class includes the form $Q(x_1,\ldots,x_{d} ) = a x_{d}^2+  \sum_{i=1}^{d-1} x_i^2$, where $a$ is any non-square in $\mathbb{F}$.
\end{proposition}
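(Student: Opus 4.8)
The plan is to derive the whole statement from two classical inputs: the Witt decomposition theorem (together with Witt cancellation), and the elementary fact that over a finite field of odd characteristic every non-degenerate quadratic form in at least three variables is isotropic. First I would use Lemma~\ref{lem:diag} to put $Q$ in diagonal form $\langle a_1,\dots,a_d\rangle$ with all $a_i\neq 0$. The key elementary lemma is that a non-degenerate \emph{anisotropic} form over $\mathbb{F}$ has dimension at most $2$; to prove it, it suffices to show a ternary form $ax^2+by^2+cz^2$ has a nontrivial zero. Fixing $z=1$, the sets $\{ax^2:x\in\mathbb{F}\}$ and $\{-c-by^2:y\in\mathbb{F}\}$ each have $(|\mathbb{F}|+1)/2$ elements, and since $(|\mathbb{F}|+1)/2+(|\mathbb{F}|+1)/2>|\mathbb{F}|$ they must intersect, yielding a solution of $ax^2+by^2=-c$. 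Any non-degenerate form of dimension $\geq 3$ contains a non-degenerate ternary subform and is therefore isotropic, so an anisotropic non-degenerate form has dimension $\leq 2$.

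Next I would invoke the Witt decomposition: $Q$ is isometric to an orthogonal sum of $r$ hyperbolic planes $H=\langle 1,-1\rangle$ and an anisotropic form $Q_{\mathrm{an}}$, where by Witt's theorem (and Lemma~\ref{lem:iostropicContain}, which guarantees the notion is well defined) the integer $r$ is exactly the Witt index of $Q$. Then $\dim Q_{\mathrm{an}}=d-2r\leq 2$ by the key lemma; since $d=2n+1$ is odd, $d-2r$ is odd, forcing $d-2r=1$ and hence $r=n$. This proves the first assertion. Moreover $Q\cong n\cdot H\perp\langle a\rangle$ for some $a\in\mathbb{F}^{*}$, and Witt cancellation shows the class of $a$ in $\mathbb{F}^{*}/(\mathbb{F}^{*})^{2}$ is an invariant of $Q$; conversely $\langle a\rangle\cong\langle a'\rangle$ precisely when $a/a'$ is a square, so this square class both is determined by and determines the isometry class of $Q$. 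Since $\mathbb{F}^{*}/(\mathbb{F}^{*})^{2}$ has exactly two elements, there are exactly two equivalence classes of $d$-dimensional non-degenerate forms.

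Finally I would check that the two displayed forms represent these two classes. The discriminant $\det(A)$ modulo squares is an isometry invariant because $\det(M^{T}AM)=\det(M)^{2}\det(A)$. The form $\sum_{i=1}^{d}x_i^{2}$ has discriminant $1$, a square, whereas $ax_d^{2}+\sum_{i=1}^{d-1}x_i^{2}$ has discriminant $a$, a non-square; these lie in distinct square classes, so the two forms are inequivalent, and as there are only two classes in all they exhaust them.

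The main obstacle here is essentially expository rather than mathematical: the real work is done by the Witt decomposition and cancellation theorems, which reduce an arbitrary non-degenerate form to the normal form $n\cdot H\perp\langle a\rangle$; the counting bound on the dimension of an anisotropic form and the discriminant computation for the two explicit representatives are routine. The one point that deserves a line of care is confirming that the integer $r$ produced by the Witt decomposition really coincides with the Witt index as defined in the paper (the dimension of a maximal totally isotropic subspace): the first coordinates of the $r$ hyperbolic planes span a totally isotropic subspace of dimension $r$, and no totally isotropic subspace can be larger since any totally isotropic subspace intersects the anisotropic summand $Q_{\mathrm{an}}$ only in $0$.
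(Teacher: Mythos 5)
Your proof is correct. The paper itself does not supply an argument for Proposition~\ref{lem:WittComputeOdd}; it refers to Section~6.2 of Lidl--Niederreiter \cite{LN} for the classification of equivalence classes and to Chapter~2 of Kleidman--Liebeck \cite{KL} for the Witt index, so there is no internal proof to compare against. Your derivation is the standard textbook one: the pigeonhole count shows every non-degenerate ternary form over $\mathbb{F}$ is isotropic, hence anisotropic forms over $\mathbb{F}$ have dimension at most $2$; parity then pins the anisotropic part of the Witt decomposition of a $(2n+1)$-dimensional form to dimension $1$, giving Witt index $n$; and Witt cancellation reduces the isometry classification to the square class of the residual one-dimensional form, which the discriminant separates into exactly two classes realized by the two displayed representatives. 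The one imprecision is in your final aside: the observation that a totally isotropic $W$ meets $Q_{\mathrm{an}}$ only in $0$ gives, via the projection onto the hyperbolic summand, only $\dim W \le 2r$, not $\dim W \le r$. In the odd-dimensional case at hand the crude bound $\dim W \le \lfloor d/2\rfloor = n$ (from $W\subseteq W^{\perp}$ under the non-degenerate form) already closes the gap; in general this step genuinely needs the uniqueness part of Witt's theorem---a maximal totally isotropic subspace of dimension $k$ can be completed to $k$ mutually orthogonal hyperbolic planes whose complement is anisotropic---which you do cite earlier, so the argument stands.
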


Let $A$ be a non-degenerate $m \times m$ symmetric matrix over $\mathbb{F}$, and let  $x\circ y$ be the associated bilinear form. Given a subset $W \subseteq \mathbb{F}^m$ we define the orthogonal complement (with respect to the bilinear form $\circ$) by
$$W^{\perp}_{\circ} := \{ v \in \mathbb{F}^m : v \circ w = 0 \text{ for every } w \in W \}.$$
Assuming that the bilinear form $\circ$ is non-degenerate, one has the formula (see \cite{EKM}, Proposition 1.5)
\begin{equation}
\text{dim}(W)+\text{dim}(W^{\perp}_{\circ})=m.
\end{equation}
Given a subspace $W \subseteq \mathbb{F}^m$ we say that $V$ is a complementary subspace if
$$\mathbb{F}^m = W \oplus V.$$
We caution that over finite fields the notion of an orthogonal complement and complementary subspace do not coincide. For instance, it is possible for the orthogonal complement of a subspace to be itself.

We recall the easily verified fact that if $W\subseteq \mathbb{F}^m$ is a subspace, then
\begin{equation}\label{eq:expOC}
 1_{W^{\perp}_{\circ}}(x) = |W|^{-1} \sum_{w \in W} e\left( x \circ w \right).
\end{equation}
The following lemma is an easy consequence of Witt's decomposition theorem (see Theorem 6.2 in \cite{Clark} for a full proof).
\begin{lemma}\label{lem:isoCompl}Let $A$ be a non-degenerate $2n \times 2n$ symmetric matrix over $\mathbb{F}$, and let $x\circ y$ be the associated bilinear form. Moreover, assume the Witt index of $A$ is $n$. Let $W \subseteq \mathbb{F}^{2n}$ be a $n$ dimensional totally isotropic subspace, with basis $w_1,\ldots,w_n$. Then there exists a disjoint totally isotropic $n$ dimensional subspace $V$ with basis $v_1,\ldots,v_n$ such that $w_i\circ v_j =\delta_{ij}$ for every $1\leq i,j \leq n$.
\end{lemma}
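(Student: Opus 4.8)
The plan is to produce, for the given basis $w_1,\dots,w_n$ of $W$, a family of vectors dual to it with respect to $\circ$, and then to correct those vectors by elements of $W$ so that their span becomes totally isotropic while the duality relations survive.

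First I would consider the evaluation map $\phi:\mathbb{F}^{2n}\to\mathbb{F}^n$ defined by $\phi(x)=(x\circ w_1,\dots,x\circ w_n)$. Its kernel is exactly $W^{\perp}_{\circ}$ (the $w_i$ span $W$), and since $A$ is non-degenerate the dimension formula $\dim W+\dim W^{\perp}_{\circ}=2n$ gives $\dim W^{\perp}_{\circ}=n$; hence $\dim\operatorname{im}\phi=n$ and $\phi$ is surjective. Choosing preimages $u_1,\dots,u_n$ of the standard basis vectors of $\mathbb{F}^n$ yields vectors with $u_i\circ w_j=\delta_{ij}$.

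Next I would set $v_i:=u_i+\sum_{j}c_{ij}w_j$ for scalars $c_{ij}$ to be chosen. Since $W$ is totally isotropic, $w_j\circ w_k=0$, so $v_i\circ w_k=u_i\circ w_k=\delta_{ik}$ no matter how the $c_{ij}$ are chosen. Expanding $v_i\circ v_k$ and using $u_i\circ w_j=\delta_{ij}$ together with the symmetry of $\circ$ gives $v_i\circ v_k=u_i\circ u_k+c_{ik}+c_{ki}$. It therefore suffices to pick the $c_{ij}$ with $c_{ik}+c_{ki}=-\,u_i\circ u_k$ for all $i,k$: for instance $c_{ii}=-\tfrac12\,u_i\circ u_i$ (here $\operatorname{char}\mathbb{F}>2$ is used) and, for $i<k$, $c_{ik}=-\,u_i\circ u_k$ and $c_{ki}=0$. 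With this choice $v_i\circ v_k=0$ for all $i,k$, so $V:=\operatorname{span}(v_1,\dots,v_n)$ is totally isotropic and $w_i\circ v_j=\delta_{ij}$.

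Finally I would verify that $v_1,\dots,v_n$ are linearly independent and that $V\cap W=\{0\}$: pairing a relation $\sum_i a_i v_i=0$ with $w_j$ gives $a_j=0$, and if $x=\sum_i a_i v_i$ also lies in $W$ then $x\circ w_j=0$ forces $a_j=0$, so $x=0$. Hence $\dim V=n$, $V$ is disjoint from $W$, and in fact $\mathbb{F}^{2n}=W\oplus V$. There is no real obstacle in this argument; the only points requiring a little care are the passage from non-degeneracy of $A$ to the surjectivity of $\phi$ (equivalently, that the maximal totally isotropic $W$ equals its own orthogonal complement), and the use of $\operatorname{char}\mathbb{F}\neq2$ when solving for the diagonal coefficients $c_{ii}$.
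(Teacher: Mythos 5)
Your proof is correct, and it is genuinely a proof rather than a reference: the paper itself does not argue Lemma~\ref{lem:isoCompl}, instead stating that it is an easy consequence of Witt's decomposition theorem and pointing to Theorem~6.2 of~\cite{Clark}. You replace that citation with a self-contained construction. The two ingredients you isolate are exactly the right ones: surjectivity of $\phi(x)=(x\circ w_1,\dots,x\circ w_n)$ follows from non-degeneracy via $\dim W+\dim W^{\perp}_{\circ}=2n$ and rank--nullity (producing dual vectors $u_i$ with $u_i\circ w_j=\delta_{ij}$), and the correction $v_i=u_i+\sum_j c_{ij}w_j$ leaves the duality intact because $W$ is totally isotropic, while $v_i\circ v_k=u_i\circ u_k+c_{ik}+c_{ki}$ can be zeroed out by your choice of $c_{ij}$ (the diagonal step using $\operatorname{char}\mathbb{F}\neq 2$, which the paper assumes globally). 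The closing independence and disjointness checks by pairing against $w_j$ are also clean. Compared with invoking Witt decomposition, your route is more elementary and more explicit --- it builds the complementary Lagrangian directly rather than extracting it from a general structure theorem --- at the small cost of re-deriving what is, in effect, a special case of that theorem. Either is acceptable here; yours has the advantage of making the paper self-contained on this point.
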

The following lemma easily follows from this result.
\begin{lemma}\label{lem:subFour}Let $A$ be a non-degenerate $2n \times 2n$ symmetric matrix over $\mathbb{F}$, and let $x\circ y$ be the associated bilinear form. Moreover, assume the Witt index of $A$ is $n$. Let $W \subseteq \mathbb{F}^{2n}$ be a $n$ dimensional totally isotropic subspace. Let $V$ be the complementary space given by Lemma \ref{lem:isoCompl}. Then the set of functions, indexed by $v \in V$, given by
$$e(v\circ x) 1_{W}(x) : W \rightarrow \mathbb{C}$$
form an orthonormal basis for the set of complex-valued functions on $W$. Moreover, we may write $f:W \rightarrow \mathbb{C}$ as
$$f(x) = \sum_{m \in V} a_m e(m\circ x)$$
where
$$||f||_{\ell^2(W)}= |W|^{1/2} \left(\sum_{m\in V} |a_m|^2\right)^{1/2}.$$
\end{lemma}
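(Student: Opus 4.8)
The plan is to reduce the statement to the standard orthogonality of additive characters on $(\mathbb{F}^n,+)$, with all the geometric input packaged into the duality pairing supplied by Lemma~\ref{lem:isoCompl}. First I would fix a basis $w_1,\dots,w_n$ of $W$ and take $v_1,\dots,v_n$ to be the dual basis of $V$ produced by Lemma~\ref{lem:isoCompl}, so that $w_i\circ v_j=\delta_{ij}$. Identifying $W$ with $\mathbb{F}^n$ via $x=\sum_i x_i w_i\mapsto(x_1,\dots,x_n)$, a one-line computation shows that for $v=\sum_j c_j v_j\in V$ one has $v\circ x=\sum_{i,j}x_i c_j(w_i\circ v_j)=\sum_j c_j x_j$; that is, under this identification the linear functional $x\mapsto v\circ x$ on $W$ is exactly the standard pairing $c\cdot(\cdot)$ on $\mathbb{F}^n$, and the family $\{e(v\circ x)1_W(x):v\in V\}$ corresponds precisely to the family of additive characters $\{x\mapsto e(c\cdot x):c\in\mathbb{F}^n\}$ on $\mathbb{F}^n$.

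Next I would record orthogonality directly. For $v,v'\in V$,
\[
\sum_{x\in W} e(v\circ x)\,\overline{e(v'\circ x)}=\sum_{x\in W} e\big((v-v')\circ x\big).
\]
The relations $w_i\circ v_j=\delta_{ij}$ force $V\cap W^{\perp}_{\circ}=\{0\}$ (if $v=\sum c_j v_j$ satisfies $v\circ w=0$ for all $w\in W$, then $0=v\circ w_i=c_i$ for every $i$), so for $v\neq v'$ the functional $x\mapsto(v-v')\circ x$ on $W$ is nonzero, hence surjective onto $\mathbb{F}$ with all fibres of size $|W|/|\mathbb{F}|$; since $e$ is non-principal the sum vanishes. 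For $v=v'$ the sum equals $|W|$. Thus the $|V|=|\mathbb{F}|^n=|W|$ functions $e(v\circ x)1_W$ are pairwise orthogonal with $\ell^2(W)$-norm $|W|^{1/2}$ each, and since the space of complex-valued functions on $W$ has dimension $|W|$, they form an orthogonal basis (orthonormal after rescaling by $|W|^{-1/2}$).

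Finally, spanning lets me write an arbitrary $f:W\to\mathbb{C}$ as $f(x)=\sum_{m\in V}a_m e(m\circ x)$, and expanding $\|f\|_{\ell^2(W)}^2=\sum_{x\in W}|f(x)|^2$ and inserting the orthogonality relations of the previous step collapses the double sum over $m,m'\in V$ to $|W|\sum_{m\in V}|a_m|^2$, which is the stated identity. The only genuine content is the non-degeneracy of the $W$--$V$ pairing, which is exactly what Lemma~\ref{lem:isoCompl} provides; beyond that everything is the elementary harmonic analysis on the finite abelian group $W\cong(\mathbb{F}^n,+)$, so I do not expect any substantial obstacle — the one thing to be careful about is bookkeeping the identification $W\cong\mathbb{F}^n$ so that the characters indexed by $V$ really do exhaust the dual group.
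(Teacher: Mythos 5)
Your proof is correct, and the paper in fact omits a proof of Lemma~\ref{lem:subFour} entirely (it is stated to ``easily follow'' from Lemma~\ref{lem:isoCompl}); your argument fills in exactly the intended reasoning. You correctly reduce to character orthogonality on $W\cong(\mathbb{F}^n,+)$ via the dual bases from Lemma~\ref{lem:isoCompl}, verify that $V\ni v\mapsto (v\circ\cdot)|_W$ is injective so that the characters are pairwise distinct (hence orthogonal by non-principality of $e$), and conclude by a dimension count plus the Parseval computation. One small point worth flagging, which you also noticed: as written the functions $e(v\circ x)1_W(x)$ have $\ell^2(W)$-norm $|W|^{1/2}$ rather than $1$, so ``orthonormal'' in the lemma should really read ``orthogonal'' (or implicitly refer to normalized counting measure on $W$); this is consistent with the $|W|^{1/2}$ factor appearing in the lemma's Parseval identity, and your proof handles the normalization correctly.
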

\section{The structure of energetic subsets of quadratic surfaces}\label{sec:HighEnergy}
In this section we develop a structure theorem of subsets of quadratic surfaces with large additive energy.

Given a quadratic surface $\mathcal{S} := \{ \underline{x} \in \mathbb{F}^{d-1} : (\underline{x} ,\underline{x} \circ \underline{x} ) \}$, we will associated to each $x \in \mathcal{S}$ ($x \neq 0$) a hyperplane $H(x)$ in $\mathbb{F}^{d-1}$ as follows
$$H(x) := \{y \in \mathbb{F}^{d-1} : \underline{x}\circ y = \underline{x}\circ \underline{x} \}.$$
\begin{lemma}\label{lem:distPlane}If $x, x' \in \mathcal{S}$ with $x \neq x'$, then $H(x) =H(x')$ occurs if and only if $\underline{x}$ and $\underline{x}'$ are both contained in some $1$ dimensional isotropic subspace (and thus are scalar multiples of each other).
\end{lemma}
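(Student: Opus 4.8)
The plan is to reduce the equality of the two affine hyperplanes to a statement about proportionality of linear functionals, and then use the quadratic relation that ties the ``constant term'' $\underline{x}\circ\underline{x}$ to the direction $\underline{x}$.

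First I would record the basic geometry of $H(x)$. Since the bilinear form $\circ$ is non-degenerate and $\underline{x}\neq 0$, the linear functional $y\mapsto \underline{x}\circ y$ is nonzero, so $H(x)$ is a genuine affine hyperplane; moreover $\underline{x}\mapsto(y\mapsto \underline{x}\circ y)$ is a linear isomorphism of $\mathbb{F}^{d-1}$ onto its dual space. Consequently, two hyperplanes of the form $\{y:\underline{x}\circ y=\underline{x}\circ\underline{x}\}$ and $\{y:\underline{x}'\circ y=\underline{x}'\circ\underline{x}'\}$ coincide if and only if there is a scalar $\lambda\in\mathbb{F}^{\times}$ with $\underline{x}'=\lambda\underline{x}$ and $\underline{x}'\circ\underline{x}'=\lambda(\underline{x}\circ\underline{x})$. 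Here one must separate the case in which the common constant is $0$ (so ``equal hyperplanes'' means equal \emph{linear} subspaces $\underline{x}^{\perp}_{\circ}=(\underline{x}')^{\perp}_{\circ}$) from the case in which it is nonzero; in both cases non-degeneracy converts equality of the functionals into $\underline{x}'=\lambda\underline{x}$, and matching the constants gives the displayed relation.

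Next I would substitute $\underline{x}'=\lambda\underline{x}$ into the second relation. Because $Q$ is quadratic, $\underline{x}'\circ\underline{x}'=\lambda^{2}(\underline{x}\circ\underline{x})$, so the relation becomes $\lambda(\lambda-1)(\underline{x}\circ\underline{x})=0$. As $\lambda\neq 0$, either $\lambda=1$, forcing $x=x'$ and contradicting the hypothesis $x\neq x'$, or $\underline{x}\circ\underline{x}=0$. In the latter case $\mathbb{F}\underline{x}$ is a one-dimensional isotropic subspace (indeed totally isotropic, since $Q(\mu\underline{x})=\mu^{2}Q(\underline{x})=0$) containing both $\underline{x}$ and $\underline{x}'=\lambda\underline{x}$. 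This gives one implication, and also explains the parenthetical remark that $\underline{x},\underline{x}'$ are scalar multiples of each other.

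For the converse, suppose $\underline{x},\underline{x}'$ both lie in a one-dimensional isotropic subspace $L=\mathbb{F}v$ with $Q(v)=0$. Writing $\underline{x}=av$, $\underline{x}'=bv$ with $a,b\in\mathbb{F}^{\times}$ (nonzero because $x,x'\neq 0$), we get $\underline{x}\circ\underline{x}=a^{2}Q(v)=0=\underline{x}'\circ\underline{x}'$, hence $H(x)=\{y:v\circ y=0\}=H(x')$. I do not anticipate a genuine obstacle: the only delicate point is the case split on whether $\underline{x}\circ\underline{x}$ vanishes when extracting $\lambda$ from the equality of hyperplanes, together with the (routine) use of non-degeneracy to pass from proportional functionals to proportional vectors.
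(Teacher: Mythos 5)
Your proof is correct and follows essentially the same route as the paper: non-degeneracy reduces equality of the hyperplanes to proportionality $\underline{x}'=\lambda\underline{x}$, and the homogeneity of $Q$ (via $\lambda^{2}Q(\underline{x})=\lambda Q(\underline{x})$) then forces $\lambda=1$, contradicting $x\neq x'$, unless $Q(\underline{x})=0$. You carry the cases $Q(\underline{x})=0$ and $Q(\underline{x})\neq 0$ together through the proportionality step while the paper separates them from the outset, but this is only an organizational difference.
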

\begin{proof}Clearly if $\underline{x}$ and $\underline{x}'$ are multiples of the same isotropic vector then $H(x)=H(x')$. We now consider the converse. From the definition $H(x) := \{y \in \mathbb{F}^{d-1} : \underline{x}\circ y = \underline{x} \circ \underline{x} \}$, it is immediate that $H(x)$ is an affine subspace in $\mathbb{F}^{d-1}$ of dimension $\mathbb{F}^{d-2}$. If $\underline{x}\circ \underline{x}=0$ then $H(x)$ is the subspace $H(x) := \{y \in \mathbb{F}^{d-1} : \underline{x}\circ y = 0 \}$ and it follows that the spaces $H(x)$ and $H(x')$ are distinct for linearly independent $x$ and $x'$.

If $\underline{x}\circ \underline{x} \neq 0$ then $H(x)$ is an affine subspace of the form $t+V$, where $V$ is a $d-2$ dimensional subspace of $\mathbb{F}^{d-1}$ and $t$ is a non-zero vector in the  subspace complementary to $V$. One then has that $t \circ \underline{x} = \underline{x}  \circ \underline{x} $ and that $v \circ x = 0$ of every $v \in V$. It follows that $V$ must be the unique subspace given by the relation $\{v \in \mathbb{F}^{d-1} :  v \circ x = 0 \}$. This implies that if $H(z)=H(x)$, then $\underline{z}$ must be a scalar multiple of $\underline{x} $.  Next observe that if $H(z) = H(x)$ we have that $\underline{x} \circ \underline{x} = \underline{x} \circ \underline{z} = \underline{z} \circ \underline{z}$. Assuming that $\underline{z}=\lambda \underline{x}$, this gives us that $\lambda^2 \underline{x}  \circ \underline{x}  = \lambda\underline{x}  \circ \underline{x}  = \underline{x}  \circ \underline{x} $, which implies that $\lambda=1$. This completes the proof.
\end{proof}
\begin{lemma}\label{lem:AffineChar}Let $\mathcal{S}$ be the quadratic surface associated to a $d-1$ dimensional non-degenerate quadratic form, $Q$. Given an affine subspace $\underline{W} \subseteq \mathbb{F}^{d-1}$, then $\mathcal{S}_{W} := \{(\underline{x},Q(\underline{x})) : \underline{x} \in \underline{W} \} \subseteq \mathcal{S}$ is an affine subspace of $\mathbb{F}^d$ if and only if $\underline{W}$ is totally isotropic.
\end{lemma}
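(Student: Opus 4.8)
The plan is to reduce the claim to a one-line computation by translating $\mathcal{S}_W$ so that it passes through the origin. First I would write the affine subspace as $\underline{W} = \underline{w}_0 + V$ with $\underline{w}_0 \in \mathbb{F}^{d-1}$ and $V \subseteq \mathbb{F}^{d-1}$ a linear subspace; recall that ``$\underline{W}$ totally isotropic'' means $Q(v) = 0$ for every $v \in V$, which, since $\mathrm{char}(\mathbb{F}) \neq 2$, is equivalent to $v \circ v' = 0$ for all $v, v' \in V$. Since $(\underline{w}_0, Q(\underline{w}_0)) \in \mathcal{S}_W$, the set $\mathcal{S}_W$ is an affine subspace of $\mathbb{F}^d$ if and only if its translate
\[ \mathcal{S}_W - (\underline{w}_0, Q(\underline{w}_0)) = \{ (v,\, Q(\underline{w}_0+v) - Q(\underline{w}_0)) : v \in V \} = \{ (v,\, 2\underline{w}_0 \circ v + Q(v)) : v \in V \} \]
is a linear subspace of $\mathbb{F}^d$, where I have used $Q(\underline{w}_0 + v) = Q(\underline{w}_0) + 2\,\underline{w}_0 \circ v + Q(v)$. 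Denote this set by $\Gamma$; it is the graph over $V$ of the function $\phi(v) := 2\,\underline{w}_0 \circ v + Q(v)$.

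For the ``if'' direction: if $V$ is totally isotropic then $Q$ vanishes on $V$, so $\phi(v) = 2\,\underline{w}_0 \circ v$ is a linear functional on $V$, its graph $\Gamma$ is a linear subspace, and hence $\mathcal{S}_W$ is an affine subspace. For the ``only if'' direction: suppose $\Gamma$ is a linear subspace, so in particular it is closed under addition. Given $v, v' \in V$, there is $v'' \in V$ with $(v, \phi(v)) + (v', \phi(v')) = (v'', \phi(v''))$; comparing the first $d-1$ coordinates forces $v'' = v + v'$, and then comparing the last coordinate, using the identity $Q(v+v') = Q(v) + 2\, v \circ v' + Q(v')$ and the bilinearity of $\circ$, yields $2\, v \circ v' = 0$, i.e. $v \circ v' = 0$. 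As $v, v' \in V$ were arbitrary, $V$ — hence $\underline{W}$ — is totally isotropic.

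I do not expect a real obstacle here: the argument is immediate once one translates by a point of $\mathcal{S}_W$. The only points that require care are bookkeeping ones: remembering that ``totally isotropic affine subspace'' refers to the direction space $V$ rather than to $Q$ restricted to $\underline{W}$ itself, and noticing that the forward implication uses only that $\Gamma$ is closed under addition — so the argument needs nothing beyond $\mathrm{char}(\mathbb{F}) \neq 2$ (to identify $Q$ with its polarization $\circ$), and in particular does not use non-degeneracy of $Q$. One may also remark that this lemma is the general-dimensional version of the observation in Section~\ref{sec:notation} that, for the $3$-dimensional hyperbolic paraboloid $\mathcal{H}$, the VH lines are precisely the totally isotropic affine subspaces of the associated bilinear form.
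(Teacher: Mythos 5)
Your proof is correct and follows essentially the same route as the paper's: translate so the base point is at the origin, expand $Q(\underline{w}_0 + v) = Q(\underline{w}_0) + 2\,\underline{w}_0 \circ v + Q(v)$, and observe that $\mathcal{S}_W$ is affine exactly when the quadratic term $Q(v)$ vanishes identically on the direction space $V$. The only difference is cosmetic — you spell out the "only if" direction via closure under addition, whereas the paper simply asserts that a polynomial of the displayed form is affine linear in $v$ iff $v \circ v = 0$ on $V$ — and your side remark that non-degeneracy of $Q$ is not actually used is accurate.
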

\begin{proof}Let $\underline{W}=\underline{t}+\underline{V}$ where $\underline{V}$ is a totally isotropic subspace and $\underline{t}$ is in the complementary subspace to $\underline{V}$. Let $\underline{w}=\underline{t}+\underline{v}$ for $\underline{v} \in \underline{V}$. We then have that
$$Q(\underline{w}) =(\underline{v}+\underline{t})\circ(\underline{v}+\underline{t}) = \underline{v}\circ \underline{v} + 2 \underline{t}\circ \underline{v} +\underline{t}\circ \underline{t}.$$
Note that $\mathcal{S}_{W}$ will be an affine subspace of $\mathbb{F}^d$ if and only if $Q(\underline{w})$ is an affine linear function of $\underline{v}$ for $\underline{v}\in \underline{V}$. Thus, $\mathcal{S}_{\underline{W}}$ is an affine subspace of $\mathbb{F}^d$ if and only if $\underline{v} \circ \underline{v} =0$ for every $\underline{v} \in \underline{V}$. This implies the claim.
\end{proof}

Given two subsets $A,B \subseteq \mathbb{F}^d$ we will be interested in the additive energy defined by
\begin{equation}\label{def:addEn}
\Lambda(A,B) := \sum_{\substack{a+b = c+d \\ a,c \in A \\ b,d \in B} }1 = \sum_{\substack{a-d = c-b \\ a,c \in A \\ b,d \in B} }1.
\end{equation}
We define the additive energy for a single set $A$ to be the quantity $\Lambda(A):=\Lambda(A,A)$. It isn't hard to show (see \cite{TaoVu}, Corollary 2.10 for a proof) that the additive energy satisfies
\begin{equation}\label{eq:EnergyTIn}
\Lambda(A,B) \leq \sqrt{\Lambda(A)} \sqrt{\Lambda(B)}.
\end{equation}
If $E = \cup_{i \in I} E_i$ is the disjoint union of sets, then one has the inequality
$$ \left( \Lambda(E) \right)^{1/4} = \sum_{i \in I} \left(\Lambda(E_i)\right)^{1/4}.$$
This can be directly seen from the relation of the additive energy and the $L^4$ norm of a trigonometric sum (as in the proof of Corollary \ref{cor:L4}). Alternatively, this may be proven directly using the definition of additive energy and several applications of the triangle inequality.

From this, one sees the following quasi-triangle inequality
\begin{equation}\label{eq:quasitriangleEnergy}
 \Lambda(E) \leq |I|^4 \max_{i \in I} \Lambda(E_i).
\end{equation}
We now show that an additive energy problem for a subset of a quadratic surface, can be transformed into an additive energy problem on an equivalent quadratic surface.
\begin{lemma}\label{lem:eqivEnergy}Let $M$, $N$ and $A$ be $d-1$ dimensional matrices over $\mathbb{F}$ such that $A$ is invertible and  $A^{T} M A = N$. Let $\mathcal{S}$ and $\mathcal{S}'$ be the quadratic surfaces associate to $M$ and $N$, respectively. Let $E \subseteq \mathcal{S}$, and let $U \subseteq \mathcal{S}'$ be defined by $\underline{U}=A^{-1}\underline{E}$. Then
$$\Lambda(E) = \Lambda(U).$$
\end{lemma}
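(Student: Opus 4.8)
The plan is to exhibit an invertible linear map of $\mathbb{F}^d$ that carries $\mathcal{S}$ onto $\mathcal{S}'$ and restricts to a bijection $E \to U$; since additive energy is manifestly invariant under invertible linear maps, this gives the claim immediately. This is the energy analogue of the computation behind Lemma \ref{lem:ParEqu}.

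Concretely, I would define $\Phi : \mathbb{F}^d \to \mathbb{F}^d$ by $\Phi(\underline{x}, t) := (A^{-1}\underline{x}, t)$ for $\underline{x} \in \mathbb{F}^{d-1}$ and $t \in \mathbb{F}$. This is linear and invertible, with inverse $(\underline{y},t) \mapsto (A\underline{y},t)$. The key step is to check that $\Phi(\mathcal{S}) = \mathcal{S}'$. A generic point of $\mathcal{S}$ is $(\underline{x}, \underline{x}^{T} M \underline{x})$, and its image under $\Phi$ is $(A^{-1}\underline{x}, \underline{x}^{T} M \underline{x})$. Writing $\underline{y} := A^{-1}\underline{x}$ and using $A^{T} M A = N$, one computes
\[
 \underline{y}^{T} N \underline{y} = (A^{-1}\underline{x})^{T}\, N \,(A^{-1}\underline{x}) = \underline{x}^{T} (A^{-1})^{T} (A^{T} M A) A^{-1} \underline{x} = \underline{x}^{T} M \underline{x},
\]
so the image point is $(\underline{y}, \underline{y}^{T} N \underline{y}) \in \mathcal{S}'$. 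Since $\underline{x} \mapsto A^{-1}\underline{x}$ is a bijection of $\mathbb{F}^{d-1}$, $\Phi$ restricts to a bijection $\mathcal{S} \to \mathcal{S}'$ sending the point of $\mathcal{S}$ over $\underline{x}$ to the point of $\mathcal{S}'$ over $A^{-1}\underline{x}$; in particular it maps $E$ (the points of $\mathcal{S}$ over $\underline{E}$) bijectively onto the points of $\mathcal{S}'$ over $A^{-1}\underline{E} = \underline{U}$, which is exactly $U$.

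It then remains to record the elementary fact that $\Lambda(\Phi(E)) = \Lambda(E)$ for any invertible linear $\Phi$ and any finite set $E$: for $a,b,c,d \in E$ one has $a+b=c+d$ if and only if $\Phi(a)+\Phi(b)=\Phi(c)+\Phi(d)$ (using linearity for the forward implication and injectivity of $\Phi$ for the reverse), and $\Phi$ is a bijection $E \to \Phi(E)$, so the two energies count the same quadruples. Applying this with the $\Phi$ above and $\Phi(E) = U$ gives $\Lambda(E) = \Lambda(U)$.

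There is essentially no obstacle here; the one point worth flagging is that the whole argument hinges on $\Phi$ leaving the last ($\mathbb{F}$-)coordinate fixed, which is precisely what the hypothesis $A^{T} M A = N$ delivers — without it the fibre coordinates of $\mathcal{S}$ and $\mathcal{S}'$ would not match up and $\Phi$ would fail to map $\mathcal{S}$ into $\mathcal{S}'$.
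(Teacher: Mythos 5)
Your proof is correct and is in substance the same argument the paper gives: the paper directly unpacks $\Lambda(E)$ as a count of quadruples $(w,x,y,z)\in\underline{E}^4$ satisfying $w+x=y+z$ and $w^TMw+x^TMx=y^TMy+z^TMz$, then performs the substitution $w=Aw'$ (etc.), using $A^TMA=N$ to identify this with the corresponding count over $\underline{U}=A^{-1}\underline{E}$. You have simply packaged that substitution as an explicit linear bijection $\Phi(\underline{x},t)=(A^{-1}\underline{x},t)$ of $\mathbb{F}^d$ carrying $\mathcal{S}$ to $\mathcal{S}'$ and $E$ to $U$, and then invoked the general invariance of additive energy under linear bijections — a tidy but essentially identical route.
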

\begin{proof}We have
$$\Lambda(E) = \sum_{\substack{ w+x=y+z \\ w^{T}Mw+x^{T}Mx=y^{T}My+z^{T}Mz \\ w,x,y,z \in \underline{E}}} 1  = \sum_{\substack{ w'+x'=y'+z' \\ w'^{T}N w+x'^{T}Nx'=y'^{T}Ny'+z'^{T}Nz' \\ w',x',y', z' \in A^{-1}\underline{E}}} 1$$
$$ = \Lambda(U). $$
\end{proof}
Given $t \in \mathcal{S}$, we define the associated Galilean transform $\tau_{t}: \mathcal{S} \rightarrow \mathcal{S}$ by
$$ \tau_{t}: (\underline{x}, \underline{x} \circ \underline{x}) \rightarrow \left(\underline{x}+\underline{t}, (\underline{x}+\underline{t})\circ(\underline{x}+\underline{t})\right).$$
When $A,B \subseteq \mathcal{S}$, the additive energy is invariant under Galilean transformations.
\begin{lemma}\label{lem:Gallinv}Let $A,B \subseteq \mathcal{S}$ and $\tau$ a Galilean transformation of $\mathcal{S}$. Then
$$\Lambda(A,B)=\Lambda(\tau(A),\tau(B)).$$
\end{lemma}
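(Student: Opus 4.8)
The plan is to show that the Galilean transform $\tau_{t}$, which a priori is only a self-map of the surface $\mathcal{S}$, is the restriction to $\mathcal{S}$ of an affine-linear bijection of the ambient space $\mathbb{F}^{d}$, and then to invoke the elementary fact that the additive energy of subsets of $\mathbb{F}^{d}$ is preserved by any affine-linear bijection of $\mathbb{F}^{d}$.

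First I would exhibit the ambient map. Write a point of $\mathbb{F}^{d}$ as $(\underline{y},s)$ with $\underline{y}\in\mathbb{F}^{d-1}$ and $s\in\mathbb{F}$, and define $L_{t}(\underline{y},s):=(\underline{y}+\underline{t},\, s+2\,\underline{t}\circ\underline{y}+\underline{t}\circ\underline{t})$. Since the bilinear form $\circ$ is symmetric and $\operatorname{char}(\mathbb{F})>2$, one has $(\underline{x}+\underline{t})\circ(\underline{x}+\underline{t})=\underline{x}\circ\underline{x}+2\,\underline{t}\circ\underline{x}+\underline{t}\circ\underline{t}$, so that for every $x=(\underline{x},\underline{x}\circ\underline{x})\in\mathcal{S}$ we get $L_{t}(x)=\tau_{t}(x)$; that is, $L_{t}|_{\mathcal{S}}=\tau_{t}$. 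Moreover $L_{t}$ is affine-linear, with linear part $(\underline{y},s)\mapsto(\underline{y},\, s+2\,\underline{t}\circ\underline{y})$, which is unipotent (block upper-triangular with $1$'s on the diagonal) and hence invertible; so $L_{t}$ is a bijection of $\mathbb{F}^{d}$.

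Next I would record the invariance of additive energy under such maps. If $L(v)=Mv+w_{0}$ with $M\in GL_{d}(\mathbb{F})$ and $w_{0}\in\mathbb{F}^{d}$ fixed, then for arbitrary $a,b,c,d$ one has $L(a)+L(b)=M(a+b)+2w_{0}$ and $L(c)+L(d)=M(c+d)+2w_{0}$, so these agree if and only if $M(a+b)=M(c+d)$, i.e. if and only if $a+b=c+d$. Therefore $(a,b,c,d)\mapsto(L(a),L(b),L(c),L(d))$ is a bijection between the set of quadruples counted by $\Lambda(A,B)$ and the set counted by $\Lambda(L(A),L(B))$, whence $\Lambda(A,B)=\Lambda(L(A),L(B))$.

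Finally, since $A,B\subseteq\mathcal{S}$ and $L_{t}|_{\mathcal{S}}=\tau_{t}=\tau$, we have $L_{t}(A)=\tau(A)$ and $L_{t}(B)=\tau(B)$, and the previous step gives $\Lambda(A,B)=\Lambda(L_{t}(A),L_{t}(B))=\Lambda(\tau(A),\tau(B))$. There is essentially no serious obstacle; the only point demanding care is the verification that the seemingly nonlinear transformation of the last coordinate is in fact affine-linear and invertible on $\mathbb{F}^{d}$, which is exactly where the symmetry of $\circ$ and odd characteristic are used. This also explains and slightly generalizes the ad hoc dilation-type invariance already exploited in the proof of Proposition \ref{prop:Energy}.
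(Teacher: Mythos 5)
Your proof is correct, and it takes a genuinely different route from the paper. The paper's argument works directly from the definition of $\Lambda(A,B)$: it expands the quadratic constraint $(\underline{a}+\underline{t})\circ(\underline{a}+\underline{t})+\cdots$ using bilinearity, and then invokes the linear constraint $\underline{a}+\underline{b}=\underline{c}+\underline{d}$ to cancel the cross terms $2(\underline{a}+\underline{b})\circ\underline{t}=2(\underline{c}+\underline{d})\circ\underline{t}$, showing the pre- and post-translation constraints are logically equivalent. You instead lift $\tau_{t}$ to an ambient affine bijection $L_{t}$ of $\mathbb{F}^{d}$ (a genuinely new observation relative to the paper's proof) and then apply the completely general fact that additive energy in an abelian group is preserved by affine bijections. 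Both are correct; your approach is more conceptual and has the pedagogical advantage of exhibiting both this lemma and the scalar dilation invariance used in the proof of Proposition \ref{prop:Energy} as instances of the same underlying principle, whereas the paper's computation is more self-contained but gives less structural insight. One small quibble: you say odd characteristic is ``exactly where'' the argument uses a hypothesis, but in fact symmetry alone suffices to write the linear part as $(\underline{y},s)\mapsto(\underline{y},\,s+\underline{t}\circ\underline{y}+\underline{y}\circ\underline{t})$, which is still unipotent even in characteristic $2$; odd characteristic merely lets you collapse this to $2\underline{t}\circ\underline{y}$. This does not affect correctness since the paper assumes odd characteristic throughout.
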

\begin{proof}We can reformulate the claim as:
$$\sum_{\substack{\underline{a}+\underline{b} = \underline{c}+d \\ \underline{a} \circ \underline{a} + \underline{b} \circ \underline{b} = \underline{c}\circ \underline{c} +\underline{d}  \circ \underline{d}   \\ \underline{a},\underline{c} \in \underline{A} \\ \underline{b},\underline{d} \in \underline{B} }} 1
= \sum_{\substack{(\underline{a}+ \underline{t}) +(\underline{b}+ \underline{t}) = (\underline{c}+ \underline{t}) + (\underline{d} + \underline{t}) \\ (\underline{a}+t) \circ (\underline{a}+ \underline{t}) + (\underline{b}+t) \circ (\underline{b}+t) = (\underline{c}+ \underline{t})\circ (\underline{c}+ \underline{t}) + (\underline{d}+ \underline{t}) \circ (\underline{d} + \underline{t})  \\ \underline{a},\underline{c} \in A \\ \underline{b},\underline{d}  \in B }} 1.
$$
Note that
$$(\underline{a}+ \underline{t}) \circ (\underline{a}+ \underline{t}) + (\underline{b}+ \underline{t}) \circ (\underline{b}+ \underline{t}) = (\underline{c}+\underline{t})\circ (\underline{c}+\underline{t}) + (\underline{d} + \underline{t}) \circ (\underline{d} + \underline{t})$$
is equivalent to
$$ \underline{a} \circ \underline{a}  + \underline{b} \circ \underline{b} + 2 (\underline{a}+\underline{b})\circ  \underline{t}= \underline{c}\circ \underline{c} + \underline{d}  \circ \underline{d}  + 2 (\underline{c}+d)\circ  \underline{t}.$$
Next, note that $\underline{a}+\underline{b} = \underline{c}+\underline{d} $ if and only if $(\underline{a}+ \underline{t}) +( \underline{b}+ \underline{t}) = (\underline{c}+ \underline{t}) + (\underline{d} + \underline{t})$. Moreover, if $\underline{a}+b=\underline{c}+\underline{d} $ then  $2(\underline{a}+\underline{b}) \circ  \underline{t} = 2 (\underline{c}+\underline{d} )\circ \underline{t} $. Combining these observations verifies the above equality.
\end{proof}
We also find it convenient to record the following related observation.
\begin{lemma}\label{lem:Gallieaninv}Let $A,B \subseteq \mathcal{S}$ and $\tau$ a Galilean transformation of $\mathcal{S}$. Then, for $b \in B$,
$$\sum_{\substack{a-d \in \mathcal{S}-b \\ a \in A \\ d \in B }} 1 = \sum_{\substack{a' -d' \in \mathcal{S}-\tau(b) \\ a' \in \tau(A)  \\ d' \in \tau(B) }} 1 .$$
\end{lemma}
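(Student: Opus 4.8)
The plan is to mimic the proof of Lemma~\ref{lem:Gallinv}, organizing everything around the observation that a Galilean transform of $\mathcal{S}$ extends to an invertible \emph{affine} self-map of the ambient space $\mathbb{F}^d$ which preserves $\mathcal{S}$. Concretely, writing $\tau=\tau_t$ for the Galilean transform associated to $t\in\mathcal{S}$ with first coordinates $\underline{t}\in\mathbb{F}^{d-1}$, I would define $\widetilde{\tau}:\mathbb{F}^d\to\mathbb{F}^d$ by
$$\widetilde{\tau}(\underline{y},s):=\bigl(\underline{y}+\underline{t},\;s+2\,\underline{t}\circ\underline{y}+\underline{t}\circ\underline{t}\bigr).$$
A one-line computation using $(\underline{y}+\underline{t})\circ(\underline{y}+\underline{t})=\underline{y}\circ\underline{y}+2\,\underline{t}\circ\underline{y}+\underline{t}\circ\underline{t}$ shows that $\widetilde{\tau}$ restricts to $\tau$ on $\mathcal{S}$ (so it agrees with $\tau$ on $A$, on $B$, and on $b$), and that $\widetilde{\tau}$ maps $\mathcal{S}$ bijectively onto $\mathcal{S}$; moreover $\widetilde{\tau}$ is invertible on all of $\mathbb{F}^d$, with inverse $\widetilde{\tau_{-t}}$.

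The key algebraic point I would then isolate is that $\widetilde{\tau}$ is affine with linear part $L(\underline{y},s)=(\underline{y},\,s+2\,\underline{t}\circ\underline{y})$ and translation part $c:=(\underline{t},\,\underline{t}\circ\underline{t})$, so that for any $a,d,b\in\mathbb{F}^d$,
$$\widetilde{\tau}(a)-\widetilde{\tau}(d)+\widetilde{\tau}(b)=\bigl(L(a)+c\bigr)-\bigl(L(d)+c\bigr)+\bigl(L(b)+c\bigr)=L(a-d+b)+c=\widetilde{\tau}(a-d+b);$$
the cancellation occurs because the coefficients $1,-1,1$ in the combination sum to $1$. Since $\widetilde{\tau}$ is a bijection of $\mathbb{F}^d$ that preserves $\mathcal{S}$, this identity yields the chain of equivalences
$$a-d\in\mathcal{S}-b\iff a-d+b\in\mathcal{S}\iff\widetilde{\tau}(a-d+b)\in\mathcal{S}\iff\widetilde{\tau}(a)-\widetilde{\tau}(d)\in\mathcal{S}-\widetilde{\tau}(b).$$

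Finally I would conclude by noting that the map $(a,d)\mapsto(\tau(a),\tau(d))=(\widetilde{\tau}(a),\widetilde{\tau}(d))$ is a bijection from $A\times B$ onto $\tau(A)\times\tau(B)$, and by the displayed equivalence it carries the set of pairs counted on the left-hand side (those with $a\in A$, $d\in B$, $a-d\in\mathcal{S}-b$) bijectively onto the set counted on the right-hand side (those with $a'\in\tau(A)$, $d'\in\tau(B)$, $a'-d'\in\mathcal{S}-\tau(b)$), so the two counts coincide. I do not expect a genuine obstacle here: the only points requiring care are extending $\tau$ off of $\mathcal{S}$ correctly (the formula for $\widetilde{\tau}$ above is forced by insisting on an affine extension fixing $\mathcal{S}$) and verifying the cancellation of the translation part, which is the same ``signed sum equal to $1$'' phenomenon already used implicitly in the proof of Lemma~\ref{lem:Gallinv}.
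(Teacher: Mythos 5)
Your proof is correct and rests on the same underlying algebraic identity as the paper's, but it packages that identity more conceptually. The paper writes out both conditions $a-d+b\in\mathcal{S}$ and $\tau(a)-\tau(d)+\tau(b)\in\mathcal{S}$ explicitly in coordinates, expands the bilinear forms, and observes that subtracting the common term $\bigl(\underline{t},\,2\underline{t}\circ(\underline{a}+\underline{b}-\underline{d})+\underline{t}\circ\underline{t}\bigr)$ from both sides of the second condition reduces it to the first. You instead isolate the structural reason this works: $\tau$ extends to an affine bijection $\widetilde{\tau}$ of $\mathbb{F}^d$ preserving $\mathcal{S}$, and affine maps commute with the signed combination $a-d+b$ precisely because the coefficients $1,-1,1$ sum to $1$, so $\widetilde{\tau}(a)-\widetilde{\tau}(d)+\widetilde{\tau}(b)=\widetilde{\tau}(a-d+b)$; then $\widetilde{\tau}(\mathcal{S})=\mathcal{S}$ finishes it. The coordinate subtraction in the paper is exactly the verification, in disguise, that $\widetilde{\tau}$ is invertible, so the two arguments have the same mathematical content; yours makes the mechanism transparent and would transfer more readily to similar situations (e.g.\ it subsumes Lemma~\ref{lem:Gallinv} as well by the same ``signed coefficients sum to $1$'' observation). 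One small notational quibble: since $-t$ need not lie on $\mathcal{S}$ (unless $\underline{t}\circ\underline{t}=0$), the inverse should be written as $\widetilde{\tau_{t'}}$ where $t'=(-\underline{t},\underline{t}\circ\underline{t})\in\mathcal{S}$ rather than $\widetilde{\tau_{-t}}$; this does not affect the argument, which really only needs that $\widetilde{\tau}$ is a bijection of $\mathbb{F}^d$ carrying $\mathcal{S}$ onto $\mathcal{S}$.
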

\begin{proof}The first expression counts solutions to  $(\underline{a},\underline{a}\circ \underline{a}) - (\underline{d}, \underline{d}\circ \underline{d}) \in \mathcal{S} -(\underline{b},\underline{b}\circ \underline{b})$ which is equivalent to
$$\left(\underline{a}+\underline{b}-\underline{d}, \underline{a}\circ \underline{a} + \underline{b}\circ \underline{b} -\underline{d}\circ \underline{d}\right) = \left(\underline{a}+\underline{b}-\underline{d},(\underline{a}+\underline{b}-\underline{d})\circ(\underline{a}+\underline{b}-\underline{d}) \right).$$
The second expression counts solutions to $(\underline{a}+\underline{t},(\underline{a}+\underline{t})\circ (\underline{a}+\underline{t}) ) - (\underline{d} +\underline{t}, (d+\underline{t})\circ (\underline{d}+\underline{t})) \in \mathcal{S} -(\underline{b}+\underline{t},(\underline{b}+\underline{t})\circ (\underline{b}+\underline{t})$ which is equivalent to
$$\left(\underline{a}+\underline{b}-\underline{d} +\underline{t}, (\underline{a}+\underline{t})\circ(\underline{a}+\underline{t}) + (\underline{b}+\underline{t})\circ (\underline{b}+\underline{t}) -(\underline{d} +\underline{t})\circ (\underline{d} +\underline{t})\right) $$
$$= \left(\underline{a}+\underline{b}-\underline{d}+\underline{t},(\underline{a}+\underline{b}-\underline{d} +\underline{t})\circ(\underline{a}+\underline{b}-\underline{d} +\underline{t}) \right).$$
Expanding out the bilinear forms, we can rewrite this condition as
$$\left(\underline{a}+\underline{b}-\underline{d}  +\underline{t} , \underline{a}\circ \underline{a} + \underline{b}\circ \underline{b} - \underline{d}  \circ \underline{d}   + 2\underline{t}\circ(\underline{a}+\underline{b}-\underline{d} ) + \underline{t} \circ \underline{t}\right)$$
$$= \left(\underline{a}+\underline{b}-\underline{d} +\underline{t},(\underline{a}+\underline{b}-\underline{d} )\circ(\underline{a}+\underline{b}-\underline{d})+2\underline{t}\circ(\underline{a}+\underline{b}-\underline{d} ) + \underline{t} \circ \underline{t} \right).$$
Subtracting $(\underline{t},2\underline{t}\circ(\underline{a}+\underline{b}-\underline{d}) + \underline{t}\circ \underline{t})$ from each side, we see this is satisfied if and only if the first expression is.
\end{proof}

We now prove that one can control the additive energy of two subsets of $\mathcal{S}$ by the number of solutions to a related incidence problem. First we need some additional notation. Given $\mathcal{S}$ a $d$ dimensional quadratic surface and subsets $A,B \subseteq \mathcal{S}$, let $L_{A}:=\{H(a): a \in A \}$ denote the multiset of hyperplanes in $\mathbb{F}^{d-1}$ associated to $A$ and $P_{B} = \{\underline{b} : b \in B\}$ the set of points in $\mathbb{F}^{d-1}$ associated to $B$. Let $I(L_{A},P_{B}) := \{(p,l) \in P_{B} \times L_{A} : p \in l \}$ denote the set of incidences between $L_{A}$ and $P_{B}$.

\begin{lemma}\label{lem:enToInc}Let $\mathcal{S}$ be a $d$ dimensional non-degenerate quadratic surface and let $A,B \subseteq \mathcal{S}$. There exists a Galilean transformation $\tau(\cdot)$ such that $A' = \tau(A)$ and $B' = \tau(B)$ satisfies
$$\Lambda(A,B) \lesssim |L_{B'}| |I(L_{B'},P_{A'})|.$$
\end{lemma}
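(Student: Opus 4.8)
The plan is to unfold the definition of additive energy as a convolution and reinterpret it as an incidence count using the hyperplanes $H(b)$ attached to points $b \in B$. Recall from Lemma~\ref{lem:distPlane} that the map $x \mapsto H(x)$ is essentially injective on $\mathcal{S}$ (the only collisions happen within a $1$-dimensional isotropic subspace), so the multiset $L_{B}$ has size comparable to $|B|$ away from a small exceptional set. The key identity is that for $a,c,d \in \mathcal{S}$, the condition ``$a - d = c - b$ for some $b \in \mathcal{S}$'' encodes, after projecting to $\mathbb{F}^{d-1}$ and using the quadratic relation in the last coordinate, a linear constraint on $\underline{a}$ of exactly the form ``$\underline{a}$ lies on the hyperplane $H(\underline{d} - \underline{c} + \underline{b})$''. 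More precisely, writing $a = (\underline{a}, \underline{a}\circ\underline{a})$ etc., the last-coordinate equation $\underline{a}\circ\underline{a} - \underline{d}\circ\underline{d} = \underline{c}\circ\underline{c} - \underline{b}\circ\underline{b}$ together with the first-coordinate equation $\underline{a} - \underline{d} = \underline{c} - \underline{b}$ can be rearranged (expanding the bilinear forms) into a statement that a certain affine-linear functional of $\underline{a}$ vanishes, i.e. an incidence between a point and a hyperplane in $\mathbb{F}^{d-1}$.

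First I would apply a Galilean transformation $\tau$ to normalize; by Lemma~\ref{lem:Gallinv} the energy $\Lambda(A,B)$ is unchanged, and by Lemma~\ref{lem:Gallieaninv} the relevant fibered counts are unchanged, so we may choose $\tau$ to move the generic point of $B$ (or more precisely to arrange that the exceptional isotropic directions of Lemma~\ref{lem:distPlane} do not cause degeneracy — this is the role of the ``there exists a Galilean transformation'' in the statement). Then I would write
\[
\Lambda(A,B) = \sum_{\substack{a - d = c - b \\ a,c \in A,\ b,d \in B}} 1
\leq \sum_{b, d \in B} \ \sum_{\substack{a \in A,\ c \in A \\ a - c = d - b}} 1.
\]
For fixed $b, d$, the inner sum counts pairs $(a,c)$, but once $a$ is chosen, $c = a - (d-b)$ is determined as an element of $\mathbb{F}^{d-1}$, and the constraint that $c$ actually lies on $\mathcal{S}$ (i.e. that its last coordinate matches $\underline{c}\circ\underline{c}$) becomes, after expanding $(\underline{a} - \underline{d} + \underline{b})\circ(\underline{a} - \underline{d} + \underline{b})$ and cancelling, a single linear equation in $\underline{a}$ — which is precisely the incidence $\underline{a} \in H(\underline{d} - \underline{b})$ or a translate thereof. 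Summing over $d$ reorganizes the double sum over $(b,d)$ into a sum over $b \in B$ and over the hyperplane $H(\cdot)$ it determines, producing $\sum_{l \in L_{B'}} |\{\,\underline{a} \in P_{A'} : \underline{a} \in l\,\}|$ weighted by at most $|L_{B'}|$ (the number of choices of the second $B$-point, which ranges over a set of size $\lesssim |B| \sim |L_{B'}|$). This gives $\Lambda(A,B) \lesssim |L_{B'}|\,|I(L_{B'}, P_{A'})|$.

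I expect the main obstacle to be the bookkeeping in the middle step: correctly identifying which linear functional of $\underline{a}$ appears, verifying it is exactly $H(\cdot)$ (up to an affine shift absorbed by the Galilean transform), and controlling the degenerate contributions — the terms with $b = d$, and the terms where the hyperplane-assignment $b \mapsto H(b)$ is not injective (Lemma~\ref{lem:distPlane}). The degenerate diagonal $b = d$ contributes at most $|A|^2|B| \lesssim$ (something absorbable), and the non-injectivity is confined to $1$-dimensional isotropic subspaces, whose contribution can be bounded separately by a trivial count and hidden in the $\lesssim$; choosing the Galilean transform $\tau$ appropriately (so that, say, no large piece of $B'$ sits on a line through the relevant distinguished point) is what makes these error terms harmless. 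Everything else is a routine expansion of bilinear forms and an application of the triangle inequality.
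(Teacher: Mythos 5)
Your plan matches the paper's proof: expand the energy, relax the condition $c\in A$ to $c\in\mathcal S$, extract the factor $|B|=|L_{B'}|$ by singling out one $B$-variable $b$, apply the Galilean translation $\tau(b)=0$ (the transform plays a purely normalizing role, not a genericity role), and observe that the remaining constraint $a-d\in\mathcal S$ expands to $\underline a\circ\underline d=\underline d\circ\underline d$, i.e.\ $\underline a\in H(\underline d)$, giving $|L_{B'}|\,|I(L_{B'},P_{A'})|$. Your worries about the non-injectivity of $x\mapsto H(x)$ and the diagonal $b=d$ are not needed at this stage: $L_{B'}$ is a multiset by definition so multiplicities are allowed (that issue is deferred to Lemma~\ref{lem:energyInc1} via the parameter $C_2$), and the degenerate term is already absorbed by the stated bound.
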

\begin{proof}
$$\Lambda(A,B) := \sum_{\substack{a+b = c+d \\ a,c \in A, b,d \in B} }1 = \sum_{\substack{a-d = c-b \\ a,c \in A, b,d \in B} }1  $$
$$\leq |L_{B'}| \max_{b \in B} \sum_{\substack{a-d = c-b \\ a,c \in A, d \in B} }1 \leq |L_{B'}| \max_{b \in B} \sum_{\substack{a-d = \mathcal{S} -b \\ a \in A, d \in B} } 1.$$
Let $\tau$ denote the Galilean transformation that satisfies $\tau(b)=0$. Applying Lemma \ref{lem:Gallieaninv}, the above quantity is then bounded by
$$\Lambda(\tau(A),\tau(B)) \leq \sum_{\substack{a-d \in \mathcal{S} \\ a \in \tau(A), d \in \tau(B)} } 1.$$
The condition $a-d \in \mathcal{S}$ is equivalent to
$$Q(\underline{a}) - Q(\underline{d}) = Q(\underline{a} -\underline{d})$$
or
$$\underline{a}\circ \underline{a} -\underline{a}\circ \underline{d} = (\underline{a} -\underline{d})\circ (\underline{a} -\underline{d}).$$
Rearranging, we have
$$\underline{a}\circ \underline{d} = \underline{d}\circ \underline{d}.$$
This is now equivalent to $\underline{a} \in H(\underline{d})$.
\end{proof}
We now prove a variant of the so-called Cauchy-Schwarz incidence estimate.
\begin{lemma}\label{lem:doublecount}Let $L$ denote a multi-set of subsets of $\mathbb{F}^n$. Let $D$ denote the set of distinct elements of $L$. For $\ell \in D$ let $N_{\ell}$ denote the multiplicity with which $\ell$ occurs in $L$, and assume that $N_{\ell} \leq C_2$ for all $\ell$. Let $P$ denote a set of distinct points in $\mathbb{F}^n$. Finally, assume that $|\ell \cap \ell' \cap P| \leq C_1 $ if $\ell,\ell' \in D$. One then has the incidence estimate
$$|I(L,P)| \leq  C_1^{1/2}|P|^{1/2}|L| + C_2 |P|.$$
\end{lemma}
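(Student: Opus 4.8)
The plan is to run the standard Cauchy--Schwarz double-counting (``popularity'') argument for incidences, but with one modification: instead of estimating the diagonal contribution trivially, I bound it by a multiple of $|I(L,P)|$ itself, so that the resulting inequality becomes a quadratic in $|I(L,P)|$ which can then be solved.

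First I would fix an enumeration of the multiset $L$ (repeated subsets counted as distinct indices), and for each $p\in P$ put $r(p):=\#\{\ell\in L: p\in\ell\}$, counted with multiplicity, so that $|I(L,P)|=\sum_{p\in P}r(p)$. Squaring and exchanging the order of summation,
\[ \sum_{p\in P} r(p)^2 \;=\; \sum_{\ell,\ell'\in L} |\ell\cap\ell'\cap P|, \]
the sum running over ordered pairs from $L$.

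Next I would split this sum according to whether $\ell$ and $\ell'$ coincide \emph{as subsets} of $\mathbb{F}^n$. For the off-diagonal pairs the hypothesis gives $|\ell\cap\ell'\cap P|\le C_1$, and there are at most $|L|^2$ of them, so their total contribution is at most $C_1|L|^2$. For the diagonal pairs, grouping by the distinct value $\ell\in D$ gives $\sum_{\ell\in D} N_\ell^2\,|\ell\cap P|$, and the bound $N_\ell\le C_2$ lets me replace one factor $N_\ell$ by $C_2$, so this is at most $C_2\sum_{\ell\in D}N_\ell|\ell\cap P| = C_2\,|I(L,P)|$. Hence
\[ \sum_{p\in P} r(p)^2 \;\le\; C_1|L|^2 + C_2\,|I(L,P)|. \]

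Finally, Cauchy--Schwarz over $P$ gives $|I(L,P)|^2=\big(\sum_{p\in P}r(p)\big)^2\le |P|\sum_{p\in P}r(p)^2\le |P|\big(C_1|L|^2+C_2|I(L,P)|\big)$, i.e.\ $|I(L,P)|^2 - C_2|P|\,|I(L,P)| - C_1|P||L|^2\le 0$. Solving this quadratic inequality and using $\sqrt{a+b}\le\sqrt a+\sqrt b$ on the discriminant yields $|I(L,P)|\le C_2|P| + C_1^{1/2}|P|^{1/2}|L|$, which is the claim. The argument is entirely routine; the only point that needs care is to resist bounding the diagonal term by the weaker $|L|\,|P|$ and instead keep it as $C_2|I(L,P)|$, so that the quadratic can be closed.
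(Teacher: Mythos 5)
Your proof is correct and follows essentially the same Cauchy--Schwarz double-counting scheme as the paper: both split $\sum_p r(p)^2$ into diagonal and off-diagonal pairs, bound the off-diagonal part by $C_1|L|^2$ and the diagonal by $C_2|I(L,P)|$, and then close a quadratic in $|I(L,P)|$. The only cosmetic difference is how the quadratic is resolved: you solve it explicitly via the quadratic formula and $\sqrt{a+b}\le\sqrt a+\sqrt b$, whereas the paper assumes $|I(L,P)|$ exceeds the target bound and divides through to reach the same conclusion.
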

\begin{proof}Let
$$|I(L,P)| = \sum_{p \in P} \sum_{\ell \in L}  1_{p \in \ell} \leq |P|^{1/2} (\sum_{p \in P} |\sum_{\ell \in D} 1_{p \in \ell} N_{\ell}  |^2  )^{1/2} $$
$$\leq |P|^{1/2} (\sum_{\substack{\ell, \ell' \in D \\ \ell \neq \ell'}} |\{p \in P : p \in \ell, p \in \ell' \}|N_{\ell} N_{\ell'} + \sum_{\ell \in D} N_{\ell}^2 |\{p \in P : p \in \ell \}| )^{1/2}.$$
Since $I(L,P)=\sum_{\ell \in D}N_{\ell}|\{p \in P : p \in \ell \}| $, the above is
$$\leq |P|^{1/2} (\sum_{\substack{\ell, \ell' \in D \\ \ell \neq \ell'}} |\{p \in P : p \in \ell, p \in \ell' \}|N_{\ell} N_{\ell'} + C_2 |I(L,P)| )^{1/2}.$$
Thus,
$$|I(L,P)|  \leq |P|^{1/2} ( C_1|L|^2 + C_2 |I(L,P)|)^{1/2}.$$
Squaring both sides and dividing by $|I(L,P)|$ gives
$$|I(L,P)| \leq \frac{ C_1|P| |L|^2 + C_2 |P| |I(L,P)|}{ |I(L,P)|} $$
Assume that $|I(L,P)| \geq C_1^{1/2}|P|^{1/2}|L| + C_2 |P|$, otherwise the result is proven. However we then have,
$$|I(L,P)| \leq \frac{ C_1 |P| |L|^2}{ C_1^{1/2}|P|^{1/2}|L|} + C_2|P|  \leq  C_1^{1/2}|P|^{1/2}|L| + C_2 |P|.$$
This completes the proof.
\end{proof}

\begin{lemma}\label{lem:energyInc1}Let $A,B \subseteq \mathcal{S}$ be subsets of a $d$ dimensional non-degenerate quadratic surface. Assume that $|\underline{A} \cap W| \leq C_1$ for every $d-3$ dimensional affine subspace $W$. In addition, assume that $|\underline{A} \cap j|\leq C_2$ for every affine isotropic line $j$. Then,
$$\Lambda(A,B) \lesssim C_1^{1/2}|B|^{2}|A|^{1/2} + C_2 |A| |B|.$$
\end{lemma}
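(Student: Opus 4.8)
The plan is to prove the bound by first converting the additive energy into an incidence count via Lemma~\ref{lem:enToInc}, and then estimating that incidence count with the Cauchy--Schwarz incidence bound of Lemma~\ref{lem:doublecount}, with the two structural hypotheses feeding the two parameters $C_1,C_2$ of that lemma. The point is that the hypothesis controlling $(d-3)$-dimensional affine slices is exactly what is needed to bound the ``off-diagonal'' Cauchy--Schwarz term, while the hypothesis controlling isotropic lines is exactly what is needed to bound the multiplicities of the hyperplanes that appear.

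First I would invoke Lemma~\ref{lem:enToInc} to produce a Galilean transformation $\tau$ such that, with $A'=\tau(A)$ and $B'=\tau(B)$, one has
\[
\Lambda(A,B)\lesssim |L_{B'}|\,|I(L_{B'},P_{A'})|\le |B|\,|I(L_{B'},P_{A'})|,
\]
where $L_{B'}=\{H(\underline b): b\in B',\ \underline b\neq 0\}$ (as a multiset of hyperplanes in $\mathbb{F}^{d-1}$) and $P_{A'}=\underline{A'}$; the at most one element of $B'$ with $\underline b=0$ contributes $\le |A|$ to the incidence count and is absorbed since $C_2\ge 1$. Since $\underline{A'}$ and $\underline{B'}$ are translates of $\underline A$ and $\underline B$, all the hypotheses transfer to the primed sets. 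Now apply Lemma~\ref{lem:doublecount} in $\mathbb{F}^{d-1}$ with $L=L_{B'}$, $P=P_{A'}$. The multiplicity parameter: if $H(\underline b)=H(\underline b')$ for distinct $b,b'\in B'$, then by Lemma~\ref{lem:distPlane} the vectors $\underline b,\underline b'$ span the same $1$-dimensional isotropic subspace, so the multiplicity of any hyperplane in $L_{B'}$ is at most the maximum of $|\underline{B'}\cap j|$ over affine isotropic lines $j$, hence $\le C_2$. The pairwise-intersection parameter: two distinct hyperplanes in $\mathbb{F}^{d-1}$ are either parallel (disjoint) or meet in an affine subspace of dimension exactly $d-3$, so $|\ell\cap\ell'\cap P_{A'}|\le C_1$ by the hypothesis on $(d-3)$-dimensional affine slices. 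Lemma~\ref{lem:doublecount} then gives
\[
|I(L_{B'},P_{A'})|\lesssim C_1^{1/2}|P_{A'}|^{1/2}|L_{B'}|+C_2|P_{A'}|\le C_1^{1/2}|A|^{1/2}|B|+C_2|A|,
\]
and multiplying by $|B|$ yields $\Lambda(A,B)\lesssim C_1^{1/2}|B|^2|A|^{1/2}+C_2|A||B|$.

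The main obstacle, and essentially the only content beyond translation-invariance bookkeeping, is correctly matching the geometric objects cut out by the incidence relation $\underline a\in H(\underline b)$ to the two hypotheses: recognizing (via Lemma~\ref{lem:distPlane}) that coincidences among the hyperplanes $H(\underline b)$ happen only along isotropic lines, so the isotropic-line hypothesis precisely controls line multiplicities, and recognizing that pairs of distinct such hyperplanes intersect in a codimension-two affine subspace, so the $(d-3)$-dimensional-slice hypothesis precisely controls the Cauchy--Schwarz diagonal. Once these two identifications are in place, the estimate follows by plugging $|L_{B'}|\le|B|$ and $|P_{A'}|=|A|$ into Lemma~\ref{lem:doublecount}.
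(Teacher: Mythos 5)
Your proposal is correct and follows essentially the same path as the paper's own proof: Lemma~\ref{lem:enToInc} converts the energy into a hyperplane--point incidence count, Lemma~\ref{lem:distPlane} shows the hyperplane multiplicities are controlled by intersections with isotropic lines, and Lemma~\ref{lem:doublecount} closes the estimate with the $(d-3)$-slice hypothesis feeding $C_1$. You add slightly more care than the paper (explicitly handling the element with $\underline{b}=0$ after the Galilean translation), and you correctly observe that the multiplicity argument really requires a bound on $|\underline{B'}\cap j|$ rather than $|\underline{A'}\cap j|$ --- a small asymmetry already latent in the paper's statement and proof, which is harmless because the lemma is always applied with $A=B$.
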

\begin{proof}Applying Lemma \ref{lem:enToInc}, there exists $\tau(\cdot)$ such that for $A' = \tau(A)$ and $B' = \tau(B)$, one has
$$\Lambda(A,B) \lesssim |L_{B'}| |I(L_{B'},P_{A'})|.$$
From Lemma \ref{lem:distPlane}, the second assumption in the hypothesis implies that each element of the multiset $L_{U'}$ occurs with multiplicity at most $C_2$. Note if $\tau(\cdot)$ is a Galilean transformation, then $\underline{\tau(A)}$ (respectively, $\underline{\tau(B)}$) is a translation of $\underline{A}$ (respectively, $\underline{B}$). The assumption that $|\underline{A} \cap W| \leq C_1$ holds for every $d-3$ dimensional affine subspace $W$, then implies that $|\underline{\tau(A)} \cap W| \leq C_1$ also for every $d-3$ dimensional $W$.  Since the intersection of two distinct hyperplanes $\ell,\ell' \in L_{B'}$ is an affine subspace of dimension at most $d-3$ (if it is non-empty), it follows that $|\ell \cap \ell' \cap P_{A'}| \leq C_1$. Invoking Lemma \ref{lem:doublecount}, gives us that
$$\Lambda(A,B) \lesssim C_1^{1/2}|B|^{2}|A|^{1/2} + C_2 |A||B|.$$
This completes the proof.
\end{proof}

We say that a surface $\mathcal{S}$ has energy exponent $\Psi$ if the following holds: for every $E \subseteq \mathcal{S}$ such that $|E \cap j| \leq |E|^{\alpha}$ holds for every maximal totally isotropic affine subspaces $j$, then $\Lambda(E) \leq |E|^{\Psi(\alpha)}$. We will always have that $\Psi(\alpha): [0,1] \rightarrow [0,3]$ is continuous, increasing and satisfies  $\Psi(\alpha) <3$ for $\alpha <1$ and $\Psi(1)=3$. We now consider $3$ dimensional quadratic surfaces:

\begin{lemma}\label{lem:3denergy}Let $E \subseteq \mathcal{S}$ be a subset of a $3$ dimensional non-degenerate quadratic surface. If the Witt index of the associated quadratic form is $0$ then
$$\Lambda(E) \lesssim |E|^{5/2}.$$
If the Witt index of the associated quadratic form is $1$ and $|E \cap j| \leq C$ for every isotropic line $j$, then
$$\Lambda(E) \lesssim |E|^{5/2} + C |E|^{2}.$$
In particular, if $|E \cap j| \leq |E|^{1/2}$ then
$$\Lambda(E) \lesssim |E|^{5/2}.$$
\end{lemma}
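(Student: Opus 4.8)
The plan is to derive all three assertions from Lemma~\ref{lem:energyInc1}, specialized to $d=3$ and $A=B=E$. First I would record the only structural input: since $Q$ is a non-degenerate quadratic form on $\mathbb{F}^{2}$, Proposition~\ref{lem:WittCompute} (with $n=1$) says that $Q$ has Witt index $0$ (the anisotropic case, in which $Q$ has no nonzero isotropic vectors) or Witt index $1$ (in which $Q$ is equivalent to the hyperbolic form $x_1 x_2$). These are precisely the two cases in the statement. The one genuinely dimension-specific observation is that the parameter $C_1$ of Lemma~\ref{lem:energyInc1} equals $1$ here: an affine subspace of dimension $d-3$ is, for $d=3$, a single point, and $\underline{E}\subseteq\mathbb{F}^{2}$ is an honest set, so $|\underline{E}\cap W|\le 1$ for every such $W$. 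Equivalently, the hyperplanes $H(x)$ live in $\mathbb{F}^{d-1}=\mathbb{F}^{2}$, so any two distinct ones are distinct lines and meet in at most one point, which is exactly the quantity controlled through $C_1$ in Lemma~\ref{lem:doublecount}.

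Next I would handle the isotropic-line hypothesis of Lemma~\ref{lem:energyInc1} by cases. If the Witt index is $0$ there are no affine isotropic lines, so that hypothesis holds vacuously and one may take $C_2=O(1)$; indeed, by Lemma~\ref{lem:distPlane} the map $x\mapsto H(x)$ is injective on $E$ in this case, so the relevant multiset of hyperplanes has multiplicity one. Lemma~\ref{lem:energyInc1} then gives $\Lambda(E)\lesssim |E|^{2}\,|E|^{1/2}+|E|\cdot|E|\lesssim |E|^{5/2}$, using $|E|^{2}\le|E|^{5/2}$. If the Witt index is $1$, the hypothesis of the present lemma provides $|\underline{E}\cap j|\le C$ over affine isotropic lines $j$, so one may take $C_2=C$, and Lemma~\ref{lem:energyInc1} yields
\[
\Lambda(E)\;\lesssim\; |E|^{2}\,|E|^{1/2}+C\,|E|^{2}\;=\;|E|^{5/2}+C\,|E|^{2},
\]
which is the second assertion. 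The final claim then follows at once by taking $C=|E|^{1/2}$, which makes the two terms equal. (This also shows that a $3$-dimensional quadratic surface of Witt index $1$ has energy exponent $\Psi(\alpha)=\max(5/2,\,2+\alpha)$.)

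I do not expect a real obstacle: once Lemma~\ref{lem:energyInc1} is granted, the only care needed is the bookkeeping above, namely pinning down $C_1=1$ in dimension three and noticing that Witt index $0$ deletes the isotropic-line term. As a check on the Witt index $1$ bound one could instead reduce to the hyperbolic paraboloid: by Proposition~\ref{lem:WittCompute} the form is equivalent to $x_1 x_2$, so Lemma~\ref{lem:eqivEnergy} identifies $\Lambda(E)$ with $\Lambda(U)$ for some $U\subseteq\mathcal{H}$ whose intersections with VH lines (these being the affine isotropic lines for $\mathcal{H}$) are at most $C$, and then Lemma~\ref{lem:L52} gives $\Lambda(U)\lesssim|U|^{5/2}+\sum_j|U_j|^{3}+\sum_k|U^{k}|^{3}\lesssim|E|^{5/2}+C^{2}|E|$, which is in fact slightly stronger than the stated bound. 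I would present the uniform argument via Lemma~\ref{lem:energyInc1} as the actual proof, since it simultaneously covers the anisotropic case.
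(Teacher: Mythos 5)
Your proof is correct and coincides with the paper's own argument: the paper also deduces all three claims from Lemma~\ref{lem:energyInc1}, noting that $C_1=1$ because $(d-3)$-dimensional affine subspaces are points when $d=3$, taking $C_2=1$ in the anisotropic case and $C_2=C$ in the Witt index one case, and then specializing $C=|E|^{1/2}$. The extra cross-check you sketch through Lemma~\ref{lem:eqivEnergy} and Lemma~\ref{lem:L52} is not in the paper but is accurate, and it does give the slightly stronger bound $|E|^{5/2}+C^{2}|E|$ in the Witt index one case.
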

\begin{proof}We will apply Lemma \ref{lem:energyInc1}. Since the only $d-3$ dimensional subspaces are points, we may take $C_1=1$ in Lemma \ref{lem:energyInc1}. If the Witt index is $0$, then there are no isotropic lines and we may further take $C_1=1$. This immediately gives the first result.

Next assume that the Witt index is $1$. Taking $C_2=C$, which follows from the hypothesis gives the second result. The final result follows from observing that  $C|E|^{2} \leq |E|^{5/2}$ if $C \leq |E|^{1/2}$.
\end{proof}
It turns out that with more care, one can improve the condition $|E \cap j| \leq |E|^{1/2}$ in the last part of the previous lemma to $|E \cap j| \leq |E|^{3/4}$. Indeed this is an immediate consequence of Lemma \ref{cor:L4}, after recalling that any $3$ dimensional non-degenerate quadratic surface with Witt index $1$ is equivalent to the $3$ dimensional hyperbolic paraboloid. Formally, this result can be stated as follows.
\begin{lemma}\label{lem:strong3dw1}Let $E \subseteq \mathcal{S}$ be a subset of a $3$ dimensional non-degenerate quadratic surface with Witt index $1$. If $|E \cap j| \leq |E|^{3/4}$ for every isotropic line $j$, then
$$\Lambda(E) \lesssim |E|^{5/2}.$$
If one assumes the weaker hypothesis that $|E \cap j | \leq |E|^{\alpha}$ for $3/4 \leq \alpha$, then one has that
$$\Lambda(E) \lesssim |E|^{1+2\alpha}.$$
In other words, $\mathcal{S}$ has energy exponent $\Psi(\alpha) =1+2\alpha$.
\end{lemma}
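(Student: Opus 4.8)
The plan is to reduce the statement to Corollary \ref{cor:L4} by exploiting the equivalence, established in Lemma \ref{lem:ParEqu} (and its energy-version Lemma \ref{lem:eqivEnergy}), between the restriction/energy problems of equivalent quadratic forms. First I would invoke Proposition \ref{lem:WittComputeOdd} together with Lemma \ref{lem:diag}: any $3$-dimensional non-degenerate quadratic surface $\mathcal{S}$ with Witt index $1$ is associated to a form equivalent to the hyperbolic form $\xi_1\xi_2$, i.e.\ to $\mathcal{H}=\{(\xi_1,\xi_2,\xi_1\cdot\xi_2)\}$. By Lemma \ref{lem:eqivEnergy} there is an invertible linear map $A$ on $\mathbb{F}^{2}$ taking $\underline{E}\subseteq \mathcal{S}$ to a set $\underline{U}=A^{-1}\underline{E}$ on $\mathcal{H}$ with $\Lambda(E)=\Lambda(U)$ and $|U|=|E|$. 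Under this identification, by Lemma \ref{lem:AffineChar} the totally isotropic affine lines on $\mathcal{S}$ correspond exactly to the totally isotropic affine lines on $\mathcal{H}$, which (as noted in Section \ref{sec:notation}) are precisely the VH lines of $\mathbb{F}^2$ under the parametrization $(x_1,x_2)\mapsto(x_1,x_2,x_1x_2)$. Hence the hypothesis $|E\cap j|\le|E|^{\alpha}$ for all isotropic lines $j$ transfers to: $\underline{U}$ is a $\mathrm{VH}(\alpha|U|)$ set — more precisely, $|\underline{U}\cap\ell|\le|U|^{\alpha}$ for every VH line $\ell$.

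Next I would connect the additive energy to the $L^4$ extension norm exactly as in the proof of Corollary \ref{cor:L4}: with $f$ the characteristic function of $U$,
\[
\|(f d\sigma)^{\vee}\|_{L^4(\mathbb{F}^3,dx)}^4 \;=\; \frac{|\mathbb{F}|^{3}}{|\mathcal{H}|^4}\,\Lambda(U).
\]
By Lemma \ref{lem:L52} (the energy estimate for $\mathcal{H}$),
\[
\Lambda(U)\;\lesssim\; |U|^{5/2} + \sum_{j}|U_j|^3 + \sum_{k}|U^k|^3,
\]
where $U_j, U^k$ are the slices of $U$ on vertical/horizontal lines. Now apply the VH hypothesis: each slice satisfies $|U_j|,|U^k|\le |U|^{\alpha}$, and since $\sum_j |U_j|\le|U|$ and similarly for the horizontal slices, we get $\sum_j|U_j|^3\le |U|^{2\alpha}\sum_j|U_j|\le |U|^{1+2\alpha}$, and the same for the horizontal sum. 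As $\alpha\ge 3/4$ forces $5/2\le 1+2\alpha$, the first term is dominated, giving $\Lambda(U)\lesssim |U|^{1+2\alpha}$. In the special case $\alpha=3/4$ this is $|U|^{5/2}$. Translating back via $\Lambda(E)=\Lambda(U)$ and $|E|=|U|$ yields $\Lambda(E)\lesssim|E|^{1+2\alpha}$, which is precisely the asserted energy exponent $\Psi(\alpha)=1+2\alpha$; the $\alpha=3/4$ case gives the stated $\Lambda(E)\lesssim|E|^{5/2}$.

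The only genuinely delicate point is making sure the VH structure is preserved under the linear equivalence: a generic invertible $A$ sends VH lines to arbitrary lines, so one must use that $A$ is not arbitrary but is an \emph{isometry} between the two forms (up to the parametrization), and invoke Lemma \ref{lem:AffineChar} to conclude that $A$ carries totally isotropic affine subspaces to totally isotropic affine subspaces — and on $\mathcal{H}$ these are exactly the VH lines. A minor technical wrinkle is that Lemma \ref{lem:L52} / Corollary \ref{cor:L4} are stated for the complete hyperboloid $\mathcal{H}$ (including the two isotropic lines through the origin), which is already handled there via the quasi-triangle inequality \eqref{eq:quasitriangleEnergy}, so no extra work is needed. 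I expect the transfer-of-hypothesis step to be where one must be careful with constants and with whether "$\le|E|^{\alpha}$" survives a translation (it does, since Galilean/linear images of VH lines that remain isotropic are again VH lines), but otherwise the argument is a direct appeal to the already-proven $3$-dimensional energy bound.
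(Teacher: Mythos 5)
Your proposal is correct and follows exactly the route the paper intends: the paper states the lemma is ``an immediate consequence of [Corollary \ref{cor:L4}], after recalling that any $3$ dimensional non-degenerate quadratic surface with Witt index $1$ is equivalent to the $3$ dimensional hyperbolic paraboloid,'' and you have filled in precisely those details — the transfer of the isotropic-line hypothesis under the linear equivalence (Lemma \ref{lem:eqivEnergy} / Lemma \ref{lem:AffineChar}), followed by the slice bound $\sum_j |U_j|^3 \le |U|^{2\alpha}\sum_j|U_j|\le|U|^{1+2\alpha}$ applied inside Lemma \ref{lem:L52}, with $\alpha\ge 3/4$ making the $|U|^{5/2}$ term harmless. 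Going directly through Lemma \ref{lem:L52} (rather than round-tripping through the $L^4$ extension norm as in Corollary \ref{cor:L4}) is in fact the cleaner way to get the general-$\alpha$ conclusion $\Lambda(E)\lesssim|E|^{1+2\alpha}$, since Corollary \ref{cor:L4} is phrased only at the $\alpha=3/4$ threshold; the $L^4$ paragraph in your write-up is therefore dispensable. The one cosmetic slip is the notation ``$\mathrm{VH}(\alpha|U|)$'' — the paper's $\mathrm{VH}(\beta)$ convention measures intersections against $|\mathbb{F}|^{\beta}$, so you should write $\mathrm{VH}(\alpha\gamma)$ where $|U|=|\mathbb{F}|^{\gamma}$, as your parenthetical correction already indicates.
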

Roughly speaking, note that proof (implicitly) exploits the fact that a $2$ dimensional quadratic form with Witt index $1$ has only a fixed number (indeed, two) of isotropic subspaces. The analogous statement isn't true for higher dimensional forms, which makes the proofs of the higher dimensional results below more complicated and less efficient. Also, recall that it is also possible to reduce the exponent $5/2$ in certain situations by appealing to sum-product/incidence theory.

Before moving on to the case of $5$ dimensional quadratic surfaces, we must first take a brief detour to consider $2$ dimensional quadratic surfaces. These are surfaces  of the form
$$\mathcal{S} := \{ (\underline{x},mx^2) : \underline{x} \in \mathbb{F} \}.$$
with $m \neq 0$. We have that
\begin{lemma}\label{lem:2dEnergy}Let $E \subseteq \mathcal{S}$ be a subset of a non-degenerate $2$ dimensional quadratic surface. Then
$$ \Lambda(E) \lesssim |E|^{2}.$$
\end{lemma}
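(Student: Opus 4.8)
The plan is to reduce the count of additive quadruples on the curve $\mathcal{S} = \{(\underline{x}, m\underline{x}^2): \underline{x}\in\mathbb{F}\}$ to a system of two equations in the base variables and observe that it is essentially rigid. Parameterize $E$ by $\underline{E}\subseteq\mathbb{F}$ via $\underline{x}\mapsto(\underline{x},m\underline{x}^2)$. A quadruple $a+b=c+d$ with $a,b,c,d\in E$, written in base coordinates $\underline{a},\underline{b},\underline{c},\underline{d}\in\underline{E}$, is exactly the pair of conditions
\begin{equation}\label{eq:2dsys}
\underline{a}+\underline{b}=\underline{c}+\underline{d},\qquad \underline{a}^2+\underline{b}^2=\underline{c}^2+\underline{d}^2.
\end{equation}

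The key step is that \eqref{eq:2dsys} forces $\{\underline{a},\underline{b}\}=\{\underline{c},\underline{d}\}$. Indeed, from the first equation $\underline{a}+\underline{b}=\underline{c}+\underline{d}=:s$, and from the second $(\underline{a}+\underline{b})^2-2\underline{a}\underline{b}=(\underline{c}+\underline{d})^2-2\underline{c}\underline{d}$, so $\underline{a}\underline{b}=\underline{c}\underline{d}=:p$. Thus $\{\underline{a},\underline{b}\}$ and $\{\underline{c},\underline{d}\}$ are both the (multiset of) roots of $X^2-sX+p=0$ over $\mathbb{F}$, hence coincide. (Here we use $\mathrm{char}(\mathbb{F})>2$ so that the identity $\underline{a}^2+\underline{b}^2=s^2-2p$ is valid.) Consequently every additive quadruple is trivial: once $\underline{a}$ and $\underline{b}$ are chosen freely from $\underline{E}$, the pair $(\underline{c},\underline{d})$ is determined up to the swap $\underline{c}\leftrightarrow\underline{d}$, giving at most $2$ choices.

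Counting, this yields $\Lambda(E) = \#\{(a,b,c,d)\in E^4 : a+b=c+d\} \leq 2|\underline{E}|^2 = 2|E|^2$, which is the claimed bound $\Lambda(E)\lesssim|E|^2$. I do not expect any real obstacle here: the only point requiring care is the use of odd characteristic to pass between the power-sum $\underline{a}^2+\underline{b}^2$ and the elementary symmetric functions $s,p$, and the observation that a monic quadratic over a field has its multiset of roots determined by its coefficients. One could alternatively phrase this via the standard fact that a parabola contains no three collinear points (so the curve is ``additively rigid''), but the symmetric-function computation above is the most direct route.
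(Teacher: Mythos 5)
Your proof is correct and follows essentially the same strategy as the paper's: both reduce the additive-quadruple condition to an algebraic rigidity statement showing $\{\underline{a},\underline{b}\}=\{\underline{c},\underline{d}\}$, and both use odd characteristic at the step where one cancels a factor of $2$. The paper substitutes $\underline{a}=\underline{c}+\underline{d}-\underline{b}$ and solves directly for $\underline{c}$ in terms of $\underline{b},\underline{d}$, whereas you pass through the elementary symmetric functions $s,p$ and invoke uniqueness of roots of a monic quadratic; these are cosmetically different phrasings of the same elementary computation.
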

\begin{proof}From the definition of $\Lambda(E)$, we see that this quantity is equal to the number of simultaneous solutions to the equations
$$a+b=c+d,  a^2+b^2 = c^2+d^2$$
with $a,b,c,d \in E$. From the relation $a=c+d-b$, this is equivalent to counting solutions to the equation
$$(c+d-b)^2 + b^2 = c^2 +d^2$$
with $b,c,d \in E$.  Simplifying gives us the equation $c(d-b)= bd-b^2$. It follows that there is at most one $c$ that satisfies this equation, for fixed $b$ and $d$ with $b\neq d$. Thus, the number of solutions with $(b,c,d)$ with $b\neq d$ is at most $|E|^2$.  On the other hand, the number of solutions with $b=d$ is also at most $|E|^2$. This proves the lemma.
\end{proof}
We now show how to reduce problems about degenerate quadratic surfaces to problems about non-degenerate quadratic surfaces in lower dimensions.

\begin{lemma}\label{lem:degEnergy}Let $Q$ be an $r$ dimensional non-degenerate quadratic form. Assume that the surface associated to $Q$
 $$\mathcal{S} := \{(\underline{x},Q(\underline{x})) : \underline{x} \in \mathbb{F}^r \}$$
has energy exponent $\Psi(\alpha)$. For $s  \in \mathbb{N}$, let $d=s+r$. The degenerate quadratic surface
$$\mathcal{U} := \{(\underline{x},\underline{y},Q(\underline{x})) : \underline{x} \in \mathbb{F}^r, \underline{y} \in \mathbb{F}^{s} \} $$
has energy exponent $\theta(\alpha) = 3\alpha + \Psi(\alpha)(1-\alpha)$.
\end{lemma}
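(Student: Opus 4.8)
\emph{Proof sketch (plan).} The plan is to realise $\mathcal{U}$ as a cylinder over $\mathcal{S}$, slice a set $E\subseteq\mathcal{U}$ over the cylinder directions, and feed the slices into the energy exponent of $\mathcal{S}$. First, a reordering of the coordinates of $\mathbb{F}^{d}$ is a linear bijection of $\mathbb{F}^{d}$ and hence preserves additive energy; after such a reordering $\mathcal{U}$ becomes the product $\mathcal{S}\times\mathbb{F}^{s}\subseteq(\mathbb{F}^{r}\times\mathbb{F})\times\mathbb{F}^{s}$. The quadratic form defining $\mathcal{U}$ is $Q'(\underline{x},\underline{y})=Q(\underline{x})$, whose radical is exactly $\{0\}\times\mathbb{F}^{s}$; combined with Lemma \ref{lem:AffineChar} and Lemma \ref{lem:iostropicContain} (applied to $Q$) this shows that the maximal totally isotropic affine subspaces of $\mathcal{U}$ are precisely the sets $j\times\mathbb{F}^{s}$, with $j$ a maximal totally isotropic affine subspace of $\mathcal{S}$. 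Writing $N:=|E|$, the hypothesis on $E$ therefore becomes $|E\cap(j\times\mathbb{F}^{s})|\le N^{\alpha}$ for every such $j$.

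Next, for $a\in\mathcal{S}$ set $Y_{a}=\{\underline{y}\in\mathbb{F}^{s}:(a,\underline{y})\in E\}$ and $A=\{a:Y_{a}\ne\emptyset\}$, so that $\sum_{a\in A}|Y_{a}|=N$ and $\sum_{a\in A\cap j}|Y_{a}|=|E\cap(j\times\mathbb{F}^{s})|\le N^{\alpha}$ for every $j$. Since every point of $\mathcal{S}$ lies on at least one maximal totally isotropic affine subspace, the last inequality applied through a given $a\in A$ yields the crucial pointwise bound $|Y_{a}|\le N^{\alpha}$. Expanding the energy fibrewise gives $\Lambda(E)=\sum_{a_{1}+a_{2}=a_{3}+a_{4},\,a_{i}\in A}\Lambda(Y_{a_{1}},Y_{a_{2}},Y_{a_{3}},Y_{a_{4}})$, where the inner term counts additive quadruples with one entry drawn from each fibre; passing to the Fourier side and applying H\"older gives $\Lambda(Y_{a_{1}},Y_{a_{2}},Y_{a_{3}},Y_{a_{4}})\le\prod_{i}\Lambda(Y_{a_{i}})^{1/4}\le\prod_{i}|Y_{a_{i}}|^{3/4}$. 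Hence, with $g(a):=|Y_{a}|^{3/4}$,
\[\Lambda(E)\le\sum_{a_{1}+a_{2}=a_{3}+a_{4},\ a_{i}\in A}g(a_{1})g(a_{2})g(a_{3})g(a_{4}),\]
a weighted additive energy of $g$ on $A\subseteq\mathcal{S}$ with $\sum_{a}g(a)^{4/3}=N$, $\sum_{a\in A\cap j}g(a)^{4/3}\le N^{\alpha}$, and $g\le N^{3\alpha/4}$.

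Finally, decompose $g$ into $O_{d}(\log|\mathbb{F}|)$ dyadic level sets $A_{k}$ on which $|Y_{a}|\sim Y_{k}\le N^{\alpha}$; then $|A_{k}|\lesssim N/Y_{k}$ and $|A_{k}\cap j|\lesssim N^{\alpha}/Y_{k}$ for every $j$. Bounding each resulting block by $\prod_{i}\bigl(Y_{k_{i}}^{3}\Lambda(A_{k_{i}})\bigr)^{1/4}\le\max_{i}Y_{k_{i}}^{3}\Lambda(A_{k_{i}})$, the whole estimate reduces to showing $Y^{3}\Lambda(B)\lesssim N^{\theta(\alpha)}$ for each level set $B$ (on which $|Y_{a}|\sim Y\le N^{\alpha}$, $|B|\lesssim N/Y$, $|B\cap j|\lesssim N^{\alpha}/Y=:K$). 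If $|B|\lesssim K$, the trivial bound $\Lambda(B)\le|B|^{3}$ gives $Y^{3}\Lambda(B)\lesssim N^{3\alpha}\le N^{\theta(\alpha)}$; otherwise one takes $\alpha'=\log K/\log|B|\le1$, applies the energy exponent of $\mathcal{S}$ to get $\Lambda(B)\le|B|^{\Psi(\alpha')}=K^{\Psi(\alpha')/\alpha'}$, and optimises over $|B|\in(K,N/Y]$: since $Y^{3}K^{\Psi(\alpha')/\alpha'}$ is increasing in $\Psi(\alpha')/\alpha'$ and this ratio is unimodal in $\alpha'$, the worst case is an endpoint, namely $\alpha'=1$ (value $N^{3\alpha}$) or $\alpha'=\beta_{0}:=(\alpha-u)/(1-u)$ with $u=\log Y/\log N\in[0,\alpha]$ (value $N^{3u+(1-u)\Psi(\beta_{0})}$). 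In the latter case $\beta_{0}\le\alpha$, and the elementary inequality $\Psi(\beta_{0})\le3\beta_{0}+(1-\beta_{0})\Psi(\beta_{0})\le3\beta_{0}+(1-\beta_{0})\Psi(\alpha)$ (valid because $\Psi\le3$ and $\Psi$ is increasing), together with $u+(1-u)\beta_{0}=\alpha$ and $(1-u)(1-\beta_{0})=1-\alpha$, gives $3u+(1-u)\Psi(\beta_{0})\le3\alpha+(1-\alpha)\Psi(\alpha)=\theta(\alpha)$. Summing the $O_{d}(\log^{4}|\mathbb{F}|)$ blocks (the logarithmic factors being harmless in all our applications) completes the argument. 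I expect the main obstacle to be precisely this last optimisation: one must verify that the two-variable quantity $Y^{3}\Lambda(B)$ is genuinely dominated by $N^{\theta(\alpha)}$ uniformly in the fibre scale $Y$ and the cardinality $|B|$, the point being that the configurations which would otherwise be extremal — a single very heavy fibre, or many singleton fibres supported on a coset of a maximal totally isotropic subspace — are exactly those ruled out by the bounds $|Y_{a}|\le N^{\alpha}$ and $\sum_{a\in A\cap j}|Y_{a}|\le N^{\alpha}$.
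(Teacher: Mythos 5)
Your argument follows essentially the same strategy as the paper's: dyadically pigeonhole on the fibre sizes $|Y_a|$, observe that the energy of each level set is at most $(\text{fibre size})^3$ times the energy of its projection onto $\mathcal{S}$, apply the energy exponent of $\mathcal{S}$ to that projection, and recombine with the quasi-triangle inequality. Your route through the weights $g(a)=|Y_a|^{3/4}$ and the multilinear energy bound $\Lambda(Y_{a_1},\dots,Y_{a_4})\le\prod_i|Y_{a_i}|^{3/4}$ is simply an alternative way of extracting the factor $Y^3$ that the paper gets from the trivial count of the inner sum over fibres; once localised to a single dyadic block the two bounds coincide, so this is cosmetic rather than a genuinely different route.

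The place where you are actually \emph{more} explicit than the paper is in applying the energy exponent to the projection $B=A_k$: you correctly note that the available constraint is $|B\cap j|\lesssim K=N^\alpha/Y$, not $|B\cap j|\lesssim|B|^\alpha$, so the effective parameter $\alpha'=\log K/\log|B|$ can exceed $\alpha$. (The paper asserts $|\underline{A_0}^{(i)}\cap j|\lesssim|\underline{A_0}^{(i)}|^\alpha$ at the corresponding step; this only follows from the hypothesis on $A$ when $\alpha(\gamma-\gamma_i)\le(1-\alpha)\theta_i$, and the text does not discuss the remaining fringe levels.) However, your final optimisation is the weak link and, as written, is a genuine gap. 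The assertion that $\Psi(\alpha')/\alpha'$ is ``unimodal, so the worst case is an endpoint'' is neither established nor true for an arbitrary continuous increasing $\Psi:[0,1]\to[3-\delta,3]$; your endpoint computations at $\alpha'=1$ and $\alpha'=\beta_0=(\alpha-u)/(1-u)$ are correct, but to conclude one needs $\Psi(\alpha')/\alpha'$ to be non-increasing on $[\beta_0,1]$, equivalently that $\Psi$ lie below the chord from $(\alpha,\Psi(\alpha))$ to $(1,3)$. This holds when $\Psi$ is concave (and, since an energy exponent may always be replaced by its concave majorant, one can in principle reduce to that case — but this needs to be said and checked), and it does hold for the piecewise-smooth energy exponents built inductively in the paper, but it does not follow from the bare axioms on an energy exponent. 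To close the argument you should either impose this chordal/concavity condition as a standing hypothesis (verifying the inductive constructions preserve it), or prove the inequality $3u+v\Psi(\alpha')\le\theta(\alpha)$ directly on the whole range of $(u,v)$ rather than only at the endpoints.
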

\begin{proof}Restating the hypothesis, if $E \subseteq \mathcal{S}$ such that $|\underline{E} \cap j| \leq |E|^{\alpha}$ ($\alpha\leq 1$) for every affine totally isotropic subspace $j$, one has that $\Lambda(E) \lesssim |E|^{\beta}$ where $\beta = \Psi(\alpha)$. Recall that an energy exponent is assumed to be monotonically increasing.

Let $\gamma$ be defined by $|A| = |\mathbb{F}|^\gamma$. Express the points in $\underline{A}$ in the form $(w,a) \in \mathbb{F}^{r} \times \mathbb{F}^{s}$. Let $\underline{A}_{0} \subseteq \mathbb{F}^{r}$ be the set of $w \in \mathbb{F}^r$ such that $(w,a) \in \underline{A}$ for some $a \in \mathbb{F}^s$. For every $w \in \underline{A}_{0}$, let $B_{w}= \{ a \in \mathbb{F}^s : (w,a) \in \underline{A}\}$. Let us partition $\underline{A}$ into $\lesssim \log(|E|)$ sets $\underline{A}^{(i)}$ where
$$\underline{A}^{(i)} := \{(w,a) :|B_{w}| \sim 2^{i}\}. $$
Define $\underline{A}_{0}^{(i)} \subseteq \mathbb{F}^{r}$ to be the set of $w \in \mathbb{F}^r$ such that $(w,a) \in \underline{A}^{(i)}$ for some $a \in \mathbb{F}^s$.
Let $|\underline{A}^{(i)}|= |\mathbb{F}|^{\gamma_i}$, where $\gamma_i =\theta_{i}+\omega_{i}$ with $|\mathbb{F}|^{\theta_{i}} \sim 2^{i}$ and $|\underline{A}_{0}^{(i)}| \sim |\mathbb{F}|^{\omega_{i}}$.  It follows that for any affine totally isotropic subspace $j$ one has $|\underline{A_{0}}^{(i)} \cap j | \lesssim |\mathbb{F}|^{\alpha (\gamma_i -\theta_i)} \lesssim |\underline{A_{0}}^{(i)} |^{\alpha}$. By the hypothesis, we then have $\Lambda( A^{(i)}_{0}) \lesssim |A^{(i)}_{0}|^{\beta}$. Now, consider the additive energy of each of these sets:
$$\Lambda(A^{(i)}) = \sum_{\substack{ w+x=y+z \\Q(w)+Q(x)=Q(y)+Q(z)\\ a+b=c+d \\ (w,a),(x,b),(y,c),(z,d) \in \underline{A}^{(i)}}} 1.$$
Rearranging the sum, we have that
$$\Lambda(A^{(i)}) = \sum_{\substack{ w+x=y+z \\Q(w)+Q(x)=Q(y)+Q(z)\\  w,x,y,z \in \underline{A}^{(i)}}} \sum_{\substack{ a+b=c+d \\ a \in B_{w} \\ b\in B_{x} \\ c \in B_{y} \\ d \in B_{z} }} 1.$$
$$\lesssim  \Lambda( A^{(i)}_{0}) |\mathbb{F}|^{3 \theta_i} \lesssim |\mathbb{F}|^{ (3- \beta) \theta_i + \beta \gamma  }. $$
It follows from the hypothesis that $\theta_i \leq \alpha \gamma$, thus we have
$$\Lambda(A^{(i)})  \lesssim |\mathbb{F}|^{ \gamma( 3\alpha  + \beta(1-\alpha) )} = |A|^{3\alpha  + \beta(1-\alpha)}.$$
Using \eqref{eq:quasitriangleEnergy}, one has
$$\Lambda(A) \lesssim_{s} \log^{O(1)}(|A|)  |A|^{ 3\alpha  + \beta(1-\alpha) }.  $$
This completes the proof.
\end{proof}

Combining Lemma \ref{lem:3denergy}, \ref{lem:2dEnergy} and \ref{lem:degEnergy} (and using the fact that every quadratic form over a finite field may be diagonalized, Lemma \ref{lem:diag}) gives the following estimates.
\begin{lemma}\label{lem:1r2renergy}Let $\mathcal{S} \subset \mathbb{F}^d$ be a rank $1$ quadratic surface. Let $E \subseteq \mathcal{S}$ such that $|\underline{E}\cap j | \leq |E|^{\alpha}$ for every totally isotropic affine subspace. It follows that
$$\Lambda(E) \lesssim_{d} \log^{O(1)}(|E|) |E|^{2+\alpha}.$$
Let $\mathcal{S} \subset \mathbb{F}^d$ be a rank $2$ quadratic surface, and $E \subseteq \mathcal{S}$ such that $|\underline{E} \cap j | \leq |\underline{E}|^{\alpha}$ with $\alpha \geq 3/4$  for every totally isotropic affine subspace. One then has that
$$\Lambda(E) \lesssim_{d} \log^{O(1)}(|E|) |E|^{1+4\alpha - 2\alpha^2}.$$
\end{lemma}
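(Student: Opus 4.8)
The plan is to derive both inequalities from the degenerate–surface energy bound of Lemma~\ref{lem:degEnergy}, after normalizing the defining quadratic form. By Lemma~\ref{lem:diag} the form $Q$ on $\mathbb{F}^{d-1}$ cutting out $\mathcal{S}$ is equivalent to a diagonal form, and since it has rank $r$ (here $r=1$ or $r=2$) this diagonal form, after permuting coordinates, depends only on the first $r$ variables. By Lemma~\ref{lem:eqivEnergy} the additive energy is unchanged under such an equivalence, and since an invertible linear map carries (maximal) totally isotropic affine subspaces of one form bijectively onto those of the other, the hypothesis $|\underline{E}\cap j|\le |E|^{\alpha}$ is carried along as well. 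Hence it suffices to treat $E\subseteq\mathcal{U}$, where $\mathcal{U}=\{(\underline{x},\underline{y},Q_0(\underline{x})):\underline{x}\in\mathbb{F}^{r},\,\underline{y}\in\mathbb{F}^{d-1-r}\}$ and $Q_0$ is a non-degenerate $r$-dimensional form. This is exactly the setting of Lemma~\ref{lem:degEnergy} with $s=d-1-r$ and base surface $\mathcal{S}_0=\{(\underline{x},Q_0(\underline{x})):\underline{x}\in\mathbb{F}^{r}\}$: if $\mathcal{S}_0$ obeys $\Lambda\lesssim|E|^{\beta}$ whenever $|\underline{E}\cap j|\le|E|^{\alpha}$ (for every maximal totally isotropic affine $j$), then $\Lambda(E)\lesssim_{d}\log^{O(1)}(|E|)\,|E|^{3\alpha+\beta(1-\alpha)}$, the $\log$ loss coming from the dyadic fiber-size decomposition in the proof of Lemma~\ref{lem:degEnergy}.

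For the rank-$1$ statement, $\mathcal{S}_0=\{(x_1,mx_1^{2})\}$ ($m\neq0$) is a non-degenerate $2$-dimensional quadratic surface, and Lemma~\ref{lem:2dEnergy} gives the unconditional bound $\Lambda\lesssim|E|^{2}$; feeding $\beta=2$ into the estimate of the previous paragraph yields the exponent $3\alpha+2(1-\alpha)=2+\alpha$, which is the first assertion. (Here the role of the hypothesis is only to bound the fiber sizes $|B_{w}|$ by $|E|^{\alpha}$.)

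For the rank-$2$ statement, $\mathcal{S}_0$ is a non-degenerate $3$-dimensional quadratic surface, and we split on the Witt index of $Q_0$. If the Witt index is $1$, then $\mathcal{S}_0$ is equivalent to the $3$-dimensional hyperbolic paraboloid, so by Lemma~\ref{lem:strong3dw1} it has energy exponent $\Psi(\alpha)=1+2\alpha$ for $\alpha\ge 3/4$; Lemma~\ref{lem:degEnergy} then gives $3\alpha+(1+2\alpha)(1-\alpha)=1+4\alpha-2\alpha^{2}$. If the Witt index is $0$, there are no isotropic lines in $\mathbb{F}^{2}$ and Lemma~\ref{lem:3denergy} gives the unconditional bound $\Lambda\lesssim|E|^{5/2}$, hence (taking $\beta=5/2$) the exponent $3\alpha+\tfrac52(1-\alpha)=\tfrac52+\tfrac{\alpha}{2}$; one checks $\tfrac52+\tfrac{\alpha}{2}\le 1+4\alpha-2\alpha^{2}$ for $\tfrac34\le\alpha\le1$ (with equality at both endpoints), so in either case $\Lambda(E)\lesssim_{d}\log^{O(1)}(|E|)\,|E|^{1+4\alpha-2\alpha^{2}}$.

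The argument is thus essentially bookkeeping on top of Lemma~\ref{lem:degEnergy}; the only points requiring care are (i) verifying that the maximal totally isotropic affine subspaces of the block form $(\underline{x},\underline{y})\mapsto Q_0(\underline{x})$ on $\mathbb{F}^{d-1}$ are exactly the products $j_0\times\mathbb{F}^{d-1-r}$ with $j_0$ a maximal totally isotropic affine subspace for $Q_0$ — this holds because the isotropic vectors of the block form are precisely those with isotropic $\underline{x}$-component, so a totally isotropic subspace projects to one for $Q_0$ and the Witt index of the block form equals that of $Q_0$ plus $d-1-r$ — so that the hypothesis of Lemma~\ref{lem:1r2renergy} is precisely what Lemma~\ref{lem:degEnergy} requires; (ii) selecting in each case the right base-surface bound (the unconditional $\Lambda\lesssim|E|^{2}$ or $\Lambda\lesssim|E|^{5/2}$, versus the genuine energy exponent $1+2\alpha$); and (iii) noting in the rank-$2$ Witt-index-$0$ subcase that the resulting exponent is dominated by the claimed one so that a single clean bound covers both subcases. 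I expect (i) to be the only mildly subtle step.
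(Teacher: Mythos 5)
Your proof is correct and follows the route the paper's brief remark indicates: diagonalize the form (Lemma~\ref{lem:diag}), transport additive energy via Lemma~\ref{lem:eqivEnergy}, observe that maximal totally isotropic affine subspaces of the block form are products $j_0\times\mathbb{F}^{d-1-r}$ so the hypothesis carries over, and feed base-surface energy exponents into Lemma~\ref{lem:degEnergy}. You have correctly supplied Lemma~\ref{lem:strong3dw1} in the rank-$2$, Witt-index-$1$ subcase, which the paper's one-line reference (``Combining Lemma~\ref{lem:3denergy}, \ref{lem:2dEnergy} and \ref{lem:degEnergy}\ldots'') omits: feeding $\Psi(\alpha)=2+\alpha$ from Lemma~\ref{lem:3denergy} alone into Lemma~\ref{lem:degEnergy} would give only $2+2\alpha-\alpha^2$, which exceeds the claimed $1+4\alpha-2\alpha^2$ by $(\alpha-1)^2$, whereas the exponent $\Psi(\alpha)=1+2\alpha$ from Lemma~\ref{lem:strong3dw1} (also the source of the $\alpha\ge 3/4$ hypothesis) yields exactly the claim; your check that the Witt-index-$0$ subcase is dominated on $[3/4,1]$ is also correct.
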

We will also need the following results regarding the restriction of a quadratic surface to a subspace.

\begin{lemma}\label{lem:restSubSurface}Consider the surfaces $\mathcal{S}:=\{(x,x\circ x) : x \in \mathbb{F}^n \}$. Let $W=V+t$ denote a $m$ dimensional affine subspace of $\mathbb{F}^n$ (where $V$ is a proper subspace). Let $\mathcal{S}_{W}$ and $\mathcal{S}_{V}$ denote the restriction of $\mathcal{S}$ to $W$ and $V$, respectively. If $\Psi$ is an energy exponent for $\mathcal{S}_{V}$ then it is also an energy exponent for $\mathcal{S}_W$.
\end{lemma}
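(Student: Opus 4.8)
The plan is to reduce to the case where the affine subspace passes through the origin by means of a Galilean transformation, after which the statement is essentially tautological. Explicitly, write $W = V + t$ and let $\tau$ be the Galilean transformation of $\mathcal{S}$ that translates the parameter variable by $-t$, i.e.\ $\tau : (\underline{x}, \underline{x}\circ\underline{x}) \mapsto (\underline{x}-t,\, (\underline{x}-t)\circ(\underline{x}-t))$. Writing each $w \in W$ as $w = v + t$ with $v \in V$, one checks at once that $\tau$ restricts to a bijection $\mathcal{S}_W \to \mathcal{S}_V$. By Lemma \ref{lem:Gallinv} the transformation $\tau$ preserves additive energy, so $\Lambda(\tau(E)) = \Lambda(E)$ for every $E \subseteq \mathcal{S}_W$, and trivially $|\tau(E)| = |E|$.

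Next I would check that $\tau$ sets up a dictionary between the ``incidence hypotheses'' that appear in the definition of an energy exponent. On the level of parameters $\tau$ acts by the translation $\underline{y} \mapsto \underline{y} - t$, which carries affine subspaces of $W$ to affine subspaces of $V$ without changing their direction spaces. By Lemma \ref{lem:AffineChar} (applied to the ambient surface $\mathcal{S}$), an affine subspace contained in $W$ (respectively, in $V$) is a totally isotropic affine subspace of $\mathcal{S}_W$ (respectively, of $\mathcal{S}_V$) precisely when its direction space is totally isotropic with respect to $\circ$. Hence $j \mapsto j - t$ is a bijection between the maximal totally isotropic affine subspaces of $\mathcal{S}_W$ and those of $\mathcal{S}_V$, and $|\underline{E} \cap j| = |\underline{\tau(E)} \cap (j-t)|$ for each such $j$.

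Combining these two observations finishes the argument. Given $E \subseteq \mathcal{S}_W$ with $|\underline{E} \cap j| \leq |E|^{\alpha}$ for every maximal totally isotropic affine subspace $j$, the set $E' := \tau(E) \subseteq \mathcal{S}_V$ satisfies $|\underline{E'} \cap j'| \leq |E'|^{\alpha}$ for every maximal totally isotropic affine subspace $j'$ of $\mathcal{S}_V$; since $\Psi$ is an energy exponent for $\mathcal{S}_V$, this yields $\Lambda(E') \leq |E'|^{\Psi(\alpha)}$, and therefore
\[ \Lambda(E) = \Lambda(E') \leq |E'|^{\Psi(\alpha)} = |E|^{\Psi(\alpha)}. \]
As $E$ and $\alpha$ were arbitrary, $\Psi$ is an energy exponent for $\mathcal{S}_W$.

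There is no genuine analytic difficulty here; the only point requiring care is ensuring that the notion of ``maximal totally isotropic affine subspace'' is transported correctly by $\tau$ — in particular that maximality is measured against the bilinear form $\circ$ restricted to the common direction space $V$ of $W$ and $V$, rather than against $\circ$ on all of $\mathbb{F}^n$ (which could have larger isotropic subspaces). This is precisely what Lemma \ref{lem:AffineChar} guarantees, once one observes that $\mathcal{S}_W$ and $\mathcal{S}_V$ share the same direction space and the same restricted bilinear form, so it is the step I would be most careful to state correctly.
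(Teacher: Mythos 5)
Your argument is correct and is essentially identical to the paper's: the paper's proof of Lemma~\ref{lem:restSubSurface} is the one-line remark that the claim "follows immediately from Lemma \ref{lem:Gallinv} once one observes that a Galilean transformation maps totally isotropic affine subspaces to (and from) totally isotropic affine subspaces," and your write-up just spells out that observation (translations on the parameter side preserve direction spaces, and total isotropy of an affine subspace depends only on its direction space and the bilinear form, so maximal totally isotropic affine subspaces of $\mathcal{S}_W$ and $\mathcal{S}_V$ are in bijection).
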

\begin{proof}This follows immediately from  Lemma \ref{lem:Gallinv} once one observes that a Galilean transformation maps totally isotropic affine subspaces to (and from)
 totally isotropic affine subspaces.
\end{proof}

\begin{lemma}\label{lem:classifysubsurface}Let $\mathcal{S}$ be a non-degenerate $d$ dimensional quadratic surface with quadratic form $P$. Let $V$ denote a $d-3$ dimensional subspace of $\mathbb{F}^{d-1}$, and consider $\mathcal{S}_{V}$ to be the restriction of $\mathcal{S}$ to $V$. Furthermore, assume that $\mathcal{S}_{V}$ is not fully degenerate. Then the quadratic form $Q$ associated to $\mathcal{S}_{V}$ will satisfy
$$\max\{1,d-7\} \leq \text{\textnormal{Rank}}(Q) \leq d-3.$$
More specifically, the following tables give the possible values of the rank and Witt index of the associated form. Here $s=d-3-r$ is the dimension of the fully degenerate component of $\mathcal{S}_{V}$. A strikethrough indicates the possibility is eliminated by dimensional considerations.
\end{lemma}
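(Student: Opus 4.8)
The quadratic form $Q$ attached to $\mathcal{S}_{V}$ is nothing but the restriction $Q=P|_{V}$, regarded as a form on $V\cong\mathbb{F}^{d-3}$; hence $\mathrm{Rank}(Q)\le\dim V=d-3$ is immediate, and all the work lies in the lower bound for the rank and in reading off the Witt index. I would do both by pure quadratic-form linear algebra over $\mathbb{F}$, the one structural input being that $V$ has codimension $2$ in $\mathbb{F}^{d-1}$. First, for the rank: let $\circ$ be the (non-degenerate) bilinear form of $P$ on $\mathbb{F}^{d-1}$ and $V^{\perp}_{\circ}$ its orthogonal complement; since $P$ is non-degenerate the dimension formula from Section~\ref{sec:HighBack} gives $\dim V^{\perp}_{\circ}=(d-1)-(d-3)=2$. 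The radical of $Q$ is exactly $\mathrm{rad}(Q)=V\cap V^{\perp}_{\circ}$, so $s:=\dim\mathrm{rad}(Q)\in\{0,1,2\}$ and
$$ r:=\mathrm{Rank}(Q)=(d-3)-s\in\{d-5,\ d-4,\ d-3\}. $$
Together with the hypothesis that $\mathcal{S}_{V}$ is not fully degenerate, i.e.\ $r\ge 1$, this gives $\max\{1,d-7\}\le\max\{1,d-5\}\le\mathrm{Rank}(Q)\le d-3$, which is the displayed inequality.

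\textbf{Witt index.} Here I would first record two facts about $\mathrm{rad}(Q)$: it is totally isotropic for $Q$ (for $v\in\mathrm{rad}(Q)$ one has $Q(v)=v\circ v=0$), and adjoining it to any totally isotropic subspace keeps that subspace totally isotropic, so it is contained in every maximal one. Thus, writing $\bar Q$ for the induced non-degenerate form on $V/\mathrm{rad}(Q)$ (of dimension $r$), pushing totally isotropic subspaces to and pulling them back from the quotient yields
$$ \mathrm{Witt}(Q)=\mathrm{Witt}(\bar Q)+s. $$
Next I would pinch $\mathrm{Witt}(Q)$ between $w_{P}$ and $w_{P}-2$, where $w_{P}$ is the Witt index of $P$ (known explicitly from Propositions~\ref{lem:WittComputeOdd} and \ref{lem:WittCompute}): every totally isotropic subspace for $Q=P|_{V}$ is totally isotropic for $P$, so $\mathrm{Witt}(Q)\le w_{P}$, while a maximal totally isotropic subspace $U$ of $P$ satisfies $\dim(U\cap V)\ge\dim U-2=w_{P}-2$ because $V$ has codimension $2$, and $U\cap V$ is totally isotropic for $Q$, so $\mathrm{Witt}(Q)\ge w_{P}-2$. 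Hence $w_{P}-2-s\le\mathrm{Witt}(\bar Q)\le w_{P}-s$, on top of the trivial $0\le\mathrm{Witt}(\bar Q)\le\lfloor r/2\rfloor$. I would then split on $s\in\{0,1,2\}$ and on the parity of $d$: when $r$ is odd the classification forces $\mathrm{Witt}(\bar Q)=(r-1)/2$, while when $r$ is even it allows only $r/2$ or $r/2-1$, each then kept or discarded by the pinching inequalities; carrying this out in each case produces the tables, the struck-through entries being exactly those forbidden by $r\ge 1$, $\mathrm{Witt}(\bar Q)\ge 0$, or $\mathrm{Witt}(\bar Q)\le\lfloor r/2\rfloor$ in low dimensions (e.g.\ the $s=2$ row for $d=5$, where it would force $r=0$). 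If one also wants the tables to list precisely the achievable pairs, the converse direction is obtained by diagonalizing $P$ and taking $V$ to be a coordinate subspace with a prescribed number of hyperbolic planes removed.

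\textbf{Main difficulty.} Nothing conceptual is hard; the hard part will be the bookkeeping. For a fixed rank $r=d-3-s$, Proposition~\ref{lem:WittCompute} determines $\mathrm{Witt}(\bar Q)$ only in terms of $r$, the parity of $\tfrac{n(|\mathbb{F}|-1)}{2}$, and whether $\disc(\bar Q)$ is a square, so one must follow $\disc$ through the restriction $P\mapsto Q$ and through the passage to the non-degenerate part (which depends on whether $\mathrm{rad}(Q)=V\cap V^{\perp}_{\circ}$ is isotropic or anisotropic), organizing the computation by the cases $s=0,1,2$ and the parity of $d$ and matching the output against the tables. The only place where one genuinely uses the present hypothesis, as opposed to generalities about forms over $\mathbb{F}$, is the codimension-$2$ estimate $\dim(U\cap V)\ge\dim U-2$, which is what simultaneously bounds $\dim\mathrm{rad}(Q)$ and the Witt defect $w_{P}-\mathrm{Witt}(Q)$.
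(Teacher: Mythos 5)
Your proof is correct, and it takes a genuinely different and in fact \emph{sharper} route than the paper's.  The paper bounds the rank of $Q$ indirectly: it observes that any totally isotropic affine subspace of $\mathcal{S}_V$ is also one of $\mathcal{S}$, so its dimension is at most the Witt index $w_P$ of $P$, and then it compares this with the dimension $w+s$ of a maximal totally isotropic subspace of $\mathcal{S}_V$ (where $w$ is the Witt index of the non-degenerate part); taking the smallest possible $w$ gives $r/2 - 1 + s \le w_P \le (d-1)/2$, which unwinds to $r\ge d-7$.  You instead compute the radical of $Q=P|_V$ directly: $\mathrm{rad}(Q)=V\cap V^{\perp}_{\circ}$, and since $P$ is non-degenerate and $V$ has codimension $2$ the dimension formula gives $\dim V^{\perp}_{\circ}=2$, hence $s=\dim\mathrm{rad}(Q)\le 2$ and $r\ge d-5$.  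This is stronger than $\max\{1,d-7\}$ and is obtained with less machinery (no Witt-index comparison is needed for the rank bound).  A consequence of your argument is that the rows with $s=3$ and $s=4$ in the paper's tables cannot actually occur — so, for instance, the unstruck entries $(r,s)=(d-6,3)$ in Tables~1 and~3 and $(r,s)=(d-7,4)$ in Table~1 are vacuous.  This does not affect the lemma's downstream use (Lemma~\ref{lem:dimInduct} only needs the list to be finite and every actually-occurring case to appear), but it is worth noting that your approach reveals the table to be larger than necessary.  Your Witt-index bookkeeping plan — sandwiching $\mathrm{Witt}(Q)$ between $w_P-2$ and $w_P$ via the codimension-$2$ estimate, then reducing modulo the radical and invoking Propositions~\ref{lem:WittCompute}--\ref{lem:WittComputeOdd} — is also sound and, as you note, the only remaining work is the case-by-case discriminant/parity computation.
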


\begin{table}[h!] \label{tab:possibleRanksOddPlus}
\centering
\begin{tabular}{|l|l|l|}  \hline
$\text{\textnormal{Rank}}(Q)$ & $s$ & Possible Witt
index of $Q$ \\
\hline
$d-3$ & $0$ & $\frac{d-3}{2}$ ($+$) or $\frac{d-5}{2}$ ($-$)\\
\hline
$d-4$ & $1$ & $\frac{d-5}{2}$ \\
\hline
$d-5$ & $2$ & $\frac{d-5}{2}$ ($+$) or $\frac{d-7}{2}$ ($-$) \\
\hline
$d-6$ & $3$ & $\frac{d-7}{2}$ \\
\hline
$d-7$ & $4$ & $\cancel{\frac{d-7}{2}}$ ($+$) or $\frac{d-9}{2}$ ($-$) \\
\hline
\end{tabular}
\caption{$d$ odd, $\mathcal{S}$ ($+$) type, (Witt index of $\frac{d-1}{2}$ of $P$)}
\end{table}

\begin{table}[h!] \label{tab:possibleRanksOddMinus}
\centering
\begin{tabular}{|l|l|l|} \hline
$\text{\textnormal{Rank}}(Q)$ & $s$ & Possible Witt
index of $Q$ \\
\hline
$d-3$ & $0$ & $\frac{d-3}{2}$ ($+$) or $\frac{d-5}{2}$ ($-$)\\
\hline
$d-4$ & $1$ & $\frac{d-5}{2}$ \\
\hline
$d-5$ & $2$ & $\cancel{\frac{d-5}{2}}$ ($+$) or $\frac{d-7}{2}$ ($-$) \\
\hline
$d-6$ & $3$ & $\cancel{\frac{d-7}{2}}$ \\
\hline
$d-7$ & $4$ & $\cancel{\frac{d-7}{2}}$ ($+$) or $\cancel{\frac{d-9}{2}}$ ($-$) \\
\hline
\end{tabular}
\caption{$d$ odd, $\mathcal{S}$ ($-$) type, (Witt index $\frac{d-2}{2}$  of $P$)}
\end{table}

\begin{table}[h!]\label{tab:possibleRanksEven}
\centering
\begin{tabular}{|l|l|l|}  \hline
$\text{\textnormal{Rank}}(Q)$ & $s$ & Possible Witt
index of $Q$\\
\hline
$d-3$ & $0$ & $\frac{d-4}{2}$\\
\hline
$d-4$ & $1$ & $\frac{d-4}{2}$ ($+$) or $\frac{d-6}{2}$ ($-$)\\
\hline
$d-5$ & $2$ & $\frac{d-6}{2}$ ($-$) \\
\hline
$d-6$ & $3$ & $\cancel{\frac{d-6}{2}}$ ($+$) or $\frac{d-8}{2}$ ($-$) \\
\hline
$d-7$ & $4$ & $\cancel{\frac{d-8}{2}}$ \\
\hline
\end{tabular}
\caption{$d$ even, (Witt index $\frac{d-2}{2}$  of $P$)}
\end{table}
\begin{proof}The fact that the rank is at most $d-3$ trivially follows from the fact that $\mathcal{S}_{V}$ is a $d-2$ dimensional surface. Similarly, the fact that the rank is at least $1$ follows from the hypothesis that $\mathcal{S}_{V}$ is not fully degenerate. We now consider the lower estimate. Since the largest totally isotropic affine subspace contained in the surface $\mathcal{S}$ can have dimension at most $\frac{d-1}{2}$ or $\frac{d-3}{2}$ by Lemma \ref{lem:WittCompute}, it follows that the dimension of the largest totally isotropic affine subspace contained in $\mathcal{S}_{V}$ is also bounded above by this quantity. On the other hand, if $Q$ has rank $r$ define $s$ by $r+s = d-3$. Then, by Lemma \ref{lem:WittCompute}, the dimension of the largest affine subspace contained in $\mathcal{S}_{V}$ will be either $r/2 - 1 +s = d - r/2-4$ or $r/2 +s = d-r/2-3$ (depending on the Witt index of the non-degenerate part of $\mathcal{S}_{V}$). From this it follows that $d-7 \leq r$. Indeed, considering each possible combination of rank and Witt index in this range gives the results summarized in the tables.
\end{proof}
We will repeatedly apply the two following elementary facts.
\begin{lemma}\label{lem:vectdecomp}Let $E \subseteq \mathbb{F}^{d-1}$. Let $0 < \rho < 1$ be a real number and $0< c < d-1$ a positive integer. We may decompose $E$ as the disjoint union of two sets $E= E_{c} \cup E_{u}$, with the following properties:
\begin{enumerate}
 \item The set $ E_{c} $ can be written as a disjoint union of $E_c =\bigcup_{i \in I} \Omega_i$ where $|\Omega_i| \geq |E|^{\rho}$ and each $\Omega_i$ is contained in an affine subspace of dimension $c$. Moreover, $|I| \leq |E|^{1-\rho}$.
 \item Given any $c$ dimensional affine subspace $j \subset \mathbb{F}^{d-1}$, we have $|E_{u} \cap j| \leq |E|^{\rho}$.
\end{enumerate}
\end{lemma}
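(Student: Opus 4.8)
The plan is to prove this decomposition by a routine greedy ``popping'' argument on $c$-dimensional affine subspaces. First I would set $E^{(0)} := E$ and build finite sequences $\Omega_1,\Omega_2,\dots$ and $E^{(1)},E^{(2)},\dots$ iteratively: given $E^{(k)}$, if there is a $c$-dimensional affine subspace $j \subseteq \mathbb{F}^{d-1}$ (a coset $t+V$ with $\dim V = c$, in the sense used throughout the paper) with $|E^{(k)} \cap j| \geq |E|^{\rho}$, then fix one such $j$, set $\Omega_{k+1} := E^{(k)} \cap j$ and $E^{(k+1)} := E^{(k)} \setminus \Omega_{k+1}$; otherwise stop. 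Let $I$ index the pieces produced, put $E_{c} := \bigcup_{i \in I}\Omega_i$ and let $E_{u}$ be the final remainder, so that $E = E_{c} \cup E_{u}$ is a disjoint decomposition.

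Next I would verify the three required properties. The $\Omega_i$ are pairwise disjoint subsets of $E$ with $|\Omega_i| \geq |E|^{\rho}$, so the process terminates (there is no room for infinitely many pieces inside the finite set $E$), and from $\sum_{i \in I}|\Omega_i| = |E_{c}| \leq |E|$ together with $|\Omega_i| \geq |E|^{\rho}$ we get $|I| \leq |E|^{1-\rho}$; moreover each $\Omega_i$ is by construction contained in the $c$-dimensional affine subspace $j$ that produced it. This gives property (1). Property (2) is precisely the stopping condition: when the process halts there is no $c$-dimensional affine subspace $j$ with $|E_{u} \cap j| \geq |E|^{\rho}$, hence $|E_{u} \cap j| < |E|^{\rho}$, in particular $|E_{u} \cap j| \leq |E|^{\rho}$, for every such $j$.

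There is essentially no obstacle here; the only points that require a moment of care are that the popping step and the stopping step must refer to the same family of ``affine subspaces of dimension $c$'' so that the dichotomy is genuine, and that the count $|I| \leq |E|^{1-\rho}$ is to be read as an inequality between the integer $|I|$ and the real number $|E|^{1-\rho}$ coming directly from the displayed size estimate, with no integrality of $|E|^{\rho}$ needed.
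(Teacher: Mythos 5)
Your proof is correct and is essentially the same greedy popping argument the paper uses; the only cosmetic difference is that the paper selects at each step a $c$-dimensional affine subspace maximizing $|j \cap E_u|$, whereas you pick an arbitrary one exceeding the threshold, and the maximality is not actually needed for the stated conclusion.
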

\begin{proof}We start by initializing $E_{u}=E$ and $E_{c} =\emptyset$. If for every $c$ dimensional affine subspace $j$, one has  $|j \cap E_{u}| \leq |E|^{\rho}$ the proof is complete. We assume this is not the case. We then select a $c$ dimensional affine subspace $j$ that maximizes $|j \cap E_{u}|$. We let $\Omega_1 = j \cap E_{u}$, and we add the elements of $\Omega_1$ to $E_{c}$ and remove them from $E_{u}$. We then repeat this process until $|j \cap U| \leq |E|^{\rho}$ holds for every $j$. It's clear that $E_{u}$ has the desired property. It is also clear that $E_{c}$ can be covered by the  sets $\Omega_i$. Moreover, since the sets $\Omega_i$ are disjoint and satisfy $|\Omega_i| \geq |E|^{\rho}$ it follows that there can be at most $|E|^{1-\rho}$ such sets. This completes the proof.
\end{proof}
\begin{lemma}\label{lem:energyIn}Let $E \subseteq \mathbb{F}^{d-1}$ has decomposition in terms of disjoint sets $E=\bigcup_{i\in I} \Omega_i$. Furthermore, assume that $\Lambda(\Omega_i) \leq C |\Omega_i|^{\beta}$ for each $\Omega_i$ and some $0 < \beta \leq 4$. Then
$$\Lambda(E) \leq C|I|^{4-\beta} |E|^{\beta}.$$
\end{lemma}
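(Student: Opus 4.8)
The plan is to use the subadditivity of $\Lambda^{1/4}$ under disjoint unions—recorded in the discussion preceding \eqref{eq:quasitriangleEnergy}, where $\Lambda$ is identified with the fourth power of the $L^4$ norm of an exponential sum—together with H\"older's inequality. First I would write, for the disjoint decomposition $E=\bigcup_{i\in I}\Omega_i$,
$$\left(\Lambda(E)\right)^{1/4} \leq \sum_{i\in I}\left(\Lambda(\Omega_i)\right)^{1/4},$$
and then substitute the hypothesis $\Lambda(\Omega_i)\leq C|\Omega_i|^{\beta}$ to obtain $\left(\Lambda(E)\right)^{1/4}\leq C^{1/4}\sum_{i\in I}|\Omega_i|^{\beta/4}$.

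Next, since $0<\beta\leq 4$ the exponent $\beta/4$ lies in $(0,1]$, so H\"older's inequality (equivalently, concavity of $t\mapsto t^{\beta/4}$ applied with the $|I|$ terms $|\Omega_i|$) gives
$$\sum_{i\in I}|\Omega_i|^{\beta/4} \leq |I|^{1-\beta/4}\left(\sum_{i\in I}|\Omega_i|\right)^{\beta/4} = |I|^{1-\beta/4}|E|^{\beta/4},$$
where the last equality uses that the $\Omega_i$ are disjoint and cover $E$. Combining the two displays yields $\left(\Lambda(E)\right)^{1/4}\leq C^{1/4}|I|^{1-\beta/4}|E|^{\beta/4}$, and raising to the fourth power gives the asserted bound $\Lambda(E)\leq C|I|^{4-\beta}|E|^{\beta}$.

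There is no substantial obstacle; the lemma is a routine combination of two standard inequalities. The only points requiring care are that the relation for $\Lambda^{1/4}$ on disjoint unions is genuinely subadditive (the ``$=$'' in the displayed line earlier in the text should be read as ``$\leq$''), and that the constraint $\beta\leq 4$ is exactly what guarantees $\beta/4\leq 1$ so that the H\"older step points in the direction we want.
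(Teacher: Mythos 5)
Your proof is correct and follows essentially the same route as the paper's: subadditivity of $\Lambda^{1/4}$ (triangle inequality in $L^4$ for the associated exponential sum), the hypothesis on each $\Omega_i$, and a H\"older/concavity step in that order. You are also right that the ``$=$'' in the displayed relation preceding \eqref{eq:quasitriangleEnergy} is a typo for ``$\leq$'', which the paper's own proof of this lemma implicitly corrects; your write-up simply makes the H\"older step explicit where the paper leaves it tacit in its final ``$\lesssim$''.
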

\begin{proof}We estimate
$$\Lambda(E) = \left(\sum_{i \in I} \left(\Lambda(\Omega_i)\right)^{1/4} \right)^{4} \leq \left( \sum_{i \in I} |\Omega_i|^{\beta/4}  \right)^{4} \leq \left( \sum_{i\in I} C^{1/4} |\Omega_i|^{\beta/4}  \right)^{4} \lesssim  C |I|^{4-\beta} |E|^{\beta}.$$
\end{proof}
The above lemmas will be used in the following way. We will be given a non-degenerate $d$ dimensional quadratic surface $\mathcal{S}$ and a subset $E \subseteq \mathcal{S}$. We will then apply Lemma \ref{lem:vectdecomp} to decompose $\underline{E} \subseteq \mathbb{F}^{d-1}$ with respect to $c=d-3$ dimensional affine subspaces. Given $\Omega_i$ in the decomposition given by Lemma \ref{lem:vectdecomp} we consider the surface $\mathcal{S}_{\Omega_i}$. We will write $\Omega^{[r,s,w]}$ to be the union of $\Omega_i$ such that the rank of $\mathcal{S}_{\Omega_i}$ is $r$ and the Witt index of the associated quadratic form is $w$. We included the variable $s=d-3-r$ in the notation for convenience, although it is fully determined by $r$ and $d$. We note that Lemma \ref{lem:classifysubsurface} specifies the finite possible combination of tuples $[r,s,w]$ for a given quadratic surface. We will use the notation $\Omega^{[r,s,w]}_i$ to denote the elements of $\Omega^{[r,s,w]}$.

The results discussed thus far give a complete understanding of surfaces of dimension $3$ and lower. We now consider higher dimensional surfaces.
\begin{lemma}Let $E \subseteq \mathcal{S}$ be a subset of a $4$ dimensional non-degenerate quadratic surface. Assume that $3/5\leq \alpha < 1$ and that $|\underline{E} \cap j| \leq |E|^{\alpha}$ for every affine totally isotropic subspace $j$ of dimension $1$.  Then,
$$\Lambda(E) \lesssim |E|^{5/2+\alpha /2}.$$
In other words, $\Psi(\alpha)= 5/2 + \alpha/2$ is an energy exponent for $\mathcal{S}$.
\end{lemma}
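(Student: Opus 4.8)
The plan is to run the decomposition-and-incidence scheme set up with Lemmas \ref{lem:vectdecomp} and \ref{lem:energyIn}, with $c = d-3 = 1$ and threshold $\rho = \alpha$, feeding the spread-out part into Lemma \ref{lem:energyInc1} and the line-concentrated part into the two-dimensional energy bound Lemma \ref{lem:2dEnergy}. The single point of care is that the hypothesis controls $|\underline{E}\cap j|$ only for \emph{isotropic} affine lines $j$, so in the decomposition step one must extract only \emph{non-isotropic} affine lines.

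First I would apply the evident variant of Lemma \ref{lem:vectdecomp} to $\underline{E} \subseteq \mathbb{F}^{3}$ in which the greedy procedure extracts, at each step, the \emph{non-isotropic} affine line meeting the remaining set in the most points (the proof is verbatim). This produces a splitting $\underline{E} = \underline{E}_{c} \cup \underline{E}_{u}$ with $\underline{E}_{c} = \bigcup_{i\in I}\Omega_{i}$, each $\Omega_{i}$ contained in a non-isotropic affine line $\ell_{i}$, with $|\Omega_{i}| \geq |E|^{\alpha}$, $|I| \leq |E|^{1-\alpha}$, and $|\underline{E}_{u}\cap \ell| \leq |E|^{\alpha}$ for every non-isotropic affine line $\ell$. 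Since $\underline{E}_{u}\subseteq\underline{E}$, the hypothesis also gives $|\underline{E}_{u}\cap j| \leq |E|^{\alpha}$ for every isotropic affine line $j$, so altogether $|\underline{E}_{u}\cap W| \leq |E|^{\alpha}$ for \emph{every} $1$-dimensional affine subspace $W \subseteq \mathbb{F}^{3}$.

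Next I would bound the two pieces. For $E_{u}$ I would apply Lemma \ref{lem:energyInc1} (with $d=4$, hence $d-3=1$) to $A=B=E_{u}$ with $C_{1}=C_{2}=|E|^{\alpha}$, which the previous paragraph licenses; this yields $\Lambda(E_{u}) \lesssim |E|^{\alpha/2}|E_{u}|^{5/2} + |E|^{\alpha}|E_{u}|^{2} \lesssim |E|^{5/2+\alpha/2}$, the second term being absorbed into the first since $\alpha \leq 1$. For $E_{c}$ the key observation is that the restriction of $\mathcal{S}$ to a non-isotropic affine line behaves, as far as additive energy is concerned, like a non-degenerate $2$-dimensional quadratic surface: writing $\ell_{i} = p_{0}+tv$ with $Q(v)\neq 0$ and matching coordinates, the relation $a+b=c+d$ on $\Omega_{i}$ forces $t_{a}+t_{b}=t_{c}+t_{d}$, whereupon the top coordinate reduces (the terms linear in $t$ cancelling) to $t_{a}^{2}+t_{b}^{2}=t_{c}^{2}+t_{d}^{2}$; hence $\Lambda(\Omega_{i}) \lesssim |\Omega_{i}|^{2}$ by Lemma \ref{lem:2dEnergy} (one may also invoke Lemma \ref{lem:restSubSurface} applied to the rank-one form $t\mapsto Q(tv)$). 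Plugging this into Lemma \ref{lem:energyIn} with $\beta=2$ gives $\Lambda(E_{c}) \lesssim |I|^{2}|E|^{2} \leq |E|^{4-2\alpha}$, and $4-2\alpha \leq 5/2+\alpha/2$ exactly when $\alpha \geq 3/5$.

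Finally, combining the two estimates via the quasi-triangle inequality \eqref{eq:quasitriangleEnergy} gives $\Lambda(E) \lesssim \Lambda(E_{u})+\Lambda(E_{c}) \lesssim |E|^{5/2+\alpha/2}$, as claimed. The only step that is not bookkeeping is the identification of $\mathcal{S}$ restricted to a non-isotropic affine line with a $2$-dimensional quadratic surface; and it is precisely the demand that the window $[\,3/4-\alpha/4,\ \alpha\,]$ of admissible decomposition thresholds $\rho$ be non-empty that is responsible for the hypothesis $\alpha\geq 3/5$.
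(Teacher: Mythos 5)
Your proposal is correct and follows essentially the same route as the paper: decompose $\underline{E}$ via the greedy line-extraction of Lemma \ref{lem:vectdecomp} at threshold $\rho=\alpha$, bound the spread-out part $E_u$ by Lemma \ref{lem:energyInc1}, bound the concentrated part $E_c$ by feeding the two-dimensional energy estimate Lemma \ref{lem:2dEnergy} into Lemma \ref{lem:energyIn}, and combine. The only cosmetic difference is that you modify the greedy procedure to extract only non-isotropic lines, whereas the paper applies Lemma \ref{lem:vectdecomp} verbatim and then observes (as you also note) that the hypothesis $|\underline{E}\cap j|\leq|E|^{\alpha}$ on isotropic $j$ already prevents isotropic lines from being extracted at threshold $|E|^{\alpha}$; the two formulations are equivalent, and your explicit verification that the restriction of $\mathcal{S}$ to a non-isotropic affine line reduces to the $2$-dimensional parabola energy is a correct fleshing out of the step the paper leaves implicit.
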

\begin{proof}By Proposition \ref{lem:WittComputeOdd}, the Witt index of $\mathcal{S}$ is $1$. We start by decomposing $\underline{E}=\underline{E_c} \cup \underline{E_u}$ using Lemma \ref{lem:vectdecomp}, with $c=1$ and $\rho=\alpha$. By the hypothesis we see that $|E\cap j| \leq |E|^{\alpha}$ must hold for every isotropic line $\ell$. By Lemma \ref{lem:energyInc1} we have that
$$\Lambda(E_{u}) \lesssim |E|^{5/2+\alpha/2} + |E|^{2+\alpha} \lesssim |E|^{5/2+\alpha/2}.$$
Let $\Omega_i$ be the decomposition of $\underline{E_c}$ from Lemma \ref{lem:vectdecomp}. By the hypothesis, we may assume that none of the $1$ dimensional affine spaces associated to $\Omega_i$ are isotropic. By Lemma \ref{lem:2dEnergy} we then have $\Lambda(\Omega_i) \lesssim |\Omega_i|^{2}$, which by Lemma \ref{lem:energyIn} implies
$$\Lambda(E_c) \lesssim |E|^{4-2\alpha}.$$
Combining the two estimates with (\ref{eq:quasitriangleEnergy}), and using that $4-2\alpha \leq 5/2 + \alpha/2$ for $3/5\leq \alpha <1$ completes the proof.
\end{proof}

We now consider the case of $5$ dimensional quadratic surfaces.
\begin{lemma}\label{lem:5dEnergy}Let $9/16\leq \alpha \leq 1$ and $E \subseteq \mathcal{S}$ be a subset of a $5$ dimensional non-degenerate quadratic surface, with Witt index $2$. Assume that $|\underline{E} \cap j| \leq |E|^{\alpha}$ for every affine totally isotropic subspace of dimension $2$.  Then,
$$\Lambda(E) \lesssim_{\alpha}  |E|^{19/7 +2\alpha/7 }.$$
In other words, $\Psi(\alpha)= 19/7 +2\alpha/7 $ is an energy exponent for $\mathcal{S}$.
\end{lemma}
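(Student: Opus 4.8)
The plan is to run the four-dimensional argument preceding this lemma one dimension higher, using the decomposition of Lemma \ref{lem:vectdecomp} with $c = d-3 = 2$ and feeding the resulting pieces into the subsurface structure theorem (Lemma \ref{lem:classifysubsurface}, Table \ref{tab:possibleRanksOddPlus}). Write $|E| = |\mathbb{F}|^\gamma$ and fix a threshold $\rho$, which after balancing I expect to take to be $\rho = (3+4\alpha)/7$; for $9/16 \le \alpha < 1$ one checks $\alpha < \rho < 1$ (the endpoint $\alpha = 1$ is the trivial bound $\Lambda(E) \le |E|^3 = |E|^{19/7+2/7}$). Lemma \ref{lem:vectdecomp} splits $\underline E = \underline{E_u} \cup \underline{E_c}$ where $|\underline{E_u} \cap W| \le |E|^\rho$ for every $2$-dimensional affine subspace $W$ (hence also for every affine line), and $\underline{E_c} = \bigcup_i \Omega_i$ with each $\Omega_i$ inside a $2$-dimensional affine subspace $W_i$, $|\Omega_i| \ge |E|^\rho$, and $|I| \le |E|^{1-\rho}$. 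The four contributions (uniform part, and concentrated pieces of each subsurface type) will be combined at the end via the quasi-triangle inequality \eqref{eq:quasitriangleEnergy}.

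For the uniform part I would apply Lemma \ref{lem:energyInc1} with $A = B = E_u$. The point to observe is that by Proposition \ref{lem:WittCompute} the maximal totally isotropic subspaces have dimension equal to the Witt index $2$, so by Lemma \ref{lem:iostropicContain} every affine isotropic line is contained in a $2$-dimensional totally isotropic affine subspace; the hypothesis then gives $|\underline{E_u} \cap j| \le |E|^\alpha$ on every isotropic line $j$. Taking $C_1 = |E|^{\rho}$ (since $d-3 = 2$) and $C_2 = |E|^\alpha$ yields $\Lambda(E_u) \lesssim |E|^{5/2 + \rho/2} + |E|^{2+\alpha}$, and with $\rho = (3+4\alpha)/7$ both exponents are $\le \frac{19}{7} + \frac{2\alpha}{7}$ (the first is an equality, the second holds for $\alpha \le 1$).

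For the concentrated pieces I classify $\mathcal{S}_{W_i}$ by Lemma \ref{lem:classifysubsurface}. Since $\mathcal{S}$ has Witt index $\frac{d-1}{2} = 2$, Table \ref{tab:possibleRanksOddPlus} with $d = 5$ leaves only: $\mathcal{S}_{W_i}$ fully degenerate, or rank $2$ with Witt index $1$ or $0$, or rank $1$. If $\mathcal{S}_{W_i}$ is fully degenerate then $W_i$ is a $2$-dimensional (hence maximal) totally isotropic affine subspace, so $|\Omega_i| \le |E|^\alpha < |E|^\rho$, contradicting $|\Omega_i| \ge |E|^\rho$; so no such piece arises. If $\mathcal{S}_{W_i}$ has rank $2$ and Witt index $0$, Lemma \ref{lem:restSubSurface} and Lemma \ref{lem:3denergy} give $\Lambda(\Omega_i) \lesssim |\Omega_i|^{5/2}$, and Lemma \ref{lem:energyIn} over the $\le |E|^{1-\rho}$ such pieces gives $\Lambda \lesssim |E|^{(3/2)(1-\rho) + 5/2}$, which is $\le |E|^{19/7 + 2\alpha/7}$ exactly when $\alpha \ge 9/16$ — this is where the lower restriction on $\alpha$ originates. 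If $\mathcal{S}_{W_i}$ has rank $2$ and Witt index $1$, an isotropic line of $\mathcal{S}_{W_i}$ is isotropic for $\mathcal{S}$, hence again lies in a $2$-dimensional totally isotropic affine subspace, so $|\Omega_i \cap j| \le |E|^\alpha$; applying Lemma \ref{lem:strong3dw1} with exponent $\max\{3/4, \alpha\gamma/\gamma_i\}$ (where $|\Omega_i| = |\mathbb{F}|^{\gamma_i}$) and summing, again with $\rho = (3+4\alpha)/7$, produces a bound $\le |E|^{19/7 + 2\alpha/7}$ on the whole range.

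The main obstacle is the rank-$1$ case. Here $\mathcal{S}_{W_i}$ is a cylinder over a $1$-dimensional parabola, so by Lemma \ref{lem:1r2renergy} (equivalently Lemma \ref{lem:degEnergy} applied in $r=1$, $s=1$) one has $\Lambda(\Omega_i) \lesssim \log^{O(1)}|\Omega_i|\cdot|\Omega_i|^{2}|E|^{\alpha}$, using that the cylinder's fibers are parallel totally isotropic lines each carrying $\le |E|^\alpha$ points of $E$; but summing this over $|E|^{1-\rho}$ pieces is too lossy against the target. My plan to close this case is to decompose each rank-$1$ piece a second time, along lines inside $W_i$: the part uniform with respect to lines is handled by the cylinder energy identity $\{a_1,b_1\} = \{c_1,d_1\}$ (as in the proof of Lemma \ref{lem:2dEnergy}) together with the $|E|^\alpha$ cap on totally isotropic fibers; the parts concentrated on non-isotropic lines are subsets of non-degenerate $2$-dimensional quadratic surfaces, so $\Lambda \lesssim |\cdot|^2$ by Lemma \ref{lem:2dEnergy}; and the parts concentrated on totally isotropic lines are controlled directly by the hypothesis. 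The delicate residual case — and the true crux — is the "brick" configuration, where a rank-$1$ piece consists of many totally isotropic lines filled nearly to capacity $|E|^\alpha$: here one must exploit that distinct bricks use essentially disjoint totally isotropic lines, so the number of $\sim|E|^\alpha$-heavy totally isotropic lines is $\le |E|^{1-\alpha}$, and play this off against $|I| \le |E|^{1-\rho}$, possibly re-optimizing $\rho$ within the window permitted by the uniform and rank-$2$ estimates. This brick bookkeeping, rather than any single clean inequality, is where I expect the real work to be.
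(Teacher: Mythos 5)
Your strategy coincides with the paper's almost verbatim through the first three cases: the decomposition of $\underline E$ via Lemma \ref{lem:vectdecomp} with $c=2$ and $\rho=(3+4\alpha)/7$, the incidence bound (Lemma \ref{lem:energyInc1}) for $E_u$, the exclusion of fully degenerate pieces because $\alpha<\rho$, the treatment of $\Omega^{[2,0,0]}$ giving $(47-12\alpha)/14$ and hence the constraint $\alpha\geq 9/16$, and the treatment of $\Omega^{[2,0,1]}$ via Lemma \ref{lem:strong3dw1} with relative exponent $\alpha\gamma/\gamma_i=\alpha/\rho\geq 3/4$, which gives the target exactly and thereby pins $\rho$ to $(3+4\alpha)/7$. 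All of that is correct and matches the paper.

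The divergence is the rank-$1$ piece $\Omega^{[1,1,0]}$, and here your bookkeeping is actually more careful than the paper's. You record the honest bound $\Lambda(\Omega_i)\lesssim|\Omega_i|^2|E|^\alpha$, equivalently $|\Omega_i|^{2+\alpha\gamma/\gamma_i}$ where $|\Omega_i|=|\mathbb F|^{\gamma_i}$; summing this over $\lesssim|E|^{1-\rho}$ pieces gives $|E|^{4+\alpha-2\rho}=|E|^{(22-\alpha)/7}$, which exceeds $|E|^{(19+2\alpha)/7}$ whenever $\alpha<1$. The paper instead cites Lemma \ref{lem:1r2renergy} to assert $\Lambda(\Omega_i)\lesssim|\Omega_i|^{2+\alpha}$, i.e.\ with exponent $\alpha$ rather than $\alpha\gamma/\gamma_i\geq\alpha$; since the hypothesis caps $|\Omega_i\cap j|$ at $|E|^\alpha$ rather than $|\Omega_i|^\alpha$, that substitution is not justified. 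Even taking the paper's optimistic bound at face value, the resulting inequality $4\alpha^2-5\alpha+22\leq 19+2\alpha$ factors as $(4\alpha-3)(\alpha-1)\leq 0$ and holds only for $3/4\leq\alpha\leq 1$, not for the stated $\alpha\geq 9/16$. So the obstruction you flagged is genuine, and it is present in the published argument as well.

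What you propose to close it, however, is not a proof. The secondary decomposition of each rank-$1$ plane by lines, plus the observation that $\sim|E|^\alpha$-heavy totally isotropic lines number at most $|E|^{1-\alpha}$, is reasonable in spirit, but two load-bearing claims are left unsupported: (i) that the heavy isotropic lines coming from distinct planes $W_i$ are ``essentially disjoint'' — distinct affine $2$-planes can intersect in a line, and nothing prevents that line from being totally isotropic and shared; (ii) that there is room to re-optimize $\rho$ — there isn't, since the $E_u$ and $\Omega^{[2,0,1]}$ terms force $\rho=(3+4\alpha)/7$ exactly. As written the rank-$1$ case is a gap. One honest way out is to weaken the claimed exponent for small $\alpha$ (take $\Psi(\alpha)=\max\bigl((19+2\alpha)/7,(22-\alpha)/7\bigr)$, say), which still satisfies $\Psi(\alpha)<3$ for $\alpha<1$ and $\Psi(1)=3$ and is therefore adequate for the downstream use in Proposition \ref{prop:EnergyGen} and Theorem \ref{thm:Main}; but the exponent $19/7+2\alpha/7$ itself, in the full range $\alpha\geq 9/16$, does not follow from the argument you (or the paper) give.
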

\begin{proof}We start by decomposing $\underline{E}=\underline{E_c} \cup \underline{E_u}$ using Lemma \ref{lem:vectdecomp}, with $c=2$ and $\rho = (4\alpha+3)/7$. By Lemma \ref{lem:energyInc1} we have that
$$\Lambda(E_{u}) \lesssim |E|^{19/7 + 2\alpha/7} + |E|^{2+\alpha} \lesssim |E|^{19/7 + 2\alpha/7}.$$
By Lemma \ref{lem:classifysubsurface} we may decompose $E_c = \Omega^{[2,0,1]} \cup \Omega^{[2,0,0]} \cup \Omega^{[1,1,0]}$. Since $\Lambda(\Omega^{[2,0,0]}_i) \lesssim |\Omega^{[2,0,0]}_i|^{5/2}$ by Lemma \ref{lem:3denergy}, Lemma \ref{lem:energyIn} implies that
$$\Lambda(\Omega^{[2,0,0]}) \lesssim |I|^{3/2}|E|^{5/2}\lesssim |E|^{\frac{47}{14}-\frac{12\alpha }{14}}.$$
Next, we have that $|\Omega^{[2,0,1]}_i| \geq |E|^\rho$ and $|\Omega^{[2,0,1]}_i \cap j| \leq |E|^{\alpha}$. This implies that $|\Omega^{[2,0,1]}_i \cap j| \leq |\Omega^{[2,0,1]}_i|^{\frac{\alpha}{\rho}} =|\Omega^{[2,0,1]}_i|^{\frac{7\alpha}{4\alpha+3} } $ for every totally isotropic affine ($1$ dimensional) subspace $j$. Lemma \ref{lem:strong3dw1} then gives us (note that $3/4 \leq \frac{7\alpha}{4\alpha+3}  \leq 1$, for $\alpha\geq 9/16$) that
$$ \Lambda(\Omega^{[2,0,1]}_i) \leq |\Omega^{[2,0,1]}_i|^{1+2\frac{7\alpha}{4\alpha+3}}.$$
By Lemma \ref{lem:energyIn}, this implies
$$\Lambda(\Omega^{[2,0,1]}) \lesssim |I|^{(1-\rho)(3-2\alpha/\rho)}|E|^{1+2\alpha/\rho}= |E|^{2\alpha/7 +19/7}.$$
Finally, by Lemma \ref{lem:1r2renergy} we have that $\Lambda(\Omega^{[1,1,0]}_i) \lesssim \log^{O(1)}(|E|) |\Omega^{[1,1,0]}_i|^{2+\alpha} $.  Applying Lemma \ref{lem:energyIn} again, this implies
$$ \Lambda(\Omega^{[1,1,0]}) \lesssim \log^{O(1)}(|E|) |E|^{(1-\rho)(2-\alpha)} |E|^{2+\alpha} = \log^{O(1)}(|E|)  |E|^{\frac{4\alpha^2-5\alpha+22}{7}}.$$
Combining these estimates (using (\ref{eq:quasitriangleEnergy})) we have that
$$\Lambda(E) \lesssim |E|^{47/14-12\alpha/14 } + |E|^{19/7 +2\alpha/7 }+  \log^{O(1)}(|E|)  |E|^{\frac{4\alpha^2-5\alpha+22}{7}} \lesssim  |E|^{19/7 +2\alpha/7 }. $$
This completes the proof.
\end{proof}
We now give an abstract version of the argument employed above.

\begin{lemma}\label{lem:dimInduct}Let $\Psi$ be a continuous, increasing function $\Psi:[0,1]\rightarrow \mathbb{R}$ such that $\Psi(0)=3-\delta$ and $\Psi(1)=3$ with $\Psi(\alpha)<3$ for $\alpha<1$. Furthermore, let $d>4$ and consider a non-degenerate $d$ dimensional quadratic surface $\mathcal{S}$. Assume that for every $b \leq d-3$ one has that  every surface of rank $b$ has energy exponent $\Psi$. It follows that $\mathcal{S}$ has energy exponent $\Psi'(\alpha)$ where $\Psi'(0)=3-\delta'$ and $\Psi'(1)=3$ with $\Psi'(\alpha)<3$ for $\alpha<1$.
\end{lemma}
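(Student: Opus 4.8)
The plan is to run the induction-on-rank argument exactly as in the proof of Lemma~\ref{lem:5dEnergy}, keeping the splitting exponent as a free function $\rho=\rho(\alpha)\in(\alpha,1)$ and replacing the explicit exponents there by the abstract $\Psi$. Fix $E\subseteq\mathcal{S}$ with $|\underline{E}\cap j|\le |E|^{\alpha}$ for every maximal totally isotropic affine subspace $j$; the goal is $\Lambda(E)\lesssim |E|^{\Psi'(\alpha)}$. First I would apply Lemma~\ref{lem:vectdecomp} with $c=d-3$ and exponent $\rho$ to write $\underline{E}=\underline{E_u}\cup\underline{E_c}$, where $\underline{E_c}=\bigcup_{i\in I}\Omega_i$ with each $\Omega_i$ contained in a $(d-3)$-dimensional affine subspace $W_i$, $|\Omega_i|\ge |E|^{\rho}$, $|I|\le |E|^{1-\rho}$, and $|\underline{E_u}\cap W|\le |E|^{\rho}$ for every $(d-3)$-dimensional affine $W$.

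For the spread-out piece $E_u$: every affine isotropic line is a one-dimensional totally isotropic subspace, hence lies in some maximal totally isotropic affine subspace (Lemma~\ref{lem:iostropicContain}), so $|\underline{E_u}\cap \ell|\le |E|^{\alpha}$ for every such line $\ell$. Feeding $C_1=|E|^{\rho}$ and $C_2=|E|^{\alpha}$ into Lemma~\ref{lem:energyInc1} yields $\Lambda(E_u)\lesssim |E|^{5/2+\rho/2}+|E|^{2+\alpha}$. For the concentrated piece $E_c$ I would treat each $\Omega_i$ via the surface $\mathcal{S}_{W_i}$. If $\mathcal{S}_{W_i}$ is fully degenerate it is an affine subspace lying in $\mathcal{S}$, i.e.\ a totally isotropic affine subspace, hence contained in a maximal one $j$; then $|E|^{\rho}\le|\Omega_i|\le|\underline{E}\cap j|\le|E|^{\alpha}$, contradicting $\rho>\alpha$, so this case does not occur. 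Otherwise Lemma~\ref{lem:classifysubsurface} gives that the form of $\mathcal{S}_{W_i}$ has rank $r_i$ with $1\le r_i\le d-3$; combining Lemma~\ref{lem:restSubSurface} (pass to the linear part of $W_i$), Lemma~\ref{lem:eqivEnergy} and Lemma~\ref{lem:diag} (diagonalize), the hypothesis applied to the rank-$r_i$ non-degenerate core, and Lemma~\ref{lem:degEnergy} (restore the $d-3-r_i$ flat directions), $\mathcal{S}_{W_i}$ has energy exponent $\theta(\beta):=3\beta+\Psi(\beta)(1-\beta)$, which still satisfies $\theta(0)=3-\delta$, $\theta(1)=3$, and $\theta(\beta)<3$ for $\beta<1$. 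Since total isotropy does not depend on the ambient space, every maximal totally isotropic affine subspace of $\mathcal{S}_{W_i}$ lies inside one of $\mathcal{S}$, so $|\underline{\Omega_i}\cap j'|\le |E|^{\alpha}\le |\Omega_i|^{\alpha/\rho}$ with $\alpha/\rho<1$, hence $\Lambda(\Omega_i)\lesssim|\Omega_i|^{\theta(\alpha/\rho)}$; Lemma~\ref{lem:energyIn} with $|I|\le|E|^{1-\rho}$ then gives $\Lambda(E_c)\lesssim |E|^{4-4\rho+\rho\,\theta(\alpha/\rho)}$ (the $\log^{O(1)}|E|$ losses are harmless).

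Combining the two bounds with the quasi-triangle inequality \eqref{eq:quasitriangleEnergy} gives $\Lambda(E)\lesssim |E|^{\Psi'(\alpha)}$ with $\Psi'(\alpha):=\max\big(\tfrac52+\tfrac{\rho}{2},\ 2+\alpha,\ 4-4\rho+\rho\,\theta(\alpha/\rho)\big)$. It then remains to choose $\rho=\rho(\alpha)$. As $\rho\uparrow 1$ (with $\alpha<1$ fixed) we have $\theta(\alpha/\rho)\to\theta(\alpha)<3$ and $\tfrac52+\tfrac{\rho}{2}\uparrow 3$, so there is a nonempty interval of admissible $\rho\in(\alpha,1)$ for which all three terms are strictly below $3$; picking $\rho(\alpha)$ in this interval, continuously in $\alpha$ and with $\rho(\alpha)\to1$ as $\alpha\to1$ (and, if necessary, replacing $\Psi'$ by $\alpha\mapsto\sup_{\beta\le\alpha}\Psi'(\beta)$ to force monotonicity), gives $\Psi'(1)=3$ and $\Psi'(\alpha)<3$ for $\alpha<1$. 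At $\alpha=0$ one may balance the first and third terms by taking $\rho(0)=3/(3+2\delta)$, which lies in $(1/(1+\delta),1)$ and yields $\Psi'(0)=\tfrac52+\tfrac{\rho(0)}{2}=4-\rho(0)(1+\delta)=\tfrac{9+5\delta}{3+2\delta}=3-\tfrac{\delta}{3+2\delta}$, so one may take $\delta'=\delta/(3+2\delta)>0$.

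The main obstacle is this last bookkeeping step: ensuring that a single continuous choice $\rho(\alpha)$ simultaneously satisfies $\rho(\alpha)>\alpha$, keeps $\tfrac52+\tfrac{\rho}{2}<3$ (forcing $\rho<1$), and keeps $4-4\rho+\rho\,\theta(\alpha/\rho)<3$ (forcing $\rho$ close enough to $1$) across all of $[0,1)$. The three requirements are mutually consistent precisely because $\theta(\alpha)<3$ strictly on $[0,1)$ leaves room near $\rho=1$, but the argument produces $\Psi'$ only implicitly through the composition $\theta(\alpha/\rho(\alpha))$, which (as the authors note) is why the resulting exponents are not readily analyzed in closed form and must be iterated numerically in low dimensions.
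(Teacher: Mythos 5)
Your proof follows essentially the same route as the paper's: decompose via Lemma~\ref{lem:vectdecomp} with $c=d-3$, use Lemma~\ref{lem:energyInc1} for the spread piece $E_u$ and Lemma~\ref{lem:energyIn} together with the inductive energy-exponent hypothesis for the concentrated piece $E_c$, then choose $\rho=\rho(\alpha)$ so that the resulting maximum of exponents stays strictly below $3$ on $[0,1)$. Your choice $c=d-3$ is the one that makes both Lemma~\ref{lem:energyInc1} and Lemma~\ref{lem:classifysubsurface} applicable; the paper writes ``$c$ equal to the Witt index,'' which agrees for $d=5$ but appears to be a slip for general $d$, and your version is the one the proof actually needs. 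You are also more careful than the paper on one point: you explicitly rule out the fully degenerate case of $\mathcal{S}_{W_i}$ (it would force a totally isotropic $\Omega_i$ of size $\geq |E|^\rho$, contradicting $\rho>\alpha$ and the hypothesis on $E$), a case that Lemma~\ref{lem:classifysubsurface} formally excludes and the paper silently skips.

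Two small inefficiencies and bookkeeping differences are worth noting. First, the hypothesis of the lemma is that \emph{every} surface of rank $b\leq d-3$, degenerate or not, has energy exponent $\Psi$; so you could have applied it directly to each $\mathcal{S}_{W_i}$ rather than re-deriving the degenerate case by composing the hypothesis on the non-degenerate core with Lemma~\ref{lem:degEnergy}. Your $\theta(\beta)=3\beta+\Psi(\beta)(1-\beta)$ still has $\theta(\beta)<3$ for $\beta<1$ so the conclusion is the same, but this is a superfluous loss (the paper simply uses $\Psi(\alpha/\rho)$). Second, the paper sidesteps your ``simultaneous continuous choice'' discussion by a cleaner device: after the crude upper bound $(4-\Psi)(1-\rho)+\Psi \leq 4(1-\rho)+\Psi(\alpha/\rho)$, it notes $2+\alpha\leq 5/2+\rho/2$ holds automatically for $\rho\geq\alpha$, and then \emph{defines} $\rho(\alpha)$ to be the unique value equalizing $5/2+\rho/2$ and $4(1-\rho)+\Psi(\alpha/\rho)$ (one is strictly increasing and the other strictly decreasing in $\rho$), taking $\Psi'(\alpha)=(5+\rho(\alpha))/2$. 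This gives continuity of $\rho$ and $\Psi'$ for free and avoids the running-supremum patch. Your explicit balancing at $\alpha=0$ giving $\delta'=\delta/(3+2\delta)$ is a nice addition and is computed correctly.
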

\begin{proof}Let $E \subset \mathcal{S}$. We start by decomposing $\underline{E}=\underline{E_c} \cup \underline{E_u}$ using Lemma \ref{lem:vectdecomp}, with $c$ equal to the Witt index of $\mathcal{S}$ and $\alpha  \leq \rho < 1$ to be specified later.  By Lemma \ref{lem:energyInc1}
$$\Lambda(E_u) \lesssim |E|^{5/2 + \rho/2} + |E|^{2+\alpha}  \lesssim |E|^{5/2 + \rho/2}. $$
By Lemma \ref{lem:classifysubsurface} we may decompose $E_c = \bigcup_{t \in T} \Omega^{[t]}$ where $t \in T$ ranges over a finite number of surface types. The components $\Omega_i^{[t]}$ satisfy $|\Omega_i^{[t]}| \geq |E|^{\rho}$ and, by the hypothesis, $|\Omega_i^{[t]} \cap j| \leq |E|^{\alpha}$ for every totally isotropic subspace. This implies that $$|\Omega_i^{[t]} \cap j| \leq |\Omega_i^{[t]} |^{\frac{\alpha}{\rho}}.$$
Thus
$$\Lambda(\Omega_i^{[t]}) \lesssim |\Omega_i^{[t]}|^{\Psi(\frac{\alpha}{\rho})}.$$
Moreover, by Lemma \ref{lem:energyIn} we have that
$$\Lambda(\Omega^{[t]}) \lesssim |I|^{4-\Psi(\frac{\alpha}{\rho})} \times |E|^{\Psi(\frac{\alpha}{\rho})}.$$
Thus by (\ref{eq:quasitriangleEnergy}), we have
$$\Lambda(E) \lesssim |E|^{5/2 + \rho/2}+ |E|^{ (4-\Psi(\frac{\alpha}{\rho}))\times (1-\rho) + \Psi(\frac{\alpha}{\rho})} \leq |E|^{5/2 + \rho/2}+ |E|^{ 4\times (1-\rho) + \Psi(\frac{\alpha}{\rho})}.$$
Defining $\theta(\alpha)$ to be the value of $\rho$ that equalizes the two terms on the right and taking $\Psi'(\alpha) = \frac{5+\theta(\alpha)}{2}$ completes the proof.
\end{proof}
Applying this inductively we may prove the following proposition for arbitrary non-degenerate quadratic surfaces.
\begin{proposition}\label{prop:EnergyGen}Let $\mathcal{S}$ be a non-degenerate quadratic surface of dimension $d$. There exists a continuous increasing function $\Psi:[0,1]\rightarrow \mathbb{R}$ such that $\Psi(0)=3-\delta$,$\Psi(1)=3$ and $\Psi(\alpha)<3$ for $\alpha<1$ such that the following holds. If $E \subseteq \mathcal{S}$ such that $|E \cap j|\leq |E|^{\alpha}$ for every maximal totally isotropic affine subspace $j$, then
$$\Lambda(E) \leq |E|^{\Psi(\alpha)}.$$
Moreover, $\Psi$ and $\delta$ depend only on $d$ and the Witt index of $\mathcal{S}$.
\end{proposition}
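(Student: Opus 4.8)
The plan is to argue by strong induction on the dimension $d$ of the surface $\mathcal{S}$, using the base cases and the inductive step that have already been assembled in the preceding lemmas. The base cases $d \le 3$ are handled directly: for $d \le 2$ by Lemma \ref{lem:2dEnergy} (which gives $\Lambda(E) \lesssim |E|^2$, so one may take $\Psi(\alpha) = 2 + \alpha$ or even the constant $2$, adjusted to meet the normalization $\Psi(1) = 3$ by using the trivial bound $\Lambda(E) \le |E|^3$ and interpolating), and for $d = 3$ by Lemma \ref{lem:3denergy} together with Lemma \ref{lem:strong3dw1}, which give $\Psi(\alpha) = 1 + 2\alpha$ when the Witt index is $1$ and an even stronger statement when the Witt index is $0$. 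The case $d = 4$ is the lemma proved just above (with $\Psi(\alpha) = 5/2 + \alpha/2$), and $d=5$ with Witt index $2$ is Lemma \ref{lem:5dEnergy}; these are recorded as additional base cases so that the induction has enough room to start.

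For the inductive step, fix $d > 4$ and a non-degenerate $d$-dimensional quadratic surface $\mathcal{S}$, and assume inductively that the proposition holds for all non-degenerate quadratic surfaces of dimension at most $d-1$, hence (via Lemma \ref{lem:degEnergy} and Lemma \ref{lem:diag}, which reduce partially degenerate surfaces to lower-dimensional non-degenerate ones, as packaged in Lemma \ref{lem:1r2renergy} and the surrounding discussion) for \emph{every} quadratic surface of rank $b \le d-3$. The key point is that this is exactly the hypothesis required to invoke Lemma \ref{lem:dimInduct}: one decomposes $\underline{E} \subseteq \mathbb{F}^{d-1}$ via Lemma \ref{lem:vectdecomp} with $c$ equal to the Witt index of $\mathcal{S}$ and a parameter $\alpha \le \rho < 1$; the ``unconcentrated'' part $E_u$ is controlled by the incidence estimate of Lemma \ref{lem:energyInc1}, while each concentrated piece $\Omega_i^{[t]}$ lives on a sub-surface of rank $\le d-3$ whose possible types are enumerated by Lemma \ref{lem:classifysubsurface}, so the inductive energy exponent applies to it with the improved local exponent $\alpha/\rho$; Lemma \ref{lem:energyIn} then assembles these into a bound on $\Lambda(E_c)$, and the quasi-triangle inequality \eqref{eq:quasitriangleEnergy} combines the two contributions. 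Optimizing $\rho$ to balance the two resulting exponents (as in the proof of Lemma \ref{lem:dimInduct}) produces the new function $\Psi'$, and one checks that $\Psi'$ inherits continuity, monotonicity, the strict inequality $\Psi'(\alpha) < 3$ for $\alpha < 1$, and the endpoint values $\Psi'(0) = 3 - \delta'$, $\Psi'(1) = 3$. Since each application of Lemma \ref{lem:dimInduct} starts from a $\Psi$ with $\Psi(0) < 3$ and outputs a $\Psi'$ with $\Psi'(0) < 3$, the strict-inequality property is preserved through finitely many steps of the induction, which is all that is needed.

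Finally, since the tables in Lemma \ref{lem:classifysubsurface} show that the combinatorial type data $[r,s,w]$ appearing in the decomposition depends only on $d$ and on whether $\mathcal{S}$ is of $(+)$ or $(-)$ type — equivalently, only on $d$ and the Witt index of $\mathcal{S}$ — the resulting $\Psi$ and $\delta$ depend only on these data, giving the last sentence of the proposition. The main obstacle I expect is purely bookkeeping rather than conceptual: one must verify that at every stage of the induction the ratio $\alpha/\rho$ stays inside the range of validity of the relevant lower-dimensional energy exponent (for instance the constraint $3/4 \le \frac{7\alpha}{4\alpha+3} \le 1$ that appears in Lemma \ref{lem:5dEnergy}), and that the optimized $\rho = \theta(\alpha)$ from Lemma \ref{lem:dimInduct} indeed satisfies $\alpha \le \theta(\alpha) < 1$; these are continuity/monotonicity checks on the explicitly defined functions, and the delicate point is simply ensuring they can be met \emph{simultaneously} across all the finitely many surface types listed in Lemma \ref{lem:classifysubsurface}. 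No single step is hard, but the induction only closes because each of the at most $O(d)$ lemmas invoked was formulated with exactly the right hypotheses; the proof is therefore a short formal deduction: \textit{apply Lemma \ref{lem:dimInduct} repeatedly, starting from the base cases $d \le 5$, and observe that the class ``rank $\le d-3$'' of surfaces needed at stage $d$ is covered by the inductive hypothesis together with Lemma \ref{lem:degEnergy}.}
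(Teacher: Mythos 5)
Your proposal follows essentially the same route as the paper's own (very terse) proof: induction on dimension, with Lemma \ref{lem:dimInduct} providing the inductive step, Lemma \ref{lem:degEnergy} handling the reduction from partially degenerate to non-degenerate lower-dimensional surfaces, and the base cases $d\le 5$ supplied by the earlier lemmas. The additional caveats you raise (verifying $\alpha \le \theta(\alpha) < 1$ and the ranges of validity of the low-dimensional exponents) are the correct bookkeeping points; they are handled in the paper by the monotonicity of energy exponents and the blanket hypotheses built into Lemma \ref{lem:dimInduct}, so there is no gap.
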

\begin{proof}To prove the result for a non-degenerate $d$ dimensional surface, by Lemma \ref{lem:dimInduct}, it suffices to prove the result for all surfaces, including degenerate surfaces (but not fully degenerate), of lower dimension. Note that if $\Psi(\alpha)$ is as given in the hypothesis, then $\theta(\alpha):= 3 \alpha + \Psi(\alpha)(1-\alpha)$ also satisfies the same conditions. Thus, by Lemma \ref{lem:degEnergy} it suffices to prove the result for all non-degenerate surfaces of lower dimension. Since we have already proven the result in $5$ and lower dimensions, the full result follows by induction.
\end{proof}
\section{The Mockenhaupt-Tao machine in higher dimensions}\label{sec:HighMT}
We now adapt the Mockenhaupt-Tao method \cite{MT} to higher dimensions.

We start with some notation. We will write $x=(\underline{x},t) \in \mathbb{F}^d$ where $\underline{x} \in \mathbb{F}^{d-1}$ and $t \in \mathbb{F}$. For $h:\mathbb{F}^d \rightarrow \mathbb{C}$ with support $E \subseteq \mathbb{F}^d$ we define (for $z \in \mathbb{F}$) $h_{z}(\underline{x},t) = h(\underline{x},t)\delta(t-z)$, and $E_z \subseteq \mathbb{F}^{d-1}$ to be the support of $h_z(\underline{x},z)$ (for $z$ fixed). Thus $h(\underline{x},t) = \sum_{z \in \mathbb{F}} h_{z}(\underline{x},t)$.

\begin{lemma}\label{lem:mtHD}Let $\mathcal{P}$ denote the $d=2n+1$ dimensional paraboloid with surface measure $d\sigma$. Let $h$ be a function such that $|h| \sim 1$ on its support $E\subseteq \mathbb{F}^d$. Furthermore, assume that $|E| \lesssim |\mathbb{F}|^{\gamma}$ (with $\gamma \geq 1$) and let $Z \subseteq \mathbb{F}$ be the set of $z$ such that $|E_z| \neq 0$. Define $0 \leq s \leq 1$ by $|\mathbb{F}|^s = |Z|$. In addition, for $z \in Z$, assume that
$$||(h_zd\sigma)^{\vee}||_{L^p(\mathbb{F}^d)} \lesssim  |\mathbb{F}|^{\alpha}.$$
Then
$$||\hat{h}||_{L^{2}(\mathcal{P},d\sigma)} \lesssim |\mathbb{F}|^{\gamma/2p'+n/2  +\alpha/2 + s/2} + |\mathbb{F}|^{\gamma/2}.$$
\end{lemma}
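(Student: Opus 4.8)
The plan is to run the Mockenhaupt--Tao $TT^{*}$ scheme one horizontal slice at a time, exactly as in the proof of Lemma~\ref{lem:mt1}, but feeding in the hypothesized per-slice bound directly rather than re-deriving it. As in the proof of Lemma~\ref{Lem:STsupp}, Plancherel gives $\|\widehat h\|_{L^2(\mathcal{P},d\sigma)}^2 = \langle h, h * (d\sigma)^\vee\rangle_{L^2(\mathbb{F}^d,dx)}$, and writing $(d\sigma)^\vee = \delta_0 + K$ with $K$ the Bochner--Riesz kernel of $\mathcal{P}$ yields $\langle h, h*(d\sigma)^\vee\rangle = \|h\|_{L^2(\mathbb{F}^d,dx)}^2 + \langle h, h*K\rangle$. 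Since $|h|\sim 1$ on $E$, the diagonal term is $|E|\le|\mathbb{F}|^\gamma$, which will produce the summand $|\mathbb{F}|^{\gamma/2}$. It is important to peel off this diagonal term \emph{before} the slice decomposition below: folding it in would cost an extra factor $|\mathbb{F}|^{s/p'}$ and spoil the conclusion.

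For the off-diagonal term, decompose $h = \sum_{z\in Z} h_z$, so $h*K = \sum_{z\in Z} h_z * K$, and Hölder's inequality gives
\[|\langle h, h*K\rangle| \le \|h\|_{L^{p'}(\mathbb{F}^d,dx)} \sum_{z\in Z}\|h_z * K\|_{L^p(\mathbb{F}^d,dx)} \le |E|^{1/p'}\sum_{z\in Z}\|h_z*K\|_{L^p(\mathbb{F}^d,dx)}.\]
The crux is to show that $\|h_z * K\|_{L^p(\mathbb{F}^d,dx)} \le |\mathbb{F}|^{n}\,\|(h_z d\sigma)^\vee\|_{L^p(\mathbb{F}^d,dx)}\lesssim |\mathbb{F}|^{n+\alpha}$ for each $z\in Z$. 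By translation invariance we may take $z=0$. By Lemma~\ref{lem:FTcompP}, $K(\underline x, t)=0$ for $t=0$ while $K(\underline x,t)=|\mathbb{F}|^{-n}S(t)e(\underline x\cdot\underline x/4t)$ with $|S(t)|=1$ for $t\ne 0$; so $h_0 * K$ lives in $\{t\ne 0\}$, and there, expanding $(\underline x-\underline y)\cdot(\underline x-\underline y)$ (using $\text{char}(\mathbb{F})>2$), the substitution $\underline w = -\underline x/2t$, $\tau = 1/4t$ — which is a bijection of $\mathbb{F}^{d-1}\times(\mathbb{F}\setminus\{0\})$ onto itself — turns the inner sum into $|\mathcal{P}|\,(\widetilde{h_0}d\sigma)^\vee(\underline w,\tau)$, where $\widetilde{h_0}$ is $h_0$ viewed on $\mathcal{P}$. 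Since $|S(t)|=1$ and $|\mathcal{P}|=|\mathbb{F}|^{2n}$, this gives $|(h_0*K)(\underline x,t)| = |\mathbb{F}|^{n}\,|(\widetilde{h_0}d\sigma)^\vee(\underline w,\tau)|$, and summing $p$-th powers over the (bijectively reparametrized) variables yields the claimed inequality. This is the verbatim higher-dimensional analogue of the pseudo-conformal computation in the proof of Lemma~\ref{lem:mt1}.

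Assembling: $\sum_{z\in Z}\|h_z*K\|_{L^p(\mathbb{F}^d,dx)}\lesssim |Z|\,|\mathbb{F}|^{n+\alpha}=|\mathbb{F}|^{s+n+\alpha}$, so $|\langle h,h*K\rangle|\lesssim |\mathbb{F}|^{\gamma/p'+s+n+\alpha}$, and hence
\[\|\widehat h\|_{L^2(\mathcal{P},d\sigma)}^2 \lesssim |\mathbb{F}|^{\gamma} + |\mathbb{F}|^{\gamma/p' + s + n + \alpha}.\]
Taking square roots (via $\sqrt{A+B}\le\sqrt A+\sqrt B$) gives the stated bound. The only genuinely computational step is the pseudo-conformal change of variables converting convolution with $K$ into the extension operator; everything else is Hölder and careful bookkeeping of the normalizing constants ($|\mathbb{F}|^{-n}$ from $K$, $|\mathcal{P}|=|\mathbb{F}|^{2n}$, $|S(t)|=1$, $|Z|=|\mathbb{F}|^s$), together with the structural point of extracting the $\delta_0$ diagonal globally.
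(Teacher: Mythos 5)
Your proof is correct and follows essentially the same route as the paper: Plancherel to reduce to $\langle h, h*(d\sigma)^\vee\rangle$, split off the $\delta_0$ term to produce $|\mathbb{F}|^{\gamma/2}$, apply H\"older plus the triangle inequality over slices to $\langle h, h*K\rangle$, and convert $\|h_z*K\|_{L^p}$ to $|\mathbb{F}|^n\|(h_z d\sigma)^\vee\|_{L^p}$ via the pseudo-conformal change of variables, exactly as Lemma~\ref{lem:mt1} does in three dimensions. The remark about why the diagonal term must be extracted globally is a correct observation that the paper makes implicitly; otherwise the argument matches the published proof step for step.
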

\begin{proof}As in $3$ dimensions, the proof will use the relation
$$||h||_{L^{2}(\mathcal{P},d\sigma)} = |\left<h, h * (d \sigma)^{\vee} \right>|^{1/2}.$$
Recall that $(d\sigma)^{\vee} = \delta_{0} + K$ where $K$ is the Bochner-Riesz kernel associated to $\mathcal{P}$, and $K$ is explicitly given in Lemma \ref{lem:FTcompP}. We start by collecting a few related estimates.

By translation symmetry, we may assume that $z=0$. We use the formula $K(\underline{x},t) =  |\mathbb{F}|^{-n} S(t) e(\underline{x}\cdot  \underline{x}/ 4t)$ (for $t \neq 0$ and $0$ otherwise). Thus
\[||h_{z} *K ||_{L^{p}(\mathbb{F}^d,dx)} =\left( \sum_{t \in \mathbb{F}, t \neq 0}  \sum_{\underline{x} \in \mathbb{F}^{2n}} \left| |\mathbb{F}|^{-n} \sum_{\underline{y} \in \mathbb{F}^{2n}} h_{0}(\underline{y},0) e( (\underline{x}- \underline{y}) \cdot  (\underline{x}- \underline{y}) /t)    \right|^p  \right)^{1/p}, \]
where we have discarded the term $S(t)$ since it has modulus $1$.
Using the pseudo-conformal transformation (for $t \neq 0$) $t':=1/4t$ and $\underline{w}:= -\underline{x}/2t$  we have that
\[  (\underline{x}- \underline{y}) \cdot  (\underline{x}- \underline{y}) /4t = \underline{w} \cdot \underline{w}   + \underline{w} \cdot \underline{y} + t' \underline{y} \cdot \underline{y}. \]
This allows us to bound
\[|| h_{z} * K||_{L^{p}(\mathbb{F}^d,dx)} \lesssim |\mathbb{F}|^{n} \left( \sum_{(w,t') \in \mathbb{F}^{2n}\times\mathbb{F}}  |(h_z d\sigma)^{\vee}(w,t')|^{p}  \right)^{1/p}. \]
Using the assumption that $||(h_zd\sigma)^{\vee}||_{L^p(\mathbb{F}^d)} \lesssim |\mathbb{F}|^{\alpha}$ we have
\[ || h * K||_{L^{p}(\mathbb{F}^d,dx)} \leq  \sum_{z \in Z}|| h_{z} * K||_{L^{p}(\mathbb{F}^3,dx)} \lesssim |\mathbb{F}|^{n + \alpha + s}. \]
Also, $||h * \delta||_{L^p(\mathbb{F}^d, dx)} = ||h ||_{L^p(\mathbb{F}^d, dx)} \lesssim |\mathbb{F}|^{\gamma/p} $.
Now we bound
$$ ||h||_{L^{2}(\mathcal{P},d\sigma)} = |\left<h, h * (d \sigma)^{\vee} \right>|^{1/2} \lesssim |\left<h, h * K \right>|^{1/2} + |\left<h, h * \delta \right>|^{1/2}$$
$$ \lesssim |\left<h, h * K \right>|^{1/2}  + |\mathbb{F}|^{\gamma/2}.$$
Now we may bound the first term as
$$\lesssim |\left<h, h * K \right>|^{1/2} \leq ||h||_{L^{p'}(\mathbb{F}^d, dx)}^{1/2} || h * K||_{L^{p}(\mathbb{F}^d,dx)}^{1/2} \lesssim |\mathbb{F}|^{\gamma/2p'+n/2  + \alpha/2 + s/2}. $$
Thus
\[ ||h||_{L^{2}(\mathcal{P},d\sigma)}  \lesssim |\mathbb{F}|^{\gamma/2p'+n/2 + \alpha/2 + s/2}+ |\mathbb{F}|^{\gamma/2}. \]
\end{proof}

\begin{corollary}\label{cor:HDMT}Let $\mathcal{P}$ be the $d=2n+1$ dimensional paraboloid with energy exponent $\Psi$. Let $h: \mathbb{F}^d \rightarrow \mathbb{C}$ with  $h \sim 1$ on its support $E \subseteq \mathbb{F}^d$, where $|E| = |\mathbb{F}|^\gamma$ (with $\gamma \geq 1$). Moreover, assume that $|E_z|\neq 0$ for $z \in Z \subseteq \mathbb{F}$ with $|Z| \leq |\mathbb{F}|^{s}$. Furthermore, for $z \in Z$, assume that $|E_z| \sim |\mathbb{F}|^{\beta}$ and that $|E_z \cap j| \leq |E_z|^{\alpha}$ for every maximal totally isotropic subspace of $j \subset \mathbb{F}^{d-1}$ (with respect to the bilinear form $\cdot$). Then,
$$||\hat{h}||_{L^{2}(\mathcal{P},d\sigma)} \lesssim |\mathbb{F}|^{\frac{\gamma(3+\Psi(\alpha))}{8}+ \frac{ (4-\Psi(\alpha))}{8} - \frac{d}{8} + \frac{1}{4} }  + |\mathbb{F}|^{\gamma/2}.$$
\end{corollary}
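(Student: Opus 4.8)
The plan is to deduce this directly from the higher-dimensional Mockenhaupt--Tao estimate, Lemma~\ref{lem:mtHD}, applied with $p=4$. Beyond the combinatorial data already present in the hypothesis, that lemma requires only a bound for $\|(h_z d\sigma)^{\vee}\|_{L^4(\mathbb{F}^d)}$ that holds uniformly over the nonempty slices $z\in Z$. Establishing this slice bound is the first and essentially the only substantive step, and it is the place where the assumption that $\mathcal{P}$ has energy exponent $\Psi$ is used.

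For the slice estimate I would repeat the computation behind Corollary~\ref{cor:L4}: expanding the $L^4$ norm and using orthogonality of characters, and using that $|h_z|\sim 1$ on its support $E_z$ (identified with a subset of $\mathbb{F}^{d-1}$ through the usual parametrization), one obtains
\[ \|(h_z d\sigma)^{\vee}\|_{L^4(\mathbb{F}^d)}^4 \lesssim \frac{|\mathbb{F}|^{d}}{|\mathcal{P}|^{4}}\,\Lambda(E_z) = |\mathbb{F}|^{4-3d}\,\Lambda(E_z), \]
since $|\mathcal{P}|=|\mathbb{F}|^{d-1}$. Because $E_z$ satisfies $|E_z\cap j|\le |E_z|^{\alpha}$ for every maximal totally isotropic affine subspace $j\subset\mathbb{F}^{d-1}$ and $\mathcal{P}$ has energy exponent $\Psi$, we get $\Lambda(E_z)\lesssim |E_z|^{\Psi(\alpha)}\sim |\mathbb{F}|^{\beta\Psi(\alpha)}$, hence $\|(h_z d\sigma)^{\vee}\|_{L^4(\mathbb{F}^d)}\lesssim |\mathbb{F}|^{1-3d/4+\beta\Psi(\alpha)/4}$ uniformly in $z\in Z$.

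It then remains to feed this into Lemma~\ref{lem:mtHD} and simplify. First the elementary bookkeeping: since each of the $|Z|$ slices has $|E_z|\sim|\mathbb{F}|^{\beta}$, we have $|\mathbb{F}|^{\gamma}=|E|=\sum_{z\in Z}|E_z|\sim|Z|\,|\mathbb{F}|^{\beta}$, so $|Z|\sim|\mathbb{F}|^{\gamma-\beta}$, and since $Z\subseteq\mathbb{F}$ this forces $\beta\ge\gamma-1$. Applying Lemma~\ref{lem:mtHD} with $p=4$, with the slice exponent $1-3d/4+\beta\Psi(\alpha)/4$ playing the role of ``$\alpha$'' there and with ``$s$'' there equal to $\gamma-\beta$, and using $n=(d-1)/2$, the exponent on the right collapses to $\tfrac{7\gamma}{8}-\tfrac{d-2}{8}+\tfrac{\beta(\Psi(\alpha)-4)}{8}$. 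Since $\Psi(\alpha)\le 3<4$ this is decreasing in $\beta$, so over the admissible range $\beta\ge\gamma-1$ it is maximized at $\beta=\gamma-1$, where a short computation shows it equals $\tfrac{\gamma(3+\Psi(\alpha))}{8}+\tfrac{4-\Psi(\alpha)}{8}-\tfrac{d}{8}+\tfrac14$; the additive $|\mathbb{F}|^{\gamma/2}$ term is inherited verbatim from Lemma~\ref{lem:mtHD}. This gives the claimed bound.

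I do not expect a genuine obstacle here: the analytic content is entirely contained in Lemma~\ref{lem:mtHD} and in the hypothesis that $\mathcal{P}$ has energy exponent $\Psi$, and what is left is the one-slice $L^4$/additive-energy identity together with arithmetic. The only point needing a little care is to verify that the concentration hypothesis imposed on each slice $E_z$ is exactly the hypothesis appearing in the definition of energy exponent, so that $\Psi$ applies slice by slice with one and the same exponent $\beta\Psi(\alpha)$, which is what makes the slice estimate uniform in $z$.
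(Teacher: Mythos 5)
Your proposal is correct and matches the paper's own proof essentially line for line: the same $L^4$-expansion of the slice extension operator reducing to the additive energy $\Lambda(E_z)$, the same application of the energy exponent hypothesis slice by slice, the same substitution into Lemma~\ref{lem:mtHD} with $p=4$, and the same observation that the resulting exponent is worst when $s=1$, i.e.\ $\beta=\gamma-1$. The only cosmetic difference is that you carry out the $\beta$-bookkeeping explicitly (deducing $\beta\ge\gamma-1$ and monotonicity in $\beta$), whereas the paper phrases the same optimization in terms of $s=\gamma-\beta$.
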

\begin{proof}We have that
$$||(h_zd\sigma)^{\vee}||_{L^4(\mathbb{F}^d)} \lesssim \frac{|\mathbb{F}|^{\frac{d}{4}}}{|\mathcal{P}|} \left(\Lambda(E_z)\right)^{1/4}\lesssim |\mathbb{F}|^{1- \frac{3d}{4}} |\mathbb{F}|^{\frac{\beta \Psi(\alpha)}{4}}$$
$$= |\mathbb{F}|^{1 - \frac{3d}{4} + \frac{\beta \Psi(\alpha)}{4}}.$$
Combining this with Lemma \ref{lem:mtHD} gives us that
$$||\hat{h}||_{L^{2}(\mathcal{P},d\sigma)} \lesssim |\mathbb{F}|^{\frac{3\gamma}{8}+\frac{d-1}{4} + \frac{s}{2} + \frac{1}{2} - \frac{3d}{8}+ \frac{\beta \Psi(\alpha)}{8}}  + |\mathbb{F}|^{\gamma/2}.$$
Reorganizing the exponent, using the relation $\gamma = s+\beta$, we have that this is
$$\lesssim |\mathbb{F}|^{\frac{7\gamma}{8}- \frac{(\gamma-s) (4-\Psi(\alpha))}{8} - \frac{d}{8} + \frac{1}{4} }  + |\mathbb{F}|^{\gamma/2}.$$
Since this is maximized when $s=1$, we may further estimate this as
$$\lesssim |\mathbb{F}|^{\frac{7\gamma}{8}- \frac{(\gamma-1) (4-\Psi(\alpha))}{8} - \frac{d}{8} + \frac{1}{4} }
\lesssim  |\mathbb{F}|^{\frac{\gamma(3+\Psi(\alpha))}{8}+ \frac{ (4-\Psi(\alpha))}{8} - \frac{d}{8} + \frac{1}{4} }  + |\mathbb{F}|^{\gamma/2}. $$
\end{proof}
\section{Mixed norm restriction estimates based on Kakeya maximal operator estimates}\label{sec:MNKak}
In this section we will restrict attention to the $d=2n+1$ dimensional paraboloid $\mathcal{P}:=\{(x,x\cdot x) : x \in \mathbb{F}^{2n}\}$ over fields in which the associated quadratic form has Witt index $n$ (that is fields in which $-1$ is a square).

Let $W$ be a totally isotropic subspace of $\mathbb{F}^{2n}$ (with respect to the standard dot product $\cdot$) and $V$ the complementary totally isotropic subspace given by Lemma \ref{lem:isoCompl}. Note that $W^{\perp}=W$, $V^{\perp}=V$ and $\text{dim}(V)=\text{dim}(W)=n$. Let $x, \xi \in \mathbb{F}^{2n}$, and write $x = x_W+x_V$, $\xi=\xi_W+\xi_V$ where $x_W, \xi_W \in W$ and $x_V,\xi_V \in V$. Then
$$\xi \cdot x= (\xi_W+\xi_V)\cdot(x_W + x_V) = \xi_W \cdot x_V + \xi_V \cdot x_W $$
and
$$\xi \cdot \xi = (\xi_W+\xi_V)\cdot (\xi_W+\xi_V) = 2 \xi_W \cdot \xi_V.$$
Identifying $\mathcal{P}$ with $\mathbb{F}^{2n}$ in the obvious way, recall that the extension operator $(fd \sigma)^{\vee}(x,t): \mathbb{F}^{2n} \times \mathbb{F} \rightarrow \mathbb{C}$ is defined by
$$
(fd \sigma)^{\vee}(x,t) := \frac{1}{|\mathbb{F}|^{2n}} \sum_{\xi \in \mathbb{F}^{2n}} f(\xi) e(\xi \cdot x + \xi \cdot \xi t).
$$
Given two complementary $n$ dimensional subspaces, $V$ and $W$, of $\mathbb{F}^{2n}$ (thus, $\mathbb{F}^{2n}= V \oplus W$), we may define a norm on functions $F: \mathbb{F}^{2n} \times \mathbb{F} \rightarrow \mathbb{C}$ by
$$|| F ||_{L^{q}_{V,t} L^p_{W}} =  \left( \sum_{v \in V, t \in \mathbb{F}} \left( \sum_{w \in W} \left| F(w+ v,t) \right|^p \right)^{q/p}\right)^{1/q}.$$
Similarly, given $f: \mathcal{P} \rightarrow \mathbb{C}$, after identifying $\mathcal{P}$ with $\mathbb{F}^{2n}$ as above, we may consider $f : \mathbb{F}^{2n} \rightarrow \mathbb{C}$. We then define the norm associated to the normalized counting measure on $\mathbb{F}^{2n}$:
$$|| f ||_{L^{q}_{V} L^p_{W}(\mathcal{P},d\sigma)} =  \left( |\mathbb{F}|^{-n} \sum_{v \in V} \left( |\mathbb{F}|^{-n} \sum_{w \in W} \left| f(w+v) \right|^p \right)^{q/p}\right)^{1/q}.$$

Using the algebraic identities above, for $x_1 \in V$ and $x_2 \in W$, we may parameterize this operator as $(fd \sigma)^{\vee}(x_1 + x_2, t): \mathbb{F}^{2n} \times \mathbb{F} \rightarrow \mathbb{C}$ as
\begin{equation}\label{eq:reparam}
(fd \sigma)^{\vee}(x_1+x_2,t) = \frac{1}{|\mathbb{F}|^{2n}} \sum_{\substack{\xi_1 \in W \\ \xi_2 \in V}} f(\xi_1+ \xi_2) e(\xi_1 \cdot x_1 + \xi_2 \cdot x_2 + 2t \xi_1 \cdot \xi_2 ).
\end{equation}
We now show that (in this setting) the Kakeya maximal operator estimate (\ref{eq:dualKakeya}) is equivalent to a mixed norm restriction estimate for $\mathcal{P}$. This is somewhat analogous to the square function estimates (see \cite{Brest}) in the Euclidean setting. We will only prove that the Kakeya estimate (\ref{eq:dualKakeya}) implies the mixed norm restriction estimate (which is all we will use), however the reverse implication can be easily seen from the proof.
\begin{proposition}Let $2n$ be an even integer, and $\cdot$ a $n\times n$ non-degenerate bilinear form with Witt index $n$. Let $W$ and $V$ be a $n$-dimensional totally isotropic subspace (with respect to $\cdot$) and $(fd \sigma)^{\vee}(x_1,x_2,t)$ as defined in (\ref{eq:reparam}).  For a function $f(\xi_1 + \xi_2) :  W \times V \rightarrow \mathbb{C}$ we have that
\begin{equation}\label{eq:ExpKak}
||(f d\sigma)^{\vee} ||_{L^{\frac{2d+2}{d-1}}_{V,t} L^2_{W}} \lesssim ||f||_{ L^{\frac{2d+2}{d-1} }_{V} L^2_{W}}.
\end{equation}
\end{proposition}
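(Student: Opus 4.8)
The plan is to recognize the mixed-norm extension estimate \eqref{eq:ExpKak} as a direct repackaging of the dual Kakeya maximal estimate \eqref{eq:dualKakeya} for the $m$-dimensional Kakeya operator, with $m=n+1$. The key computation is to expand the inner $L^2_W$ norm in $\xi_1 \in W$ using the orthogonality of the characters $\{e(\xi_1 \cdot x_1) : \xi_1 \in W\}$ on $W$ (valid because $\cdot$ restricted to $W \times V$ is a non-degenerate pairing, by Lemma \ref{lem:isoCompl} and Lemma \ref{lem:subFour}), exactly as in the computation preceding Theorem \ref{thm:ResToKak}. First I would fix $x_1 \in V$, $t \in \mathbb{F}$, and write $(fd\sigma)^{\vee}(x_1+x_2,t)$ from \eqref{eq:reparam}; summing $|(fd\sigma)^{\vee}(x_1+x_2,t)|^2$ over $x_2 \in W$ kills the $x_2$-dependence and collapses the $\xi_2$ sum to a single value $\xi_2 = \xi_2(x_1,t)$ determined by the linear constraint $\xi_2 \cdot x_2$ hidden inside the phase, after absorbing the $2t\xi_1\cdot\xi_2$ term; the upshot is that
\[ \left( \sum_{x_2 \in W} |(fd\sigma)^{\vee}(x_1+x_2,t)|^2 \right)^{1/2} = |\mathbb{F}|^{-n} \left( \sum_{\xi_2 \in V} \left| \sum_{\xi_1 \in W} f(\xi_1+\xi_2) 1_{\ell}(\xi_1,\cdots) \right|^2 \right)^{1/2}, \]
where the indicator picks out an affine line in the $(x_1,t)$-variables indexed by $\xi_2$ — precisely the geometry of the lines $\ell_{(b,\eta)}$ in $\mathbb{F}^m$, with the $n$ coordinates of $x_1$ together with $t$ playing the role of the ambient $\mathbb{F}^m$, the direction $\eta$ corresponding to $\xi_2 \in V$, and the base point determined by an arbitrary function of $\xi_2$.

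Next I would identify $h$: setting $h(\xi_2) := \big(\sum_{\xi_1} |f(\xi_1+\xi_2)|^2\big)$ (so $h^{1/2}$ is the $L^2_W$-norm of $f$ on the coset $\xi_2 + W$), the right side of the reorganized identity becomes exactly $|\mathbb{F}|^{-n}$ times the expression $\big\| \sum_{\xi_2} h^{1/2}(\xi_2) 1_{\ell(x_0(\xi_2),\xi_2)} \big\|$ appearing in \eqref{eq:dualKakeya} with $m = n+1$. The mixed norm $\|(fd\sigma)^{\vee}\|_{L^{(2d+2)/(d-1)}_{V,t} L^2_W}$ then equals $|\mathbb{F}|^{-n}$ times the $L^{(2d+2)/(d-1)}$-norm in the $(x_1,t) \in \mathbb{F}^m$ variables of that superposition of line-indicators, to which Proposition \ref{prop:ResKak} / the Ellenberg–Oberlin–Tao dual estimate \eqref{eq:dualKakeya} applies. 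One must check the exponent bookkeeping: with $m = n+1$ and $d = 2n+1$ we have $\tfrac{2d+2}{d-1} = \tfrac{2(n+1)}{n} = \tfrac{2m}{m-1}$, and the Kakeya exponent $K_m^*$ in \eqref{eq:dualKakeya} is applied with $p = q = \tfrac{2m}{m-1}$; this is within the range where EOT gives a bound, and the $|\mathbb{F}|$-power produced by $K_m^*$ together with the measure-normalization factors ($|\mathbb{F}|^{-n}$ from each collapsed character sum, plus the normalized counting measures defining $\|\cdot\|_{L^q_V L^2_W(\mathcal{P},d\sigma)}$) is designed to cancel exactly. I would verify this cancellation explicitly, since it is where the precise statement lives.

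The main obstacle I anticipate is the phase/variable bookkeeping in the collapse step: one has to handle the $2t\,\xi_1\cdot\xi_2$ cross term carefully so that, after summing in $x_2$, what remains is genuinely an \emph{affine line} in $(x_1,t)$-space of the form $\{(b + \eta s, s)\}$ rather than some more complicated algebraic set — equivalently, one must confirm that the map $(\xi_1,t) \mapsto$ (the point of $\mathbb{F}^m$ contributing to a fixed direction $\xi_2$) traces out the line $\ell_{(b(\xi_2),\xi_2)}$, with the freedom in the base point $b(\xi_2)$ being exactly the "arbitrary function $x_0 : \mathbb{F}^{m-1}\to\mathbb{F}^{m-1}$" allowed in \eqref{eq:dualKakeya}. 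This is the same maneuver as in the proof of Theorem \ref{thm:ResToKak} (where $f(\xi,\theta) = h^{1/2}(\theta)e(-b(-\theta)\cdot\xi)$ was used to encode an arbitrary choice of base points), so I would mirror that argument: insert a modulating factor $e(-b(\xi_2)\cdot\xi_1)$ into $f$ when proving the reverse direction, but for the direction we need here — Kakeya implies mixed-norm restriction — it suffices to run the orthogonality computation forward and quote \eqref{eq:dualKakeya} with the resulting (arbitrary, $f$-dependent) base-point function. A secondary, purely clerical, point is to make sure that the two-sided normalization conventions for $d\sigma$ on $\mathcal{P}$ and for $d\theta$/$dv$ on $\mathbb{F}^{m-1}$ match up; I would track the powers of $|\mathbb{F}|$ through once at the end to confirm the clean constant $\lesssim 1$.
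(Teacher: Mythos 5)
Your outline correctly identifies the two structural ingredients --- orthogonality in the $x_2 \in W$ variable to collapse the inner $L^2_W$ norm, followed by an appeal to the dual Kakeya maximal estimate \eqref{eq:dualKakeya} in the $(x_1,t) \in V \times \mathbb{F} \cong \mathbb{F}^m$ variables --- but there is a genuine gap at the reduction step, and it is not a clerical one. When you compute $\sum_{x_2\in W}|(fd\sigma)^{\vee}(x_1+x_2,t)|^2$ by orthogonality, the $\xi_2$-sum does not collapse to a single $\xi_2(x_1,t)$; rather it is the $\xi_1$-sum (after expanding $f(\,\cdot\,+\xi_2)$ in its dual Fourier series $\sum_{\eta\in V}h_{\xi_2}(\eta)e(\xi_1\cdot\eta)$) that collapses via the delta function $\delta(\eta+x_1+2t\xi_2)$, leaving
\[
\sum_{x_2\in W}\bigl|(fd\sigma)^{\vee}(x_1+x_2,t)\bigr|^2 \;=\; \mathrm{const}\cdot \sum_{\xi_2\in V}\bigl|h_{\xi_2}(-x_1-2t\xi_2)\bigr|^2 \;=\; \mathrm{const}\cdot\sum_{\xi_2\in V}\sum_{\eta\in V}|h_{\xi_2}(\eta)|^2\,1_{\ell(-\eta,-2\xi_2)}(x_1,t).
\]
So for each direction $\xi_2$ you do \emph{not} get a single line with base point $x_0(\xi_2)$; you get a weighted superposition of potentially many lines in that direction, one for each $\eta$ in the Fourier support of $f(\,\cdot\,+\xi_2)$, with weights $|h_{\xi_2}(\eta)|^2$. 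Setting $h(\xi_2)=\sum_{\eta}|h_{\xi_2}(\eta)|^2$ only recovers the total mass; it does not let you replace the spread-out family by a single indicator $1_{\ell(x_0(\xi_2),\xi_2)}$, and \eqref{eq:dualKakeya} as stated really does require one line per direction.

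The paper closes exactly this gap with a randomization trick that your proposal omits: for each $\xi_2$ one chooses a random base point $\eta_\omega$ with probability $|h_{\xi_2}(\eta)|^2/h(\xi_2)$, so that $\mathbb{E}_\omega\bigl[h(\xi_2)\,1_{\ell(-\eta_\omega,-2\xi_2)}\bigr]=\sum_\eta|h_{\xi_2}(\eta)|^2 1_{\ell(-\eta,-2\xi_2)}$ pointwise. Then, since $\frac{m}{m-1}>1$, Minkowski's integral inequality gives $\bigl\|\mathbb{E}_\omega[\cdot]\bigr\|_{L^{m/(m-1)}}\le \mathbb{E}_\omega\bigl\|\cdot\bigr\|_{L^{m/(m-1)}}$, and each realization $\omega$ \emph{is} a one-line-per-direction configuration to which \eqref{eq:dualKakeya} applies with the correct normalization. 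Without this averaging step the reduction to the Ellenberg--Oberlin--Tao estimate simply does not go through, so the step you flag as merely ``phase/variable bookkeeping'' is actually the crux of the argument; everything else in your sketch is sound.
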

\begin{proof}
We work with the parameterization given in (\ref{eq:reparam}).  We partition $\mathbb{F}^{2n} = W \oplus V$ into $\mathbb{F}^{n}$ `caps' of the form $\{(\xi_1+\alpha, 2 \xi_1 \cdot \alpha) : \xi_1 \in W\}$ indexed by $\alpha \in V$. Similarly, we may define the restriction of $f$ to a cap associated to $\alpha$ by
$$f_{\alpha}(\xi_1 + \xi_2) = f(\xi_1 + \xi_2) \delta(\xi_2-\alpha)$$
where $\delta$ denotes the Dirac delta function on $\mathbb{F}^{2n}$. We define $h_{\alpha}(\eta)$ (for $\eta \in V$ and $\alpha \in V$) to be the $n$-dimensional Fourier inverse transform associated to $f_{\alpha}(\xi_1+\alpha)$ (as given by Lemma \ref{lem:subFour}). In other words, $h_{\alpha}(\eta)$ is defined by the relation
$$ f_{\alpha}(\xi_1+\alpha) = \sum_{\eta \in V} h_{\alpha}(\eta)e(\xi_1 \cdot \eta).$$
We now define
\begin{equation}\label{eq:faOrth}
\mathcal{A}(\alpha) := |\mathbb{F}|^{-n} \sum_{\xi_1 \in W}|f_{\alpha}(\xi_1+ \alpha)|^2 = \sum_{\eta \in V} |h_{\alpha}(\eta)|^2.
\end{equation}
The last equality holds by orthogonality of characters. We have that
\[ (f_{\alpha} d\sigma)^{\vee}(x_1+x_2,t) =   \frac{1}{|\mathbb{F}|^{2n}} \sum_{\eta \in V } h_{\alpha}(\eta) \sum_{\xi_1 \in W } e(\xi_1 \cdot \eta) e(\xi_1 \cdot x_1 + \alpha \cdot x_2 +  t\xi_1 \cdot \alpha )   \]
\[= \frac{e(\alpha \cdot x_2) }{|\mathbb{F}|^{2n}}\sum_{\eta \in V } h_{\alpha}(\eta) \sum_{\xi_1 \in W } e(\xi_1 \cdot (\eta  + x_1 +  2t \alpha ))    \]
\[= \frac{e(\alpha \cdot x_2) }{|\mathbb{F}|^{n}}\sum_{\eta \in V } h_{\alpha}(\eta) \delta(\eta  + x_1 +  2 t \alpha )\]
\[ = \frac{e(\alpha \cdot x_2) }{|\mathbb{F}|^{n}} \sum_{\eta \in V} h_{\alpha}(\eta) 1_{\ell_{(-\eta,-2 \alpha)} } (x_1,t) .\]
Here $\delta$ can be taken to be the Dirac delta function on $V$ and $\ell_{(-\eta,-2\alpha)}$ is the line in $V \times \mathbb{F}$ with direction $2\alpha$ and intercept $-\eta$. We now wish to estimate:
\[\left|\left| \sum_{\alpha \in \mathbb{F}^{n} } (f_{\alpha} d \sigma)^{\vee}(x_1+x_2, t)    \right|\right|_{L^{\frac{2d+2}{d-1}}_{V,t} L^2_{W}}   =  \left|\left| \sum_{\alpha \in \mathbb{F}^n} \frac{e(\alpha \cdot x_2) }{|\mathbb{F}|^{n}} \sum_{\eta \in V } h_{\alpha}(\eta) 1_{\ell_{(-\eta,-\alpha)} } (x_1,t)  \right|\right|_{L^{(2d+2)/(d-1)}_{V,t}L^{2}_{W}} .\]
From orthogonality we have
\[ \left|\left| \sum_{\alpha \in V} \frac{e(\alpha \cdot x_2) }{|\mathbb{F}|^{n}} \sum_{\eta \in V } h_{\alpha}(\eta) 1_{\ell_{(-\eta,-2\alpha)} } (x_1,t) \right|\right|_{L^{2}_{W}} = \left(\frac{1}{|\mathbb{F}|^{n}}\sum_{\alpha \in V} \sum_{\eta \in V }  h_{\alpha}^2(\eta) 1_{\ell(-\eta,-2\alpha)}(x_1,t) \right)^{1/2}. \]
Thus
\[\left|\left| \sum_{\alpha \in V } (f_{\alpha} d \sigma)^{\vee}    \right|\right|_{L^{\frac{2d+2}{d-1}}_{V,t} L^2_{W}}  =  \left|\left| \frac{1}{|\mathbb{F}|^{n}}\sum_{\alpha \in V} \sum_{\eta \in V}  h_{\alpha}^2(\eta) 1_{\ell(-\eta,-2\alpha)}(x_1,t)   \right|\right|_{L^{(d+1)/(d-1)}_{V,t}}^{1/2}\]
Note that $(d+1)/(d-1) = (n+1)/n $. We now perform a randomization trick to reduce to the case when there is only one line in every direction. This will then allow us to apply the Kakeya estimate. Consider the random function
$$ \frac{1}{|\mathbb{F}|^{n}}\sum_{\alpha \in V} \mathcal{A}(\alpha) 1_{\ell(-\eta_{\omega},-2\alpha)}(x_1,t) $$
where $\mathcal{A}(\alpha)$ is defined by (\ref{eq:faOrth}) and for each $\alpha$ the translation $\eta_\omega$ is randomly selected with probability $\frac{|h_{\alpha}(\eta)|^2}{\mathcal{A}(\alpha)}$. We then have the pointwise equality
$$ \mathbb{E}_{\omega} \left[ \frac{1}{|\mathbb{F}|^{n}}\sum_{\alpha \in W} \mathcal{A}(\alpha) 1_{\ell(-\eta_{\omega},-2\alpha)}(x_1,t)\right]=
\frac{1}{|\mathbb{F}|^{n}}\sum_{\alpha \in W} \sum_{\eta \in V}  h_{\alpha}^2(\eta) 1_{\ell(-\eta,2-\alpha)}(x_1,t).$$
Thus, we may bound the expression above by
\[ \left|\left| \mathbb{E}_{\omega} \left[ \frac{1}{|\mathbb{F}|^{n}}\sum_{\alpha \in W} \mathcal{A}(\alpha) 1_{\ell(-\eta_{\omega},-2\alpha)}(x_1,t) \right]  \right|\right|_{L^{(n+1)/n}_{V,t}}^{1/2}\]
\[\leq \left( \mathbb{E}_{\omega} \left[ \left|\left| \frac{1}{|\mathbb{F}|^{n}}\sum_{\alpha \in W} \mathcal{A}(\alpha) 1_{\ell(-\eta_{\omega},-2\alpha)}(x_1,t) \right]  \right|\right|_{L^{(n+1)/n}_{V,t}} d\omega \right)^{1/2}.  \]
Applying the Kakeya estimate (\ref{eq:KakeyaMaxIn}) (in dual form (\ref{eq:dualKakeya})) to the inner quantity, we may estimate this as
\[\lesssim \left( |\mathbb{F}|^{-n} \sum_{\alpha \in V } |\mathcal{A}(\alpha)|^{(n+1)/n } \right)^{n/(2n+2)} =
\left( |\mathbb{F}|^{-n} \sum_{\alpha  \in V } \left| |\mathbb{F}|^{-n} \sum_{\xi_1 \in W }|f_{\alpha}(\xi_1+ \alpha)|^2   \right|^{(n+1)/n } \right)^{n/(2n+2)}.  \]
\[= ||f||_{ L^{\frac{2d+2}{d-1} }_{V} L^2_{W}}.\]
This completes the proof.
\end{proof}

We now deduce some consequences of this result.  In what follows we always assume that $\mathbb{F}$ is a field in which $-1$ is a square. Moreover, we assume that $V$ and $W$ are complementary totally isotropic subspaces of dimension $n$.  We will use the dual form of (\ref{eq:ExpKak}).  Let $f: \mathbb{F}^d \rightarrow \mathbb{C}$ with $d$ dimensional Fourier transform $\hat{F}$. Parameterizing $f(x_1 + x_2,t)$ with $x_1 \in V$, $x_2 \in W$,and $t\in \mathbb{F}$, and the restriction $\hat{f}$ to $\mathcal{P}$ as $\hat{f}\left(\xi_1+\xi_2, (\xi_1+\xi_2)\cdot (\xi_1+\xi_2)\right) = \hat{f}\left(\xi_1+\xi_2, 2 \xi_1\cdot \xi_2 \right)$ for $\xi_1 \in W$ and $\xi_2 \in V$. We will prove the inequality
\[ \left( |\mathbb{F}|^{-n} \sum_{\xi_2  \in V} \left(|\mathbb{F}|^{-n} \sum_{ \xi_1  \in W } |\hat{f} (\xi_1+\xi_2, 2 \xi_1 \cdot \xi_2 )|^{2} \right)^{\frac{d+1}{d+3}} \right)^{\frac{d+3}{2d+2} }\]
\begin{equation}\label{eq:dualKakeyaExp}
\lesssim  \left( \sum_{x_2  \in V ,t  \in \mathbb{F} } \left( \sum_{x_1  \in W} |f(x_1 +x_2,t)|^2  \right)^{\frac{d+1}{d+3} } \right)^{\frac{d+3}{2d+2}}
\end{equation}
which we notate as:
$$ || \hat{f} ||_{L^\frac{2d+2}{d+3}_V L^{2}_W (\mathcal{P},d\sigma)} \lesssim ||f||_{L^{\frac{2d+2}{d+3}}_{V,t}L^{2}_{W}}. $$
For completeness we include the  derivation of this inequality from (\ref{eq:ExpKak}).
\begin{lemma}\label{lem:MixedResForm}In the notation above, we have the inequality
$$ || \hat{f} ||_{L^\frac{2d+2}{d+3}_V L^{2}_W (\mathcal{P},d\sigma)} \lesssim ||f||_{L^{\frac{2d+2}{d+3}}_{V,t}L^{2}_{W}}. $$
This implies the weaker inequality:
$$ || \hat{f} ||_{L^\frac{2d+2}{d+3}(\mathcal{P},d\sigma)} \lesssim ||f||_{L^{\frac{2d+2}{d+3}}_{V,t}L^{2}_{W}}. $$
\end{lemma}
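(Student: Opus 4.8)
The plan is to read off the first inequality of the lemma directly as the dual (adjoint) statement of the mixed-norm \emph{extension} estimate (\ref{eq:ExpKak}), and then to obtain the scalar inequality by a trivial nesting of $\ell^{q'}$ into $\ell^2$ on the caps $W$.

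First I would record the standard pairing identity relating the extension and restriction maps. Expanding the definitions of $(gd\sigma)^{\vee}$ and $\widehat{f}$ and summing over $x$ before $\xi$, the orthogonality of characters gives
\[
\sum_{x\in\mathbb{F}^{d}}(gd\sigma)^{\vee}(x)\,\overline{f(x)}
=\frac{1}{|\mathcal{P}|}\sum_{\xi\in\mathcal{P}}g(\xi)\,\overline{\widehat{f}(\xi)},
\]
i.e. $\langle (gd\sigma)^{\vee},f\rangle_{dx}=\langle g,\widehat{f}\rangle_{d\sigma}$, so that the restriction map $f\mapsto\widehat{f}|_{\mathcal{P}}$ is the adjoint of $g\mapsto(gd\sigma)^{\vee}$ with respect to these pairings. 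Next I would invoke the duality of mixed-norm spaces (see \cite{BP}): since the inner $L^{2}$ factor is self-dual, the dual of $L^{r}_{V,t}L^{2}_{W}$ (counting measure on $\mathbb{F}^{d}$) is $L^{r'}_{V,t}L^{2}_{W}$, and the dual of $L^{r}_{V}L^{2}_{W}(\mathcal{P},d\sigma)$ is $L^{r'}_{V}L^{2}_{W}(\mathcal{P},d\sigma)$, only the outer exponent being conjugated. Writing $q=\frac{2d+2}{d-1}$, so that $q'=\frac{2d+2}{d+3}$, this gives
\[
\|\widehat{f}\|_{L^{q'}_{V}L^{2}_{W}(\mathcal{P},d\sigma)}
=\sup_{\|g\|_{L^{q}_{V}L^{2}_{W}(\mathcal{P},d\sigma)}\le1}\bigl|\langle(gd\sigma)^{\vee},f\rangle_{dx}\bigr|.
\]
Applying Cauchy--Schwarz in $w\in W$ for each fixed $(v,t)$, followed by H\"older in $(v,t)$ with exponents $q,q'$, the right-hand pairing is at most $\|(gd\sigma)^{\vee}\|_{L^{q}_{V,t}L^{2}_{W}}\,\|f\|_{L^{q'}_{V,t}L^{2}_{W}}$, and (\ref{eq:ExpKak}) bounds the first factor by a constant multiple of $\|g\|_{L^{q}_{V}L^{2}_{W}(\mathcal{P},d\sigma)}\le1$. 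Taking the supremum over $g$ proves the first inequality, which is precisely (\ref{eq:dualKakeyaExp}).

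For the weaker inequality, note that $q'=\frac{2d+2}{d+3}=\frac{2(n+1)}{n+2}\le2$ since $d=2n+1$. Hence, with the normalized counting measure on each cap $W$, $\|\widehat{f}(\cdot+v)\|_{L^{q'}_{W}}\le\|\widehat{f}(\cdot+v)\|_{L^{2}_{W}}$ for every $v\in V$; taking the $L^{q'}_{V}$ norm of both sides and using $L^{q'}_{V}L^{q'}_{W}(\mathcal{P},d\sigma)=L^{q'}(\mathcal{P},d\sigma)$ yields $\|\widehat{f}\|_{L^{q'}(\mathcal{P},d\sigma)}\le\|\widehat{f}\|_{L^{q'}_{V}L^{2}_{W}(\mathcal{P},d\sigma)}$, which combined with the first inequality gives the claim. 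There is no genuine analytic difficulty here, as all of the content sits in (\ref{eq:ExpKak}); the only point requiring care is the bookkeeping of normalizations — keeping straight which spaces carry counting versus normalized counting measure, and ensuring the mixed-norm duality and the Cauchy--Schwarz/H\"older chain are applied with those same normalizations.
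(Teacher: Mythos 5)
Your proof of the first inequality is essentially identical to the paper's: you use the same pairing identity $\langle(gd\sigma)^{\vee},f\rangle_{dx}=\langle g,\widehat{f}\rangle_{d\sigma}$, the same mixed-norm duality from \cite{BP}, and your step-by-step Cauchy--Schwarz-in-$w$ then H\"older-in-$(v,t)$ is exactly what the paper's single invocation of the mixed-norm H\"older inequality unpacks to, followed in both cases by (\ref{eq:ExpKak}). Your derivation of the second inequality (nesting $L^{q'}_W\hookleftarrow L^{2}_W$ for the normalized cap measure since $q'\leq 2$, then collapsing $L^{q'}_V L^{q'}_W=L^{q'}$) is the correct, standard argument; the paper states this implication without spelling it out.
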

\begin{proof}Note that the extension and restriction operators satisfy the duality relation
$$\left<(gd\sigma)^{\vee},f\right>_{(\mathbb{F}^d,dx)} = |\mathcal{P}|^{-1} \sum_{\xi \in \mathcal{P}} g(\xi) \overline{\widehat{f}(\xi)}= \left< g, \hat{f} \right>_{(\mathcal{P},d\sigma)}.$$
Using the fact that a mixed norm may be written as a supremum of the inner product with elements of the dual ball (see Theorem 1 of \cite{BP}), we now have that
$$|| \hat{f} ||_{L^\frac{2d+2}{d+3}_V L^{2}_W (\mathcal{P},d\sigma)}  = \sup_{\substack { g \\ ||g||_{L^{\frac{2d+2}{d-1} }_{V} L^{2}_{W}(\mathcal{P},d\sigma)} =1 } } <g, \hat{f} >_{(\mathcal{P},d\sigma)} = \sup_{\substack { g \\ ||g||_{L^{\frac{2d+2}{d-1} }_{V} L^{2}_{W}(\mathcal{P},d\sigma)} =1 }} \left<(gd\sigma)^{\vee}, f \right>_{(\mathbb{F}^d,dx)}.$$
Applying the mixed norm H\"older inequality (see (1) of \cite{BP}) and (\ref{eq:ExpKak}) we may estimate this
$$\leq \sup_{\substack { g \\ ||g||_{L^{\frac{2d+2}{d-1} }_{V} L^{2}_{W}} =1 }}   ||(g d\sigma)^{\vee} ||_{L^{\frac{2d+2}{d-1} }_{V,t} L^{2}_{W}} ||f||_{L^{\frac{2d+2}{d+3} }_{V,t} L^{2}_{W}}  \leq C||f||_{L^{\frac{2d+2}{d+3} }_{V,t} L^{2}_{W}}.$$
\end{proof}
Let us also recall the following property of cosets/affine subspaces of a vector space.
\begin{lemma}\label{lem:elcosets}Let $V$ be a finite dimensional vector space. Let $A$ and $B$ denote subspaces of $V$. Let $C$ denote the subspace $A\cap B =C$. Then, either the intersection of cosets of $A$ and $B$, say $(x+A) \cap (y+B)$, is empty or
$$|(x+A)\cap(y+B)|=|C|.$$
\end{lemma}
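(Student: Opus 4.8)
The final statement is Lemma~\ref{lem:elcosets}, a standard fact about cosets of subspaces in a vector space. Here is how I would prove it.

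\medskip

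The plan is to reduce the claim to a statement about the intersection of the two subspaces $A$ and $B$ and then use the orbit structure of the translation action. First I would dispose of the trivial case: if $(x+A)\cap(y+B)=\emptyset$ there is nothing to prove, so assume there is some $z\in(x+A)\cap(y+B)$. Since $z\in x+A$ we have $x+A = z+A$, and since $z\in y+B$ we have $y+B = z+B$; thus without loss of generality we may assume the two cosets are $z+A$ and $z+B$. Now translating everything by $-z$ (which is a bijection of $V$ preserving cardinalities and mapping cosets to cosets), it suffices to show that $A\cap B = C$ has exactly $|C|$ elements — which is a tautology. More precisely, after the translation the intersection $(z+A)\cap(z+B)$ becomes $A\cap B$, and this is literally the subspace $C$, so $|(z+A)\cap(z+B)| = |A\cap B| = |C|$.

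\medskip

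The only point that needs a word of justification is the step ``$z\in x+A \implies x+A = z+A$'': writing $z = x+a$ for some $a\in A$, for any $a'\in A$ we have $x+a' = z + (a'-a) \in z+A$, so $x+A\subseteq z+A$, and the reverse inclusion follows identically since $x = z - a = z+(-a)$ with $-a\in A$. The same reasoning applies verbatim to $B$. Putting the two normalizations together, $(x+A)\cap(y+B) = (z+A)\cap(z+B)$, and an element $v$ lies in this set precisely when $v-z\in A$ and $v-z\in B$, i.e.\ $v-z\in A\cap B = C$; hence the intersection is the coset $z+C$, which has cardinality $|C|$.

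\medskip

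There is no real obstacle here — the lemma is elementary linear (or rather affine) algebra, and the argument is essentially ``nonempty intersections of cosets are cosets of the intersection subspace.'' The only thing to be careful about is not to assume $x$ or $y$ themselves lie in the intersection; the fix is exactly the renormalization to a common base point $z$ described above. Over a finite field $\mathbb{F}$ all the sets involved are finite, so ``cardinality'' is literally the number of field-element tuples, and $|C| = |\mathbb{F}|^{\dim C}$, though the statement as phrased needs only the abstract cardinality.
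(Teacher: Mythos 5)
Your proof is correct and follows essentially the same route as the paper: pick a common point $z$ in the nonempty intersection, observe $(x+A)=(z+A)$ and $(y+B)=(z+B)$, so the intersection equals $z+(A\cap B)=z+C$, which has cardinality $|C|$. The only difference is that you spell out the verification of $z\in x+A\implies x+A=z+A$, which the paper leaves implicit.
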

\begin{proof}Assume that $(x+A)\cap(y+B) \neq \emptyset$, and consider $z \in (x+A)\cap(y+B)$. Now $(x+A)=(z+A)$ and $(y+B)=(z+B)$, so $(x+A)\cap(y+B)=(z+A)\cap(z+B)$. From this it follows that $(x+A)\cap(y+B) = z+(A\cap B)$. This implies that $|(x+A)\cap(y+B)|=|C|$.
\end{proof}
\begin{corollary}\label{cor:KakeyaRegSet}Let $F : \mathbb{F}^d \rightarrow \mathbb{C}$ such that $F \sim 1$ on its support $E$, with $|E| \leq |\mathbb{F}|^{\gamma}$. For $z \in \mathbb{F}$, denote the restriction of $F$ to the hyperplane $\{(x,z):x \in \mathbb{F}^{d-1}\}$ as $F_z$ and let $E_{z} \subset \mathbb{F}^{d-1}$ denote the support of $F_z$. Assume that $E_z$ can be written as a disjoint union $E_z = \cup_{j\in A(z)} U_j$, where each $U_j$ is the subset of a distinct maximal totally isotropic affine subspace $\Omega_j$, with $|A(z)| \leq |\mathbb{F}|^{e}$.
Then
$$||\hat{F}||_{L^{\frac{2d+2}{d+3}}(\mathcal{P},d\sigma)} \lesssim |\mathbb{F}|^{\frac{\gamma}{2} + \frac{e+1}{d+1} + \frac{d-3}{2d+2} }.$$
\end{corollary}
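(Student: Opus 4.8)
The plan is to derive this from the mixed-norm restriction estimate of Lemma~\ref{lem:MixedResForm}. First I would fix, via Lemma~\ref{lem:isoCompl}, a pair of complementary $n$-dimensional totally isotropic subspaces $V,W$ of $\mathbb{F}^{d-1}$, so that the weak form of Lemma~\ref{lem:MixedResForm} gives
\[
\|\widehat F\|_{L^{\frac{2d+2}{d+3}}(\mathcal P,d\sigma)}\ \lesssim\ \|F\|_{L^{\frac{2d+2}{d+3}}_{V,t}L^2_W}.
\]
Writing $q=\tfrac{2d+2}{d+3}$ (so $q/2=\tfrac{d+1}{d+3}<1$ and $\tfrac1q-\tfrac12=\tfrac1{d+1}$) and using that $F\sim 1$ on $E$ while the slices of $E$ sit over distinct values of $t$, the right-hand side is comparable to $\bigl(\sum_{(x_1,t)\in V\times\mathbb{F}}|E_t\cap(x_1+W)|^{q/2}\bigr)^{1/q}$, where $E_t\subseteq\mathbb{F}^{d-1}$ is the $t$-slice of $E$. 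Everything then reduces to estimating this sum using the hypothesis that each $E_t$ is covered by at most $|\mathbb{F}|^{e}$ subsets $U_j$ of maximal totally isotropic affine subspaces $\Omega_j=c_j+W_j$.

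For a fixed slice $t$, each $U_j$ meets a coset $x_1+W$ in at most $|\Omega_j\cap(x_1+W)|=|W_j\cap W|$ points, while $\Omega_j$ meets exactly $|\mathbb{F}|^{n}/|W_j\cap W|$ cosets of $W$, both consequences of Lemma~\ref{lem:elcosets}; moreover $|E_t\cap(x_1+W)|\le|\mathbb{F}|^{n}$ trivially (a coset of $W$ has $|\mathbb{F}|^{n}=|\mathbb{F}|^{(d-1)/2}$ points), and $\sum_t|E_t|=|E|\le|\mathbb{F}|^{\gamma}$. Feeding these facts into a dyadic decomposition of $|E_t\cap(x_1+W)|$, using the subadditivity of $s\mapsto s^{q/2}$ to pass the sum over the $\le|\mathbb{F}|^{e}$ pieces inside, and then summing over the nonempty slices with a final application of H\"older's inequality against the mass constraint, should produce a bound of the claimed shape $|\mathbb{F}|^{\frac\gamma2+\frac{e+1}{d+1}+\frac{d-3}{2d+2}}$.

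The hard part is that the target exponent is essentially sharp, so none of these steps can be wasteful; in particular a single application with one fixed pair $(V,W)$ is not enough once $e$ is small, because a single isotropic piece $\Omega_j$ with $W_j\cap W=\{0\}$ already spreads over all $|\mathbb{F}|^{n}$ cosets of $W$, and a crude count of cosets met then only recovers the trivial Stein--Tomas balance $\tfrac\gamma2+\tfrac12$ rather than the sharper exponent. I therefore expect the argument to require decomposing $F$ according to the isotropic subspaces $W_j$ carrying its pieces and applying Lemma~\ref{lem:MixedResForm} to each group with the \emph{adapted} splitting $(V_j,W_j)$, so that a group lying over few cosets of its own $W_j$ contributes efficiently, and then recombining the groups using the $L^2$ (orthogonality) built into the mixed norm together with Cauchy--Schwarz and the combinatorial constraint $\sum_z|A(z)|\le|\mathbb{F}|\cdot|\mathbb{F}|^{e}$. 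Balancing this group-by-group estimate against the per-slice estimate, so that the factor of $|\mathbb{F}|$ lost over the $t$-slices is paid for by the $|\mathbb{F}|^{e}$-efficiency inside each slice, is the delicate point of the proof.
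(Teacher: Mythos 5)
The proposal correctly reduces the problem via Lemma~\ref{lem:MixedResForm} to bounding $\bigl(\sum_{(x_1,t)}|E_t\cap(x_1+W)|^{q/2}\bigr)^{1/q}$ with $q=\tfrac{2d+2}{d+3}$, and correctly assembles the two coset-counting facts (via Lemma~\ref{lem:elcosets}) that drive the estimate. You also correctly pinpoint the obstruction: for a fixed pair $(V,W)$, an $\Omega_j$ whose underlying subspace is complementary to $W$ meets all $|\mathbb{F}|^n$ cosets of $W$, and then the H\"older step over cosets gives back only the Stein--Tomas numerology.

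The difficulty is in how you propose to get around this. Your sketch --- decompose $F$ into groups by underlying isotropic subspace, apply the mixed-norm estimate to each group with the group-adapted pair $(V_j,W_j)$, then ``recombine using the $L^2$ orthogonality built into the mixed norm together with Cauchy--Schwarz'' --- does not close. The groups are measured in different mixed norms $L^q_{V_j,t}L^2_{W_j}$, which share no orthogonality with each other, so on the Fourier side one is forced into a triangle inequality over groups. With up to $|\mathbb{F}|^{e+1}$ groups, H\"older across groups costs a factor $|\mathbb{F}|^{(e+1)(1-1/q)} = |\mathbb{F}|^{(e+1)\frac{d-1}{2d+2}}$, and combining this with the per-group bound $\bigl(\sum_{(z,j)}|U_j|^{q/2}\bigr)^{1/q}\lesssim|\mathbb{F}|^{\frac{e+1}{d+1}+\frac{\gamma}{2}}$ yields $|\mathbb{F}|^{\frac{e+1}{2}+\frac{\gamma}{2}}$, i.e.\ no improvement over Stein--Tomas and worse than the target $|\mathbb{F}|^{\frac{\gamma}{2}+\frac{e+1}{d+1}+\frac{d-3}{2d+2}}$.

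The paper's actual fix is a single clean idea that you do not hit on: \emph{choose $W$ to be one of the underlying subspaces of the $\Omega_j$'s themselves}, after first splitting $F$ into two disjointly-supported pieces so that (on each piece) no $\Omega_j$ sits over a subspace complementary to $W$. Then every $\Omega_j$ that appears satisfies $|W\cap\Omega_j|\ge|\mathbb{F}|$ (two $n$-dimensional subspaces of $\mathbb{F}^{2n}$ that are not complementary intersect in a subspace of dimension at least one), so each $\Omega_j$ meets at most $|\mathbb{F}|^{n-1}$ cosets of $W$, never $|\mathbb{F}|^n$. With this in place, the rest of the computation is exactly what you outlined: subadditivity of $s\mapsto s^{q/2}$ to push the sum over the pieces $U_j$ outside, H\"older over the $\le|\mathbb{F}|^{n-1}$ nonempty cosets of $W$ meeting a given $\Omega_j$, and then H\"older over the $\le|\mathbb{F}|^{e+1}$ pairs $(z,j)$ against the mass constraint $\sum|U_j|\le|\mathbb{F}|^\gamma$. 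The missing ingredient in your proposal is precisely this pivot in the choice of $W$, not the subsequent bookkeeping.
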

\begin{proof}From Lemma \ref{lem:MixedResForm} (noting \eqref{eq:dualKakeyaExp}) we have
\begin{equation}\label{eq:mixedExp}||\hat{F}||_{L^{\frac{2d+2}{d+3}}(\mathcal{P},d\sigma)} \lesssim \left( \sum_{x_2  \in V ,t  \in \mathbb{F} } \left( \sum_{x_1  \in W} |f(x_1 +x_2,t)|^2  \right)^{\frac{d+1}{d+3} } \right)^{\frac{d+3}{2d+2}}.
\end{equation}
It will thus suffice to estimate the right side of this inequality. Recall that if $W$ is a maximal totally isotropic subspaces (in a $2n$ dimensional quadratic space with Witt index $n$) then the complementary subspace $V$ is also a maximal totally isotropic subspace. Thus, after splitting $F$ into two functions with disjoint supports, we may assume that no pair of the sets $U_{j}$ arise from complementary subspaces. We now let $W$ be a maximal totally isotropic subspace associated to some $U_{j}$.

Let $\{W_i \}_{i \in \mathcal{C}}$ be an enumeration of the cosets of $W$. Given a (non-complementary) subspace $V$, it follows that the set of $i$'s such that $W_{i} \cap V \neq \emptyset$ has size $\frac{|\mathbb{F}|^n}{|W\cap V|}\leq |\mathbb{F}|^{n-1}$, using Lemma \ref{lem:elcosets} and the fact that the spaces are not complementary. We may estimate the right side of (\ref{eq:mixedExp}) as
$$ \left( \sum_{z \in Z} \sum_{i \in \mathcal{C}}  \left( \sum_{j \in A(z)} |W_i \cap U_{j}| \right)^{\frac{d+1}{d+3}} \right)^{\frac{d+3}{2d+2}}
\leq \left( \sum_{z \in Z} \sum_{i \in \mathcal{C}} \sum_{j \in A(z)}  |W_i \cap U_{j}|^{\frac{d+1}{d+3}} \right)^{\frac{d+3}{2d+2}}.
$$
In the first inequality we have used the assumption that the sets $\{U_{j}\}$ are disjoint, and in the second inequality we have used the reverse form of Minkowski's inequality, since $\frac{d+1}{d+3} <1$. This last term can be reorganized as
$$ \left( \sum_{z \in Z}  \sum_{j \in A(z)} \sum_{\substack{ i \in \mathcal{C} \\ W_i \cap \Omega_j \neq \emptyset }  } |W_i \cap U_{j}|^{\frac{d+1}{d+3}} \right)^{\frac{d+3}{2d+2}}.$$
Since, for $\Omega_j$ fixed, we have $|\{ i \in \mathcal{C}  : W_i \cap \Omega_j \}| \leq \frac{|\mathbb{F}|^{n}}{|W\cap \Omega_j|}$, H\"{o}lder's inequality gives us that
$$\sum_{\substack{ i \in \mathcal{C} \\ W_i \cap \Omega_j \neq \emptyset }  } |W_i \cap U_{j}|^{\frac{d+1}{d+3}} \leq \left(\sum_{\substack{ i \in \mathcal{C} \\ W_i \cap \Omega_j \neq \emptyset }  }|W_i \cap U_{j}| \right)^{\frac{d+1}{d+3}} \frac{|\mathbb{F}|^{\frac{2n}{d+3}} }{|W\cap \Omega_j|^{\frac{2}{d+3}}} $$
$$\lesssim |U_{j}|^{\frac{d+1}{d+3}}\frac{|\mathbb{F}|^{ \frac{2n}{d+3}} }{|W\cap \Omega_j|^{\frac{2}{d+3}}} \leq |U_{j}|^{\frac{d+1}{d+3}} |\mathbb{F}|^{(n-1)\frac{2}{d+3}}.$$
Here we have used that $|W \cap \Omega_j| \geq |\mathbb{F}|$. In addition, using that
$$\sum_{z \in Z}\sum_{j \in A(z)}|U_j|^{\frac{d+1}{d+3}} \leq \left( \sum_{z \in Z} \sum_{j \in A(z)}|U_j|\right)^{\frac{d+1}{d+3}} |\mathbb{F}|^{\frac{2 e +2}{d+3} } \leq |\mathbb{F}|^{\gamma \frac{d+1}{d+3} + \frac{2 e+2}{d+3}}, $$
we have
$$||\hat{F}||_{L^{\frac{2d+2}{d+3}}(\mathcal{P},d\sigma)} = \left(|\mathbb{F}|^{\gamma \frac{d+1}{d+3} + \frac{2 e+2}{d+3} +\frac{d-3}{d+3} } \right)^{\frac{d+3}{2d+2}}
\lesssim |\mathbb{F}|^{\frac{\gamma}{2} + \frac{e+1}{d+1} + \frac{d-3}{2d+2} }.$$
This completes the proof.
\end{proof}
\section{Proof of Theorem \ref{thm:Main}}\label{sec:MainProof}
We are now ready to assemble the estimates proven in the previous sections to obtain an improvement to the Stein-Tomas inequality. We will prove the estimate in dual form.
\newtheorem*{thm:Main}{Theorem \ref{thm:Main}}
\begin{thm:Main}\normalfont{(dual form)} Let $d=2n+1 \geq 3$ be an odd integer and $\mathcal{P}$ the $d$ dimensional paraboloid over a field $\mathbb{F}$ in which $-1$ is a square. We have the inequality
$$||\hat{F}||_{L^{\frac{2d+2}{d+3}}(\mathcal{P},d\sigma)} \lesssim ||F||_{L^{\frac{2d+2}{d+3} + \delta_{d}}  }$$
for some $\delta_d>0$.
\end{thm:Main}
\begin{proof}We restrict attention to $d\geq 5$ since the $d=3$ case has already been proven. By dyadic pigeonholing and $\epsilon$ removal, it suffices to assume that $F: \mathbb{F}^d \rightarrow \mathbb{C}$ such that $F \sim 1$ on its support $E \subseteq \mathbb{F}^d$, where $|E| \sim |\mathbb{F}|^\gamma$. Moreover, we may assume that if $E_z \subseteq \mathbb{F}^{d-1}$ denotes the slices of $E$ as above, then $\gamma=s+\beta$ where $|Z|=|\mathbb{F}|^s$  and $|E_z| \sim |\mathbb{F}|^\beta$ (for all $z$ such that $|E_z| \neq 0$).
First we dispense with the case $\gamma \leq \frac{d+1}{2} - \frac{1}{d^2}$. The term $\frac{1}{d^2}$ has not been chosen optimally (since we are not attempting to produce an explicit estimate). If $\gamma \leq \frac{d+1}{2} - \frac{1}{d^2}$, Lemma \ref{lem:stdecay} gives
$$||\hat{F}||_{L^{2}(\mathcal{P},d\sigma)} \lesssim||F||_{L^{\frac{4\gamma}{4\gamma-d+1}}} +||F||_{L^2} = ||F||_{L^{\frac{2d^2+4d+4}{d^2+4d+4}}} +||F||_{L^2} = ||F||_{L^{\frac{2d+2}{d+3}+\frac{4}{(d+2)^2(d+3) }}}+||F||_{L^2}.$$
We assume $\gamma \geq \frac{2d+2}{d+3} - \frac{1}{d^2}$ throughout the rest of the argument. Using the argument of Lemma \ref{lem:vectdecomp} (with respect to maximal totally isotropic affine subspaces instead of arbitrary affine subspaces) on each slice $E_z$, for any $\alpha <1$, we may partition $E = E^{(c)} \cup E^{(u)}$ such that:
\begin{enumerate}
\item For $z \in Z$, the set $E^{(c)}_{z}$ can be written as a disjoint union of $E^{(c)}_{z} =\bigcup_{i \in I} \Omega_i$ where $|\Omega_i| \geq |E^{(c)}_{z}|^{\alpha}$ and each $\Omega_i$ is contained in a maximal totally isotropic affine subspace of dimension $n$. Moreover, $|I| \lesssim |E^{(c)}_{z}|^{1-\alpha}$,
 \item Given any ($n$ dimensional) maximal totally isotropic affine subspace $j \subset \mathbb{F}^{d-1}$, we have $|E^{(u)}_z \cap j| \lesssim |E^{(u)}_z|^{\alpha}$.
\end{enumerate}
We will apply the above decomposition with $\alpha = 1 - 1/d^2$. Define $F_{u}$ (respectively $F_{c}$) to be the restriction of $F$ to $E^{(c)}$ (respectively $E^{(u)}$) defined above. Let $\Psi$ be the energy exponent given by proposition \ref{prop:EnergyGen}, thus $\Psi(\alpha)=3 - \eta$ for some $\eta >0$. These, and all other constants in this section, are allowed to depend on the dimension $d$. From Corollary \ref{cor:HDMT} we have
$$ ||\hat{F_{u}}||_{L^{2}(\mathcal{P},d\sigma)} \lesssim |\mathbb{F}|^{\frac{7\gamma}{8}- \frac{\beta (4-\Psi(\alpha))}{8} - \frac{d}{8} + \frac{1}{4} }$$
This estimate degrades as $\beta$ decreases, so we may assume that $\beta = \gamma-1$, which gives
$$ ||\hat{F_{u}}||_{L^{2}(\mathcal{P},d\sigma)} \lesssim |\mathbb{F}|^{\frac{7\gamma}{8}- \frac{(\gamma-1) (4-\Psi(\alpha))}{8} - \frac{d}{8} + \frac{1}{4} }\lesssim |\mathbb{F}|^{\frac{7\gamma}{8}- \frac{(\gamma-1) (1+\eta )}{8} - \frac{d}{8} + \frac{1}{4} } $$
$$\lesssim |\mathbb{F}|^{\frac{6\gamma -d +3}{8} - \frac{\eta(\gamma-1)}{8} }.  $$
Here we have used that $\gamma \geq \frac{2d+2}{d+3} - \frac{1}{d^2}$, for some $\eta' >0$ this is $\lesssim |\mathbb{F}|^{\frac{6\gamma -d +3}{8} - \eta' }.$  This implies, for some $\eta'' >0$, that
$$||\hat{F_{u}}||_{L^{2}(\mathcal{P},d\sigma)}  \lesssim ||F||_{L^{\frac{8\gamma}{6\gamma - d+3} + \eta'' }}. $$
On the other hand, Lemma \ref{lem:RestL2} gives
$$ ||\hat{F_{u}}||_{L^{2}(\mathcal{P},d\sigma)} \lesssim  ||F||_{\frac{2\gamma}{1+\gamma}}.$$
Since $\frac{8\gamma}{6\gamma - d+3} + \eta''$ is decreasing in $\gamma$ and $\frac{2\gamma}{1+\gamma}$ is increasing in $\gamma$, and their intercept is strictly greater than $(d+1)/2$,  we may conclude that
$$||\hat{F_{u}}||_{L^{2}(\mathcal{P},d\sigma)}  \lesssim ||F_{u}||_{L^{\frac{2d+2}{d+3} + \eta'''}}.$$
Next we consider $F_{c}$. Corollary \ref{cor:KakeyaRegSet}, with $e=(1-\alpha)(\gamma-s)$, gives
$$||\hat{F_{c}}||_{L^{\frac{2d+2}{d+3}}(\mathcal{P},d\sigma)} \lesssim  |\mathbb{F}|^{\frac{\gamma}{2} + \frac{e+1}{d+1} + \frac{d-3}{2d+2} } \lesssim |\mathbb{F}|^{\gamma\frac{d+1}{2d+2} + \frac{2 (1-\alpha)(\gamma-s)+d-1}{2d+2}}. $$
Clearly this estimate is least favorable when $s=0$. Thus we can bound this by
$$\lesssim |\mathbb{F}|^{\gamma\frac{d+1}{2d+2} + \frac{2 (1-\alpha)\gamma+d-1}{2d+2}} \lesssim  |\mathbb{F}|^{\gamma \frac{d+3 }{2d+2}  -\frac{2 \alpha \gamma -d +1}{2d+2} }.$$
We now claim that $2\alpha  \gamma -d  +1 > \delta' >0$ for all $\gamma \geq \frac{d+1}{2} - \frac{1}{d^2}$. Indeed, we have
$$2\alpha  \gamma -d  +1  \geq \left(1-\frac{1}{d^2}\right) \left(d+1 -\frac{2}{d^2} \right) -d +1$$
$$= 2 -\frac{d^3+3d^2-2}{d^4} \geq \delta' >0$$
as desired. Thus
$$||\hat{F_{c}}||_{L^{\frac{2d+2}{d+3}}(\mathcal{P},d\sigma)} \lesssim |\mathbb{F}|^{-\delta''} ||F||_{L^{\frac{2d+2}{d+3}}}.$$
H\"older's inequality then implies
$$||\hat{F_{c}}||_{L^{\frac{2d+2}{d+3}}(\mathcal{P},d\sigma)} \lesssim ||F||_{L^{\frac{2d+2}{d+3}+\delta'''}}.$$
This completes the proof.
\end{proof}
We now sketch an explicit form of the above argument in $5$ dimensions. By Lemma \ref{lem:stdecay} we may assume that, say, $\gamma >2.5$. Arguing as above, we have that
$$||\hat{F}||_{L^{\frac{3}{2}}(\mathcal{P},d\sigma)} \lesssim ||\hat{F_{c}}||_{L^{\frac{3}{2}}(\mathcal{P},d\sigma)} + ||\hat{F_{u}}||_{L^{2}(\mathcal{P},d\sigma)} \lesssim  |\mathbb{F}|^{\frac{\gamma(4-2\alpha)+2 }{6} } +  |\mathbb{F}|^{\frac{7\gamma}{8}- \frac{(\gamma -1)(4-\Psi(\alpha))}{8} - \frac{5}{8} + \frac{1}{4} } $$
$$\lesssim |\mathbb{F}|^{\frac{\gamma(4-2\alpha)+2 }{6} } +  |\mathbb{F}|^{\frac{3\gamma + \Psi(\alpha) (\gamma-1) +1}{8} } $$
with $\Psi(\alpha)= \max\left(\frac{19+2\alpha}{7},\frac{23}{8}\right)$ using Lemma \ref{lem:5dEnergy}. Optimizing the parameter $0\leq \alpha \leq 1$ then gives
$$||\hat{F}||_{L^{\frac{3}{2}}(\mathcal{P},d\sigma)} \lesssim |\mathbb{F}|^{ \frac{-22\gamma^2 + 7 \gamma +1}{3-31\gamma} } + |\mathbb{F}|^{\frac{47\gamma-15}{64}}.  $$
This implies that
$$||\hat{F}||_{L^{\frac{3}{2}}(\mathcal{P},d\sigma)} \lesssim ||F||_{L^{\frac{31 \gamma^2 -3 \gamma}{22\gamma^2 - 7\gamma -1}  }} +||F||_{L^{\frac{64\gamma}{47\gamma-15}}}.  $$
Optimizing this with Lemma \ref{lem:RestL2} in the form
$$||\hat{F}||_{L^{2}(\mathcal{P},d\sigma)} \lesssim ||F||_{L^{\frac{2\gamma}{1+\gamma}}(\mathbb{F}^5,dx)} $$
gives the following result.
\begin{theorem}For $p  < 47/31 = 1.51613\ldots$ we have the inequality
$$||\hat{F}||_{L^{\frac{3}{2}}(\mathcal{P},d\sigma)} \lesssim ||F||_{L^{p}(\mathbb{F}^5,dx)}.$$
\end{theorem}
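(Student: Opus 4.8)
The plan is to combine the two restriction estimates derived in the preceding paragraphs, namely the ``concentrated/unconcentrated'' decomposition estimate
$$||\hat{F}||_{L^{3/2}(\mathcal{P},d\sigma)} \lesssim ||F||_{L^{\frac{31 \gamma^2 -3 \gamma}{22\gamma^2 - 7\gamma -1}}} + ||F||_{L^{\frac{64\gamma}{47\gamma-15}}}$$
valid for a function $F \sim 1$ on a support $E$ with $|E| \sim |\mathbb{F}|^{\gamma}$ and $\gamma > 2.5$, together with the $L^2$-based Stein--Tomas-type estimate of Lemma \ref{lem:RestL2}, $||\hat{F}||_{L^2(\mathcal{P},d\sigma)} \lesssim ||F||_{L^{2\gamma/(1+\gamma)}(\mathbb{F}^5,dx)}$. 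The first bound is strongest (smallest right-hand exponent) when $\gamma$ is small, while the Lemma \ref{lem:RestL2} bound is strongest when $\gamma$ is large; interpolating/choosing whichever is better for each $\gamma$ yields a single exponent $p(\gamma)$ such that $||\hat{F}||_{L^{3/2}(\mathcal{P},d\sigma)} \lesssim ||F||_{L^{p(\gamma)}}$, and then one takes the supremum of $p(\gamma)$ over the relevant range of $\gamma$. The claim is that this supremum is $47/31$, attained in the limit, so that for any fixed $p < 47/31$ the inequality holds uniformly.

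First I would reduce to the model situation: by dyadic pigeonholing on the level sets of $F$ and the $\epsilon$-removal Lemma \ref{lem:eprem} (using that $\mathcal{P}$ has positive Fourier dimension by Lemma \ref{lem:FTcompP}), it suffices to prove, for $F \sim 1$ on a support $E$ of size $|\mathbb{F}|^{\gamma}$, an estimate of the form $||\hat{F}||_{L^{3/2}(\mathcal{P},d\sigma)} \lesssim |\mathbb{F}|^{-\epsilon_0} ||F||_{L^p}$ for every $p < 47/31$ and some $\epsilon_0 = \epsilon_0(p) > 0$. The range $\gamma \le 2.5$ is disposed of by Lemma \ref{lem:stdecay} exactly as in the proof of Theorem \ref{thm:Main}: there the $L^2$ estimate already gives a right-hand exponent strictly below $3/2$ (here $d=5$ so $2d+2/(d+3) = 3/2$), hence below $47/31$, with room to spare for the $|\mathbb{F}|^{-\epsilon_0}$ gain via Hölder. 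So one may assume $\gamma > 2.5$ throughout.

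Next I would carry out the optimization over $\alpha \in [0,1]$ that produces the displayed $L^{3/2}$ estimate: plug $\Psi(\alpha) = \max\big(\frac{19+2\alpha}{7}, \frac{23}{8}\big)$ from Lemma \ref{lem:5dEnergy} into the Corollary \ref{cor:HDMT} bound for $\hat{F_u}$ and the Corollary \ref{cor:KakeyaRegSet} bound (with $e = (1-\alpha)(\gamma - s)$, worst case $s=0$) for $\hat{F_c}$, giving two expressions in $|\mathbb{F}|$ whose exponents are respectively increasing and decreasing in $\alpha$; equalizing them determines the optimal $\alpha = \alpha(\gamma)$ and yields the two terms $|\mathbb{F}|^{(-22\gamma^2+7\gamma+1)/(3-31\gamma)}$ and $|\mathbb{F}|^{(47\gamma-15)/64}$. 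Converting each term $|\mathbb{F}|^{c}$ into a statement $||\hat F||_{L^{3/2}} \lesssim ||F||_{L^{\gamma/c}}$ (since $||F||_{L^q} = |\mathbb{F}|^{\gamma/q}$ for $F \sim 1$) gives the two candidate exponents $\tfrac{31\gamma^2-3\gamma}{22\gamma^2-7\gamma-1}$ and $\tfrac{64\gamma}{47\gamma-15}$, and combining the better of these with $\tfrac{2\gamma}{1+\gamma}$ from Lemma \ref{lem:RestL2} gives $p(\gamma)$. One then checks, by elementary calculus / monotonicity, that $\sup_{\gamma > 2.5} p(\gamma) = 47/31$: the decreasing branch $\tfrac{64\gamma}{47\gamma-15}$ tends to $64/47 < 47/31$ as $\gamma \to \infty$ while being balanced against the increasing Lemma \ref{lem:RestL2} branch, and the crossover value, together with the first branch, never exceeds $47/31$; the value $47/31$ appears as the relevant limiting exponent. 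Given any fixed $p < 47/31$ we then have $p < p(\gamma)$ for all admissible $\gamma$ uniformly, so Hölder's inequality upgrades the bound to $||\hat{F}||_{L^{3/2}(\mathcal{P},d\sigma)} \lesssim |\mathbb{F}|^{-\epsilon_0}||F||_{L^p}$, and summing the dyadic pieces finishes the proof.

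The main obstacle I anticipate is purely the bookkeeping of the optimization: one must verify that taking the pointwise minimum of the three exponent functions of $\gamma$ and then the supremum over $\gamma > 2.5$ genuinely gives $47/31$ and not some slightly different constant, and in particular that the two regimes of $\Psi(\alpha)$ (the $\frac{19+2\alpha}{7}$ branch versus the flat $\frac{23}{8}$ branch) are stitched together correctly — i.e. that the optimal $\alpha(\gamma)$ actually lands in $[0,1]$ and in the claimed branch over the whole relevant $\gamma$-window. This is elementary but error-prone; everything else is an immediate assembly of Lemma \ref{lem:stdecay}, Lemma \ref{lem:RestL2}, Lemma \ref{lem:5dEnergy}, Corollary \ref{cor:HDMT}, Corollary \ref{cor:KakeyaRegSet}, and the $\epsilon$-removal Lemma \ref{lem:eprem}.
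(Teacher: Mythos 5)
Your proposal reproduces the paper's own argument almost exactly: restrict to $\gamma > 5/2$ via Lemma \ref{lem:stdecay}, apply the $F_c/F_u$ decomposition with the $5$-dimensional energy exponent $\Psi(\alpha) = \max\left(\tfrac{19+2\alpha}{7},\tfrac{23}{8}\right)$ fed into Corollaries \ref{cor:HDMT} and \ref{cor:KakeyaRegSet}, optimize in $\alpha$, convert the resulting $|\mathbb{F}|$-powers to $L^p$ norms, and then balance against Lemma \ref{lem:RestL2}. The bookkeeping concern you flag does resolve cleanly — for $\gamma > 5/2$ the balancing $\alpha$ always falls below $9/16$, so only the $\frac{47\gamma-15}{64}$ branch is active, and its crossing with $\frac{2\gamma}{1+\gamma}$ at $\gamma = 47/15$ gives exactly $47/31$ — so the proposal is correct and follows the paper's approach.
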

\section{Further remarks}\label{sec:Remarks}
We conclude with some additional remarks:
\begin{enumerate}
 \item The structure theorem for energetic subsets of quadratic surfaces presented in Section \ref{sec:HighEnergy} is probably quite inefficient. Improving these estimates is likely the most efficient path to improving the exponent in the main result.
 \item We have focused exclusively on the case of finite fields. In \cite{WrightNotes} (see also \cite{WrightExp}) Wright has observed that replacing a finite field $\mathbb{F}$ with the ring of residues $\mod N$ is more analogous to the Euclidean case. Indeed, the divisors of $N$ introduce an analog of scales, which is not present in the finite field case. The presence of multiple scales seems crucial for the Kakeya phenomena. Indeed in finite fields, which lack multiple scales, Kakeya sets must have positive measure. In the function field setting, where there is an infinite number of scales, Dummit and Hablicsek \cite{DH} have recently proven that Kakeya sets with measure $0$ exist. It is an open problem to determine if Kakeya sets with measure $0$ exist in the p-adic setting (where there also is an infinite number of scales) however it seems the answer is likely `yes'. Note the $p$-adic setting is similar to working in the ring of residues $\mod N$ with $N=p^{k}$ asymptotically as $k \rightarrow \infty$ (although the measures are different). If there does exist $2$ dimensional $p$-adic Kakeya sets of measure $0$, likely one can then use the arguments presented here to disprove the endpoint $3$ dimensional restriction inequality for the paraboloid over the ring $R$ of residues $\mod p^k$:
 $$|| (fd\sigma)^{\vee}||_{L^{3}(R^{3},dx) } \lesssim ||f||_{L^{\infty}(\mathcal{P},d\sigma)}. $$
 This would be analogous to the Euclidean result of Beckner, Carbery, Semmes and Soria \cite{BCSS} based on C. Fefferman's disc multiplier counterexample \cite{FeffermanD}. \textbf{Update:} The author has recently learned that J. Wright has proven the existence of $p$-adic Kakeya sets of measure $0$.
 \item The arguments in Section \ref{sec:KaktoRes} indicate that certain restricted $k$-plane operators might also play a role in finite field restriction problems. We note that while sharp estimates are known for the finite field Kakeya maximal operator, the problem of determining the $L^p$ mapping properties for (unrestricted) $k$-plane transforms in finite fields is not fully resolved. See \cite{Bueti}, \cite{OberlinThesis} and Section 4.12 of \cite{EOT} for a discussion of these operators.
 \item A somewhat different application of quadratic form theory and incidence geometry to restriction problems (in the lattice setting) has recently appeared in the works of Bourgain and Demeter \cite{BourgainDemeter} and Demeter \cite{Demeter}.
 \item We briefly remark on some analogies between the finite field and Euclidean settings. In the Euclidean setting functions supported on small caps play a fundamental role. Indeed such examples demonstrate the optimality of the Stein-Tomas inequality and are the building blocks that allow one to embed Kakeya-type objects into the restriction problem. One insight of our current work is that affine subspaces contained within a surface play an analogous roles in the finite field setting. This analogy is not perfect, however, as evidenced by the fact that the finite field restriction conjecture is related to the Kakeya problem in a lower dimension, unlike the Euclidean setting.

    A key component of our analysis of the hyperbolic paraboloid $\mathcal{H}$ was Lemma \ref{lem:L52} which, roughly speaking, shows that near extermizers of the (extension form of) the Stein-Tomas inequality must concentrate on the lines contained within $\mathcal{H}$. This is somewhat analogous to the mixed-norm inequalities of Bourgain \cite{Br3} and Moyua-Vargas-Vega \cite{MVV}, which imply that near extermizers to the (extension form of the) Euclidean Stein-Tomas inequality must concentrate on caps. Curiously, the proof of both Lemma \ref{lem:L52} and these Euclidean mixed-norm inequalities proceed by reducing matters to an incidence-type estimate.

    It is interesting to speculate on the applicability of the ideas presented here to the Euclidean setting. Indeed, one can modify the Mockenhaupt-Tao slicing argument to the Euclidean setting. In fact this technique was first introduced by Carbery \cite{Carbery} in his proof that the restriction conjecture implies the Bochner-Riesz conjecture for the (Euclidean) paraboloid. Combining this, using the basic strategy introduced here, with, say, the Moyua-Vargas-Vega mixed-norm estimate should allow one to deduce a significant amount of structure to a hypothetical extremizers to the (restriction form of) the Stein-Tomas inequality. One could then hope to use this structure to obtain an improved restriction estimate.

    This would be significantly distinct from the original application of the Moyua-Vargas-Vega inequality to the restriction problem and perhaps more efficient(if the numerology here is any guide). This would, however, likely fail to be competitive with the recent multilinear approach to these problems, such as \cite{BG} and \cite{GuthPoly}.
 \item Recently, there has been a lot of interest in classifying extremals and near externals to Fourier analytic inequalities. For instance, two recent papers by Christ and Shao, \cite{CS1} and \cite{CS2}, study the extremizers of the (Euclidean) Stein-Tomas inequality for sphere. The ideas presented there might well have some application to the current setting or vice versa.
 \item We have focused exclusively on the Paraboloid in this work. The explicit nature of the Fourier transform of its characteristic function (Lemma \ref{lem:FTcomp}) simplifies many issues compared to other surfaces such as the sphere. In the case of the sphere, one must appeal to deeper results regarding exponential sums even to obatin Stein-Tomas-type estimates. We refer the reader to \cite{KangKoh} for a more detailed discussion of the Fourier restriction phenomenon for the finite field sphere.
 \item \emph{Note added:} The current work crucially relies on the polynomial method through the application of the Kakeya maximal operator estimates of Ellenberg, Oberlin and Tao \cite{EOT} for which there is currently no other known proof. More recently, Guth \cite{GuthPoly} has found an application of the polynomial partitioning method to the Euclidean restriction problem.
\item \emph{Note added:} Recently Koh \cite{KohNH} obtained new restriction estimates in certain high dimensional cases, improving on the results of \cite{LL} and \cite{IKParaboloid}. These arguments rely on, among other things, a high dimensional analog of the approach from \cite{LewkoNew}. In addition, Koh points out there that the necessary conditions in certain high dimensional cases were misstated in an earlier version of this manuscript. This has now been corrected.

\end{enumerate}

\section{Summary of progress on the finite field restriction problem}

\begin{center}
\begin{figure}[H]\label{fig:parab}
\begin{tabular}{|l|l|l|} \hline
Dimension /& Range of $p$ & \\
Field &  & \\
\hline
$d = 2$ & $p=4, q=2$ (S-T) & Mockenhaupt and Tao \cite{MT} (optimal)\\
\hline
$d = 3$ & $p = 4$, $q=2 $ (S-T) &  Mockenhaupt and Tao \cite{MT} \\
$d = 3$ & $p > 3.6$, $q=2 $ &  Mockenhaupt and Tao \cite{MT} \\
-1 not square   & $p \geq 3.6$, $q=2 $ & Bennett, Carbery, Garrigos, Wright \cite{BCGW} \\
 &    & (see also \cite{LL})   \\
 & $p \geq 3.6 - \delta $, $q=2 $ (for $\delta >0$)  &Lewko \cite{LewkoNew} \\
  & $p \geq 3 $, $q=2 $   & (conjectured) \\
\hline
$d = 3$ & $p = 4$, $q=2 $ (S-T) &  Mockenhaupt and Tao \cite{MT} \\
-1 a square   & $p = 3.6$, $q=9/4 $ &  Theorem \ref{thm:Res3} \\
  & $p \geq 3.6 - \delta $, $q=9/4 +\delta' $ (for $\delta, \delta' >0$)  &  Theorem \ref{thm:ResWSP} \\
  & $p \geq 3  $, $q=3 $   & (conjectured) \\
\hline
$d > 3$ & $p \geq \frac{2d+2}{d-1}$, $q=2$ (S-T) & Mockenhaupt and Tao \cite{MT}   \\
d even & $p >\frac{2d^2}{d^2-2d+2}$, $q=2$  &Iosevich and Koh \cite{IKParaboloid}  \\
& $p \geq \frac{2d^2}{d^2-2d+2}$, $q=2$  &Lewko and Lewko \cite{LL}  \\
& $p= \frac{2d}{d-1} $, $q= \frac{2d^2}{d^2-d+2} $  & (conjectured)\\
\hline
$d > 3$ & $p \geq \frac{2d+2}{d-1}$, $q=2$ (S-T) & Mockenhaupt and Tao \cite{MT}   \\
d odd & $p >\frac{2d^2}{d^2-2d+2}$, $q=2$ (for $d \equiv (3 \mod 4)$) &Iosevich and Koh \cite{IKParaboloid}  \\
-1 not a square & $p \geq \frac{2d^2}{d^2-2d+2}$, $q=2$ (for $d \equiv (3 \mod 4)$)  &Lewko and Lewko \cite{LL}  \\
& $p= \frac{2d}{d-1} $, $q= \frac{2d^2+2d}{d^2+3} $  (for $d \equiv (3 \mod 4)$) & (conjectured)\\
& $p= \frac{2d}{d-1} $, $q= \frac{2d}{d-1} $  (for $d \equiv (1 \mod 4)$) & (conjectured)\\
\hline
$d > 3$ & $p \geq \frac{2d+2}{d-1}$, $q=2$ (S-T) & Mockenhaupt and Tao \cite{MT}  \\
d odd & $p >\frac{2d+2}{d-1} - \delta_d$, $q=\frac{2d+2}{d-1}$  & Theorem \ref{thm:Main}  \\
-1 a square & $p= \frac{2d}{d-1} $, $q= \frac{2d}{d-1} $ & (conjectured)\\
\hline
\end{tabular}
\caption{Progress on the finite field restriction conjecture for the paraboloid. Here we emphasized the $p$ exponent and have omitted certain improvements in the $q$ exponent for inferior values of $p$. We use (S-T) to indicate the exponents given by the Stein-Tomas method.  }
\end{figure}
\end{center}

\texttt{M. Lewko}

\textit{mlewko@gmail.com}
\end{document}